\documentclass[reqno,12pt]{amsart}
\usepackage[english]{babel}

\usepackage{amsfonts,amsmath,amsthm,color,graphicx,xfrac,physics,mathrsfs,dirtytalk,mathtools,amssymb,bm,enumitem,a4wide}
\usepackage[colorlinks,citecolor=green,linkcolor=red]{hyperref}

\newcommand\Q{\mathbb Q}
\newcommand\R{\mathbb R}
\newcommand{\RR}{\R}
\newcommand\Z{\mathbb Z}
\newcommand{\ZZ}{\Z}
\newcommand\N{\mathbb N}
\newcommand{\NN}{\N}

\newcommand{\LL}{\mathscr L}
\newcommand\calH{\mathscr H}
\newcommand{\HH}{\calH}

\newcommand\Vout{V_\mathrm{out}}
\newcommand\Vin{V_\mathrm{in}}
\newcommand\Vmid{V_\mathrm{mid}}
\newcommand\Qout{Q_\mathrm{out}}
\newcommand\Qin{Q_\mathrm{in}}
\newcommand\overlineQin{\overline Q_\mathrm{in}}
\newcommand\Qmid{Q_\mathrm{mid}}

\newcommand{\FF}{\mathcal{F}}

\newcommand\rad{\mathrm{rad}}
\newcommand\cone{\mathrm{cone}}

\newcommand\conv{\mathrm{conv\,}}
\newcommand\ext{\mathrm{ext\,}}
\newcommand\diam{\mathrm{diam}}
\newcommand\diag{\mathrm{diag}}
\newcommand\dist{\mathrm{dist}}

\newcommand\SO{{SO}}
\newcommand{\supp}{{\mathrm {supp\,}}}

\newtheorem{theorem}{Theorem}
\newtheorem{lemma}[theorem]{Lemma}
\newtheorem{proposition}[theorem]{Proposition}
\newtheorem{remark}[theorem]{Remark}

\newtheorem{definition}[theorem]{Definition}
\newtheorem{thm}[theorem]{Theorem}
\newtheorem{lem}[theorem]{Lemma}
\newtheorem{prop}[theorem]{Proposition}

\theoremstyle{definition}
\newtheorem{defn}[theorem]{Definition}

\theoremstyle{remark}
\newtheorem{rem}[theorem]{Remark}

\numberwithin{theorem}{section}
\numberwithin{equation}{section}

\newcommand{\DIFF}{{\mathrm {D}}}

\newcommand{\BV}{{\mathrm {BV}}}

\newcommand{\rmc}{{\mathrm {c}}}

\newcommand{\rmloc}{{\mathrm {loc}}}

\DeclareMathOperator*{\argmin}{arg\,min}

\newcommand{\defeq}{\vcentcolon=}
\newcommand{\eqdef}{=\vcentcolon}

\def\Xint#1{\mathchoice
	{\XXint\displaystyle\textstyle{#1}}%
	{\XXint\textstyle\scriptstyle{#1}}%
	{\XXint\scriptstyle\scriptscriptstyle{#1}}%
	{\XXint\scriptscriptstyle\scriptscriptstyle{#1}}%
	\!\int}
\def\XXint#1#2#3{{\setbox0=\hbox{$#1{#2#3}{\int}$}
		\vcenter{\hbox{$#2#3$}}\kern-.5\wd0}}

\def\dashint{\Xint-}

\newcommand{\mres}{\mathbin{\vrule height 1.6ex depth 0pt width 0.13ex\vrule height 0.13ex depth 0pt width 1.3ex}}

\newcommand{\fr}{\penalty-20\null\hfill$\blacksquare$}     

\let\oldchi=\chi
\renewcommand{\chi}{\text{\raisebox{\depth}{\(\oldchi\)}}}

\newcommand\eps{\varepsilon}
\let\epsilon\eps

\begin{document}
		\title[Hessian-Schatten variation]{Functions with bounded Hessian-Schatten variation: density, variational and extremality properties}

	\author[L.\ Ambrosio]{Luigi Ambrosio}
\address{L.~Ambrosio: Scuola Normale Superiore, Piazza dei Cavalieri 7, 56126 Pisa} 
\email{\tt luigi.ambrosio@sns.it}

	\author[C.\ Brena]{Camillo Brena}
	\address{C.~Brena: Scuola Normale Superiore, Piazza dei Cavalieri 7, 56126 Pisa} 
	\email{\tt camillo.brena@sns.it}
	
	\author[S.\ Conti]{Sergio Conti}
\address{S.~Conti:  Institut f\"ur Angewandte Mathematik,
	Universit\"at Bonn, 53115 Bonn} 
\email{\tt sergio.conti@uni-bonn.de}

	\begin{abstract}
        In this paper we analyze in detail a few questions related to the theory of functions with bounded
        $p$-Hessian--Schatten total variation, which are relevant in connection with the theory of inverse problems and machine learning.
We prove an optimal density result, relative to the 
        $p$-Hessian--Schatten total variation, of continuous piecewise linear (CPWL)
        functions in any space dimension $d$,
        using a construction based on a mesh
        whose local orientation is adapted to the function to be approximated.
        We show that not all extremal functions  with respect to the $p$-Hessian--Schatten total variation are CPWL. Finally, we prove existence of minimizers of certain relevant functionals involving the $p$-Hessian--Schatten total variation in the critical dimension $d=2$.
	\end{abstract}
	\maketitle
	\tableofcontents

\tableofcontents
\section*{Introduction}
Broadly speaking, the goal of  an inverse problem is to reconstruct an unknown signal of interest from a collection of (possibly noisy)   observations. Linear inverse problems, in particular,  are prevalent in various areas of signal processing. They are defined via the specification of three principal components: 
\begin{itemize}
\item[$\bullet$]a hypothesis space $\mathcal{S}$ from which we aim to reconstruct the unknown signal $f^*\in\mathcal{S}$,
\item [$\bullet$]a linear forward operator $\nu:\mathcal{S}\rightarrow \mathbb{R}^N$ that models the data acquisition process,
\item[$\bullet$] the observed data that is stored in an array ${y}\in\mathbb{R}^N$, with the implicit assumption that ${y}\approx {\nu}(f^*)$. 
\end{itemize}The task is then to (approximately) reconstruct the unknown signal $f^*$ from the observed data ${y}$.  From a variational perspective, the problem  can be formulated as a minimization of the form 
\begin{equation}\label{eq:inv_pb}
	f^* \in \argmin_{f\in\mathcal{S}} 
	\lambda\mathcal E\left({\nu}(f),{y}\right) +  \mathcal{R}(f),
\end{equation}
where
\begin{itemize}
	\item [$\bullet$]$\mathcal E:\mathbb{R}^N\times\mathbb{R}^N\rightarrow\mathbb{R}$ is a convex loss function that measures the data discrepancy,
	\item[$\bullet$] $\mathcal{R}:\mathcal{S}\rightarrow\mathbb{R}$ is the regularization functional that enforces prior knowledge and regularity on the reconstructed signal,
	\item [$\bullet$] $\lambda>0$ is a tunable parameter that adjusts the two terms. 
\end{itemize}


In general, regularization (obtained by the presence of $\mathcal R$) enhances the stability of the problem and alleviates its inherent ill-posedness.  Also, the presence of $\mathcal R$ leads to a key theoretical result, the so called ``representer theorem'', that provides a parametric form for optimal solutions of~\eqref{eq:inv_pb} and has been recently extended to cover generic convex optimization problems over Banach spaces \cite{Boyer2018,bredies2020sparsity,Unser2020b,unser2021convex}. In simple terms (and under suitable assumptions), this abstract results characterizes the solution set of~\eqref{eq:inv_pb} in terms of the extreme points of the unit ball of the regularization functional \begin{equation}\label{cdsa}
 \{f\in\mathcal{S}: \mathcal{R}(f)\leq 1\}.
 \end{equation} Hence, the original problem can be translated in finding the extreme points of the unit ball appearing in~\eqref{cdsa}.


\smallskip

In this paper, we are going to study problems arising from a particular, yet general, choice of the items appearing in the functional in~\eqref{eq:inv_pb}. In particular,
\begin{enumerate}[label={\textit{\alph{enumi}}})]
	\item the hypothesis space are the functions $f:\Omega\rightarrow\RR$ with bounded $p$-Hessian--Schatten variation (see item~\ref{itemee}), for some $\Omega\subseteq\RR^d$ open. The space coincides indeed with Demengel's space (\cite{Deme}) of functions with bounded Hessian, 
	which has been introduced to study models of plastic deformations of solids and has proven useful also in the context of image processing, but the norm we adopt is specific and allows for optimal approximation
	results by continuous and piecewise affine functions when $p=1$; 
	\item 	  \label{itemee}the regularizing term is the $p$-Hessian--Schatten variation $|\DIFF_p^2 \,\cdot\,|(\Omega)$, that coincides with the relaxation of the functional  (here and after $|\,\cdot\,|_p$ denotes the $p$-Schatten norm),
	$$
	|\DIFF_p^2 f|(\Omega)\defeq \int_{\Omega} |\nabla ^2 f|_p\dd\LL^d\qquad\text{for every }f\in C^2(\Omega);
	$$
	This is a variant of the classical second-order total variation (\cite{Aziznejad2021HSTV}). It
 has been inspired by  \cite{hinterberger2006variational,bergounioux2010second,knoll2011second,lefki2013HS,lefki2013Poisson} and used in \cite{campos2021HTV,pourya2022delaunay};
	\item  in the critical case $d=2$ we consider as linear forward operator the evaluation functional at certain points $x_1,\ldots,x_N\in \RR^2$, with observed data $(y_1,\ldots,y_N)\in\RR^N$; 
		\item still in the critical case, the error term is taken to be an $\ell^q$ norm, i.e.\ $$\mathcal E (f)\defeq\Vert (f(x_i)-y_i)_{i=1,\ldots,N}\Vert_{\ell^q}.$$
 	\item 	 the tunable parameter is $\lambda\in (0,\infty]$, where by convention $\lambda=\infty$ imposes a perfect fit with the data.
\end{enumerate}

\smallskip

In view of the discussion above, it is evident that some questions arise as natural.
\begin{enumerate}[label=\roman*)]
	\item\label{intro1} The description of the extremal points of the ball (cf.~\eqref{cdsa})
\begin{equation}\label{campsmkc}
		\{f:\Omega\rightarrow\RR:|\DIFF^2_p f|(\Omega)\le 1\}
\end{equation}
modulo additive affine functions (since the Hessian--Schatten seminorm is invariant under 
	the addition of affine functions, this factorization is necessary).
	A reasonable description of these extremal points was given in \cite{ambrosio2022linear}, under the assumption that a certain density conjecture holds true. Namely, it has been proved that if $\rm CPWL$ functions are dense in energy in the space of functions with bounded Hessian--Schatten variation, then all extremal points, which obviously are on the sphere, are found in the closure of the $\rm CPWL$  extremal points (and this last set is rather manageable, see \cite{ambrosio2022linear}). Here and below, a $\rm CPWL$ (Continuous and PieceWise Linear) function is a piecewise affine function, affine on certain simplexes.
	 In Section~\ref{sectcpwl}  we give a positive answer to the just mentioned conjecture, proved only in the two-dimensional case in  \cite{ambrosio2022linear} with a different, more constructive, strategy.
	 As any CPWL function can be exactly represented by a neural network with rectified linear unit (ReLU) activation functions \cite{arora2016understanding}, our result (Theorem~\ref{corollary}) in particular
implies approximability of any function whose Hessian has bounded total variation
by means of neural networks with ReLU activation functions, with convergence of the $1$-Hessian-Schatten norm.
	\item \label{intro2} Again with respect to the  extremal points of the set described in~\eqref{campsmkc}, one may wonder whether all the extremal points are $\rm CPWL$. By a delicate measure-theoretic analysis, in Section~\ref{sectcones} we show that the answer is negative: functions whose graphs are cut cones are extremal,
	modulo affine functions, and these functions are not $\rm CPWL$ if $d\geq 2$. In connection with this negative answer, as for compact convex sets exposed points are dense in the class of extreme points, it would be interesting to know whether cut cones are also exposed, namely if there exist linear continuous functionals attaining their minimum, when restricted to the closed unit ball of the Hessian-Schatten seminorm, only at a cut cone.
	\item \label{intro3} In the two-dimensional case, one may wonder whether the functional~\eqref{eq:inv_pb} admits minimizers, with the choice of error and regularizing term described above. In Section~\ref{sectionsolutions} we give a positive answer,  for a large set of choices of the parameters $\lambda$, $p$ and $q$.
\end{enumerate}

\smallskip

Now we pass to a more detailed description of the content of the paper. Namely, we examinate separately the answers to items~\ref{intro1},~\ref{intro2} and~\ref{intro3} above and we sketch their proofs.

\subsection*{Density of CPWL functions} In Section~\ref{sectcpwl} we address the problem of density in energy $|\DIFF^2_1\,\cdot\,|(\Omega)$ of $\rm CPWL$ functions in the set of functions with bounded Hessian--Schatten variation. Our main result is Theorem~\ref{theodensity}, stated for $C^2$ targets, and then it follows the localized version Theorem~\ref{corollary} for targets with finite $p$-Hessian--Schatten variation. The proof of Theorem~\ref{theodensity} heavily relies on a fine study of triangulations of $\RR^d$ and consists morally of three parts.
\medskip
\\\textbf{Part 1} is Section~\ref{subsecgenprop}  and deals with general properties of triangulations (considered as couples of sets, the set of vertices and the set of elements), the most important ones  being the Delaunay, non degeneracy and uniformity properties (items~\ref{deftriangdel},~\ref{deftriangreg} and~\ref{deftriangunif} of Definition~\ref{deftriang}).  Roughly speaking, the Delaunay property states that given an element of the triangulation, no vertex of the triangulation lies inside the circumsphere of the given element. It entails regularity properties, among them, the fact that angles in the elements are not too small. This leads to the non degeneracy property, crucial to estimate geometric quantities related to an element in terms of the volume of the given element. Finally, uniformity states that the vertices of the triangulation look like a rotation of a rescaling of the lattice $\ZZ^d$. The main results are Lemma~\ref{lemmaconstrE}, that allows us to gain a Delaunay triangulation starting from a {uniform} set of vertices and Lemma~\ref{lemmabrzd} which studies Delaunay triangulation whose vertices locally coincide with a rotation of a rescaling of the lattice $\ZZ^d$.
\medskip
\\\textbf{Part 2} is Section~\ref{secconstrtriang} and aims at constructing a \say{good} triangulation (in the sense of \textbf{Part 1})  that locally follows a prescribed orientation. The outcome is Theorem~\ref{theogridrot}  and the main difficulty in its proof relies in \say{gluing} the various sub-triangulations to allow for the variable orientation (see Figure~\ref{fig1}).
\medskip
\\\textbf{Part 3} is the proof of the density result, Section~\ref{sectproof}.  We exploit the outcome of \textbf{Part 2} to build a triangulation that locally follows the orientation  given by the Hessian of $w$, $\nabla ^2 w$, in the sense that is given by an orthonormal basis of eigenvectors for $\nabla ^2 w$. Then we take $u$, the affine interpolation for $w$ with respect to this triangulation, which will be a good approximation. The contribution of the Hessian--Schatten variation of $u$ on regions in which the orientation of the triangulation is constant (and hence adapted to the Hessian of $w$) is estimated thanks to the good choice of the orientation, whereas the contribution around the boundaries of these regions, i.e.\ where the gluing took place, comes from the regularity properties of the triangulation and the smallness of these regions.
\subsection*{Extremality of cones} In Section~\ref{sectcones}, we prove that functions whose graphs are cut cones are extremal with respect to the Hessian--Schatten total variation seminorm. Namely, we prove that functions defined as
	$$f^\cone(x)\defeq(1-|x|)_+$$
	are extremal modulo affine functions, in the sense that if for some $\lambda\in(0,1)$ $$f^\cone=\lambda f_1+(1-\lambda) f_2$$ with $$|\DIFF_p^2 f_1|(\RR^d)=|\DIFF_p^2 f_2|(\RR^d)=|\DIFF_p^2 f^\cone|(\RR^d),$$
	for some $p\in[1,\infty)$, then $f_1$ and $f_2$ are equal to $f^\cone$, up to affine functions (Theorem~\ref{coneexthm}). 	
	\bigskip
	
	Our strategy is as follows. First, we set $f_i^\rad$ to be the radial symmetrization of $f_i$, for $i=1,2$. As $f^\cone$ is radial, a simple computation yields that still 
	$$f^\cone=\lambda f_1^\rad+(1-\lambda) f_2^\rad$$ 
	and
	 $$|\DIFF_p^2 f_1^\rad|(\RR^d)=|\DIFF_p^2 f_2^\rad|(\RR^d)=|\DIFF_p^2 f^\cone|(\RR^d).$$
	 This implies with not much effort that $f_i^\rad=f^\cone$, up to affine terms, thanks to the explicit computation of Hessian--Schatten total variation of radial functions (Proposition~\ref{htvradial}).

	 The bulk of the proof is then to prove that whenever we have $f$ such that $f^\rad=f^\cone$ and $|\DIFF_p^2 f|(\RR^d)=|\DIFF_p^2 f^\cone|(\RR^d)$, then $f$ equals to $f^\cone$, up to affine terms. In other words, in the case $f^\rad=f^\cone$, we have rigidity of the property  that $|\DIFF^2_p f^\rad|(\RR^d)\le |\DIFF^2_p f|(\RR^d)$
stated in Lemma~\ref{radialbetter}. 
	\medskip\\\textbf{Case $p=1$} is dealt in Proposition~\ref{propconerad}. 
	 For its proof, a key remark is the fact that, if $\boldsymbol{\Delta}$ denotes the distributional Laplacian,  then $\int_{B_1}\boldsymbol{\Delta} (f(U\,\cdot\,))$ is independent of $U\in SO(\RR^d)$. Hence, by $f^\rad=f^\cone$, we have that
	 	$$
	 	\int_{B_1}\boldsymbol{\Delta} f=	 	\int_{B_1}\boldsymbol{\Delta} f^\cone=-|\DIFF_1^2 f^\cone|(B_1)
		=-|\DIFF_1^2 f|(B_1),
	 	$$
	 	where the second inequality is obtained by explicit computation (or by concavity of $f^\cone$ in $B_1$). This then implies that (at the right hand side there is the total variation of the matrix valued measure $\DIFF\nabla f$ with respect to the $1$-Schatten norm) 
	 	$$
	 \int_{B_1}\dd\tr(\DIFF\nabla f)=-	\int_{B_1}\dd|\DIFF\nabla f|_1,
	 	$$
	 	so that $\tr(\DIFF\nabla f)=-|\DIFF\nabla f|_1$  almost everywhere, which implies that the eigenvalues of $\DIFF\nabla f$ are all negative, almost everywhere (Lemma~\ref{lemmaconvex}), by rigidity in the inequality $|\Tr(A)|\le |A|_1$. Then, by Lemma~\ref{lemmaconvex1}, it follows that $f$ has a continuous concave representative in $B_1$. Finally we exploit concavity to obtain the pointwise bound $f\ge f^\cone$ in $B_1$, which, combined with the integral equality $f^\rad=f^\cone$, implies the claim.
	 	\medskip\\\textbf{Case $p\in (1,\infty)$} is dealt in Proposition~\ref{propconeradp}, where we reduce ourselves to the case $p=1$, namely we show that the information $|\DIFF_p^2 f|(\RR^d)=|\DIFF_p^2 f^\cone|(\RR^d)$, coupled with $f^\rad=f^\cone$, self improves to $|\DIFF_1^2 f|(\RR^d)=|\DIFF_1^2 f^\cone|(\RR^d)$, whence we can use what proved in the \textbf{Case $p=1$}. This reduction is done treating separately the absolutely continuous and singular part of $|\DIFF_p^2 f|$. The former is treated exploiting the strict convexity of the $p$-Schatten norm together with the  scaling property of the map $p\mapsto |\DIFF_p^2 f^\cone|$, whereas the latter is treated by Alberti's rank 1 Theorem (\cite{alberti_1993}), in conjunction with the fact that the $p$-Schatten norm of rank $1$ matrices is independent of $p$.
	 
\subsection*{Solutions to the minimization problem} In Section~\ref{sectionsolutions} we restrict ourselves to the two dimensional Euclidean space. Indeed, we want to exploit the continuity of functions with bounded Hessian--Schatten variation in dimension $2$ (\cite{ambrosio2022linear}, see Proposition~\ref{sobo}) to have a meaningful evaluation functional and define, for $\Omega\subseteq\RR^2$ open (cf.~\eqref{eq:inv_pb}), 
$\FF_\lambda:L^1_\rmloc(\Omega)\rightarrow [0,\infty]$ by
\begin{equation}\label{defFF}
	\FF_\lambda(f)= |\DIFF_1^2 f|(\Omega)+\lambda \Vert(f(x_i)-y_i)_{i=1,\ldots,N}\Vert_{\ell^1},
\end{equation}
where $x_1,\ldots,x_N\in\Omega$ are distinct points and $y_1,\ldots,y_N\in\RR$. 
Also, we are adopting the convention that $\infty\,\cdot\,0=0$, hence, if $\lambda=\infty$, we have 
$\FF_{\infty}:L^1_\rmloc(\Omega)\rightarrow [0,\infty]$,
\begin{equation}\notag
	\FF_{\infty}(f)=\begin{cases}
		|\DIFF^2_1 f|(\Omega)\qquad&\text{if $f(x_i)=y_i$ for $i=1,\ldots,N$,}\\
		\infty\qquad&\text{otherwise}.
	\end{cases}
\end{equation}
Notice that $\FF_\lambda$ is the sum of the regularizing term $|\DIFF^2_1 f|$ and the weighted (by $\lambda$) error term $\lambda \Vert(f(x_i)-y_i)_{i=1,\ldots,N}\Vert_{\ell^1}$ and that $\FF_\lambda$ can be seen as a relaxed version of 
$\FF_{\infty}$. 

\bigskip

In Section~\ref{sectionsolutions}, we will consider slightly more general functionals, see~\eqref{deffunct}, but for the sake of clarity we reduce ourselves to a particular case in this introduction. Our aim is to prove existence of minimizers of $\FF_\lambda$ (Theorem~\ref{csaaa}). Notice that in higher ($\ge 3$) dimension, $\FF_\lambda$ is not well defined (by the lack of continuity), and, even if we try to define it imposing continuity on its domain, minimizers do not exist in general, as the infimum of $\FF_\lambda$ is always zero. To see this last claim, simply exploit the scaling property of the Hessian--Schatten total variation (or use Proposition~\ref{htvradial}) for functions of the kind $x\mapsto y_i(1-|x-x_i|/r)_+$ as $r\searrow 0$.
\bigskip

We sketch now the proof of the existence of minimizers of $\FF_\lambda$. There are two key steps. We denote $\lambda_c\defeq 4\pi$, the \say{critical} value for $\lambda$.
\medskip
\\\textbf{Step 1}.
First we prove existence of minimizers of $\FF_{\lambda}$, for $\lambda\in [0,\lambda_c]$. This is done via  the direct method of calculus of variations, after we prove relative  compactness of minimizing sequences and semicontinuity of this functional. Compactness, proved in Proposition~\ref{compacntess},  is mostly due to the estimates of \cite{ambrosio2022linear}, see Proposition~\ref{sobo}. Semicontinuity is then proved in Lemma~\ref{lowersemicont} and here the choice of $\lambda\in [0,\lambda_c]$ plays a role. The key idea is  that, given a point $x_i$ and a converging sequence $f_k\rightarrow f$, either $|\DIFF^2_1 f_k|$ concentrates at $x_i$ or it does not. In the former case (Lemma~\ref{excess}), as a part of $|\DIFF_1^2 f_k|$ concentrates at $x_i$ (and $|\DIFF_1^2 f|(x_i)=0$, being points of codimension $2$), we experience a drop in the regularizing term of the functional, and this drop is enough to offset the lack of convergence of the evaluation term $f_k(x_i)$ in the error term. In the latter case (Lemma~\ref{excess} again), we have instead convergence of $k\mapsto f_k(x_i)$.  
\medskip
\\ \textbf{Step 2}.
We prove the existence of minimizers of $\FF^\lambda$, for $\lambda\in [\lambda_c,\infty]$. By \textbf{Step 1}, we can take a minimizer $f$ of $\FF_{\lambda_c}$. Then we modify $f$ to obtain $\tilde f$ satisfying
$$
|\DIFF^2_1 \tilde f|(\Omega)\le |\DIFF_1^2 f|(\Omega)+\lambda_c \Vert (f(x_i)-y_i)_i\Vert_{\ell^1}\qquad\text{and}\qquad\tilde f(x_i)=y_i\text{ for }i=1,\ldots,N.
$$
Such modifications is obtained adding to $f$ a suitable linear combination of  \say{cut-cones}, namely functions $x\mapsto y_i(1-|x-x_i|/\bar r)_+$ for $\bar r$ small enough.
As $\tilde f$ has a perfect fit with the data, for any $\lambda$,
$$
\FF_\lambda(\tilde f)=\FF_{\lambda_c}(\tilde f)\le \FF_{\lambda_c}(f),
$$
where the inequality is due to the construction of $\tilde f$.
Now, as $\FF_\lambda\ge \FF_{\lambda_c}$ (here the choice $\lambda\in [\lambda_c,\infty]$ plays a role) and as $f$ is a minimizer of $\FF_{\lambda_c}$, we see that $\tilde f$ is a minimizer of $\FF_\lambda$.

\medskip
Therefore, putting together what seen in  \textbf{Step 1} and  in \textbf{Step 2} we have that for every $\lambda\in [0,\infty]$ there exists a minimizer of $\FF_\lambda$.

\section{Preliminaries}
In this short section we first recall basic facts about Hessian--Schatten seminorms and then in 
Section~\ref{sectexplicit} we add an explicit formula to compute Hessian--Schatten variations of radial functions.
\subsection{Schatten norms}
We recall basic facts about Schatten norms, see \cite{ambrosio2022linear} and the references therein.
\begin{defn}[Schatten norm]
	Let $p\in[1,\infty]$. If $M\in\RR^{d\times d}$ and $s_1(M),\ldots,s_d(M)\ge 0$ denote the singular values of $M$ (counted with their multiplicity), we define the Schatten $p$-norm of $M$  
	by $$|M|_p\defeq\Vert(s_1(M),\ldots,s_d(M))\Vert_{\ell^p}.$$
\end{defn}
We recall that the scalar product between $M,\,N\in\RR^{d\times d}$ is defined by $$M\,\cdot\, N\defeq \tr(M^t N)=\sum_{i,\,j=1,\ldots,d} M_{i,j}N_{i,j}$$ and induces the Hilbert--Schmidt norm.  Next,  we enumerate several properties of the Schatten norms that shall be used throughout the paper
\begin{prop}\label{zuppa}
	The family of Schatten norms satisfies the following properties.
	\begin{enumerate}[label=\roman*)]
		\item \label{zuppa1}If $M\in\RR^{d\times d}$ is symmetric, then its singular values $s_1(M),\ldots,s_d(M)$ are equal to $|\lambda_1(M)|,\ldots,|\lambda_d(M)|$, where $\lambda_1(M),\ldots,\lambda_d(M)$ denote the eigenvalues of $M$ (counted with their multiplicity). Hence $|M|_p=\Vert(\lambda_1(M),\ldots,\lambda_d(M))\Vert_{\ell^p}$.
		\item \label{zuppa2} If $M\in\RR^{d\times d}$ and $N\in O(\RR^d)$, then $|M N|_p=|N M|_p=|M|_p$.
		\item \label{zuppa3} If $M,\,N\in\RR^{d\times d}$, then $|M N|_p\le |M|_p |N|_p$.
		\item \label{zuppa4} If $M\in\RR^{d\times d}$, then $|M|_p=\sup_N M\,\cdot\, N$, where the supremum is taken among all $N\in\RR^{d\times d}$ with $|N|_{p^*}\le 1$, for $p^*$ the conjugate exponent of $p$, defined by $1/p+1/p^*=1$.
		\item\label{zuppa5} If $M$ has rank $1$, then $|M|_p$ coincides with the Hilbert-Schmidt norm of $M$ for every $p\in [1,\infty]$. 
		\item \label{zuppa6} If $p\in(1,\infty)$, then the Schatten $p$-norm is strictly convex.
		\item \label{zuppa7} If $M\in\RR^{d\times d}$, then $|M|_p\le C|M|_q$, where  $C=C(d,p,q)$ depends only on $d$, $p$ and $q$. 
	\end{enumerate}
\end{prop}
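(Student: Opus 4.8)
The plan is to establish the seven items in roughly increasing order of difficulty: \ref{zuppa1}, \ref{zuppa2}, \ref{zuppa5} and \ref{zuppa7} are elementary consequences of linear algebra, while \ref{zuppa3}, \ref{zuppa4} and \ref{zuppa6} carry the genuine content; all are classical and, as the statement indicates, can be followed through \cite{ambrosio2022linear} and the references therein.

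\textbf{The elementary items.} For \ref{zuppa1} I would invoke the spectral theorem: writing $M=Q\,\diag(\lambda_1(M),\ldots,\lambda_d(M))\,Q^t$ with $Q\in O(\RR^d)$ gives $M^tM=Q\,\diag(\lambda_1(M)^2,\ldots,\lambda_d(M)^2)\,Q^t$, so the singular values $s_i(M)$, being the nonnegative square roots of the eigenvalues of $M^tM$, coincide with $|\lambda_i(M)|$, and the formula for $|M|_p$ follows from the definition. For \ref{zuppa2}, if $N\in O(\RR^d)$ then $(MN)^t(MN)=N^t(M^tM)N$ is similar to $M^tM$ and $(NM)(NM)^t=N(MM^t)N^t$ is similar to $MM^t$, so $MN$, $NM$ and $M$ share their singular values. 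For \ref{zuppa5}, any rank-one $M$ equals $ab^t$ for some $a,b\in\RR^d$, hence $M^tM=|a|^2\,bb^t$ has the single nonzero eigenvalue $|a|^2|b|^2$; thus $M$ has exactly one nonzero singular value, equal to $|a|\,|b|$, so both $|M|_p$ (which reduces to that single term for every $p$) and the Hilbert--Schmidt norm $\sqrt{\tr(M^tM)}$ equal $|a|\,|b|$. Finally \ref{zuppa7} is the equivalence of $\Vert\cdot\Vert_{\ell^p}$ and $\Vert\cdot\Vert_{\ell^q}$ on $\RR^d$ applied to the vector of singular values, with, e.g., $C=\max\{1,d^{1/p-1/q}\}$.

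\textbf{Submultiplicativity and duality.} For \ref{zuppa3} I would start from the bound $s_k(MN)\le\Vert M\Vert_{\mathrm{op}}\,s_k(N)=s_1(M)\,s_k(N)$, valid for every $k$ by the variational characterization of singular values; taking $\ell^p$ norms yields $|MN|_p\le s_1(M)\,|N|_p\le|M|_p\,|N|_p$, the last inequality because $s_1(M)\le|M|_p$. For \ref{zuppa4}, the inequality $M\cdot N\le|M|_p\,|N|_{p^*}$ follows from von Neumann's trace inequality $\tr(M^tN)\le\sum_i s_i(M)\,s_i(N)$ combined with H\"older's inequality in $\ell^p$; for the reverse, fix a singular value decomposition $M=U\,\diag(s_1(M),\ldots,s_d(M))\,V^t$ and take $N=U\,\diag(t_1,\ldots,t_d)\,V^t$ with $t_i\ge0$, $\sum_i t_i^{p^*}=1$, chosen to realize equality in H\"older against $(s_i(M))_i$; then $|N|_{p^*}=1$ and $M\cdot N=\sum_i s_i(M)\,t_i=|M|_p$.

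\textbf{Strict convexity, the main obstacle.} Item \ref{zuppa6} is the matrix analogue of the strict convexity of $\ell^p$ and is the step I expect to require real work; it ultimately rests on the Clarkson--McCarthy inequalities (equivalently, uniform convexity of the Schatten classes). I would deduce it from the duality in \ref{zuppa4}: it suffices to show that equality in $|A+B|_p\le|A|_p+|B|_p$ with $A,B\ne0$ forces $B$ to be a positive multiple of $A$. Picking $N$ with $|N|_{p^*}=1$ and $(A+B)\cdot N=|A+B|_p$, one gets $A\cdot N=|A|_p$ and $B\cdot N=|B|_p$ (each is $\le$, and their sum is an equality). The equality case of von Neumann's trace inequality then forces $A$ and $N$, and likewise $B$ and $N$, to admit a common singular value decomposition with compatibly ordered singular values, while the equality case of H\"older's inequality in $\ell^p$ with $1<p<\infty$ forces $s_i(A)^p$ and $s_i(B)^p$ to be proportional to $s_i(N)^{p^*}$; combining these gives $B=tA$ with $t>0$. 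The delicate point, which I expect to cost the most effort, is the bookkeeping of multiplicities when $N$ has repeated singular values, where ``common singular value decomposition'' has to be interpreted inside the relevant invariant subspaces; alternatively one may simply cite uniform convexity of the Schatten $p$-norm for $1<p<\infty$ from the literature referenced in \cite{ambrosio2022linear}.
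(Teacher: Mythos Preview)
The paper states this proposition without proof, referring the reader to \cite{ambrosio2022linear} and the references therein for these standard facts; there is therefore no ``paper's own proof'' to compare against. Your arguments for all seven items are correct and are the standard ones: the linear-algebra items \ref{zuppa1}, \ref{zuppa2}, \ref{zuppa5}, \ref{zuppa7} are handled cleanly, the submultiplicativity \ref{zuppa3} via $s_k(MN)\le s_1(M)s_k(N)$ and the duality \ref{zuppa4} via von~Neumann's trace inequality plus H\"older are the usual routes, and for \ref{zuppa6} your duality-based approach is sound---the bookkeeping with repeated singular values can indeed be completed (the key observation being that the H\"older equality forces the multiplicity pattern of $A$ and $B$ to match that of $N$, so the ambiguity in the SVD of $N$ is harmless), or one may simply cite uniform convexity of the Schatten classes as you suggest.
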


\begin{defn}[$L^r$-Schatten norm]
	Let $p,\,r\in[1,\infty]$ and let $M\in C_\rmc(\RR^d)^{d\times d}$. We define the Schatten $(p,r)$-norm of $M$ by
	$$
	\Vert M\Vert_{p,r}\defeq \Vert |M|_p\Vert_{L^r(\RR^d)}.
	$$
\end{defn}

\subsubsection{Poincar\'e inequalities} We recall basic facts about Poincaré inequalities. 
\begin{defn}\label{defnPoin}
	Let $A\subseteq\RR^d$ be a domain. We say that $A$ supports Poincar\'e inequalities if for every $q\in[1,d)$ there exists a constant $C=C(A,q)$ depending on $A$ and $q$ such that
	$$
	\bigg(\dashint_A \Big|f-\dashint_A f\Big|^{q^*}\dd\LL^d\bigg)^{1/q^*}\le C \bigg(\dashint_A|\nabla  f|^q\dd\LL^d\bigg)^{1/q}\qquad\text{for every }f\in W^{1,q}(A),
	$$
	where $1/{q^*}=1/q-1/d$.
\end{defn}

%

\subsection{Hessian--Schatten total variation}
For this section fix $\Omega\subseteq\RR^d$ open and $p\in[1,\infty]$. We let $p^*$ denote the conjugate exponent of $p$. Now we recall the definition of Hessian--Schatten total variation and some basic properties, see \cite{ambrosio2022linear} and the references therein.

\begin{defn}[Hessian--Schatten variation]\label{def:H-Svar}
Let $f\in L^1_\rmloc(\Omega)$. For every $A\subseteq\Omega$ open we define
	\begin{equation}\label{defd2}
		|\DIFF^2_p f|(A)\defeq \sup_F \int_A \sum_{i,\,j=1,\ldots,d} f \partial_i\partial_j F_{i,j}\dd\LL^d,
	\end{equation}
where the supremum runs among all $F\in C_\rmc^\infty(A)^{d\times d}$ with $\Vert F\Vert_{p^*,\infty}\le 1$.
We say that $f$ has bounded $p$-Hessian--Schatten variation in $\Omega$ if $|\DIFF^2_p f|(\Omega)<\infty$.
\end{defn}
\begin{rem}
	If $f$ has bounded $p$-Hessian--Schatten variation in  $\Omega$, then the set function defined in~\eqref{defd2} is the restriction to open sets of a finite Borel measure, that we still call $|\DIFF^2_p f|$. This can be proved with a classical argument, building upon \cite{DGL} (see also \cite[Theorem 1.53]{AFP00}).
	
By its very definition, the $p$-Hessian--Schatten variation is lower semicontinuous with respect to convergence in distributions.\fr
\end{rem}

For any couple $p,\,q\in [1,\infty]$, $f$ has bounded $p$-Hessian--Schatten variation if and only if $f$ has bounded $q$-Hessian--Schatten variation and moreover $$C^{-1}|\DIFF^2_p f|\le |\DIFF^2_q f|\le C|\DIFF^2_p f| $$
for some constant $C=C(d,p,q)$ depending only on $d$, $p$ and $q$.  This is due to equivalence of matrix norms.

The next proposition connects Definition~\ref{def:H-Svar} with Demengel's space of functions with bounded Hessian \cite{Deme}, namely
Sobolev functions whose partial derivatives are functions of bounded variation. We shall use $\DIFF$ to denote the
distributional derivative, to keep the distinction with $\nabla$ notation (used also for gradients of Sobolev
functions).

\begin{prop}\label{hessiananddiffgrad}
	Let $f\in L^1_{\rmloc}(\Omega)$. Then the following are equivalent:
	\begin{itemize}
		\item $f$ has bounded Hessian--Schatten variation in $\Omega$,
		\item $f\in W^{1,1}_\rmloc(\Omega)$ and $\nabla f\in \BV_\rmloc(\Omega;\RR^d)$ with $|\DIFF\nabla f|(\Omega)<\infty$.
	\end{itemize}  If this is the case, then, as measures,
	\begin{equation}\notag
		|\DIFF^2_p f|=\bigg|\dv{\DIFF\nabla f}{|\DIFF\nabla f|}\bigg|_p |\DIFF\nabla f|.
	\end{equation}
	In particular, there exists a constant $C=C(d,p)$ depending only on $d$ and $p$ such that 
	$$
	C^{-1}|\DIFF \nabla f|\le |\DIFF^2_p f|\le C|\DIFF \nabla f|
	$$
	as measures.
\end{prop}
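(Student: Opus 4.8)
The plan is to identify the distributional Hessian $\DIFF^2 f=(\partial_i\partial_j f)_{i,j}$, show that $|\DIFF^2_p f|(\Omega)<\infty$ is equivalent to $\DIFF^2 f$ being a finite symmetric matrix-valued Radon measure, prove that $\nabla f\in\BV_\rmloc$ is then forced, and finally compute $|\DIFF^2_p f|$ as the total variation of this measure with respect to the Schatten $p$-norm.

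First I would pass from bounded variation to a measure, and back. Assume $|\DIFF^2_p f|(\Omega)<\infty$. Testing~\eqref{defd2} with matrix fields $F$ supported in a single symmetric pair of entries $(i,j),(j,i)$ and using Proposition~\ref{zuppa}, one gets $|\langle\partial_i\partial_j f,\phi\rangle|\le C\Vert\phi\Vert_\infty|\DIFF^2_p f|(\Omega)$ for every $\phi\in C^\infty_\rmc(\Omega)$, so each $\partial_i\partial_j f$ is a finite Radon measure $\mu_{ij}$ and $\mu=(\mu_{ij})$ is a finite $\RR^{d\times d}$-valued measure, symmetric since $\partial_i\partial_j f=\partial_j\partial_i f$ as distributions. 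Now $\partial_j(\partial_i f)=\mu_{ij}$ is a measure for every $j$, so by the classical fact that a distribution with measure-valued gradient lies in $L^1_\rmloc$ (see e.g.\ \cite{AFP00}), $\partial_i f\in L^1_\rmloc(\Omega)$ for each $i$; hence $f\in W^{1,1}_\rmloc(\Omega)$, $\nabla f\in\BV_\rmloc(\Omega;\RR^d)$ and $\DIFF\nabla f=\mu$, whence $|\DIFF\nabla f|(\Omega)=|\mu|(\Omega)<\infty$. Conversely, if $f\in W^{1,1}_\rmloc(\Omega)$ with $\nabla f\in\BV_\rmloc(\Omega;\RR^d)$ and $|\DIFF\nabla f|(\Omega)<\infty$, then $\partial_i\partial_j f=(\DIFF\nabla f)_{ij}$ as distributions, so $\DIFF^2 f$ equals the finite symmetric measure $\mu:=\DIFF\nabla f$; finiteness of $|\DIFF^2_p f|$ already follows from $|\int_A F:\dd\mu|\le\Vert F\Vert_{p^*,\infty}|\mu|(\Omega)$ up to the norm-equivalence constant, but the sharp value is the content of the next step.

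For the formula, write the polar decomposition $\mu=\sigma|\mu|$ with $\sigma:=\dv{\DIFF\nabla f}{|\DIFF\nabla f|}$, so $|\sigma(x)|=1$ in Hilbert--Schmidt norm for $|\mu|$-a.e.\ $x$. For open $A\subseteq\Omega$ and admissible $F$, integration by parts and Proposition~\ref{zuppa}\ref{zuppa4} give
$$
\int_A\sum_{i,j}f\,\partial_i\partial_j F_{ij}\dd\LL^d=\int_A F:\dd\mu=\int_A F:\sigma\,\dd|\mu|\le\int_A|\sigma|_p\,\dd|\mu|,
$$
hence $|\DIFF^2_p f|(A)\le\int_A|\sigma|_p\,\dd|\mu|$. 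For the opposite inequality, Proposition~\ref{zuppa}\ref{zuppa4} together with a measurable selection yields a Borel map $N:\Omega\to\RR^{d\times d}_{\mathrm{sym}}$ with $|N(x)|_{p^*}\le1$ and $N(x):\sigma(x)=|\sigma(x)|_p$ for $|\mu|$-a.e.\ $x$. Fixing $A'\Subset A$ and $\delta>0$, Lusin's theorem provides $G\in C_\rmc(A;\RR^{d\times d}_{\mathrm{sym}})$ with $|G|_{p^*}\le1$ everywhere (by truncation) and $|\mu|(\{G\ne N\}\cap A')<\delta$; the mollification $G_\eps:=\varphi_\eps*G$ satisfies $|G_\eps(x)|_{p^*}\le(\varphi_\eps*|G|_{p^*})(x)\le1$ by convexity of $|\cdot|_{p^*}$ and $G_\eps\to G$ uniformly on $A'$, and multiplying by a cutoff $\chi\in C^\infty_\rmc(A)$ with $\chi\equiv1$ on $A'$ and $0\le\chi\le1$ produces an admissible $F:=\chi G_\eps$. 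Letting $\eps\to0$, then $\delta\to0$, then $A'\uparrow A$ gives $|\DIFF^2_p f|(A)\ge\int_A|\sigma|_p\,\dd|\mu|$, so $|\DIFF^2_p f|=|\sigma|_p\,|\DIFF\nabla f|$ as measures. The two-sided bound then follows because $|\sigma|=1$ in Hilbert--Schmidt norm $|\mu|$-a.e.\ forces $C^{-1}\le|\sigma|_p\le C$ $|\mu|$-a.e.\ by equivalence of matrix norms (Proposition~\ref{zuppa}\ref{zuppa7}).

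The only genuinely delicate point is the last step: realizing the total variation with respect to the non-Euclidean norm $|\cdot|_p$ by smooth, compactly supported admissible fields, i.e.\ performing the measurable selection of the dual element and the Lusin--mollification approximation while preserving the pointwise constraint $|F(x)|_{p^*}\le1$, which is what makes convexity of the Schatten norm essential. The remaining steps are a routine combination of the de Giorgi--Letta argument already invoked in the Remark, the structure theory of $\BV$, and equivalence of matrix norms.
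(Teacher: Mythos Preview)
The paper does not prove this proposition: it is stated in the preliminaries section as a known fact, with the reference ``see \cite{ambrosio2022linear} and the references therein'' covering this and the surrounding statements. There is therefore no proof in the paper to compare against.

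Your argument is correct and is the natural one. A couple of remarks on presentation. First, the step ``a distribution whose gradient is a finite measure lies in $L^1_\rmloc$'' is indeed classical, but it is worth being precise about what you invoke: one mollifies, uses the $L^1$ bound on $\nabla T_\eps=\mu\ast\rho_\eps$ together with Poincar\'e on balls and $\BV$ compactness, and identifies the limit with $T$ via $T_\eps\to T$ in $\mathcal D'$; alternatively one writes $T$ locally as a Newtonian potential of $\Delta T=\sum_j\partial_j\mu_j$ plus a harmonic function. Second, in the Lusin step you should make explicit that you first restrict $N$ to $A'$, obtain a compact $K\subseteq A'$ with $|\mu|(A'\setminus K)<\delta$ on which $N$ is continuous, extend by Tietze to $G\in C_\rmc(A)$, and only then truncate; the truncation $G\mapsto G/\max\{1,|G|_{p^*}\}$ is continuous and leaves $G$ unchanged on $K$. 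With $G\in C_\rmc(A)$ the mollification $G_\eps$ already has compact support in $A$ for small $\eps$, so the extra cutoff $\chi$ is harmless but unnecessary. Finally, the restriction to symmetric $N$ is not needed (the test fields $F$ in \eqref{defd2} are arbitrary $d\times d$), though it is also harmless since for symmetric $\sigma$ the dual element can be chosen symmetric.
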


\begin{prop}\label{myersserrin}
	Let $f\in L^1_\rmloc(\Omega)$. Then, for every $A\subseteq\Omega$ open, it holds
	$$
	|\DIFF^2_p f|(A)=\inf\left\{\liminf_k \int_A |\nabla^2 f_k|_p\dd\LL^d\right\}
	$$
	where the infimum is taken among all sequences $(f_k)\subseteq C^\infty(A)$ such that $f_k\rightarrow f$ in $L^1_\rmloc(A)$.
	If moreover $f\in L^1(A)$, the convergence in $L^1_\rmloc(A)$ above can be replaced by convergence in $L^1(A)$.
\end{prop}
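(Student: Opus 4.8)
The plan is to prove the two inequalities separately: ``$\le$'' will be a soft lower semicontinuity argument, and ``$\ge$'' a Meyers--Serrin type construction adapted to the Schatten norm.

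For ``$\le$'', I would first record that every $g\in C^\infty(A)$ satisfies $|\DIFF^2_p g|(A)\le\int_A|\nabla^2 g|_p\dd\LL^d$: indeed, a double integration by parts in~\eqref{defd2} turns the test integral into $\int_A\nabla^2 g\cdot F\dd\LL^d$, which is bounded by $\int_A|\nabla^2 g|_p|F|_{p^*}\dd\LL^d\le\int_A|\nabla^2 g|_p\dd\LL^d$ using the pointwise duality of Proposition~\ref{zuppa}\,\ref{zuppa4} and $\Vert F\Vert_{p^*,\infty}\le1$. Since $L^1_\rmloc(A)$-convergence implies convergence in the sense of distributions, and $|\DIFF^2_p\,\cdot\,|(A)$ is lower semicontinuous for the latter (remark after Definition~\ref{def:H-Svar}), any sequence $(f_k)\subseteq C^\infty(A)$ with $f_k\to f$ in $L^1_\rmloc(A)$ satisfies $|\DIFF^2_p f|(A)\le\liminf_k|\DIFF^2_p f_k|(A)\le\liminf_k\int_A|\nabla^2 f_k|_p\dd\LL^d$; taking the infimum over such sequences (a non-empty family, by the construction below applied with any scales tending to $0$) gives ``$\le$''. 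This also settles the case $|\DIFF^2_p f|(A)=\infty$, since then the right-hand side is forced to be $\infty$.

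For ``$\ge$'' I would assume $|\DIFF^2_p f|(A)<\infty$, so that Proposition~\ref{hessiananddiffgrad} (applied on the open set $A$) gives $f\in W^{1,1}_\rmloc(A)$, $\mu\defeq\DIFF\nabla f$ a finite $\RR^{d\times d}$-valued measure on $A$, and $|\DIFF^2_p f|=g\,|\mu|$ with $g\defeq\bigl|\dv{\mu}{|\mu|}\bigr|_p$. Fix $\eps>0$; choose open sets $A_h\Subset A_{h+1}\Subset A$ with $A_0=\emptyset$ and $\bigcup_h A_h=A$, put $U_h\defeq A_{h+1}\setminus\overline{A_{h-1}}$, obtaining a locally finite open cover of $A$, and fix a subordinate smooth partition of unity $\{\varphi_h\}$ with $\varphi_h\in C^\infty_\rmc(U_h)$ and $\sum_h\varphi_h\equiv1$ on $A$. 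The Leibniz rule for the product of a $\BV_\rmloc$ (resp.\ $W^{1,1}_\rmloc$) function with a smooth one yields $\DIFF\nabla(f\varphi_h)=\varphi_h\mu+\nu_h\LL^d$, where $\nu_h\defeq\nabla\varphi_h\otimes\nabla f+\nabla f\otimes\nabla\varphi_h+f\,\nabla^2\varphi_h\in L^1(A;\RR^{d\times d})$ is supported in $U_h$, and where the key cancellation $\sum_h\nu_h=0$ holds $\LL^d$-a.e.\ because $\sum_h\nabla\varphi_h=0$ and $\sum_h\nabla^2\varphi_h=0$. I would then pick mollification scales $\eps_h\downarrow0$ so small that, for a standard mollifier $\rho_{\eps_h}$, all the convolutions below stay supported in $U_h$ and $\Vert(f\varphi_h)*\rho_{\eps_h}-f\varphi_h\Vert_{L^1(A)}+\Vert\nu_h*\rho_{\eps_h}-\nu_h\Vert_{L^1(A)}\le\eps2^{-h}$, and set $f_\eps\defeq\sum_h(f\varphi_h)*\rho_{\eps_h}$; this is a locally finite sum, hence $f_\eps\in C^\infty(A)$, and $f_\eps\to f$ in $L^1_\rmloc(A)$ as $\eps\to0$ (and in $L^1(A)$ when $f\in L^1(A)$, by dominated convergence on the partial sums $\sum_{h\le H}f\varphi_h$).

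The remaining point is the estimate $\int_A|\nabla^2 f_\eps|_p\dd\LL^d\le|\DIFF^2_p f|(A)+C\eps$. Differentiating termwise gives $\nabla^2 f_\eps=\sum_h(\varphi_h\mu)*\rho_{\eps_h}+\sum_h\nu_h*\rho_{\eps_h}$. The second sum equals $\sum_h(\nu_h*\rho_{\eps_h}-\nu_h)$ by the cancellation, hence has $L^1(A)$-norm at most $2\eps$, so by norm equivalence (Proposition~\ref{zuppa}\,\ref{zuppa7}) its $p$-Schatten integral is $\le C\eps$. For the first sum I would use that $|\,\cdot\,|_p$ is a norm, Jensen's inequality for the measure $\rho_{\eps_h}(x-y)\varphi_h(y)\dd|\mu|(y)$ and Fubini to obtain $\int_A|(\varphi_h\mu)*\rho_{\eps_h}|_p\dd\LL^d\le\int_A\varphi_h g\dd|\mu|=\int_A\varphi_h\dd|\DIFF^2_p f|$, and then sum over $h$ to get $\le\sum_h\int_A\varphi_h\dd|\DIFF^2_p f|=|\DIFF^2_p f|(A)$. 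Choosing $\eps=1/k$ produces a competitor $(f_{1/k})$ with $\liminf_k\int_A|\nabla^2 f_{1/k}|_p\dd\LL^d\le|\DIFF^2_p f|(A)$, which is ``$\ge$''; the final sentence of the statement follows since this same competitor converges in $L^1(A)$ when $f\in L^1(A)$. I expect the delicate part to be precisely the Meyers--Serrin bookkeeping in the third paragraph: arranging the exhaustion, the partition of unity and the scales $\eps_h$ so that the commutator errors $\nu_h$ telescope against $\sum_h\nu_h=0$ while the localized masses $\int_A\varphi_h\dd|\DIFF^2_p f|$ add up exactly to $|\DIFF^2_p f|(A)$; the Schatten structure itself enters only through the convexity and triangle inequality of $|\,\cdot\,|_p$ and Proposition~\ref{zuppa}.
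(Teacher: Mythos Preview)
The paper does not prove this proposition: it is listed in the preliminaries (Section~1.2) as a known result, with the blanket reference ``see \cite{ambrosio2022linear} and the references therein'' covering the whole subsection. There is therefore no proof in the paper to compare your attempt against.

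That said, your argument is correct and is exactly the Meyers--Serrin/Anzellotti--Giaquinta construction one expects here. The ``$\le$'' direction is the soft lower semicontinuity argument you describe. For ``$\ge$'', the exhaustion/partition-of-unity/mollification scheme is standard; the only points specific to the $p$-Schatten setting are (i) that $|\cdot|_p$ is a norm, so the triangle/Jensen inequality lets you pass it inside the convolution against the nonnegative weight $\rho_{\eps_h}(x-y)\varphi_h(y)$, and (ii) Proposition~\ref{hessiananddiffgrad}, which identifies $|\DIFF^2_p f|$ with $\bigl|\tfrac{d\mu}{d|\mu|}\bigr|_p\,|\mu|$ and hence makes the localized masses $\int_A\varphi_h\,\dd|\DIFF^2_p f|$ sum exactly to $|\DIFF^2_p f|(A)$. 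Both of these you handle correctly. Two minor remarks: in the Jensen step you implicitly use $\varphi_h\ge 0$, which is true for the standard subordinate partition of unity but is worth stating; and your observation that $f_\eps-f=\sum_h\bigl[(f\varphi_h)*\rho_{\eps_h}-f\varphi_h\bigr]$ has $L^1(A)$-norm at most $\eps$ already gives $L^1(A)$-convergence whenever $f\in L^1(A)$, so the dominated-convergence aside is not needed.
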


In the statement of the next lemma and in the sequel we denote by $B_\epsilon(A)$ the open
$\epsilon$-neighbourhood of $A\subseteq\RR^d$.

\begin{lem}\label{mollif}
	Let $f\in L^1_\rmloc(\Omega)$ with bounded Hessian--Schatten variation in $\Omega$. Let also $A\subseteq\RR^d$ open and $\epsilon>0$ with $B_\epsilon(A)\subseteq\Omega$. Then, if $\rho\in C_\rmc(\RR^d)$ is a convolution kernel with $\supp\rho\subseteq B_\epsilon(0)$, it holds
	$$
	|\DIFF_p^2 (\rho\ast f)|(A)\le|\DIFF_p^2 f|(B_\epsilon(A)).  
	$$ 
\end{lem}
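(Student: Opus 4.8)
\textbf{Proof proposal for Lemma~\ref{mollif}.}

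The plan is to reduce to the smooth case via the integral representation of Proposition~\ref{myersserrin} and then use the commutation of convolution with mollification. First I would reduce to the case where $f\in C^\infty(B_\epsilon(A))$: by Proposition~\ref{myersserrin} applied on the open set $B_\epsilon(A)$ (which is contained in $\Omega$), there exists a sequence $(f_k)\subseteq C^\infty(B_\epsilon(A))$ with $f_k\to f$ in $L^1_\rmloc(B_\epsilon(A))$ and $\liminf_k\int_{B_\epsilon(A)}|\nabla^2 f_k|_p\dd\LL^d=|\DIFF_p^2 f|(B_\epsilon(A))$. Since $\supp\rho\subseteq B_\epsilon(0)$, for $x\in A$ the value $(\rho\ast f)(x)$ depends only on the values of $f$ on $B_\epsilon(A)$, and $\rho\ast f_k\to\rho\ast f$ in $L^1_\rmloc(A)$ (indeed pointwise, by dominated convergence on each compact subset, using that convolution with an $L^1_\rmloc$ function against a compactly supported continuous kernel is continuous under $L^1_\rmloc$ convergence of the argument on a neighbourhood). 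By lower semicontinuity of $|\DIFF_p^2\,\cdot\,|(A)$ with respect to convergence in distributions, $|\DIFF_p^2(\rho\ast f)|(A)\le\liminf_k|\DIFF_p^2(\rho\ast f_k)|(A)$, so it suffices to prove the estimate for each smooth $f_k$.

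For $g\in C^\infty(B_\epsilon(A))$, the function $\rho\ast g$ is smooth on $A$ and $\nabla^2(\rho\ast g)(x)=\int\rho(y)\,\nabla^2 g(x-y)\dd y=(\rho\ast\nabla^2 g)(x)$ for $x\in A$, where the convolution acts entrywise. Hence, by Jensen's inequality applied to the convex function $M\mapsto|M|_p$ (with respect to the probability measure $\rho(y)\dd y$),
\begin{equation}\notag
|\nabla^2(\rho\ast g)(x)|_p=\Big|\int\rho(y)\,\nabla^2 g(x-y)\dd y\Big|_p\le\int\rho(y)\,|\nabla^2 g(x-y)|_p\dd y.
\end{equation}
Integrating over $A$ and applying Tonelli's theorem, together with the fact that for $x\in A$ and $y\in B_\epsilon(0)$ one has $x-y\in B_\epsilon(A)$,
\begin{equation}\notag
\int_A|\nabla^2(\rho\ast g)|_p\dd\LL^d\le\int_A\int\rho(y)\,|\nabla^2 g(x-y)|_p\dd y\dd x\le\int\rho(y)\Big(\int_{B_\epsilon(A)}|\nabla^2 g|_p\dd\LL^d\Big)\dd y=\int_{B_\epsilon(A)}|\nabla^2 g|_p\dd\LL^d.
\end{equation}
Applying this with $g=f_k$ gives $\int_A|\nabla^2(\rho\ast f_k)|_p\dd\LL^d\le\int_{B_\epsilon(A)}|\nabla^2 f_k|_p\dd\LL^d$, and since $\rho\ast f_k\in C^\infty(A)$ the left-hand side equals $|\DIFF_p^2(\rho\ast f_k)|(A)$ (the variation of a smooth function is the integral of its Schatten norm). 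Passing to the $\liminf$ in $k$ yields $|\DIFF_p^2(\rho\ast f)|(A)\le\liminf_k\int_{B_\epsilon(A)}|\nabla^2 f_k|_p\dd\LL^d=|\DIFF_p^2 f|(B_\epsilon(A))$, which is the claim.

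I expect the only delicate point to be the bookkeeping with the open neighbourhoods: one must make sure that $\rho\ast f_k$ is defined on all of $A$ and converges there, which relies on the inclusion $B_\epsilon(A)\subseteq\Omega$ guaranteeing $f_k$ (and $f$) are available on the full $\epsilon$-neighbourhood of $A$. Everything else — Jensen, Tonelli, lower semicontinuity — is routine. An alternative, slightly slicker route avoiding Proposition~\ref{myersserrin} is to test directly: for $F\in C_\rmc^\infty(A)^{d\times d}$ with $\|F\|_{p^*,\infty}\le 1$, write $\int_A\sum_{i,j}(\rho\ast f)\,\partial_i\partial_j F_{i,j}\dd\LL^d=\int_{B_\epsilon(A)}\sum_{i,j}f\,\partial_i\partial_j(\check\rho\ast F)_{i,j}\dd\LL^d$ where $\check\rho(z)=\rho(-z)$, and check that $\check\rho\ast F\in C_\rmc^\infty(B_\epsilon(A))^{d\times d}$ still satisfies $\|\check\rho\ast F\|_{p^*,\infty}\le 1$ by the same Jensen estimate applied with $p^*$ in place of $p$; taking the supremum over $F$ then gives the result without any approximation. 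Either way the heart of the matter is the convexity of the Schatten norm.
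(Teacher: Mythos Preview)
Your proof is correct. The paper does not prove Lemma~\ref{mollif} in the text (it cites \cite[Lemma~12]{ambrosio2022linear}), but the proof of the closely analogous Lemma~\ref{radialbetter} shows that the intended argument is precisely your ``alternative, slightly slicker route'': pick a test field $F\in C_\rmc^\infty(A)^{d\times d}$ with $\Vert F\Vert_{p^*,\infty}\le 1$, push the convolution onto $F$ to obtain $\check\rho\ast F\in C_\rmc^\infty(B_\epsilon(A))^{d\times d}$, observe via Jensen that $\Vert\check\rho\ast F\Vert_{p^*,\infty}\le 1$, and take the supremum. So your alternative matches the paper's method.

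Your primary route through Proposition~\ref{myersserrin} is a genuinely different (and equally valid) argument: instead of dualizing, you approximate $f$ by smooth functions on $B_\epsilon(A)$, apply Jensen directly to $|\nabla^2(\rho\ast f_k)|_p$, and close with lower semicontinuity. This buys you a slightly more hands-on computation at the cost of invoking the relaxation result of Proposition~\ref{myersserrin}; the paper's duality approach is more self-contained, needing only the definition of $|\DIFF_p^2\,\cdot\,|$. One small point worth making explicit in your write-up: the Jensen step uses that $\rho\ge 0$ with $\int\rho=1$, which is implicit in ``convolution kernel'' but not stated; and the $L^1_\rmloc$ convergence of $\rho\ast f_k$ on compacts $K\subseteq A$ relies on $\supp\rho$ being a compact subset of the \emph{open} ball $B_\epsilon(0)$, so that $K-\supp\rho$ is compact in $B_\epsilon(A)$.
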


In the same spirit of Lemma~\ref{mollif}, we have the following lemma.
\begin{lem}\label{radialbetter}
	Let $f\in L^1_\rmloc(\Omega)$ with bounded Hessian--Schatten variation in $\Omega$. Assume that $A\subseteq\Omega$ is open and invariant under the action of $SO(\RR^d)$. For any $U\in SO(\RR^d)$ the
	function $f_U\defeq f(U\,\cdot\,)$ satisfies $|\DIFF^2_p f_U|(A)\le |\DIFF^2_p f|(A).$
	In particular, setting
	$$
	f^\rad\defeq \dashint_{SO(\RR^d)} f_U\dd\mu_d(U),
	$$
	where $\mu_d$ is the Haar measure on $SO(\RR^d)$, by convexity one has
	$$|\DIFF^2_p f^\rad|(A)\le |\DIFF^2_p f|(A).$$
\end{lem}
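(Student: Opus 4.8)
The plan is to prove $|\DIFF^2_p f_U|(A)\le |\DIFF^2_p f|(A)$ directly from the supremum definition in~\eqref{defd2}, by a change of variables, exploiting that $A$ is $SO(\RR^d)$-invariant and that the Schatten norm is rotation-invariant (Proposition~\ref{zuppa}, item~\ref{zuppa2}). The second assertion will then follow from the first by the convexity and lower semicontinuity of the Hessian--Schatten variation together with Jensen's inequality. Let me sketch both steps.

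\emph{Step 1: the inequality for a fixed rotation.} Fix $U\in SO(\RR^d)$ and a competitor $F\in C_\rmc^\infty(A)^{d\times d}$ with $\Vert F\Vert_{p^*,\infty}\le 1$ in the variational problem for $|\DIFF^2_p f_U|(A)$. I would compute $\int_A \sum_{i,j} f_U\,\partial_i\partial_j F_{i,j}\dd\LL^d$ and perform the substitution $y=Ux$. Since $U$ is linear, $\partial_i\partial_j$ acting on $F_{i,j}$ transforms into a combination of second derivatives of $F(U^t\,\cdot\,)$; in matrix notation, if one sets $\tilde F(y) \defeq U F(U^t y) U^t$, then the bilinear pairing $\sum_{i,j} g\,\partial_i\partial_j G_{i,j}$ (which is $\int g \,(\DIFF^2 : G)$ up to the usual conventions) is invariant under the simultaneous rotation of the spatial variable and conjugation of the matrix, because $\DIFF^2$ transforms as a $2$-tensor and the double contraction with $G$ is $O(d)$-equivariant. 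Concretely, $\sum_{i,j}(\partial_i\partial_j \tilde F_{i,j})(Ux) = \sum_{i,j}(\partial_i\partial_j F_{i,j})(x)$, so after the change of variables $\int_A \sum_{i,j} f_U\,\partial_i\partial_j F_{i,j}\dd\LL^d = \int_{UA}\sum_{i,j} f\,\partial_i\partial_j \tilde F_{i,j}\dd\LL^d$, and $UA=A$ by invariance. It remains to check $\tilde F$ is an admissible competitor for $|\DIFF^2_p f|(A)$: it is smooth and compactly supported in $A$ (since $F$ is supported in $A=U^t A$), and at each point $|\tilde F(y)|_{p^*} = |U F(U^t y) U^t|_{p^*} = |F(U^t y)|_{p^*}$ by rotation-invariance of the Schatten norm, so $\Vert \tilde F\Vert_{p^*,\infty}=\Vert F\Vert_{p^*,\infty}\le 1$. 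Taking the supremum over $F$ gives $|\DIFF^2_p f_U|(A)\le |\DIFF^2_p f|(A)$.

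\emph{Step 2: passing to the average.} By Step 1, applied with $U$ ranging over $SO(\RR^d)$ (and also noting each $U^{-1}$ gives the reverse inequality, so in fact $|\DIFF^2_p f_U|(A)=|\DIFF^2_p f|(A)$ for all $U$, although only the inequality is needed), the map $U\mapsto |\DIFF^2_p f_U|(A)$ is bounded. To handle $f^\rad$, I would first check $f^\rad\in L^1_\rmloc(\Omega)$ (Fubini, since $|f_U|$ is jointly measurable in $(U,x)$ and integrable in $x$ uniformly in $U$ on compact sets, as $U$ is an isometry and $A$ is rotation-invariant). Then I would use the lower semicontinuity of $|\DIFF^2_p\,\cdot\,|(A)$ together with convexity: approximating the Haar average by finite convex combinations $\sum_k \mu_d(E_k) f_{U_k}$, the Hessian--Schatten variation is subadditive and positively homogeneous, so $|\DIFF^2_p (\sum_k \mu_d(E_k) f_{U_k})|(A)\le \sum_k \mu_d(E_k)|\DIFF^2_p f_{U_k}|(A)\le |\DIFF^2_p f|(A)$; passing to the limit (these combinations converge to $f^\rad$ in $L^1_\rmloc$, hence in distributions) and invoking lower semicontinuity yields $|\DIFF^2_p f^\rad|(A)\le |\DIFF^2_p f|(A)$. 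Alternatively, one can test $f^\rad$ directly against a competitor $F$ and pull the $U$-average outside using Fubini, then apply Step 1's pointwise bound on the rotated competitor $\tilde F$; this is cleaner and avoids the discretization.

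\emph{Main obstacle.} The only genuinely delicate point is verifying the equivariance identity $\sum_{i,j}(\partial_i\partial_j \tilde F_{i,j})\circ U = \sum_{i,j}(\partial_i\partial_j F_{i,j})$ with the correct placement of $U$ and $U^t$ — i.e.\ that conjugating the matrix field by $U$ while rotating the argument exactly compensates the chain-rule factors from the two derivatives. This is a bookkeeping computation with the chain rule: writing $\tilde F(y)=U F(U^t y)U^t$, one has $\partial_{y_i}\tilde F_{i,j}(y) = \sum_{a,b,c} U_{ia}(\partial_c F_{ab})(U^t y) U_{jb} U_{ci}$, and then differentiating again in $y_j$ and summing over $i,j$ collapses, via $\sum_i U_{ia}U_{ci}=\delta_{ac}$ (here using $U U^t=\Id$) and similarly for the $j$-sum, to $\sum_{c,b}(\partial_c\partial_b F_{cb})(U^t y)$. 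Everything else — rotation-invariance of the Schatten norm, invariance of $A$, the convexity/semicontinuity argument — is standard and already recorded in the excerpt. I would present Step 1 as the substance of the proof and dispatch Step 2 in a sentence or two.
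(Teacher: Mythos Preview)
Your approach is essentially identical to the paper's: define the rotated test field $\tilde F=UF(U^t\cdot)U^t$ (the paper calls it $G$), verify the equivariance identity and admissibility, then handle $f^\rad$ by Fubini exactly as in your ``alternative'' for Step~2. One small slip in your bookkeeping: the chain rule gives the factor $U_{ic}$ (not $U_{ci}$), and the collapse uses $\sum_i U_{ia}U_{ic}=(U^tU)_{ac}=\delta_{ac}$ rather than $UU^t=\Id$; with that correction the computation goes through verbatim.
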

\begin{proof}
	The proof is very similar to the one of Lemma~\ref{mollif} above i.e.\  \cite[Lemma 12]{ambrosio2022linear}, but we sketch it anyway for the reader's convenience and for future reference.
	
We take any $F\in C_\rmc^\infty(A)^{n\times n}$ with $\Vert F\Vert_{p^*,\infty}\le 1$ and we set  $G\defeq U F(U^t\,\cdot\,) U^t$. A straightforward computation shows that $$\sum_{i,j}\partial_i\partial_j G_{i,j}(x)=\sum_{i,\,j}(\partial_i\partial_j F_{i,j})(U^tx)$$ and that $G\in C_\rmc^\infty(A)^{n\times n}$ with $\Vert G\Vert_{p^*,\infty}\le 1$.
Then  we compute, by a change of variables,
\begin{equation}\notag
	\begin{split}
		\int_A \sum_{i,\,j} f_U\partial_i\partial_j F_{i,j}\dd\LL^d&=\int_A f (x) \sum_{i,\,j}(\partial_i\partial_j F_{i,j})(U^t x)\dd\LL^d(x)\\&=\int_A f (x) \sum_{i,\,j}(\partial_i\partial_j G_{i,j})(x)\dd\LL^d(x).
\end{split}
\end{equation}
In particular, 
$$
\bigg|	\int_A \sum_{i,j} f_U\partial_i\partial_j F_{i,j}\dd\LL^d(x)\bigg|\le |\DIFF^2_p f|(A).
$$

Now, by Fubini's Theorem
\begin{align*}
		\int_A \sum_{i,j} f^\rad\partial_i\partial_j F_{i,j}\dd\LL^d&=\int_{SO(\RR^d)}\int_A f_U \sum_{i,j}\partial_i\partial_j F_{i,j}\dd\LL^d\dd\mu_d(U)\\&\le \int_{SO(\RR^d)}  |\DIFF^2_p f|(A)\dd\mu_d(U)= |\DIFF^2_p f|(A),
\end{align*}
whence the claim as $F$ was arbitrary.
\end{proof}

\begin{prop}[Sobolev embedding]\label{sobo}
	Let $f\in L^1_\rmloc(\Omega)$ with bounded Hessian--Schatten variation in $\Omega$. 
	Then
	\begin{alignat*}{3}
		&f\in L^{d/(d-2)}_{\rmloc}(\Omega)\cap W_\rmloc^{1,d/(d-1)}(\Omega)\qquad&&\text{if }d\ge 3,\\
		&f\in L^\infty_{\rmloc}(\Omega)\cap W^{1,2}_\rmloc(\Omega)\qquad&&\text{if }d= 2,\\
		&f\in L^\infty_{\rmloc}(\Omega)\cap W^{1,\infty}_\rmloc(\Omega)\qquad&&\text{if }d= 1\end{alignat*}
	and, if $d=2$, $f$ has a continuous representative.
		
	More explicitly, for every $A\subseteq\Omega$ bounded domain that supports Poincar\'e inequalities and $r\in[1,\infty)$, there exist $C=C(A,r)$ and an affine map $g=g(A,f)$ such that, setting $\tilde f\defeq f-g$, it holds that
	\begin{alignat*}{3}
	&\Vert \tilde f\Vert_{L^{d/(d-2)}(A)}+\Vert \nabla\tilde f\Vert_{L^{d/(d-1)}(A)}\le C|\DIFF^2 f|(A)\qquad&&\text{if }d\ge 3,\\
		&\Vert \tilde f\Vert_{L^r(A)}+\Vert \nabla \tilde f\Vert_{L^2(A)}\le C|\DIFF^2 f|(A)\qquad&&\text{if }d= 2,\\
		&\Vert \tilde f\Vert_{L^\infty(A)}+\Vert \nabla \tilde f\Vert_{L^\infty(A)}\le C|\DIFF^2 f|(A)\qquad&&\text{if }d= 1.
	\end{alignat*}
\end{prop}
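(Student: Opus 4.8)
The plan is to reduce everything to Proposition~\ref{hessiananddiffgrad}: since $f$ has bounded Hessian--Schatten variation, $f\in W^{1,1}_\rmloc(\Omega)$ and $\nabla f\in\BV_\rmloc(\Omega;\RR^d)$ with $|\DIFF\nabla f|\le C|\DIFF^2 f|$ as measures, so up to the dimensional constant $C$ the Hessian--Schatten variation is the total variation of the $\RR^{d\times d}$-valued measure $\DIFF\nabla f$. Granting this, the quantitative estimates split into two successive Poincar\'e-type inequalities: first the Sobolev--Poincar\'e inequality for $BV$ vector fields bounds $\nabla f$ in $L^{d/(d-1)}$ by $|\DIFF\nabla f|$ (for $d\ge2$), and then the classical Sobolev--Poincar\'e inequality on $A$ bounds $f$ itself. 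The case $d=1$ is elementary, whereas the local boundedness and continuity for $d=2$ are genuinely endpoint statements requiring a separate argument.

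Fix a bounded domain $A$ supporting Poincar\'e inequalities (we may assume $\overline A\subseteq\Omega$) and let $g$ be the affine map with $\nabla g\equiv\dashint_A\nabla f$ and $\dashint_A(f-g)\,\dd\LL^d=0$; set $\tilde f\defeq f-g$, so that $\tilde f$ and $\nabla\tilde f$ both have zero average on $A$. First I would estimate $\nabla\tilde f$: for $d\ge2$, applying the Poincar\'e inequality of Definition~\ref{defnPoin} with $q=1$ (so $q^*=d/(d-1)$) to the $BV$ field $\nabla f$ — the inequality passing from $W^{1,1}$ to $\BV$ by the standard strict approximation — gives $\Vert\nabla\tilde f\Vert_{L^{d/(d-1)}(A)}\le C(A)|\DIFF\nabla f|(A)\le C(A)|\DIFF^2 f|(A)$, while for $d=1$ the oscillation of a one-dimensional $BV$ function is bounded by its total variation, giving $\Vert\nabla\tilde f\Vert_{L^\infty(A)}\le C|\DIFF^2 f|(A)$. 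Next, since $\tilde f\in W^{1,d/(d-1)}(A)$ (resp.\ $W^{1,\infty}(A)$) has zero average, a second application of the Poincar\'e inequality on $A$ gives $\Vert\tilde f\Vert_{L^{d/(d-2)}(A)}\le C(A)\Vert\nabla\tilde f\Vert_{L^{d/(d-1)}(A)}$ when $d\ge3$ (the Sobolev conjugate of $d/(d-1)<d$ being $d/(d-2)$); when $d=2$, choosing $q<2$ with $2q/(2-q)\ge r$ and using $W^{1,2}(A)\hookrightarrow W^{1,q}(A)$ on the bounded set $A$ gives $\Vert\tilde f\Vert_{L^r(A)}\le C(A,r)\Vert\nabla\tilde f\Vert_{L^2(A)}$; when $d=1$, the fundamental theorem of calculus gives $\Vert\tilde f\Vert_{L^\infty(A)}\le\diam(A)\Vert\nabla\tilde f\Vert_{L^\infty(A)}$. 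Concatenating yields the displayed estimates, and the membership statements (including $W^{1,2}_\rmloc$ for $d=2$ and $W^{1,d/(d-1)}_\rmloc$ for $d\ge3$) follow by covering $\Omega$ with such domains, affine functions being locally in all the spaces involved.

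For $d=2$ it remains to show $f\in L^\infty_\rmloc(\Omega)$ and that $f$ has a continuous representative; this is the main obstacle. The starting point is the identity, valid for $u\in C^\infty_\rmc(\RR^2)$ and every $x$, obtained by integrating the radial second derivative of $u$ twice along rays emanating from $x$ and averaging over directions $\omega\in S^1$:
\begin{equation}\notag
u(x)=\frac1{2\pi}\int_{\RR^2}\frac{(w-x)^t\,\nabla^2 u(w)\,(w-x)}{|w-x|^2}\,\dd\LL^2(w),
\end{equation}
which gives at once $\Vert u\Vert_{L^\infty(\RR^2)}\le\frac1{2\pi}\Vert\nabla^2 u\Vert_{1,1}$ and, applied to $u(x)-u(y)$, a modulus of continuity of $u$ controlled by $\Vert\nabla^2 u\Vert_{1,1}$ and by how much the mass of $|\nabla^2 u|\LL^2$ can concentrate. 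I would then localize by a cut-off $\zeta$, apply the estimate to $\zeta(\rho_\eps\ast f)$, bound $|\DIFF^2(\rho_\eps\ast f)|$ on a neighbourhood of $\supp\zeta$ uniformly in $\eps$ via Lemma~\ref{mollif}, and use $\rho_\eps\ast f\to f$ in $W^{1,1}_\rmloc(\Omega)$ to control the lower-order terms produced by $\nabla\zeta$ and $\nabla^2\zeta$; this gives uniform $L^\infty$ bounds and a uniform modulus of continuity for the mollifications on compact subsets, hence a locally bounded continuous representative of $f$. The delicate point is to make the modulus-of-continuity estimate rigorous in the presence of a singular part of $\DIFF\nabla f$; this relies on the fact that, since $f\in L^1_\rmloc$, the measure $\DIFF\nabla f$ has no atoms, and for this step I would follow \cite{ambrosio2022linear}.
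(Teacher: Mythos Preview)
The paper does not prove Proposition~\ref{sobo}: it is stated in the preliminaries as a result recalled from \cite{ambrosio2022linear} (the $L^\infty$ bound in $d=2$ going back to \cite[Proposition~3.1]{Deme}, which the paper invokes directly in the proof of Lemma~\ref{compacntess}). So there is no in-paper argument to compare against, and the relevant question is simply whether your reconstruction is sound.

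It is. The two-step Poincar\'e scheme---first the $\BV$ Sobolev--Poincar\'e inequality applied to $\nabla f$ (extended from $W^{1,1}$ to $\BV$ by strict approximation, as you note), then the ordinary Sobolev--Poincar\'e on $A$ applied to $\tilde f$---is exactly the standard route to the quantitative estimates, and your exponent bookkeeping is correct. For the endpoint $d=2$ continuity, the representation $u(x)=\tfrac1{2\pi}\int_{\RR^2}\omega^t\nabla^2u(w)\,\omega\,\dd\LL^2(w)$ with $\omega=(w-x)/|w-x|$ is valid for $u\in C^\infty_\rmc(\RR^2)$ (integrate twice along rays and use that the area element in polar coordinates is $s\,\dd s\,\dd\omega$), and since the kernel is bounded by $1$ in operator norm the $L^\infty$ bound and the modulus-of-continuity estimate follow. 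Your localization-plus-mollification argument, with Lemma~\ref{mollif} controlling $|\DIFF^2(\rho_\eps\ast f)|$ uniformly, is the correct way to transfer this to general $f$. One small sharpening: the fact that $\DIFF\nabla f$ has no atoms---which is what makes the family $\{\rho_\eps\ast f\}$ locally equicontinuous---follows from $\nabla f\in L^2_\rmloc$ (already established at that point of your argument) via $\int\partial_j\phi_r\,\partial_i f=O\bigl(r^{-1}\|\nabla f\|_{L^2(B_r)}|B_r|^{1/2}\bigr)=o(1)$, rather than from $f\in L^1_\rmloc$ alone.
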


\begin{lem}[Rigidity]\label{rigidity}
	Let $f,\,g\in L^1_\rmloc(\Omega)$ with bounded Hessian--Schatten variation in $\Omega$ and assume that 
	$$
	|\DIFF^2_p (f+g)|(\Omega)=|\DIFF^2_p f|(\Omega)+|\DIFF^2_p g|(\Omega).
	$$
	Then 
$$|\DIFF^2_p (f+g)|=|\DIFF^2_p f|+|\DIFF^2_p g|$$ 
as measures on $\Omega$.
\end{lem}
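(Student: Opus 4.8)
The plan is to reduce the equality of measures to a local statement and then exploit the superadditivity inequality that always holds for total variations together with the hypothesis of equality in the total mass. First I would recall the two elementary facts that underlie everything: for any open $A\subseteq\Omega$ one has the superadditivity $|\DIFF^2_p(f+g)|(A)\le |\DIFF^2_p f|(A)+|\DIFF^2_p g|(A)$ (immediate from the definition \eqref{defd2}, since a single competitor $F$ splits linearly over the sum), and both $|\DIFF^2_p f|$ and $|\DIFF^2_p g|$ (and their sum) are finite Borel measures on $\Omega$ by the Remark after Definition~\ref{def:H-Svar}. Write $\mu\defeq |\DIFF^2_p f|+|\DIFF^2_p g|$ and $\nu\defeq |\DIFF^2_p(f+g)|$; both are finite measures, $\nu\le\mu$ as set functions on open sets (hence as Borel measures, by outer regularity), and by hypothesis $\nu(\Omega)=\mu(\Omega)$. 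A finite measure dominated by another finite measure of the same total mass must coincide with it: indeed $\mu-\nu$ is a nonnegative finite measure with $(\mu-\nu)(\Omega)=0$, so $\mu-\nu\equiv 0$.

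That already gives the result, but I would be slightly careful about one point: the superadditivity inequality is proved directly only for open sets, so to conclude $\nu\le\mu$ as Borel measures I would invoke that a finite Borel measure on an open subset of $\R^d$ is determined by (and outer-regular with respect to) its values on open sets; alternatively, approximate an arbitrary Borel set from outside by open sets. Then $\nu(\Omega)=\mu(\Omega)$ together with $\nu(A)\le\mu(A)$ for all open $A$ forces $\nu(A)=\mu(A)$ for all open $A$ (write $\mu(\Omega)=\mu(A)+\mu(\Omega\setminus A)\ge\nu(A)+\nu(\Omega\setminus A)=\nu(\Omega)=\mu(\Omega)$, using $\nu\le\mu$ also on $\Omega\setminus A$ — but $\Omega\setminus A$ need not be open, so this is exactly where outer regularity is used), and by the previous paragraph $\nu=\mu$ on all Borel sets.

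The only genuinely substantive ingredient is thus the superadditivity $|\DIFF^2_p(f+g)|(A)\le|\DIFF^2_p f|(A)+|\DIFF^2_p g|(A)$ for open $A$, which I expect to be routine: given $F\in C_\rmc^\infty(A)^{d\times d}$ with $\Vert F\Vert_{p^*,\infty}\le 1$, one has $\int_A\sum_{i,j}(f+g)\partial_i\partial_j F_{i,j}\,\dd\LL^d=\int_A\sum_{i,j}f\,\partial_i\partial_j F_{i,j}\,\dd\LL^d+\int_A\sum_{i,j}g\,\partial_i\partial_j F_{i,j}\,\dd\LL^d\le|\DIFF^2_p f|(A)+|\DIFF^2_p g|(A)$, and taking the supremum over such $F$ gives the claim. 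There is no real obstacle here; if anything, the subtlety to flag in the write-up is purely measure-theoretic (passing from open sets to Borel sets), not analytic. I would structure the proof as: (1) superadditivity on open sets; (2) finiteness and the measure property from the Remark; (3) the soft argument that $\nu\le\mu$ with equal total mass implies $\nu=\mu$.
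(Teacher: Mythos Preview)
The paper does not actually prove this lemma; it is stated in the preliminaries as a recalled result (presumably from \cite{ambrosio2022linear}), so there is no ``paper's own proof'' to compare against. Your argument is correct and is the standard one: the triangle inequality $|\DIFF^2_p(f+g)|(A)\le|\DIFF^2_p f|(A)+|\DIFF^2_p g|(A)$ on open sets extends to all Borel sets by outer regularity of finite Borel measures, and then a nonnegative measure of total mass zero is identically zero.

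Two minor points of presentation. First, you repeatedly call the inequality $|\DIFF^2_p(f+g)|\le|\DIFF^2_p f|+|\DIFF^2_p g|$ ``superadditivity''; this is subadditivity (the triangle inequality for the seminorm). Second, the digression in your middle paragraph about needing $\Omega\setminus A$ to be open is unnecessary once you have already passed to $\nu\le\mu$ on all Borel sets via outer regularity: at that point $\mu-\nu$ is a nonnegative finite Borel measure with $(\mu-\nu)(\Omega)=0$, and you are done in one line. I would drop that paragraph entirely and keep the clean structure you outline at the end: (1) subadditivity on open sets from the definition; (2) extension to Borel sets by outer regularity; (3) equal total mass forces equality.
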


\subsection{Hessian--Schatten variation of radial functions}\label{sectexplicit}
The following result is new and aims at computing the Hessian--Schatten variation of radial functions. This will be needed in Section~\ref{sectcones} and Section~\ref{sectionsolutions}. Notice also that, as expected, the contribution involving the singular part of $|\DIFF g'|$ in~\eqref{csdcasdca} below does not depend on $p$. 

In the proof we shall use the
auxiliary function $F:(0,R)\times \RR^2\rightarrow[0,\infty)$
$$F(s,(v_1,v_2))\defeq d\omega_d\Vert (s v_2,v_1,\ldots,v_1)\Vert_{\ell^p}s^{d-2},$$
where $v_1$ is repeated $d-1$ times and $\omega_d:=\LL^d(B_1)$
($d$ will be the dimension of the Euclidean ambient space). 
Notice that $F$ is continuous, convex and $1$-homogeneous with respect to the $(v_1,v_2)$ variable. 
Therefore, for intervals $(r_1,r_2)\subseteq (0,R)$, the functional 
$$
\Phi_{(r_1,r_2)}(\mu):=\int_{(r_1,r_2)}F\bigg(s,\dv{\mu}{|\mu|}\bigg)\dd|\mu|=
\int_{(r_1,r_2)}F\bigg(s,\dv{\mu}{\lambda}\bigg)\dd\lambda
\qquad\text{whenever $|\mu|\ll\lambda$},
$$
defined on $\RR^2$-valued measures $\mu$ makes sense and is convex. Furthermore, 
Reshetnyak lower semicontinuity Theorem (e.g.\ \cite[Theorem 2.38]{AFP00}) grants
its lower semicontinuity with respect to weak convergence in duality with $C_{\rmc}((r_1,r_2))$.

\begin{prop}\label{htvradial}
	Let $d\ge 2$ and let $g\in L^1_{\rmloc}((0,R))\rightarrow\RR$ be such that $\int_0^r s^{d-1}|g(s)|\dd s<\infty$ for every $r\in (0,R)$. Define $f(\,\cdot\,)\defeq g(|\,\cdot\,|)\in L^1_\rmloc(B_R(0))$. 
	
	Assume that $f$ has bounded Hessian--Schatten total variation in $B_R(0)$.
	Then $g\in W^{1,1}_\rmloc((0,R))$ and $g'\in \BV_\rmloc((0,R))$. Write the  decomposition $\DIFF g'=\DIFF^s g'+g''\LL^1$, where $\DIFF^s g'\perp \LL^1$.
	Then, for every $r\in (0,R]$ and $p\in [1,\infty]$, one has
\begin{equation}\label{csdcasdca}
|\DIFF^2_p f|(B_r(0))= d\omega_d
\bigg(\int_{(0,r) }s^{d-1}\dd{|\DIFF^s g'|( s)}+\int_0^r \Vert(s g''(s),g'(s),\ldots,g'(s))\Vert_{\ell^p} s^{d-2}\dd s\bigg).
\end{equation}
	
Conversely, assume that $g\in W^{1,1}_\rmloc((0,R))$ and $g'\in \BV_\rmloc((0,R))$, and, with the same notation above, that 
	$$
	\int_{(0,R)} s^{d-1}\dd{|\DIFF^s g'|( s)}+\int_0^R\Vert(s g''(s),g'(s),\ldots,g'(s))\Vert_{\ell^p} s^{d-2}\dd s<\infty.
	$$
	Then $f$ has bounded Hessian--Schatten total variation in $B_R(0)$ and the Hessian--Schatten variation of $f$ is computed as above.
\end{prop}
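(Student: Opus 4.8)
The plan is to reduce the statement to a pointwise computation for $C^2$ radial functions together with an approximation argument organised around the functional $\Phi$ introduced above. For $g\in C^2$ one computes that, away from the origin, $\nabla^2 f(x)=g''(|x|)\tfrac{x\otimes x}{|x|^2}+\tfrac{g'(|x|)}{|x|}\big(\Id-\tfrac{x\otimes x}{|x|^2}\big)$, so the singular values of $\nabla^2 f(x)$ are $|g''(|x|)|$ (multiplicity $1$) and $|g'(|x|)|/|x|$ (multiplicity $d-1$), and if moreover $f\in C^2$ near the origin then $g'(0)=0$ and all of them equal $|g''(0)|$ there. Hence $|\nabla^2 f(x)|_p=\Vert(g''(|x|),g'(|x|)/|x|,\ldots,g'(|x|)/|x|)\Vert_{\ell^p}$, and integrating in polar coordinates, using $\HH^{d-1}(\partial B_s)=d\omega_d s^{d-1}$ and absorbing a factor $s$ into the $\ell^p$-norm by $1$-homogeneity, one gets, for $C^2$ radial $f$,
\[
|\DIFF^2_p f|(B_r)=\int_{B_r}|\nabla^2 f|_p\dd\LL^d=d\omega_d\int_0^r\Vert(sg''(s),g'(s),\ldots,g'(s))\Vert_{\ell^p}s^{d-2}\dd s=\Phi_{(0,r)}\big((g'\LL^1,g''\LL^1)\big).
\]
For general $g$ with $g'\in\BV_\rmloc$, put $\mu\defeq(g'\LL^1,\DIFF g')$, a locally finite $\RR^2$-valued measure on $(0,R)$; decomposing $\mu$ into its absolutely continuous part $(g',g'')\LL^1$ and its singular part $(0,\DIFF^s g')$ and using the $1$-homogeneity of $F$ with $F(s,(0,\pm1))=d\omega_d s^{d-1}$, one obtains the algebraic identity identifying $\Phi_{(0,r)}(\mu)$ with the right-hand side of~\eqref{csdcasdca}. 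So it suffices to prove $|\DIFF^2_p f|(B_r)=\Phi_{(0,r)}(\mu)$ for every $r\in(0,R]$.

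For the lower bound $|\DIFF^2_p f|(B_r)\ge\Phi_{(0,r)}(\mu)$ I would mollify $f$ with a \emph{radial} kernel $\rho_\eps$, so that $f_\eps\defeq\rho_\eps\ast f=g_\eps(|\,\cdot\,|)$ is smooth and radial on $B_{R-\eps}$. By Lemma~\ref{mollif} and the $C^2$ computation, $\Phi_{(0,r'')}\big((g_\eps'\LL^1,g_\eps''\LL^1)\big)=|\DIFF^2_p f_\eps|(B_{r''})\le|\DIFF^2_p f|(B_{r''+\eps})$ for $r''<R-\eps$; since on each $[\delta,r'']\subset(0,R)$ the integrand dominates a fixed multiple of $\delta|g_\eps''|+|g_\eps'|$, this gives uniform $W^{1,1}$ bounds for $g_\eps$ on compact subintervals, and combined with $g_\eps\to g$ in $L^1_\rmloc((0,R))$ (which follows from $f_\eps\to f$ in $L^1_\rmloc$ and polar coordinates) it yields $g\in W^{1,1}_\rmloc((0,R))$, $g'\in\BV_\rmloc((0,R))$ and $\mu_\eps\defeq(g_\eps'\LL^1,g_\eps''\LL^1)\weakto\mu$ in duality with $C_\rmc((0,R))$. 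The Reshetnyak lower semicontinuity of $\Phi$ recalled above, applied on $B_{r''}$ and then letting $r''\uparrow r$ (monotone convergence, as $F\ge 0$), gives $\Phi_{(0,r)}(\mu)\le|\DIFF^2_p f|(B_r)$; in particular the right-hand side of~\eqref{csdcasdca} is finite whenever $f$ has bounded Hessian--Schatten variation, and the regularity of $g$ needed in the first half of the statement is established.

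For the reverse inequality $|\DIFF^2_p f|(B_r)\le\Phi_{(0,r)}(\mu)$ — which, when the two integrals are assumed finite, will also give that $f$ has bounded Hessian--Schatten variation — I would build a recovery sequence for the relaxation formula of Proposition~\ref{myersserrin}: take a one-dimensional mollification $g_k$ of $g$ at scale $\delta_k\downarrow0$ and cap it off near $0$ and near $R$ by smooth functions so that $f_k\defeq g_k(|\,\cdot\,|)\in C^\infty(B_R)$ and $f_k\to f$ in $L^1_\rmloc$; the cap near the origin can be chosen with Hessian of size $\sim|g'(\eta_k)|/\eta_k$ on $B_{\eta_k}$, contributing at most $C\eta_k^{d-1}|g'(\eta_k)|\to0$ to $\int|\nabla^2 f_k|_p\dd\LL^d$ for a suitable $\eta_k\downarrow0$ (such a choice exists since $\liminf_{s\to0}s^{d-1}|g'(s)|=0$, otherwise $\int_0^R|g'(s)|s^{d-2}\dd s=\infty$), and similarly near $R$. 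On the remaining annulus $\int|\nabla^2 f_k|_p\dd\LL^d$ equals $\Phi$ evaluated at a mollification of $\mu$, and Jensen's inequality for the convex $1$-homogeneous $F$, the continuity of $F(\,\cdot\,,v)$, and the vanishing of the tails $\Phi_{(0,\delta)}(\mu),\Phi_{(R-\delta,R)}(\mu)\to0$ (finiteness of the total integrals) give $\liminf_k\int|\nabla^2 f_k|_p\dd\LL^d\le\Phi_{(0,R)}(\mu)$; Proposition~\ref{myersserrin} then gives the inequality for $r=R$, and for $r<R$ one argues identically with $f|_{B_r}$ in place of $f$ (the open ball $B_r$ and the open interval $(0,r)$ both exclude $\partial B_r$, so no boundary term is lost).

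The routine ingredient is the $C^2$ computation; the main obstacle is the behaviour near the origin (and near $R$), where $|\DIFF g'|$ may carry infinite mass while the $s$-weighted functional $\Phi$ stays finite, so that the Reshetnyak semicontinuity and the Jensen estimate have to be run with care about the tails, and the capping in the recovery sequence has to be designed so as not to produce spurious Hessian mass at the origin; keeping track of open versus closed balls and intervals — so that the sphere $\{|x|=s\}$ contributes the ($p$-independent, rank-one) amount $d\omega_d s^{d-1}|\DIFF^s g'|(\{s\})$ to both sides — is the remaining bookkeeping point.
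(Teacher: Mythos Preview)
Your proposal is correct and shares the paper's overall architecture: the pointwise $C^2$ computation, radial mollification of $f$ together with Reshetnyak lower semicontinuity for the inequality $\ge$, and one-dimensional mollification of $g$ for the inequality $\le$. Two implementation choices differ and are worth comparing.

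\emph{Handling of the origin.} The paper never works on balls containing the origin until the very end: both inequalities are first proved on annuli $A_{r_1,r_2}=B_{r_2}\setminus\bar B_{r_1}$ with $0<r_1<r_2<R$ (choosing $r_1,r_2$ so that $|\DIFF g'|$ does not charge the endpoints), then one lets $r_1\downarrow 0$, $r_2\uparrow r$. This yields bounded Hessian--Schatten variation of $f$ on $B_R\setminus\{0\}$, and a short removable-singularity lemma (points are removable for $W^{1,1}$ and $\BV$ in dimension $d\ge 2$) upgrades this to $B_R$. You instead stay on balls and absorb the origin into the recovery sequence by capping $g_k$ near $0$, exploiting $\liminf_{s\to 0}s^{d-1}|g'(s)|=0$. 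Both routes work; the paper's is cleaner because it avoids the careful selection of the capping radius $\eta_k$ and the smooth matching, while yours is more self-contained since it does not invoke an external removability statement.

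\emph{Upper bound.} For the inequality $\le$ the paper does not use Jensen: it uses the Reshetnyak \emph{continuity} theorem. On $(r_1,r_2)\Subset(0,R)$ the one-dimensional mollifications satisfy $|\mu_{g_k}|((r_1,r_2))\to|\mu_g|((r_1,r_2))$, which together with weak convergence gives $\Phi_{(r_1,r_2)}(\mu_{g_k})\to\Phi_{(r_1,r_2)}(\mu_g)$ directly, and then lower semicontinuity of $|\DIFF^2_p\,\cdot\,|$ yields $|\DIFF^2_p f|(A_{r_1,r_2})\le\Phi_{(r_1,r_2)}(\mu_g)\le\Phi_{(0,R)}(\mu_g)$. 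Your Jensen-based argument reaches the same conclusion but needs the extra bookkeeping you mention (continuity of $F$ in $s$, control of the tails). Either way the result follows.
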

\begin{proof}
	Let $r\in (0,R)$.
	Let $\rho_k$ be radial Friedrich mollifiers for $\RR^d$ and define $f_k\defeq \rho_k\ast f$. As $f_k$ is still radial, we write $f_k(\,\cdot\,)=g_k(|\,\cdot\,|)$, where $g_k\in C^\infty((0,r))$.
	As $f_k\rightarrow f\in L^1 (B_r(0))$, $g_k\rightarrow g$ in $L^1_\rmloc ((0,r))$. Now we compute, on $B_r(0)$,
	\begin{equation}\notag
		\nabla^2 f_k(x)=g_k''(|x|)\frac{x\otimes x}{|x|^2}+g_k'(|x|)\frac{|x|^2
			{\rm Id}-x\otimes x}{|x|^3}.
	\end{equation}
	Notice that the eigenvalues of the matrix appearing at the right hand side of the equation above are
	$g_k''(|x|)$ with multiplicity 1 and $g_k'(|x|)/|x|$ with multiplicity $d-1$, the eigenvectors being 
	$x$ and a basis of $x^\perp$.
	Therefore, by Proposition~\ref{hessiananddiffgrad}, on $B_r(0)$ one has
	\begin{equation}\label{eq:luigi1}
	|\DIFF^2_p f_k|=|x|^{-1}\big\Vert \big(|x|g_k''(|x|),g_k'(|x|),\ldots,{g_k'(|x|)}\big)\big\Vert_{\ell^p}\LL^d
	\geq g_k''(|x|)\LL^d.
	\end{equation}
	As $|\DIFF_p^2 f_k|(B_r(0))$ is uniformly bounded by Lemma~\ref{mollif}, we obtain the claimed membership for $g$, letting eventually $r\nearrow R$. 
	
	For the purpose of proving the inequality $\geq$ in \eqref{csdcasdca}.
It is enough to compute $|\DIFF^2_p f|(A_{r_1,r_2})$, where we define the open annulus
	$$
	A_{r_1,r_2}\defeq B_{r_2}(0)\setminus \bar{B}_{r_1}(0)
	$$
	for $[r_1,r_2]\subseteq (0,R)$. Also, there is no loss of generality in assuming that $r_1$ and $r_2$ are such that $|\DIFF g'|(\{r_1\})=|\DIFF g'|(\{r_2\})=0$, as well as $|\DIFF\nabla f|(\partial A_{r_1,r_2})=0$, hence we will tacitly assume this condition in what follows.
	
	From \eqref{eq:luigi1}, with the notation $\mu_g\defeq (g'\LL^1,\DIFF g')$, we get
	\begin{equation}\notag
	|\DIFF^2_p f_k|(A_{r_1,r_2})=\int_{A_{r_1,r_2}}|\DIFF^2_pf_k|(x)\dd\LL^d(x)=\Phi_{(r_1,r_2)}(\mu_{g_k}).
	\end{equation}
	Now notice that Lemma~\ref{mollif} and our choice of radii grant $|\DIFF^2_p f|(A_{r_1,r_2})=\lim_k |\DIFF^2_p f_k|(A_{r_1,r_2})$,
	so that the lower semicontinuity of $\Phi$ together with the weak* convergence of $\mu_{g_k}$ to $\mu_g$
	grants
	\begin{align*}
	|\DIFF^2_p f|(A_{r_1,r_2})&\ge \Phi_{(r_1,r_2)}(\mu_g)\\&=d\omega_d
	\bigg(\int_{(r_1,r_2)} s^{d-1}\dd{|\DIFF^s g'|( s)}+\int_{r_1}^{r_2} \Vert(s g''(s),g'(s),\dots,g'(s))\Vert_{\ell^p} s^{d-2}\dd s\bigg).
	\end{align*}
	Letting $r_1\to 0$ and $r_2\to r$ provides the inequality $\geq$ in \eqref{csdcasdca}.
	
	Now we prove the converse implication and inequality. This time we denote by $(\rho_k)$ a sequence of Friedrich mollifiers on $\RR$ and we call $g_k\defeq\rho_k\ast g$, then $f_k(\,\cdot\,)\defeq g_k(|\,\cdot\,|)$. Notice that, with our choice of the radii, $|\mu_{g_k}|((r_1,r_2))$ converges to $|\mu_g|((r_1,r_2))$ as $k\to\infty$, therefore invoking
	Reshetnyak continuity Theorem (e.g.\ \cite[Theorem 2.39]{AFP00}) we get
	\begin{align*}
	|\DIFF^2_p f|(A_{r_1,r_2})&\leq\liminf_k |\DIFF^2_p f_k|(A_{r_1,r_2})=
	\liminf_k \Phi_{(r_1,r_2)}(\mu_{g_k})\\
	&=\Phi_{(r_1,r_2)}(\mu_g)\leq\Phi_{(0,R)}(\mu_g)\\
	&= d\omega_d\biggl(\int_{(0,R)} s^{d-1}\dd{|\DIFF^s g'|( s)}
	+\int_0^R\Vert(s g''(s),g'(s),\ldots,g'(s))\Vert_{\ell^p} s^{d-2}\dd s\biggr).
	\end{align*}
	Letting $r_1\to 0$ and $r_2\to R$ gives that $f$ has bounded Hessian--Schatten total variation in $B_R(0)\setminus\{0\}$. To conclude, obtaining also the converse inequality in \eqref{csdcasdca}, we need
	just to apply the classical Lemma~\ref{removal} below to $f$ and to the partial derivatives of $f$, taking into
	account the mutual absolute continuity of $|\DIFF^2_p f|$ and $|\DIFF\nabla f|$ (Proposition \ref{hessiananddiffgrad}).
	\end{proof}
	
\begin{lemma}\label{removal}
Let $B_R(0)\subseteq\RR^d$, $d\geq 2$ and let $h\in W^{1,1}(B_R(0)\setminus\{0\})$ (resp.\ $h\in \BV(B_R(0)\setminus\{0\})$). 
Then $h\in W^{1,1}(B_R(0))$ (resp.\ $h\in \BV(B_R(0))$ and $|\DIFF h|(\{0\})=0$).
\end{lemma}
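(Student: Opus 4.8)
The plan is to show that a single point is removable for $W^{1,1}$ and $\BV$ in dimension $d\geq 2$, which is a classical fact about sets of zero capacity (here, $\mathcal{H}^{d-2}$-finite sets, hence negligible for $W^{1,1}$ extension). First I would treat the $W^{1,1}$ case. Given $h\in W^{1,1}(B_R(0)\setminus\{0\})$, the candidate extension is simply $h$ itself together with its gradient $\nabla h\in L^1(B_R(0)\setminus\{0\};\RR^d)$, both viewed as $\LL^d$-a.e.\ defined functions on $B_R(0)$ (the point having measure zero). It suffices to check the integration-by-parts identity $\int_{B_R(0)} h\,\partial_i\varphi\,\dd\LL^d=-\int_{B_R(0)}(\partial_i h)\varphi\,\dd\LL^d$ for every $\varphi\in C^\infty_\rmc(B_R(0))$ and $i=1,\dots,d$. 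To do this I would introduce cutoff functions $\eta_\epsilon\in C^\infty_\rmc(B_R(0)\setminus\{0\})$ with $0\le\eta_\epsilon\le1$, $\eta_\epsilon\equiv1$ outside $B_{2\epsilon}(0)$, $\eta_\epsilon\equiv 0$ on $B_\epsilon(0)$, and $|\nabla\eta_\epsilon|\le C/\epsilon$. Applying the known weak derivative identity on $B_R(0)\setminus\{0\}$ to the test function $\eta_\epsilon\varphi$ gives
\begin{equation}\notag
\int h\,\partial_i(\eta_\epsilon\varphi)\,\dd\LL^d=-\int(\partial_i h)\,\eta_\epsilon\varphi\,\dd\LL^d.
\end{equation}
The right-hand side converges to $-\int(\partial_i h)\varphi\,\dd\LL^d$ by dominated convergence. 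On the left, the term $\int h\,\eta_\epsilon\,\partial_i\varphi$ converges to $\int h\,\partial_i\varphi$, while the error term is $\int h\,\varphi\,\partial_i\eta_\epsilon\,\dd\LL^d$, bounded in absolute value by $(C/\epsilon)\Vert\varphi\Vert_\infty\int_{B_{2\epsilon}(0)\setminus B_\epsilon(0)}|h|\,\dd\LL^d$. This is where the dimensional hypothesis enters: since $\LL^d(B_{2\epsilon}(0))\le C\epsilon^d$ with $d\ge 2$, Hölder's inequality (or just the crude bound $|h|\le\|h\|_{L^1}$-localized) gives that $\frac1\epsilon\int_{B_{2\epsilon}\setminus B_\epsilon}|h|\to 0$ as $\epsilon\to 0$, because $h\in L^1$ near the origin and $\epsilon^{-1}\LL^d(B_{2\epsilon})\to 0$ when $d\ge 2$; more carefully, one uses absolute continuity of the integral together with $\epsilon^{-1}\to\infty$ slower than the measure shrinks only if $d\ge 2$—the sharp statement is that $\liminf_{\epsilon\to0}\epsilon^{-1}\int_{B_{2\epsilon}\setminus B_\epsilon}|h|\,\dd\LL^d=0$, which holds for any $L^1$ function when $d\ge 2$ since otherwise $\int_{B_1}|h|/|x|\,\dd\LL^d=\infty$ would fail integrability only in $d=1$. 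Taking $\epsilon\to 0$ (along a suitable subsequence for the $\liminf$) yields the desired identity, so $h\in W^{1,1}(B_R(0))$ with $\nabla h$ the obvious extension.

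For the $\BV$ case the argument is parallel but one tests against vector fields $\Phi\in C^\infty_\rmc(B_R(0);\RR^d)$: writing $\DIFF h$ for the ($\RR^d$-valued, finite) measure on $B_R(0)\setminus\{0\}$, one has $\int\langle\Phi,\dd\DIFF h\rangle=-\int h\,\dv\Phi\,\dd\LL^d$ after inserting $\eta_\epsilon$, and the same estimate controls the error $\int h\,\langle\nabla\eta_\epsilon,\Phi\rangle\,\dd\LL^d$. Passing to the limit shows that the distributional derivative of $h$ on all of $B_R(0)$ is represented by the finite measure obtained by extending $\DIFF h$ by zero on $\{0\}$; in particular $h\in\BV(B_R(0))$ and, since the extension puts no mass at the origin, $|\DIFF h|(\{0\})=0$.

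The main (and only real) obstacle is the quantitative control of the cutoff error term, i.e.\ verifying $\liminf_{\epsilon\to 0}\epsilon^{-1}\int_{B_{2\epsilon}(0)\setminus B_\epsilon(0)}|h|\,\dd\LL^d=0$; this is precisely the point at which $d\ge 2$ is used and is false for $d=1$. Everything else—choice of cutoffs, dominated convergence on the main terms, identification of the extended measure—is routine. I would also remark that, alternatively, one can avoid the explicit estimate by invoking the general capacity-theoretic removability result (a compact set of zero $W^{1,1}$-capacity, in particular a point in $\RR^d$ with $d\ge2$, is removable for $\BV$ and $W^{1,1}$), but the self-contained cutoff argument above is short enough to include directly.
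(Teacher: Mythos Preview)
Your overall strategy---use a cutoff near the origin and show the commutator error vanishes---is exactly the paper's approach; the paper phrases it dually by multiplying $h$ (rather than the test function) by $1-\psi_k$, but the content is the same error term $\int |h|\,|\nabla\eta_\epsilon|$.

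However, your control of that error term has a genuine gap. You assert that
\[
\liminf_{\epsilon\to 0}\ \epsilon^{-1}\int_{B_{2\epsilon}\setminus B_\epsilon}|h|\,\dd\LL^d=0
\]
holds for any $L^1$ function in dimension $d\ge 2$, but this is false: take $h(x)=|x|^{-d+\delta}$ with $0<\delta<1$, which is in $L^1(B_1)$, and compute $\epsilon^{-1}\int_{B_{2\epsilon}\setminus B_\epsilon}|h|\sim \epsilon^{\delta-1}\to\infty$. Absolute continuity of the $L^1$ integral gives only $\int_{B_{2\epsilon}}|h|=o(1)$, not $o(\epsilon)$, and your remark about $\int|h|/|x|$ is circular since that integral need not be finite. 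So as written the error term is not controlled.

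The paper fixes precisely this point by first truncating: one may assume $h$ is bounded (truncation preserves $W^{1,1}$ and $\BV$), and then the error is at most $\|h\|_\infty\cdot C\epsilon^{-1}\cdot\LL^d(B_{2\epsilon})\le C\|h\|_\infty\,\epsilon^{d-1}\to 0$ for $d\ge 2$. An alternative repair, closer to your write-up, would be to invoke the Sobolev embedding $W^{1,1}$ (resp.\ $\BV$) $\hookrightarrow L^{d/(d-1)}_{\rmloc}$ and use H\"older to get $\epsilon^{-1}\int_{B_{2\epsilon}}|h|\le C\big(\int_{B_{2\epsilon}}|h|^{d/(d-1)}\big)^{(d-1)/d}\to 0$; but you would need to say this explicitly.
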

\begin{proof} By a truncation argument, we can assume with no loss of generality that $h$ is bounded.
Then, the approximation of $h$ by the functions $h_k=h(1-\psi_k)\in W^{1,1}(B_R(0))$ (resp.\ $\BV(B_R(0))$),
where $\psi_k\in C^1_{\rmc}(B_{1/k}(0))$ satisfy $|\nabla\psi_k|\leq 2 k$, $0\leq\psi_k\leq 1$ and $\psi_k=1$
in a neighbourhood of $0$,
together with Leibniz rule, provides the result.
\end{proof}

\section{Density of CPWL functions}\label{sectcpwl}
We recall the definition of continuous piecewise linear ($\rm CPWL$) functions. In view of this definition we state that a simplex in $\R^d$ is the convex hull of $d+1$ points (called vertices of the simplex) that do not lie on an hyperplane, and a face of a simplex is the convex hull of a subset of its vertices.
\begin{defn}
	Let $\Omega\subseteq\RR^d$ open and let $f\in C(\Omega)$. We say that $f$ is $\rm CPWL$ (or $f\in{\rm CPWL}(\Omega)$)
	if there exists a decomposition of $\RR^d$ in $d$-dimensional simplexes $\{P_k\}_{k\in\NN}$, such that
	\begin{enumerate}[label=\roman*)]
		\item $P_k\cap P_h$ is either empty or a common face of $P_k$ and $P_h$, for every $h\ne k$;
		\item for every $k$, the restriction of $f$ to $P_k\cap \Omega$ is affine;
		\item the decomposition is locally finite, in the sense that for every ball $B$, only finitely many $P_k$ intersect $B$.
	\end{enumerate}
\end{defn}
The main theorem of this section is the following density result.
\begin{theorem}\label{theodensity}
	For any  $w\in C^2(\R^d)$ there exists a sequence $(u_j)\subseteq{\rm CPWL}(\RR^d)$ with $u_j\rightarrow w$ in the $L^\infty_\rmloc(\RR^d)$ topology and such that	for any bounded open set  $\Omega\subseteq\R^d$ with $\LL^d(\partial\Omega)=0$,
	\begin{equation}\notag
		\lim_{j\to\infty} |\DIFF^2_1u_j|(\Omega)\to 
	 |\DIFF^2_1 w|(\Omega).
	\end{equation}
\end{theorem}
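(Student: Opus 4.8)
The plan is to approximate $w$ by interpolating it on a carefully constructed triangulation whose local orientation follows the eigenframe of $\nabla^2 w$, so that on each region where the mesh orientation is constant the Hessian of the piecewise-affine interpolant is, up to small errors, the best rank-adapted approximation of $\nabla^2 w$, and hence contributes almost exactly $\int |\nabla^2 w|_1\,\dd\LL^d$ to the $1$-Hessian--Schatten variation. Concretely, I would fix a scale $\delta>0$ and, invoking the machinery of Part 1 and Part 2 (culminating in Theorem~\ref{theogridrot}), produce for each $\delta$ a triangulation $\mathcal{T}_\delta$ of $\RR^d$ which is Delaunay, non-degenerate and uniform with mesh size $O(\delta)$, and which in each cell $Q$ of a background grid of sidelength comparable to some mesoscale $h=h(\delta)\gg\delta$ coincides with a rotated rescaled copy of $\ZZ^d$, the rotation $U_Q\in SO(\RR^d)$ chosen so that its columns diagonalize $\nabla^2 w$ at (say) the center of $Q$. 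Let $u_\delta$ be the continuous piecewise-affine interpolant of $w$ on $\mathcal{T}_\delta$; by uniform continuity of $w$ and its derivatives on compact sets, $u_\delta\to w$ in $L^\infty_\rmloc$, and each $u_\delta\in{\rm CPWL}(\RR^d)$.

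Next I would bound $|\DIFF^2_1 u_\delta|(\Omega)$ from above, splitting $\Omega$ into the \emph{good} region $G_\delta$ (union of background cells $Q$ contained in $\Omega$ away from where the orientation changes) and the \emph{bad} region $\Omega\setminus G_\delta$ (a neighbourhood of the skeleton of the background grid together with a neighbourhood of $\partial\Omega$). On each good cell $Q$, since the interpolation nodes form a rotated rescaled lattice aligned with the eigenframe of $\nabla^2 w$, a direct Taylor expansion shows that on each simplex the affine interpolant has constant Hessian equal (up to $O(\delta)+o_Q(1)$, where $o_Q(1)$ measures the variation of $\nabla^2 w$ over $Q$) to a diagonal-in-$U_Q$ matrix whose entries are averages of the eigenvalues $\lambda_i(\nabla^2 w)$; this is exactly the regime where a lattice triangulation is ``$1$-Schatten optimal'', giving $|\DIFF^2_1 u_\delta|(Q)\le \int_Q |\nabla^2 w|_1\,\dd\LL^d + (\text{error})\cdot\LL^d(Q)$. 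Here I would use Proposition~\ref{hessiananddiffgrad} to write $|\DIFF^2_1 u_\delta|$ as the $1$-Schatten total variation of the jump part of $\DIFF\nabla u_\delta$ across the $(d-1)$-faces of the triangulation, and compare it face by face with the lattice model; Lemma~\ref{lemmabrzd} is what makes this comparison clean when the vertices locally coincide with a rotated rescaled $\ZZ^d$. On the bad region, I would use the non-degeneracy and uniformity of $\mathcal{T}_\delta$ together with the $C^2$ bound on $w$ to get a crude estimate $|\DIFF^2_1 u_\delta|(\Omega\setminus G_\delta)\le C\,\LL^d(\Omega\setminus G_\delta)$, and then observe that, choosing $h=h(\delta)$ so that $\delta\ll h\to 0$ but the grid skeleton still has small volume (e.g. $h$ fixed small, or $h\to 0$ slowly), $\LL^d(\Omega\setminus G_\delta)\to 0$ as $\delta\to 0$ using $\LL^d(\partial\Omega)=0$. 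Letting $\delta\to0$ through a sequence and relabelling gives $\limsup_j|\DIFF^2_1 u_j|(\Omega)\le |\DIFF^2_1 w|(\Omega)$.

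The matching liminf inequality is the soft direction: since $u_j\to w$ in $L^1_\rmloc(\Omega)\subseteq L^1_\rmloc$ and the $1$-Hessian--Schatten variation is lower semicontinuous with respect to convergence in distributions (Definition~\ref{def:H-Svar} and the following remark), one gets $|\DIFF^2_1 w|(\Omega)\le\liminf_j|\DIFF^2_1 u_j|(\Omega)$ directly, on any open set. Combining the two inequalities yields $\lim_j |\DIFF^2_1 u_j|(\Omega)=|\DIFF^2_1 w|(\Omega)$ simultaneously for all admissible $\Omega$ along a single sequence — here one should be slightly careful and first establish the $\limsup$ bound for a countable generating family of boxes $\Omega$ with $\LL^d(\partial\Omega)=0$, extract a diagonal subsequence, and then upgrade to arbitrary bounded open $\Omega$ with negligible boundary by inner/outer approximation by such boxes together with the fact that $|\DIFF^2_1 w|$ is a finite Borel measure that does not charge $\partial\Omega$.

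The main obstacle, and the technical heart of the argument, is the construction and error analysis on the mesoscale: one must simultaneously (i) glue the locally-lattice triangulations across neighbouring background cells with incompatible orientations $U_Q\ne U_{Q'}$ while preserving the Delaunay / non-degeneracy properties (this is Theorem~\ref{theogridrot}, and the delicate point flagged in Figure~\ref{fig1}), and (ii) quantify the interpolation error on the good cells precisely enough that the ``lattice is $1$-Schatten optimal'' heuristic becomes a genuine inequality with an error that is $o(1)$ times the volume — this requires that $\delta$ be small compared to the scale on which $\nabla^2 w$ varies, which forces the two-scale choice $\delta\ll h$ and a careful bookkeeping of how the bad-region volume, which scales like $h^{-1}\cdot(\text{skeleton thickness})$, is controlled. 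Everything else (the $L^\infty_\rmloc$ convergence, the liminf inequality, the passage from boxes to general $\Omega$) is routine once these two points are in place.
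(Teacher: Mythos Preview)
Your outline is correct and follows the paper's proof essentially step for step: the two-scale grid from Theorem~\ref{theogridrot} with orientation diagonalising $\nabla^2 w$, piecewise-affine interpolation, the good/bad split, a Taylor/face-jump computation on the good region (via Lemma~\ref{lemmabrzd}), a crude non-degeneracy estimate on the bad region, and the liminf from lower semicontinuity. Two small corrections: first, the phrase ``constant Hessian on each simplex'' is literally wrong (affine functions have zero Hessian)---the mechanism is rather that when the lattice is eigenframe-aligned one has $\nabla^2 w(z)(x-y)\cdot(x-y)=\tfrac14\eps^2\sum_i\lambda_i$ for \emph{every} vertex $x$ of the small cube centred at $y$, so all simplexes in that cube share (up to $O(\eps\,\omega_\delta)$) the constant gradient $\nabla w(y)$, and the dominant $|\DIFF^2_1 u_j|$-mass sits on the faces \emph{between} adjacent cubes, where the jump $\nabla w(y')-\nabla w(y)\approx \pm\eps\lambda_i R_z e_i$ produces exactly $|\nabla^2 w|_1$; second, the diagonal argument over $\Omega$ is unnecessary, since $(u_j)$ is constructed globally (independently of $\Omega$) and the paper's $\limsup$ estimate is shown to hold for every fixed bounded open $\Omega$ with $\LL^d(\partial\Omega)=0$.
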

Recall that, as explained in \cite[Remark 22]{ambrosio2022linear}, because of lower semicontinuity the exponent $p=1$ is the only meaningful exponent in a density result as above, namely this sharp approximation by ${\rm CPWL}$ functions is not possible for the energy
$|\DIFF^2_p f|$ when $p>1$.

We defer the proof of Theorem~\ref{theodensity} to Section~\ref{sectproof}, after having studied properties of \say{good} triangulations in Section~\ref{subsecgenprop} and Section~\ref{secconstrtriang}. Namely, we aim to construct triangulations of $\R^d$ which locally follow a prescribed orientation. The general scheme is illustrated in Figure~\ref{fig3}.
In each of the large squares it coincides with a rotation of a triangulation of $\eps\Z^d$; the difficulty resides in the interpolation region between different squares. 
In Section~\ref{subsecgenprop} we discuss standard material on general properties of triangulations. 
In Section~\ref{secconstrtriang} we present the specific construction, the key result is Theorem~\ref{theogridrot}.
This is then used to prove density in Theorem~\ref{theodensity}.

\bigskip
First, we start with a brief discussion around the result of Theorem~\ref{theodensity}.
We recall the following extension result, \cite[Lemma 17]{ambrosio2022linear}. Its last claim is immediate, once one takes into account also Proposition~\ref{sobo}.
\begin{lem}\label{extension}
	Let $\Omega\defeq(0,1)^d\subseteq\RR^d$ and let $f\in L^1_\rmloc(\Omega)$ with bounded Hessian--Schatten variation in $\Omega$. Then there exist an open neighbourhood  
	$\tilde\Omega$ of $\bar\Omega$ and $\tilde f\in L^1_\rmloc(\tilde \Omega)$ with bounded Hessian--Schatten variation in  $\tilde \Omega$ such that
\begin{equation}\label{cdsasc}
		|\DIFF^2_1 \tilde f|(\partial \Omega)=0
\end{equation}
and
$$
\tilde f=f\qquad\text{a.e.\ on $\Omega$}.
$$
In particular, $f\in L^1(\Omega)$.
\end{lem}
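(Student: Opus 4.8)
The plan is to reduce the statement to a reflection-and-regularization argument. The obstruction to a naive reflection across each face of $\partial\Omega$ is the appearance of a singular part of $\DIFF\nabla\tilde f$ concentrated on the hyperplanes containing the faces of the cube: reflecting $f$ across $\{x_1=0\}$, say, produces a function whose gradient is continuous in the $x_2,\dots,x_d$ directions but whose normal derivative $\partial_1 f$ jumps across $\{x_1=0\}$ unless $\partial_1 f=0$ there in a trace sense. To kill this jump we first modify $f$ near each face by subtracting a suitable affine correction, or better, we proceed in two stages: extend one variable at a time. First I would extend $f$ from $(0,1)^d$ to $(-\delta,1)\times(0,1)^{d-1}$ by an even reflection in the first variable \emph{after} subtracting the affine function $x\mapsto x_1\,c_1$, where $c_1$ is chosen (using the Sobolev embedding of Proposition~\ref{sobo}, which gives $\nabla f\in W^{1,d/(d-1)}_\rmloc$, hence a well-defined trace of $\partial_1 f$ on $\{x_1=0\}$) so that the reflected gradient matches up; one checks that the reflected function lies in $W^{1,1}_\rmloc$ with $\nabla$ of bounded variation, and that no singular mass is created on $\{x_1=0\}$ because both $f$ and $\partial_1 f$ (after the correction) have matching traces from the two sides. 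Iterating over the $d$ coordinate directions and over both endpoints $0$ and $1$ of each, one obtains $\hat f$ defined on an open neighbourhood of $\bar\Omega$ with bounded Hessian--Schatten variation and with $|\DIFF\nabla \hat f|$ (equivalently $|\DIFF^2_1\hat f|$, by Proposition~\ref{hessiananddiffgrad}) giving no mass to $\partial\Omega$.

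There is, however, a subtlety at the edges and lower-dimensional faces of the cube, where two or more reflections interact: the affine corrections chosen for different faces need not be compatible at a shared $(d-2)$-face. The cleanest way to handle this is to first localize. Cover $\bar\Omega$ by finitely many balls, each meeting at most the relative interior of a single $k$-face of $\partial\Omega$ for one value of $k$, together with balls compactly contained in $\Omega$; perform the reflection construction locally in each such ball (where only the reflections in the coordinate directions transverse to that face are needed, and these commute), and glue the local extensions by a partition of unity subordinate to the cover. The partition-of-unity gluing is harmless for the Hessian--Schatten variation up to a multiplicative constant, but one must take care that it does not reintroduce singular mass on $\partial\Omega$: this is ensured because each local piece already has $|\DIFF\nabla\,\cdot\,|(\partial\Omega)=0$ and the cutoff functions are smooth, so Leibniz rule only produces absolutely continuous contributions near $\partial\Omega$.

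Once $\tilde f$ is constructed on a neighbourhood $\tilde\Omega$ of $\bar\Omega$ with $|\DIFF^2_1\tilde f|(\partial\Omega)=0$ and $\tilde f=f$ a.e.\ on $\Omega$, the final claim $f\in L^1(\Omega)$ is immediate: $\tilde f\in L^1_\rmloc(\tilde\Omega)$ and $\bar\Omega$ is a compact subset of $\tilde\Omega$, so $\int_\Omega|f|=\int_\Omega|\tilde f|\le\int_{\bar\Omega}|\tilde f|<\infty$; alternatively one invokes Proposition~\ref{sobo} directly on a slightly larger cube. The main obstacle in the whole argument is the compatibility of the reflections at the lower-dimensional faces of $\partial\Omega$, which is why the localization-and-gluing step, rather than a single global reflection, is the right framework; the verification that no spurious singular part is created on $\partial\Omega$ at any stage is the technical heart of the proof.
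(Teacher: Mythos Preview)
The paper does not supply its own proof of this lemma: it simply recalls the result from \cite[Lemma~17]{ambrosio2022linear} and remarks that the final claim $f\in L^1(\Omega)$ follows from Proposition~\ref{sobo}. So there is no argument in the paper to compare against; your proposal has to stand on its own.

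There is a genuine gap in your reflection step. For the face $\{x_1=0\}$ you propose to subtract the affine function $x\mapsto c_1 x_1$ with a \emph{scalar} $c_1$ and then perform an even reflection, claiming that $c_1$ can be chosen so that the normal derivative matches. But after even reflection the jump of $\partial_1\tilde f$ across $\{x_1=0\}$ is (up to sign) $2\bigl(\partial_1 f(0^+,x')-c_1\bigr)$, and this depends on $x'$: a single constant $c_1$ kills it only if the boundary trace of $\partial_1 f$ is constant along the entire face, which is false in general. Localising to small balls does not help, since on any piece of the face the trace $\partial_1 f(0^+,\cdot)$ still varies with $x'$. (Incidentally, Proposition~\ref{sobo} gives $f\in W^{1,d/(d-1)}_\rmloc$, not $\nabla f\in W^{1,d/(d-1)}_\rmloc$; the trace of $\partial_1 f$ exists because $\nabla f\in\BV_\rmloc$ by Proposition~\ref{hessiananddiffgrad}.) As written, then, your construction still leaves a singular part of $\DIFF\nabla\tilde f$ on $\partial\Omega$.

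The standard remedy is a higher-order reflection that matches both the function and its first normal derivative at once: set, for small $x_1>0$,
\[
\tilde f(-x_1,x')\defeq 3f(x_1,x')-2f(2x_1,x').
\]
A direct computation shows that the one-sided traces of $\tilde f$ and of $\partial_1\tilde f$ agree on $\{x_1=0\}$, so $\nabla\tilde f\in\BV$ across the face with no jump part there, and hence $|\DIFF^2_1\tilde f|(\{x_1=0\})=0$. Because the cube has product structure, the analogous reflections in the $d$ coordinate directions commute, and iterating over all $2d$ faces yields the extension to a neighbourhood of $\bar\Omega$ directly, with no partition-of-unity gluing needed at edges or corners. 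With this correction the rest of your plan---check that no singular mass is produced on $\partial\Omega$, then deduce $f\in L^1(\Omega)$ from compactness of $\bar\Omega$ in $\tilde\Omega$---goes through.
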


The following result gives a positive answer to \cite[Conjecture 1]{ambrosio2022linear}, partially
proved in the two-dimensional case in \cite[Theorem 21]{ambrosio2022linear}. The proof is based on
Theorem~\ref{theodensity} and a diagonal argument.

\begin{theorem}\label{corollary}
	Let $\Omega\defeq(0,1)^d\subseteq\RR^d$. Then $\rm CPWL$ functions are dense with respect to the energy $|\DIFF^2_1\,\cdot\,|(\Omega)$ in the space
	$$
	\{f\in L^1_\rmloc(\Omega):f\text{ has bounded Hessian--Schatten variation in }\Omega\}
	$$
	with respect to the $L^1(\Omega)$ topology.
	Namely, for any $f\in L^1_\rmloc(\Omega)$ with bounded Hessian--Schatten variation in $\Omega$, there exists $\{f_k\}_k\subseteq{\rm CPWL}(\Omega)$ with $f_k\rightarrow f$ in $L^1(\Omega)$  and $|\DIFF_1^2 f_k|(\Omega)\rightarrow|\DIFF_1^2 f|(\Omega)$.
\end{theorem}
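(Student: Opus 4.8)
The plan is to deduce the result for a general $f$ on the cube from Theorem~\ref{theodensity} by a three-step reduction: first reduce to smooth targets on a slightly larger domain, then apply Theorem~\ref{theodensity} there, and finally run a diagonal argument to combine the two approximations.

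First I would use the extension Lemma~\ref{extension} to replace $f$ by $\tilde f$, defined on an open neighbourhood $\tilde\Omega$ of $\bar\Omega$, with bounded Hessian--Schatten variation in $\tilde\Omega$, agreeing with $f$ a.e.\ on $\Omega$, and crucially satisfying $|\DIFF^2_1\tilde f|(\partial\Omega)=0$. Fix a slightly larger cube $\Omega'$ with $\bar\Omega\subseteq\Omega'\subseteq\bar{\Omega'}\subseteq\tilde\Omega$. Now apply Proposition~\ref{myersserrin} to $\tilde f$ on $\Omega'$: there is a sequence $(w_n)\subseteq C^\infty(\Omega')$ with $w_n\to\tilde f$ in $L^1(\Omega')$ and $\int_{\Omega'}|\nabla^2 w_n|_1\,\dd\LL^d\to|\DIFF^2_1\tilde f|(\Omega')$. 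By lower semicontinuity of the Hessian--Schatten variation with respect to $L^1_\rmloc$ convergence, we also get $|\DIFF^2_1 w_n|(\Omega)\to|\DIFF^2_1\tilde f|(\Omega)$; indeed $\liminf_n|\DIFF^2_1 w_n|(\Omega)\ge|\DIFF^2_1\tilde f|(\Omega)$ while $\limsup_n|\DIFF^2_1 w_n|(\Omega)\le\limsup_n\int_{\Omega'}|\nabla^2 w_n|_1\,\dd\LL^d=|\DIFF^2_1\tilde f|(\Omega')$, and letting $\Omega'\downarrow\bar\Omega$ (or using that $|\DIFF^2_1\tilde f|(\partial\Omega)=0$ together with $\bar\Omega\subseteq\Omega'$, after choosing $\Omega'$ with $|\DIFF^2_1\tilde f|(\partial\Omega')=0$) the right-hand side can be made arbitrarily close to $|\DIFF^2_1\tilde f|(\Omega)$.

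Next, for each fixed $n$, apply Theorem~\ref{theodensity} to the $C^2$ function $w_n$ (restricted, or extended arbitrarily to a $C^2$ function on all of $\RR^d$; only the values near $\bar\Omega$ matter): there is a sequence $(u_{n,j})_j\subseteq{\rm CPWL}(\RR^d)$ with $u_{n,j}\to w_n$ in $L^\infty_\rmloc(\RR^d)$, hence in $L^1(\Omega)$, and $|\DIFF^2_1 u_{n,j}|(\Omega)\to|\DIFF^2_1 w_n|(\Omega)$ as $j\to\infty$ (here we use $\LL^d(\partial\Omega)=0$, which holds since $\Omega$ is a cube). Now a standard diagonal extraction: for each $n$ choose $j(n)$ so large that $\|u_{n,j(n)}-w_n\|_{L^1(\Omega)}\le 1/n$ and $\big||\DIFF^2_1 u_{n,j(n)}|(\Omega)-|\DIFF^2_1 w_n|(\Omega)\big|\le 1/n$. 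Setting $f_k\defeq u_{k,j(k)}\in{\rm CPWL}(\Omega)$, we get $f_k\to\tilde f=f$ in $L^1(\Omega)$ from the triangle inequality together with $w_n\to\tilde f$ in $L^1(\Omega')$, and $|\DIFF^2_1 f_k|(\Omega)\to|\DIFF^2_1\tilde f|(\Omega)=|\DIFF^2_1 f|(\Omega)$ from the triangle inequality together with $|\DIFF^2_1 w_n|(\Omega)\to|\DIFF^2_1\tilde f|(\Omega)$.

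Since every statement invoked is either Theorem~\ref{theodensity}, Proposition~\ref{myersserrin}, Lemma~\ref{extension}, or the lower semicontinuity noted after Definition~\ref{def:H-Svar}, the only genuinely delicate point is the bookkeeping at the boundary $\partial\Omega$: one must ensure that passing from $\Omega'$ to $\Omega$ does not lose mass, which is exactly what the condition $|\DIFF^2_1\tilde f|(\partial\Omega)=0$ in Lemma~\ref{extension} guarantees, and one should similarly arrange $|\DIFF^2_1\tilde f|(\partial\Omega')=0$ by choosing $\Omega'$ among a one-parameter family of cubes, all but countably many of which have this property. With that care taken, the rest is routine.
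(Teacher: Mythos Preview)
Your proposal is correct and follows essentially the same route as the paper: extend via Lemma~\ref{extension}, approximate by smooth functions, apply Theorem~\ref{theodensity}, and diagonalize, using $|\DIFF^2_1\tilde f|(\partial\Omega)=0$ for the boundary bookkeeping. The only cosmetic differences are that the paper obtains the smooth approximants by cutting off $\tilde f$ inside $\tilde\Omega$ and mollifying on $\RR^d$ (rather than invoking Proposition~\ref{myersserrin} on an intermediate $\Omega'$), and that the step $|\DIFF^2_1 w_n|(\Omega)\to|\DIFF^2_1\tilde f|(\Omega)$ is cleaner if you also use lower semicontinuity on the open complement $\Omega'\setminus\bar\Omega$ (together with $|\DIFF^2_1 w_n|(\partial\Omega)=0$ for smooth $w_n$), which removes the need to shrink $\Omega'$.
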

\begin{proof}
	Take $f$ as in the statement, and let $\tilde{f}$ be given by Lemma~\ref{extension}. By  using smooth cut-off functions, there is no loss of generality in assuming that $\tilde{f}$ is compactly supported in $\tilde \Omega$, hence, in particular, $\tilde f\in L^1(\RR^d)$. Also, we see that we can assume that $\LL^d(\partial\tilde\Omega)=0$.
	
	 Now we take $(\tilde f_k)\subseteq C_\rmc^\infty(\RR^d)$ be mollifications of $\tilde f$ by means of compactly supported mollifiers, notice that $\tilde{f}_k\rightarrow\tilde f$ in $L^1(\RR^d)$ and $|\DIFF^2_1 f_k|(\tilde \Omega)=|\DIFF^2_1 f_k|(\RR^d)\rightarrow|\DIFF^2_1 \tilde f|(\RR^d) =|\DIFF^2_1 \tilde f|(\tilde\Omega)$, thanks to Proposition~\ref{mollif} and lower semicontinuity. Now, for any $k$, take $(\tilde f_{k,h})\subseteq{\rm CPWL}(\RR^d)$ be given by Theorem~\ref{theodensity} for $\tilde f_k$.  With a diagonal argument, we obtain $(g_\ell)\subseteq{\rm CPWL}(\RR^d)$ with $g_\ell\rightarrow \tilde f$ in $L^1(\tilde\Omega)$ and such that $|\DIFF_1^2 g_\ell|(\tilde\Omega)\rightarrow|\DIFF^2_1 \tilde f|(\tilde\Omega)$. By lower semicontinuity, the fact that $|\DIFF_1^2 g_\ell|(\tilde\Omega)\rightarrow|\DIFF^2_1 \tilde f|(\tilde\Omega)$ and~\eqref{cdsasc}, it easily follows that 
	 $$
	 |\DIFF_1^2 g_\ell|(\Omega)\rightarrow|\DIFF^2_1 \tilde f|(\Omega)=|\DIFF^2_1  f|(\Omega).
	 $$
	Clearly,  $g_\ell\rightarrow f$ in $L^1(\Omega)$,
	so that the proof is concluded.
 \end{proof}

\begin{rem} 
Let $\Omega\defeq(0,1)^d$. As a consequence of Theorem~\ref{corollary}, the description of the extremal points of the unit ball with respect to the $|\DIFF_1^2\,\cdot\,|(\Omega)$ seminorm obtained in \cite[Theorem 25]{ambrosio2022linear} remains in place in arbitrary dimension. In a slightly imprecise way, the result states that $\rm CPWL$ extremal points are dense in $1$-Hessian--Schatten energy in the set of extremal points with respect to the $L^1(\Omega)$ topology. Notice that the description of $\rm CPWL$ extremal points is made explicit in   \cite[Proposition 23]{ambrosio2022linear}. \fr	
\end{rem}

\begin{figure}
\includegraphics[width=6cm]{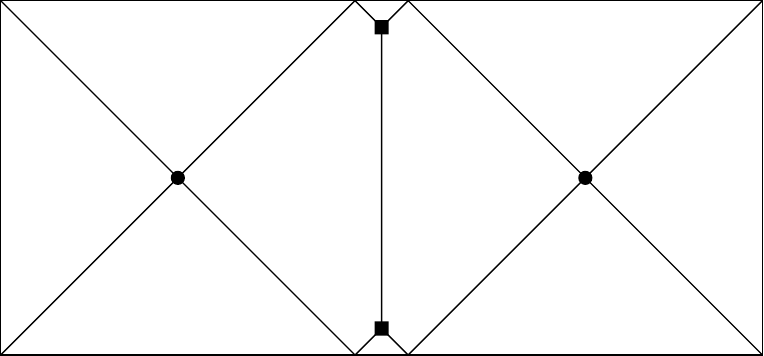}
\caption{Sketch of the function $G_h$ used in proving Remark~\ref{remnotclosed}. 
The function equals 1 on the two points marked by black dots, $-h$ on the two points marked by black squares, vanishes
outside the large rectangle, and is
affine in each of the ten polygons in the figure.
}
 \label{fignotflosed}
\end{figure} 

\begin{rem}\label{remnotclosed}
The set of extremal points is not closed with respect to the convergence considered here. 
 For example, with $d=2$, 
one can easily check that the function $g(x):=\max\{1-\Vert x\Vert_{\ell^\infty},0\}$ is extremal, but the function $G_0(x):=g(x+e_1)+g(x-e_1)$ is not. 
 Indeed, $G_0=\frac 12 (2 g(\cdot + e_1) + 2 g (\cdot -e_1))$, with
 $|\DIFF_p^2G_0|(\R^2)=|\DIFF_p^22 g(\cdot + e_1)|(\R^2)=
 |\DIFF_p^22 g(\cdot - e_1)|(\R^2)$.
For $h\in (0,1/4)$ we then define
 $G_h:\R^2\to\R$ by
 \begin{equation}\notag
  G_h(x):=\max\bigl\{1-\Vert x-(1+h)e_1\Vert_{\ell^\infty}, 1-\Vert x+(1+h)e_1\Vert_{\ell^\infty}, -\dist_{\ell^\infty}(x, \partial R_h)\bigr\}
 \end{equation}
if $x\in R_h:=[-2-h,2+h]\times [-1,1]$,
and $G_h(x)=0$ if $x\in \R^2\setminus R_h$ (see Fig.~\ref{fignotflosed}). Then each $G_h$ is CPWL, is extremal, and $G_h\to G_0$ uniformly with 
$|\DIFF_p^2G_h|(\R^2)\to
|\DIFF_p^2G_0|(\R^2)$ for any $p\in[1,\infty]$, but $G_0$ is not extremal.

Let us briefly comment on the proof of extremality of $G_h$ (the same argument implies extremality of $g$). 
If $G_h=\lambda f+(1-\lambda) f'$, with $\lambda\in (0,1)$ and
$|\DIFF_p^2f|(\R^2)=
|\DIFF_p^2f'|(\R^2)=
|\DIFF_p^2G_h|(\R^2)$, then by
Lemma~\ref{rigidity} the support of $|\DIFF_p^2f|$
is contained in the support of 
$|\DIFF_p^2G_h|$, so that $f$ (after choosing the continuous representative) is affine in each of the sets on which $G_h$ is affine. 
Adding an irrelevant affine function, we can reduce to the case that $f=0$ outside $R_h$.
Using
the fact that if two affine functions coincide on three non-collinear points then they coincide everywhere, one obtains $f=aG_h$, where $a:=f((1+h)e_1)\in\R$  (see Fig.~\ref{fignotflosed}); by equality of the norms $a=\pm 1$. Similarly, $f'=\pm G_h$, so that by $G_h=\lambda f+(1-\lambda) f'$
we obtain $G_h=f=f'$.\fr
\end{rem}

\subsection{General properties of triangulations}\label{subsecgenprop}
We define a triangulation of $\R^d$ as a pair of two sets, the first one, $V$, containing the vertices (nodes), the second one, $E$, containing the elements, which are nondegenerate compact simplexes with pairwise disjoint interior. Each simplex is the convex hull of its $d+1$ vertices. One further requires a compatibility condition that ensures that neighbouring elements share a complete face (and not a strict subset of a face). 
We remark that there is a large literature which studies this in the more general framework of simplicial complexes. For the present application the metric and regularity properties are crucial, we present in this section the few properties which are relevant here in a self-contained way.

\begin{definition}\label{deftriang}
A triangulation of $\R^d$ is a pair $(V,E)$, with
$V\subseteq\R^d$  and $E\subseteq\mathcal P(\R^d)$ such that
\begin{enumerate}[label=\rm{\roman*)}]
 \item \label{deftriangv} for every $e\in E$, $e$ has non empty interior and 
there is $v_e\subseteq \R^d$ with $\# v_e=d+1$ and $e=\conv(v_e)$; 
\item\label{deftriangV}
$V=\bigcup_{e\in E} v_e$;
\item\label{deftriangeep}
for any $e,\,e'\in E$ one has
$e\cap e'=\conv(v_e\cap v_{e'})$; 
\item $\bigcup_{e\in E}e=\R^d$.
\end{enumerate}
We introduce four regularity properties:
\begin{enumerate}[label={\rm(\alph{enumi})}]
 \item \label{deftriangdel}
The triangulation has the Delaunay property if for each $e\in E$, 
the unique open ball $B$ with $v_e\subseteq \partial B$ 
obeys $B\cap V=\emptyset$. 
\item\label{deftriangreg} The triangulation is $c_*$-non degenerate, for some $c_*>0$, if $(\diam \, e)^d\le c_*\LL^d(e)$ for all $e\in E$.
\item\label{deftriangunif} The set $V\subseteq\R^d$ is $(\bar c, \eps)$-uniform, for some $\bar c,\,\eps>0$, if $|x-y|\ge \eps/\bar c$  for all   $x\in V,\,y\in V$ with $x\ne y$ and
$B_{\bar c\eps}(q)\cap V\ne\emptyset$ for all $q\in\R^d$.
\item\label{locallyfinite} The triangulation is locally finite if, for every ball $B$, only finitely many elements of $E$ intersect $B$.
\end{enumerate}
\end{definition}
\begin{figure}
\begin{center}
 \includegraphics[width=7cm]{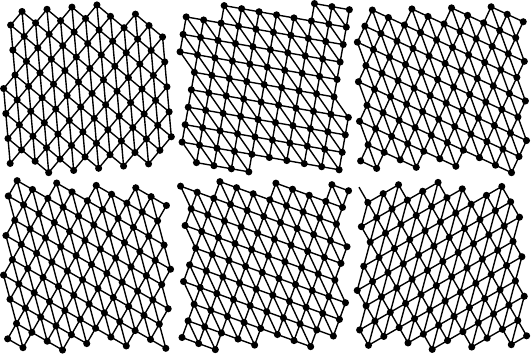} 
\end{center}
 \caption{Sketch of the desired triangulation without the interpolation region. Aim of this section is to find a suitable interpolation between the squares.}
 \label{fig3}
\end{figure}
Condition~\ref{deftriangeep} states that two distinct elements of $E$ are either disjoint or share a face of dimension between 0 and $d-1$; in particular distinct elements have disjoint interior. Notice that $\conv(\emptyset)=\emptyset$.

The Delaunay property~\ref{deftriangdel} states that the circumscribed sphere to each simplex does not contain any other vertex, and implies $\partial e\cap V=v_e$ for all $e\in E$. It can be interpreted as a statement that the vertices have been matched to form simplexes in an ``optimal'' way.

The non-degeneracy property~\ref{deftriangreg} states that simplexes are uniformly non-degenerate, so that the affine bijection that maps $e$ onto the standard simplex has a uniformly bounded condition number. It implies that 
there is $C=C(c_*,d)$ such that 
for any $e\in E$, any $x\in v_e$,
any $F\in \R^d$ one has
\begin{equation}\label{eqnondegsimpl}
 |F|\le C(c_*,d) \sum_{y\in v_e\setminus\{x\}} \frac{|F\cdot (y-x)|}{|y-x|}.
\end{equation}

The uniformity property~\ref{deftriangunif} of a set $V$ of vertices ensures (for Delaunay triangulations) that all sides of all elements have length comparable to $\eps$. Also, property~\ref{deftriangunif} immediately implies property~\ref{locallyfinite}, as it forces  $V$ to be a locally finite set.

\begin{rem}\label{remdiam}
	Let $(V,E)$ be a triangulation that has  the Delaunay property
	(property~\ref{deftriangdel}) and is $(\bar c,\eps)$-uniform (property~\ref{deftriangunif}). Then  $\diam(e)\le 2\bar c \eps$, for any $e\in E$.\fr
\end{rem}
\begin{proof}
	Take $e\in E$ and let $q\in \RR^d$ and $r\in(0,\infty)$ such that $v_e\subseteq \partial B_r(q)$. By the Delaunay property, $V\cap B_r(q)=\emptyset$, so that, by $(\bar c,\eps)$-uniformity, $\bar c\eps>r\ge \diam(e)/2$.
\end{proof}
\bigskip

We next show how given the set of vertices $V$ one can abstractly obtain a good triangulation.
The construction is standard up to a perturbation argument. As we could not find a reference with the complete result, we prove it.
 \begin{lemma}\label{lemmaconstrE}
Let $V\subseteq\R^d$ be uniform in the sense of property~\ref{deftriangunif} of Definition~\ref{deftriang}.
 Then there is $E\subseteq\mathcal P(\R^d)$ such that $(V,E)$ is a triangulation of $\R^d$ with the Delaunay property~\ref{deftriangdel}.
 \end{lemma}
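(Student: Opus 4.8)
The idea is to obtain $E$ as a globally coherent simplicial refinement of the Delaunay subdivision of $V$, so I first recall the latter. Property~\ref{deftriangunif} says precisely that $V$ is a Delone set: it is uniformly discrete, $|x-y|\ge\eps/\bar c$ for distinct $x,y\in V$, and relatively dense, every $B_{\bar c\eps}(q)$ meets $V$. Consequently $V$ is locally finite, every point of $\R^d$ has a nearest site, and the Voronoi cells $\mathrm{Vor}(v)\defeq\{x:\ |x-v|\le|x-w|\text{ for all }w\in V\}$ are $d$-dimensional convex polytopes with $\mathrm{Vor}(v)\subseteq\bar B_{2\bar c\eps}(v)$, since a point at distance $>2\bar c\eps$ from $v$ has a strictly closer site in some ball $B_{\bar c\eps}(\cdot)$. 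These polytopes tile $\R^d$ with pairwise disjoint interiors and their faces form a locally finite polytopal complex. Dually, to each vertex $q$ of this complex one attaches $W_q\defeq\{v\in V:\ q\in\mathrm{Vor}(v)\}$; the points of $W_q$ are equidistant from $q$, the open ball $B(q)$ centred at $q$ through the points of $W_q$ satisfies $B(q)\cap V=\emptyset$, and the sets $\conv(W_q)$ together with all their faces constitute the Delaunay complex $\mathcal D$ of $V$: a locally finite polytopal complex whose $d$-dimensional cells cover $\R^d$ with pairwise disjoint interiors and meet pairwise along common faces, and such that every $v\in V$ is a vertex of at least one $d$-dimensional cell (choose a vertex $q$ of the $d$-polytope $\mathrm{Vor}(v)$; then $v\in W_q$ and $\conv(W_q)$ is the corresponding $d$-cell). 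This is classical for Delone sets. The sole obstruction to $(V,\mathcal D)$ already being a triangulation is that some $W_q$ may contain more than $d+1$ points (several sites on a common empty sphere), so a cell of $\mathcal D$ need not be a simplex.

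To refine $\mathcal D$ I use a pulling triangulation. As $V$ is countable I fix $a\in\R^d$ with $x\mapsto a\cdot x$ injective on $V$ and order the vertices by this value. For a polytope $P$ with $v_P\subseteq V$ define $\mathrm{pull}(P)$ by induction on $\dim P$: if $\dim P=0$ set $\mathrm{pull}(P)=\{P\}$; otherwise, with $v$ the $a$-minimal vertex of $P$, set
\[
\mathrm{pull}(P)\defeq\bigl\{\conv(\{v\}\cup\sigma):\ \sigma\in\mathrm{pull}(G),\ G\text{ a facet of }P,\ v\notin G\bigr\}.
\]
A standard induction shows that $\mathrm{pull}(P)$ is a simplicial subdivision of $P$ into non-degenerate $(\dim P)$-simplices whose vertex set is exactly $v_P$, and---this is the point that makes the construction global---that for every face $F$ of $P$ the simplices of $\mathrm{pull}(P)$ lying in $F$ are exactly those of $\mathrm{pull}(F)$, since running the recursion on $F$ uses the same vertices with the same order whether $F$ is processed alone or inside $P$. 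I then set $E\defeq\bigcup\{\mathrm{pull}(C):\ C\in\mathcal D,\ \dim C=d\}$.

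It remains to check the defining conditions of a triangulation for $(V,E)$, together with the Delaunay property. The conditions that each $e\in E$ is a non-degenerate $d$-simplex with vertices in $V$, that $V=\bigcup_{e\in E}v_e$ (use that every $v\in V$ is a vertex of some $d$-cell $C$, hence of some $e\in\mathrm{pull}(C)$), and that $\bigcup_{e\in E}e=\R^d$, are immediate from the previous two paragraphs. For condition~\ref{deftriangeep}, take $e\in\mathrm{pull}(C)$, $e'\in\mathrm{pull}(C')$ with $C,C'$ the $d$-cells of $\mathcal D$ containing them. If $C=C'$ this is the simpliciality of $\mathrm{pull}(C)$. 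If $C\ne C'$, then $e\cap e'\subseteq C\cap C'=:F$, a common face of $C$ and $C'$; since $F$ is exposed in $C$ by a supporting hyperplane and $e\subseteq C$, $e\cap F$ is a face of the simplex $e$ lying in $F$, hence a simplex of $\mathrm{pull}(C)$ contained in $F$, i.e.\ a simplex of $\mathrm{pull}(F)$, and likewise $e'\cap F\in\mathrm{pull}(F)$; therefore $e\cap e'=(e\cap F)\cap(e'\cap F)=\conv(v_e\cap v_{e'})$, two simplices of the simplicial complex $\mathrm{pull}(F)$ meeting in their common face. Finally, for the Delaunay property~\ref{deftriangdel}: if $e\in\mathrm{pull}(C)$ then the $d+1$ affinely independent points of $v_e$ lie on the circumsphere $\partial B(q)$ of the cell $C$; as $d+1$ affinely independent points of $\R^d$ determine a unique sphere, the circumball of $e$ equals $B(q)$, whence $B(q)\cap V=\emptyset$. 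Thus $(V,E)$ is a triangulation with the Delaunay property (and, although not required here, it is also locally finite, inheriting this from $\mathcal D$).

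The delicate point is condition~\ref{deftriangeep}: the Delaunay subdivision is a priori only polytopal, and a triangulation demands a simplicial refinement that is consistent across all cells at once, whereas triangulating each cell separately would in general clash along shared faces. Fixing one global ordering of $V$ in the pulling construction is exactly what renders the refinement of every shared face unambiguous; an alternative route is to displace the sites into general position, $V_\delta=\{v+\delta u_v\}$ with generic $u_v$, so that the Delaunay triangulation of $V_\delta$ is genuinely simplicial---there the same difficulty reappears as the need to take a single $\delta>0$ valid everywhere, which is where the uniform Delone bounds of property~\ref{deftriangunif} come in.
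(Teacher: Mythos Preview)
Your proof is correct and takes a genuinely different route from the paper's. The paper constructs the Delaunay subdivision via the paraboloid lift: it takes the convex envelope of $f(x)=|x|^2$ on $V$ (shown to be CPWL in an auxiliary lemma, using uniformity of $V$), whose affine pieces project to the Delaunay cells. To break cosphericity the paper perturbs the heights to $|x|^2+\rho^{\varphi(x)}$ for an enumeration $\varphi$ and generic $\rho>0$; for all but countably many $\rho$ the lifted points are in general position, so the resulting triangulation is simplicial and satisfies~\ref{deftriangeep}. A diagonal/compactness argument in $\rho_j\to 0$ then produces a limiting $E$ that still has the Delaunay property for the original $V$.

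Your approach replaces the perturbation-and-limit step by a direct, globally coherent refinement: you take the Delaunay polytopal complex from Voronoi duality and apply a pulling triangulation governed by a single linear functional $a\cdot x$. The decisive observation---that the pulling construction restricts compatibly to every face because the order is global---is exactly what the paper's perturbation achieves by other means (you note this yourself in the last paragraph). Your argument is shorter and avoids the limit, at the cost of invoking two standard but nontrivial facts without proof: that the Delaunay cells of a Delone set form a locally finite polytopal complex covering $\R^d$, and the face-restriction property of pulling triangulations. The paper's version, by contrast, is essentially self-contained (it proves the CPWL lemma for convex envelopes from scratch) but pays for this with the more delicate subsequence extraction. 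Both are valid; yours is the more structural argument, the paper's the more elementary one.
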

\begin{proof}
	We define $f:\R^d\to[0,\infty]$ by
	\begin{equation*}
		f(x):=\begin{cases}
			|x|^2\qquad &\text{ if } x\in V,\\
			\infty\qquad&\text{ otherwise.}
		\end{cases}
	\end{equation*}
	Let $g$ be the convex envelope of $f$, which is 
	CPWL (see Lemma~\ref{lemmaconvcpwl} below). Moreover, notice that
\begin{equation*}
		g(x)=|x|^2=f(x)\qquad\text{for every }x\in V.
	\end{equation*}
	Let $q\in\R^d$, $\mu\in\R$ be such that
	\begin{equation}\label{eq:luigi99}
		A:=\{x: g(x)=\mu+2x\cdot q \}
	\end{equation}
	has nonempty interior.
 Notice that  $A$ is compact, convex  and coincides with the closure of its interior, and $g(x)> \mu+2x\cdot q$ for every $x\in\RR^d\setminus A$.
	Also, we set
	\begin{equation}\label{eqAaffx}
		w:=\{x\in V:  \mu+2x\cdot q = |x|^2 \}= A\cap V,
	\end{equation}
then,
	\begin{equation}\notag
	\mu+2x\cdot q < |x|^2 \qquad\text{for all } x\in V\setminus w.
\end{equation}

 Now we show that $\ext(A)\subseteq V$ so that $\ext(A)\subseteq w$ and hence  $A=\conv(w)$ with $\#w\ge d+1$ (as $A$ has nonempty interior). Take indeed $p\in \ext(A)$ and assume $p\notin V$. 
Then, take a minimal set of points $\{p_1,\dots,p_{k}\}\subseteq V$ such that 
$(p,g(p))\in \conv\big((p_1,f(p_1)),\dots, (p_{k},f(p_{k}))\big)$ (this is possible by~\eqref{convnoclosed} of Lemma~\ref{lemmaconvcpwl} below).
As $p\in\ext(A)$, up to reordering, we can assume that  $p_1\notin A$, hence by $g(p_1)>\mu+2p_1\cdot q$ we have that $g(p)>\mu+2p\cdot q$, a contradiction.

	The above equations can be rewritten as
	\begin{equation}\notag
		|x-q|^2=\mu+|q|^2 \qquad\text{for all }x\in w
	\end{equation}
	and
	\begin{equation}\notag
		|x-q|^2>\mu+|q|^2 \qquad\text{for all } x\in V\setminus w.
	\end{equation}
	We set $r:=\sqrt{\mu+|q|^2}$, so that these conditions are
	$w\subseteq \partial B_r(q)$ 
	and $V\cap B_r(q)=\emptyset$, 
	so that the set $w$ has the Delaunay property.
	
	  Notice then that for every $x\in V$, there is at least one set  $A$ as in \eqref{eq:luigi99}
	with nonempty interior and with  $x\in  A\cap V$ (this set was called $w$): this follows from the fact that $g$ is CPWL.
	
	Any decomposition of those elements $A$ in \eqref{eq:luigi99} with nonempty interior into non degenerate simplexes with vertices in $w$ leads to a pair $(V,E)$ with all 4 claimed properties of triangulations, except for~\ref{deftriangeep} of Definition~\ref{deftriang}. In the rest of the proof we show by a perturbation argument that a decomposition exists such that property~\ref{deftriangeep}, which relates neighbouring pieces in which $g$ is affine,  also holds.

	We first remark that property~\ref{deftriangeep} is automatically true if $g$ is non degenerate, in the sense that each $A$ is a simplex, which is the same as $\#w=d+1$  (we are going to add a few details about this in the sequel of the proof). In turn, this is true if 
	for every choice of $X:=\{x_1, \ldots, x_{d+2}\}\subseteq V$ the
	$d+2$ points $\{(x,g(x))\}_{x\in X}\in\R^{d+1}$ do not lie in a $d$-dimensional hyperplane, so that~\eqref{eqAaffx} cannot hold for all $x\in X$.

	We fix an enumeration $\varphi:V\to\N\setminus\{0,1\}$ and
	recall that $V$ is $(\bar c,\eps)$-uniform. For any $\rho\in(0,\eps\wedge 1]$ we consider
	$f_\rho:\R^d\to[0,\infty]$ defined by
	\begin{equation}\notag
		f_\rho(x):=\begin{cases}
			|x|^2+\rho^{\varphi(x)}\qquad&\text{ if } x\in V,\\
			\infty\qquad &\text{ otherwise.}
		\end{cases}
	\end{equation}
	For a given set $X:=\{x_1,\ldots, x_{d+2}\}\subseteq V$ consider the $d+2$ equations
	\begin{equation}\label{eqsystX}
		\mu+2x_i\cdot q =|x_i|^2+\rho^{\varphi(x_i)}\qquad\text{for } i=1,\ldots, d+2
	\end{equation}
	in the $d+1$ unknowns $(\mu,q)$.
	The affine map $T:\R^{d+1}\to\R^{d+2}$ defined by 
	$T_i(\mu,q):=\mu+2x_i\cdot q - |x_i|^2$
	has an image which is at most $d+1$ dimensional, hence contained in a set of the form $\{\Xi\in \R^{d+2}: \Xi\cdot\nu=a\}$ for some 
	$\nu\in S^{d+1}$, $a\in\R$ (which depend on $X$). If  the system~\eqref{eqsystX} has a solution, then
	\begin{equation}\notag
		\sum_{i=1}^{d+2} \nu_i \rho^{\varphi(x_i)}=a.
	\end{equation}
	As $|\nu|=1$ and the exponents are all distinct, this is a nontrivial polynomial equation in $\rho$, and has at most finitely many solutions.
	As there are countably many possible choices of the set $X\subseteq V$, 
	for all but countably many values of $\rho$ no such system has a solution. Therefore we can choose $\rho_j\searrow 0$ such that~\eqref{eqsystX} has no solution for any choice of $X$ with $X=\{x_1,\ldots,x_{d+2}\}\subseteq V$.

		Fix now an index $j$ and let $g_{\rho_j}$ be the convex envelope of $f_{\rho_j}$.  Notice that if $\rho_j$ is sufficiently small  (that we are going to assume from here on), then, as $V$ is discrete and $|x|^2$ is strictly convex,
	$$
	g_{\rho_j}(x)=|x|^2+\rho_j^{\varphi(x)}=f_{\rho_j}(x)\qquad\text{for every }x\in V.
	$$
	 Our choice of $\rho_j$ implies that for every $j$, for every choice of $X:=\{x_1, \ldots, x_{d+2}\}\subseteq V$ the
		$d+2$ points $\{(x,g_{\rho_j}(x))\}_{x\in X}\in\R^{d+1}$ do not lie in a $d$-dimensional hyperplane. Now pick $\mu,\,q$ such that
	\begin{equation}\notag
		A:=\{x: g_{\rho_j}(x)=\mu+2x\cdot q \}
	\end{equation}
	has nonempty interior (the function $g_{\rho_j}$ is CPWL, 
	by Lemma~\ref{lemmaconvcpwl} below).
 By non-degeneracy, arguing as above, $A=\conv(w)$, with $\#w=d+1$ and ${\rm Int}(A)\cap V=\emptyset$.
	We define $E_j$ as the family of those sets.
	
	Let us justify why $(V,E_j)$ is a triangulation of $\RR^d$. It is enough to show that property~\ref{deftriangeep} holds. Take then $e_1,e_2\in E_j$ (with vertices $w_1,w_2$), so that there exist two affine functions $L_1,L_2$ such that $g_{\rho_j}=L_i$ on $e_i $ and $g_{\rho_j}>L_i$ on $\RR^d\setminus e_i $, for $i=1,2$.  Assume that $\xi\in e_1\cap e_2$, so that $L_1(\xi)=g_{\rho_j}(\xi)=L_2(\xi)$. Take a minimal set $\{\zeta_1,\dots,\zeta_k\}\subseteq w_2$ with $\xi \in \conv(\{\zeta_1,\dots,\zeta_k\})$. As for every $a=1,\dots,k$, $L_2(\zeta_a)=g_{\rho_j}(\zeta_a)\ge L_1(\zeta_a)$, it follows that for every $a=1,\dots,k$, $g_{\rho_j}(\zeta_a)=L_1(\zeta_a)$ hence $\{\zeta_1,\dots,\zeta_k\}\subseteq w_1\cap w_2$.

	The conditions
	\begin{equation}\notag
		\mu+2x\cdot q = |x|^2 +\rho_j^{\varphi(x)}\ge |x|^2 \qquad\text{for all }x\in w
	\end{equation}
	and
	\begin{equation}\notag
		\mu+2x\cdot q\le |x|^2 +\rho_j^{\varphi(x)}\le |x|^2+\rho_j^2\qquad \text{for all } x\in V
	\end{equation}
	lead to
	\begin{equation}\notag
		|x-q|^2\le \mu+|q|^2\qquad \text{for all }x\in w
	\end{equation}
	and
	\begin{equation}\notag
		\rho_j^2+|x-q|^2\ge\mu+|q|^2\qquad \text{for all } x\in V.
	\end{equation}
	Therefore $w\subseteq \overline B_r(q)$, 
	and either $r\le \rho_j$ or $V\cap B_{r-\rho_j}(q)=\emptyset$, where
	$r:=\sqrt{\mu+|q|^2}$. 
	By uniformity of the grid, necessarily $r-\rho_j< \bar c \eps$, which gives $\diam(A)\le 2r<
	2\bar c\eps+2\rho_j\le
	2(\bar c+1)\eps$.
	
	For any $x\in V$, the possible choices of $e$ with $x\in v_e$ are restricted by $\diam(e)< 2(\bar c+1)\eps$, which implies $v_e\subseteq V\cap B_{2(\bar c+1)\eps}(x)$. As the grid is uniform, the latter set is finite, with a bound depending only on $\bar c$. Therefore for any $x\in V$ we can choose a subsequence of $\rho_j$ such that the set
	\begin{equation}\notag
		\{e\in E_j: x\in v_e\}
	\end{equation}
	is, after finitely many steps, constant. 
	As there are countably many $x\in V$, we can choose a common diagonal subsequence. Along this sequence, for any bounded set $K$ the set $\{e\in E_j: e\subseteq K\}$ is, after finitely many steps, constant.  Property~\ref{deftriangeep} holds for $E_j$, and therefore for those sets. Therefore we obtain a common set $E$ with all desired properties. We remark that indeed the Delaunay property follows from  the construction of $E$ and the discussion of the first part of the proof: indeed, if $e\in E$, it is easy to see that there exists an affine function coinciding with $g$ on $e$.
\end{proof}

We next present the result on the regularity of convex envelopes used above.
\begin{lemma}\label{lemmaconvcpwl}
 Let $V\subseteq\R^d$ be a uniform set of vertices, in the sense of 
 item~\ref{deftriangunif} of Definition~\ref{deftriang}. Let $f:V\to[0,\infty)$ be superlinear, in the sense that
 \begin{equation}\label{eqasssuperlinear}
  \lim_{x\in V,\  |x|\to\infty} \frac{f(x)}{|x|}=\infty.
 \end{equation}
Let $g:\R^d\to[0,\infty)$ be the convex envelope of $f$ ($f$ is extended by $\infty$ to $\R^d\setminus V$). Then $g$ is $\rm CPWL$.
Moreover,
\begin{equation}\label{convnoclosed}
	\{(x,g(x)):x\in\RR^d\}\subseteq\conv(\{(x,f(x)):x\in V\})
\end{equation}
(notice that we are \emph{not} taking the closure of the convex hull at the right hand side).
\end{lemma}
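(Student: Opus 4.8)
The plan is to first establish the representation~\eqref{convnoclosed}, from which the CPWL property will follow by a local finiteness argument. The superlinearity hypothesis~\eqref{eqasssuperlinear} is what makes the convex hull $\conv(\{(x,f(x)):x\in V\})$ in $\R^{d+1}$ ``open from above'' in the right sense: its lower boundary (the graph of $g$) is attained without passing to the closure. Concretely, I would argue as follows. Fix $x_0\in\R^d$. By Carathéodory's theorem, the epigraph point $(x_0,g(x_0))$, being in the closed convex hull of $\{(x,f(x)):x\in V\}\cup(\{0\}\times\R_{\ge 0})$ (the recession directions), lies in the convex hull of at most $d+2$ of these points, i.e.\ $(x_0,g(x_0)) = \sum_{i} \theta_i (x_i, f(x_i)) + \theta_0 (0, t)$ with $t\ge 0$, $\theta_i\ge 0$, $\sum_i\theta_i + \theta_0 = 1$, $x_i\in V$, $\sum_i \theta_i x_i = x_0$. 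Superlinearity forces this sum to be finite: if the points $x_i$ ranged over an unbounded set one could decrease the value, contradicting that $g(x_0)$ is the infimal value; more precisely, for a minimizing configuration all active $x_i$ must lie in a bounded set depending on $x_0$ (since $f(x_i)/|x_i|\to\infty$ while the average $|x_0|=|\sum\theta_i x_i|$ is controlled, a far-away vertex with positive weight would make $\sum\theta_i f(x_i)$ exceed any competitor using only nearby vertices). On that bounded set $V$ is finite by uniformity, so the infimum defining $g(x_0)$ is a minimum over a compact set of convex combinations, hence attained; and since $\theta_0 t\ge 0$ can only increase the last coordinate, at the minimizer $\theta_0 t = 0$, giving $(x_0,g(x_0))\in\conv(\{(x,f(x)):x\in V\})$. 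This proves~\eqref{convnoclosed}, and en passant that $g(x)\le f(x)$ on $V$ with equality there (equality on $V$: $(x,f(x))$ is an extreme point of the epigraph direction-wise because $|x|^2$-type growth — more simply, $f(x)$ cannot be written as a nontrivial convex combination lowering the value at $x$ while keeping the first coordinate equal to $x$, using that $x$ is not in the convex hull of $V\setminus\{x\}$, which holds since $V$ is uniform hence its points are in general enough position — actually one only needs that each $x\in V$ is an extreme point of $\conv V$ locally, which again follows from discreteness plus superlinearity of $f$).

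For the CPWL conclusion, I would use that $g$ is a finite convex function on $\R^d$ whose graph, by~\eqref{convnoclosed}, is covered by the finitely-generated-locally convex hull of a locally finite set of points in $\R^{d+1}$. The standard fact is: the convex envelope of a function that is $+\infty$ off a locally finite set and superlinear is piecewise affine, with domains of affinity being the projections of the facets of the lower convex hull of the point set $\{(x,f(x))\}$. Each such facet is a $d$-dimensional polytope (a simplex after a further subdivision is \emph{not} needed here; the lemma only claims CPWL, and CPWL functions are defined via simplexes, so I would triangulate each affinity cell into simplexes — note the ambient Definition of CPWL requires simplicial pieces). Local finiteness of this decomposition follows from uniformity of $V$: any bounded ball $B$ meets only finitely many vertices within, say, twice its radius, and superlinearity bounds how far an active vertex for a point in $B$ can be (same estimate as above, uniform over $B$), so only finitely many facets project onto cells meeting $B$. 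Continuity of $g$ is automatic (finite convex function), and piecewise affineness on these finitely-locally-many polytopes gives $g\in\mathrm{CPWL}(\R^d)$ after the triangulation step.

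The main obstacle I anticipate is making rigorous the ``active vertices stay in a bounded set'' estimate uniformly enough to get both~\eqref{convnoclosed} (pointwise) and local finiteness of the affinity decomposition (locally uniformly), and being careful that the infimum in the definition of the convex envelope is genuinely attained — this is exactly where superlinearity~\eqref{eqasssuperlinear} is indispensable and where a sloppy argument would only yield a point in the \emph{closed} convex hull, which is precisely what the parenthetical remark after~\eqref{convnoclosed} warns against. A clean way to handle it: show that for $x_0$ in a fixed ball $B_\Lambda(0)$, and any competitor convex combination representing $(x_0,s)$ with $s$ within $1$ of $g(x_0)$, every vertex carrying positive weight lies in $B_{\Lambda'}(0)$ with $\Lambda'=\Lambda'(\Lambda)$; this uses that removing a far vertex and redistributing its weight to nearby ones (possible once $|x_i|$ is large relative to the spread of the other active vertices) strictly decreases the $(d+1)$-st coordinate by superlinearity, contradicting near-optimality. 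Everything else is a routine compactness/convexity argument plus the elementary fact that the lower convex hull of finitely many points in $\R^{d+1}$ is a polyhedral complex.
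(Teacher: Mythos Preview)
Your overall strategy---prove \eqref{convnoclosed} first via Carath\'eodory and a compactness argument, then deduce CPWL from local finiteness of the lower faces---is a reasonable alternative to the paper's route, which instead truncates to $f_R := f$ on $V\cap B_R$ (and $+\infty$ elsewhere), observes that its convex envelope $g_R$ is manifestly CPWL, and then shows $g=g_R$ on $B_{r/4}$ for suitable $R=R(r)$ via a supporting-hyperplane bound: the Lipschitz constant of $g_R$ on $B_{r/4}$ controls the slope of any supporting affine $a$, and superlinearity then forces $a\le f$ on all of $V$, hence $a\le g\le g_R$. The paper obtains \eqref{convnoclosed} as a corollary of $g=g_R$, since the epigraph of $g_R$ is the convex hull of finitely many points.

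However, your ``clean way'' has a genuine gap: the claim that \emph{every} near-optimal competitor has all active vertices in a fixed ball is false. Take $d=1$, $V=\Z$, $f(n)=n^2$, $x_0=0$; then $g(0)=0$, but for any $N\ge 2$ the combination $(1-\tfrac1N)\cdot 0 + \tfrac{1}{N(N+1)}\cdot N + \tfrac{1}{N+1}\cdot(-1)$ represents $0$ with value exactly $1\le g(0)+1$ and has the far vertex $N$ active. Your redistribution could lower the value by at most $\theta_j f(x_j)\le 1$, which does not contradict near-optimality. What \emph{does} work is the compactness argument you mention in passing: restricting to at most $d+1$ terms by Carath\'eodory, along any minimizing sequence an escaping vertex $|x_j^{(n)}|\to\infty$ satisfies $\theta_j^{(n)}\to 0$ and $\theta_j^{(n)} x_j^{(n)}\to 0$ by superlinearity, so it drops out in the limit and the surviving vertices (bounded, hence eventually constant by discreteness of $V$) give an \emph{attained} optimum, proving \eqref{convnoclosed}. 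For the CPWL conclusion you still need the face through $(x_0,g(x_0))$ to have vertices in a ball independent of $x_0\in B_\Lambda$; this does not follow from the pointwise compactness argument, and the cleanest fix is precisely the paper's step (local Lipschitz bound on the finite convex function $g$ $\Rightarrow$ slope bound on supporting affines $\Rightarrow$ superlinearity excludes far contact points). Finally, your aside that $g=f$ on $V$ is false in general (take $V=\Z$, $f(0)=10$, $f(n)=n^2$ for $n\ne 0$; then $g(0)\le 1$), though it is not needed for the lemma.
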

\begin{remark}
It is easy to verify what follows.
 \begin{enumerate}[label=\roman*)]
  \item 
The fact that $V$ is uniform implies that $g$ is real-valued.
\item The assumption of superlinearity is necessary. Indeed, consider $d=2$, $V=\Z^2$, $f(x)=|x|$. 
Obviously $g(x)\ge |x|$. For any $x\in\Q^2$ there is $n\in\N\setminus\{0\}$ such that 
$xn\in \Z^2$, which implies 
$g(x)\le (1-\frac1n) f(0)+\frac1n f(xn)=|x|$,
so that $g(x)=|x|$ on $\Q^2$.
As $g$ is a real-valued convex function, it is continuous. We conclude $g(x)=|x|$ on $\R^2$, which is not $\rm CPWL$.
 \end{enumerate}
\end{remark}
\begin{proof}[Proof of Lemma~\ref{lemmaconvcpwl}]
For $r\in(0,\infty)$, we write
 \begin{equation}\notag
  f_r(x):=\begin{cases} f(x) & \text{ if } x\in V\cap B_r,\\
           \infty & \text{ otherwise,}
          \end{cases}
 \end{equation}
and let $g_r\geq g$ be the convex envelope of $f_r$. 
Since $V$ is uniform, any set $V\cap B_r$ is finite, and therefore 
$g_r$ is $\rm CPWL$ on $\conv(V\cap B_r)$, and infinity outside. If $r\ge \bar c \eps$, with
$\bar c,\,\eps>0$ the constants from item~\ref{deftriangunif} of Definition~\ref{deftriang}, the set $V\cap B_r$ is nonempty.

We shall show below that for any $r>0$ there is $R>0$ such that $g=g_R$ on $B_{r/4}$. This implies that $g$ is CPWL on $B_{r/4}$ for any $r$, and therefore the assertion. The choice of $R$ (which depends on $f$ and $r$) is done in \eqref{eqchoiceR} below.

For $r\ge \bar c\eps$ we define
 $\alpha_r:=\max f(V\cap [-r,r]^d)$. 
We first prove that if
$R/\sqrt d> r\ge 4\bar c \epsilon$ then
\begin{equation}\label{eqgub}
 g_R(x)\le \alpha_{r} \text{ for all } x\in B_{r/2}.
\end{equation}
To see this, 
let $q_1,\ldots, q_{2^d}$ denote the vertices of the cube $[-1,1]^d$. 
By uniformity of $V$, for each $i$ we can pick 
$p_i\in V\cap B_{\bar c \epsilon}((r-\bar c \epsilon)q_i)$. 
One checks that
$B_{r/2}\subseteq (r-2\bar c \epsilon)[-1,1]^2\subseteq \conv(\{p_1,\dots, p_{2^d}\})$. 
As $p_i\in V\cap [-r,r]^d\subseteq V\cap B_R$, we  have
$g_R(p_i)\le f(p_i)\le \alpha_{r}$ for all $i$, and therefore
$g_R\le \alpha_{r}$ on $B_{r/2}$, which proves \eqref{eqgub}.

We next show that, if $R$ is chosen sufficiently large, then $g_R=g$ on $B_{r/4}$.
By convexity, \eqref{eqgub}, and $g_R\ge0$ we obtain $\mathrm{Lip}(g_R;B_{r/4})\le 4\alpha_r/r$. 
As $g_R$ is CPWL in $B_{r/4}$, for any $y\in B_{r/4}$ there is an affine function $a:\R^d\to\R$ such that $y\in T_a:=\{g_R=a\}\cap B_{r/4}$ and $T_a$ has  nonempty interior. The Lipschitz bound on $g_R$ then carries over to $a$, and we obtain $|\nabla a|\le 4\alpha_{r}/r$.
By convexity of $g_R$, we have $a\le g_R$, so that $a\le f$ on $V\cap B_R$.
In order to obtain the same inequality outside $B_R$, we consider any $x$ with
$|x|\ge R\ge r$. Then, recalling $y\in T_a\subseteq B_{r/4}$,
\begin{equation}\notag
 a(x)\le a(y)+|\nabla a|\,|x-y|
 \le \alpha_r + \frac{4\alpha_{r} }r\Big(|x|+\frac r4\Big)
 \le \frac{6\alpha_{r}}{r} |x|.
\end{equation}
Finally, by \eqref{eqasssuperlinear} we can choose $R> \sqrt d r$ such that
\begin{equation}\label{eqchoiceR}
 f(x)\ge \frac{6\alpha_r}{r} |x| \hskip1cm\text{ for all } x\in V\setminus B_R.
\end{equation}
Therefore $a\le f$ everywhere, which implies
$a\le g \le g_R$, and in turn
$g=g_R$ on $T_a$ and therefore on $B_{r/4}$.

We prove now \eqref{convnoclosed}. Take $x\in\RR^d$, so that, by what proved above, $g(x)=g_R(x)$ for some $R>0$. 
	Now notice that the epigraph of $g_R$ coincides with the convex hull of the epigraph of $f_R$ (here we are using that the convex hull of the epigraph of $f_R$ is closed), so that the conclusion is easily achieved.
\end{proof}

We next investigate in more detail Delaunay triangulations such that $V$ locally  coincides with $\Z^d$ (possibly up to translations and rotations).
We show in Lemma~\ref{lemmabrzd} below that the elements necessarily are the ``natural'' ones. Before we recall some basic properties of $\Z^d$, where, as usual, for $F\in\R^{d\times d}$, $A\subseteq\R^d$, $p\in\R^d$, we set $p+FA:=\{p+Fa: a\in A\}$.
\begin{remark}\label{remarkface}
	The following hold.
	\begin{enumerate}[label=\roman*)]
		\item\label{remarkfaced} Let $R\in SO(\RR^d)$ and let $\epsilon\in (0,\infty)$. Then $\dist(x,\epsilon R \Z^d)\le \epsilon\sqrt d/2$ for any $x\in\RR^d$.
		\item\label{remarkfacev} If $v\subseteq\Z^d$, $\#v=d+1$, then
		either $v$ is contained in a $(d-1)$-dimensional affine subspace, or
		\begin{equation}\notag
			\LL^d(\conv v)\ge \frac{1}{d!}.
		\end{equation}
	\item\label{remarkfaces} If $w\subseteq\Z^d$, $\#w=d$, then
	either $w$ is contained in a $(d-2)$-dimensional affine subspace, or
	\begin{equation}\label{eqwzd}
		\calH^{d-1}(\conv w)\ge \frac{1}{(d-1)!}.
	\end{equation}
	\end{enumerate}
\end{remark}
\begin{proof}
To prove the first item, we can change coordinates to assume that $R=\rm Id$, and then, by scaling, we see that we can assume $\epsilon=1$. For each $i=1,\dots,d$ we select $z_i\in\Z$ with $|x_i-z_i|\le\frac12$, so that $z\in\Z^d$ and $$|x-z|=\Big({\sum\nolimits_{i=1}^d (x_i-z_i)^2}\Big)^{1/2}\le \sqrt d/2.$$

For the second one, by translation we can assume $0\in v$. The volume of the simplex $\conv v$ is given by $1/d!$ times the absolute value of the determinant of the matrix whose columns are the 
 vectors of $v\setminus\{0\}$. 
As each component of each vector is integer, the determinant is an integer. Hence it is either 0, or at least 1.

The proof of the third item is similar. Again, assume $0\in w$. At least one $e_i$ is not contained in the linear space generated by $w$. We apply the first assertion to $v:=w\cup\{e_i\}$, and obtain that the volume of $T:=\conv v$ is either zero or at least $1/d!$. Since the volume of $T$ is also given by $1/d$ times the area of $\conv w$ times the distance of $e_i$ to the space generated by $w$, which is at most 1 since $0\in w$, we obtain~\eqref{eqwzd}.
 \end{proof}

\begin{lemma}\label{lemmabrzd}
 Let $(V,E)$ be a triangulation of $\R^d$ with the Delaunay property and let $B_r(q)$ be a ball such that 
 $ V\cap B_r(q)= \eps R \Z^d\cap B_r(q)$, 
 for some $\eps>0$ and $R\in\SO(\RR^d)$.
 If $e\in E$ is such that
 $e\cap B_{r-\sqrt d \eps}(q)\ne\emptyset$,
then there is a unique $y\in \eps R (\Z+\frac12)^d$ such that
 $v_e\subseteq y+\eps R \{-\frac12,\frac12\}^d$, characterized by $v_e\subseteq \partial B_{\sqrt{d}/2}(y)$. 
\end{lemma}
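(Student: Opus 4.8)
The plan is to locate the unique deep hole $y\in\eps R(\Z+\tfrac12)^d$ of the grid sitting near $e$ and to recognise it as the circumcentre of $e$. Since $e$ is a nondegenerate $d$-simplex and $B_{r-\sqrt d\eps}(q)$ is open, I pick $x\in\mathrm{int}(e)\cap B_{r-\sqrt d\eps}(q)$; let $B=B_\sigma(p)$ be the open circumball provided by the Delaunay property, so $v_e\subseteq\partial B$, $\mathrm{int}(e)\subseteq B$ (as $\overline B$ is convex and $e=\conv(v_e)$) and $V\cap B=\emptyset$. Two elementary remarks drive the argument. First, strict convexity of $\overline B$: any point of $e$ that is not a vertex is a nontrivial convex combination of points of $v_e\subseteq\partial B$, hence lies in the \emph{open} ball $B$; since $\eps R\Z^d\cap B_r(q)\subseteq V$ and $V\cap B=\emptyset$, every point of $\eps R\Z^d\cap B_r(q)$ lying in $e$ must be one of the $d+1$ vertices, so in particular $\mathrm{int}(e)$ contains no point of $\eps R\Z^d\cap B_r(q)$. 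Second, the covering radius of $\eps R\Z^d$ equals $\sqrt d\eps/2$ (Remark~\ref{remarkface}), so every open ball of radius $>\sqrt d\eps/2$ contains a point of $\eps R\Z^d$; applying this to the balls $B_\rho(x)$ with $\rho\le\min(\dist(x,\partial e),\sqrt d\eps)$, which lie in $\mathrm{int}(e)\cap B_r(q)\subseteq B$, forces $\dist(x,\partial e)\le\sqrt d\eps/2$ (otherwise such a ball would produce a point of $V\cap B$).

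The crux is to upgrade this into a size bound $\diam(e)\le C(d)\eps$, so that (since $x$ is deep in the lattice region) \emph{all} vertices of $e$ become points of $\eps R\Z^d$ lying comfortably inside $B_r(q)$; morally this rules out ``sliver/wedge'' elements whose circumball is large while $e$ stays close to a hyperplane deep inside $B_r(q)$. The intended mechanism is to apply the two remarks above recursively along the face lattice of $e$: a point of $e$ deep in the region is $O(\eps)$-close to some facet $F$, on which — using strict convexity of the circumsphere restricted to $\mathrm{aff}(F)$, whose vertices lie on $\partial B$ — one repeats the argument, descending in at most $d$ steps to a vertex $v_0\in v_e$ with $|v_0-x|$ controlled (alternatively, when $V$ is known to be uniform, as in the applications of this lemma, $\diam(e)\le 2\bar c\eps$ is immediate from Remark~\ref{remdiam}). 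I expect this to be the main obstacle: one must propagate the dichotomy ``close to the current sphere / close to the boundary of the current face'' while keeping the dimensional constant small enough that $v_0$, together with its $2d$ nearest lattice neighbours $v_0\pm\eps Re_i$, lies in $B_r(q)$.

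Granting a vertex $v_0\in v_e$ which is a point of $\eps R\Z^d$ with $v_0\pm\eps Re_i\in V$ for all $i$, the conclusion follows cleanly. Expanding $|u-p|^2=|u-v_0|^2+2(u-v_0)\cdot(v_0-p)+\sigma^2$ shows $v_0+\eps Re_i\in B$ when $(Re_i)\cdot(v_0-p)<-\eps/2$ and $v_0-\eps Re_i\in B$ when $(Re_i)\cdot(v_0-p)>\eps/2$; as Delaunay forbids either from lying in $B$, we get $|(Re_i)\cdot(v_0-p)|\le\eps/2$ for every $i$, whence $\sigma^2=|v_0-p|^2=\sum_{i=1}^d\bigl((Re_i)\cdot(v_0-p)\bigr)^2\le d\eps^2/4$. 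Thus $\sigma\le\sqrt d\eps/2$, $\diam(e)\le2\sigma\le\sqrt d\eps$, and $|p-q|\le|p-x|+|x-q|<\sqrt d\eps/2+(r-\sqrt d\eps)<r-\sqrt d\eps/2$, so $B_\sigma(p)\subseteq B_r(q)$ and hence $B_\sigma(p)\cap\eps R\Z^d=\emptyset$; moreover $v_e\subseteq B_{\sqrt d\eps}(x)\subseteq B_r(q)$ consists of points of $\eps R\Z^d$. Therefore $\sigma=\dist(p,\eps R\Z^d)$ and the $d+1$ affinely independent points $v_e$ are nearest lattice points to $p$. But in the coordinates given by $(\eps R)^{-1}$ the set of nearest lattice points to $p$ is a product of singletons and two-element sets, whose affine hull has dimension equal to the number of half-integer coordinates of $(\eps R)^{-1}p$; affine independence of $d+1$ of them forces this dimension to be $d$, i.e.\ $p\in\eps R(\Z+\tfrac12)^d$, $\sigma=\sqrt d\eps/2$, and the nearest lattice points are exactly the $2^d$ cube vertices $p+\eps R\{-\tfrac12,\tfrac12\}^d$, which contain $v_e$. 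Taking $y:=p$ gives existence; since $v_e$ consists of $d+1$ affinely independent points it determines a unique circumsphere, so any $y'$ with $v_e\subseteq y'+\eps R\{-\tfrac12,\tfrac12\}^d\subseteq\partial B_{\sqrt d\eps/2}(y')$ must equal $p=y$, which also gives the stated characterisation.
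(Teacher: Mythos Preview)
Your endgame---once a vertex $v_0\in v_e\cap\eps R\Z^d$ with all $2d$ neighbours $v_0\pm\eps Re_i$ in $B_r(q)$ is in hand, the expansion of $|v_0\pm\eps Re_i-p|^2$ forces $\sigma\le\sqrt d\,\eps/2$, and then the structure of nearest lattice points pins down $p$ as a deep hole---is correct and essentially equivalent to the paper's coordinatewise argument. The gap is precisely the step you yourself flag: producing such a $v_0$. The recursive face-lattice argument does not close: being $O(\eps)$-close to a facet $F$ says nothing about distance to a vertex, and once you pass to $\mathrm{aff}(F)$ the lattice $\eps R\Z^d$ need not meet $\mathrm{aff}(F)$ at all, so the covering-radius step has no analogue there and you cannot iterate. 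The uniformity fallback (Remark~\ref{remdiam}) adds a hypothesis the lemma does not have and yields only $\diam(e)\le 2\bar c\,\eps$; with the margin $r-\sqrt d\,\eps$ in the hypothesis this is far too weak to keep $v_0$ and its $2d$ neighbours inside $B_r(q)$ unless $2\bar c+1\le\sqrt d$, which is never the case in the applications.

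The paper bypasses all of this by bounding the circumradius $\rho$ directly from the two pieces of information one actually has. After scaling to $\eps=1$, $R=\mathrm{Id}$, Delaunay gives $B_\rho(y)\cap\Z^d\cap B_r(q)=\emptyset$, and the intersection hypothesis gives $|q-y|<r-\sqrt d+\rho$. If $\rho>\sqrt d/2$, set $\rho':=\min\{\rho,\,r,\,\tfrac12(r+\rho-|q-y|)\}$; each of the three candidates exceeds $\sqrt d/2$, and from $|q-y|\le(r-\rho')+(\rho-\rho')$ one finds $y'$ with $B_{\rho'}(y')\subseteq B_r(q)\cap B_\rho(y)$. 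Since $\rho'>\sqrt d/2$ this ball meets $\Z^d$, contradicting the first line. Hence $\rho\le\sqrt d/2$, which immediately gives $\overline{B_\rho(y)}\subseteq B_r(q)$, so $v_e\subseteq\Z^d$ and $B_\rho(y)\cap\Z^d=\emptyset$, and your (or the paper's) endgame applies. This ball-intersection trick is the idea your argument is missing.
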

We remark that  the assumption
$e\cap B_{r-\sqrt d \eps}(q)\ne\emptyset$ implies $r>\sqrt d \eps$.
\begin{proof}
By scaling and a change of coordinates it suffices to consider the case $\eps=1$, $R=\mathrm{Id}$.
Let $e$ be as in the statement, and let $B_\rho(y)$ be such that $v_e\subseteq\partial B_\rho(y)$. By the Delaunay property, using also the assumption in force here, \begin{equation}\label{csdacas}
	B_\rho(y)\cap \Z^d\cap B_r(q)\subseteq B_\rho(y)\cap V=\emptyset;
\end{equation}
by $e\cap B_{r-\sqrt d}(q)\ne\emptyset$ and $e\subseteq\overline B_\rho(y)$
we have
\begin{equation}\label{vcdsona}
	|q-y|<r-\sqrt d+\rho\qquad \text{(and $r>\sqrt d$)}.
\end{equation}
We want to show now that $\rho=\sqrt{d}/2$. 

First, we assume (by contradiction) that 
$\rho>\sqrt d/2$. We show that this possibility cannot occur. We define $\rho':=\min\{\rho,r, (r+\rho-|q-y|)/2\}$.
Condition \eqref{vcdsona} implies $\rho'>\sqrt d/2$ and the definition of $\rho'$ gives
$$|q-y|\le- 2\rho'+r+\rho=( r-\rho')+(\rho-\rho'),$$ so that
there exists $y'\in \overline B_{r-\rho'}(q)\cap \overline  B_{\rho-\rho'}(y)$ (we adopt the convention that $\overline B_0(x)=\{x\}$). The point $y'$ obeys then
$B_{\rho'}(y')\subseteq B_r(q)\cap B_\rho(y)$
%
and therefore, recalling \eqref{csdacas},
$B_{\rho'}(y')\cap \Z^d=\emptyset$, which contradicts $\rho'>\sqrt d/2$ (Remark \ref{remarkface}(\ref{remarkfaced}).   
 
Hence $\rho\le \sqrt d/2$, so that, using also \eqref{vcdsona}, $\overline B_\rho(y)\subseteq B_r(q)$, and therefore, recalling \eqref{csdacas}, $B_\rho(y)\cap\Z^d=\emptyset$
 and 
$v_e\subseteq\Z^d$.
We define $z\in\Z^d$ by choosing for each $i$ a component $z_i\in\Z$ which minimizes
$|z_i-y_i|$, notice that $|z_i-y_i|\le 1/2$. As $B_\rho(y)\cap\Z^d=\emptyset$, we have
$|z-y|\ge\rho$.
By minimality of $z_i$, for any $x\in v_e\subseteq\Z^d$ and any $i$ we have 
$|x_i-y_i|\ge |z_i-y_i|$, which by $x\in \partial B_\rho(y)$ implies $\rho=|x-y|\ge|z-y|\ge\rho$. Therefore, equality holds  throughout and
\begin{equation}\notag
 \rho=|x-y|=|z-y| \text{ and } |x_i-y_i|=|z_i-y_i| \qquad\text{for every $i\in\{1,\ldots, d\}$ and $x\in v_e$}.
\end{equation}
Assume that there exists $i$ with $|z_i-y_i|<\frac12$, so that $|z_i-x_i|<1$ for all $x\in v_e$. As $x_i, z_i\in\Z$, this implies $x_i=z_i$ for all   $x\in v_e$, hence $v_e$ is contained in a $(d-1)$-dimensional subspace of $\R^d$. As $e$ is non degenerate (i.e.\ has non empty interior), this is impossible, hence $|z_i-y_i|=\frac12$ for all $i$. We conclude that $\rho=\sqrt d/2$ and then
$v_e\subseteq y+\{-\frac12,\frac12\}^d$, which also implies the membership of $y$ to $(\mathbb{Z}+1/2)^d$ by $v_e\subseteq\Z^d$.
\end{proof}

\subsection{Construction of the triangulation}
\label{secconstrtriang}

\newcommand\Cgrid{C_\mathrm G}
\newcommand\Cface{C_\mathrm F}
We write $Q_\ell(x):=x+(-\ell/2,\ell/2)^d$ and $Q_\ell:=Q_\ell(0)$. Notice the factor $1/2$, i.e.\ $\ell$ is the length of the edge of the open cube $Q_\ell(x)$.

Aim of this section is to prove the following (see Figure~\ref{fig1} for an illustration):
\begin{theorem}\label{theogridrot}
For any $d\ge 2$ there is $\Cgrid=\Cgrid(d)$ with the following property.

 Let $0<\eps<\delta$ with $\delta \ge \Cgrid \eps$, and let $R:\delta\Z^d\to\SO(\RR^d)$. Then there is 
 a triangulation $(V,E)$ of $\R^d$, in the sense of Definition~\ref{deftriang}, with the following properties:
 \begin{enumerate}[label=\roman*)]
  \item\label{theogridrotreg} Regularity:
The triangulation has the Delaunay property
(property~\ref{deftriangdel}), 
is $\Cgrid$-non degenerate (property~\ref{deftriangreg}), and is $(\Cgrid,\eps)$-uniform (property~\ref{deftriangunif}).
  \item\label{theogridrotor} Orientation: for each $z\in \delta \Z^d$ 
  one has $V\cap Q_{\delta-\Cgrid\eps}(z)=
  \eps R(z)\Z^d\cap Q_{\delta-\Cgrid\eps}(z)$.
\end{enumerate} 
\end{theorem}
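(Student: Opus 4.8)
The plan is to build the triangulation region by region, following the combinatorial structure of the cubic lattice $\delta\Z^d$, and then to glue the pieces together using the machinery of Section~\ref{subsecgenprop}. First I would fix a large scale $\delta$ and a small scale $\eps$ and, for each $z\in\delta\Z^d$, declare that inside the central region $Q_{\delta-\Cgrid\eps}(z)$ the vertex set $V$ should coincide with the rotated lattice $\eps R(z)\Z^d$. The difficulty is that two neighbouring cubes $Q_\delta(z)$ and $Q_\delta(z')$ carry different rotations $R(z)$ and $R(z')$, so the two locally prescribed lattices do not match on the common interface; one must decide what $V$ looks like in the \say{buffer slabs} of width $\sim\Cgrid\eps$ around the boundaries $\partial Q_\delta(z)$. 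I would therefore first specify $V$ as follows: $V$ equals $\eps R(z)\Z^d$ on a slightly shrunk cube $Q_{\delta-\Cgrid\eps}(z)$ for each $z$, and on the remaining \say{transition set} $V$ is chosen to be any $(\bar c,\eps)$-uniform set of points which, on the one hand, is compatible with the two (or up to $2^d$, near corners) neighbouring rotated lattices near their respective faces, and, on the other hand, keeps all mutual distances $\gtrsim\eps$ and leaves no $\eps$-ball empty. Such a set exists: one can, for instance, take the union of the truncated rotated lattices and then thin it out greedily wherever two points are closer than $\eps/\bar c$, filling any created gap larger than $\bar c\eps$ by inserting a point; since all the lattices involved have the same length scale $\eps$ and $\delta\ge\Cgrid\eps$ with $\Cgrid$ large, finitely many such corrections per cube suffice and yield a globally $(\bar c,\eps)$-uniform $V$ with the prescribed behaviour on the shrunk cubes. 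This is where the constant $\Cgrid(d)$ gets fixed, and verifying uniformity of the glued vertex set is the first technical point.

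Next I would apply Lemma~\ref{lemmaconstrE} to this $V$ to obtain a set $E$ such that $(V,E)$ is a triangulation of $\R^d$ with the Delaunay property~\ref{deftriangdel}; this immediately gives part~\ref{theogridrotreg} except for non-degeneracy. For the non-degeneracy estimate~\ref{deftriangreg}, I would combine Remark~\ref{remdiam} (which, from the Delaunay property and $(\bar c,\eps)$-uniformity, gives $\diam(e)\le 2\bar c\eps$ for every $e\in E$) with a lower bound on $\LL^d(e)$. The diameter bound restricts the vertices of any $e$ to lie in $V\cap B_{2\bar c\eps}(x)$ for $x\in v_e$; so $v_e$ meets at most two neighbouring rotated lattices. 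If $v_e$ lies in a single rotated lattice $\eps R(z)\Z^d$, then $v_e$ is $\eps R(z)$ times a $(d+1)$-subset of $\Z^d$ which, being the vertex set of a genuine simplex, is not contained in a hyperplane, so by Remark~\ref{remarkface}\ref{remarkfacev} we get $\LL^d(e)\ge\eps^d/d!$, hence $(\diam e)^d\le(2\bar c)^d d!\,\LL^d(e)$. In the remaining case $v_e$ straddles two cubes inside a buffer slab; here the edge vectors of $e$ still have length in a fixed range $[c\eps,2\bar c\eps]$ (by uniformity and Remark~\ref{remdiam}), and since $e$ has nonempty interior the $d$ edge vectors from any fixed vertex are linearly independent, so a direct volume/Gram-determinant estimate — using that everything lives at scale $\eps$ — gives $\LL^d(e)\ge c'(d)\eps^d$. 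This yields $\Cgrid$-non-degeneracy after enlarging $\Cgrid$ if necessary, and completes~\ref{theogridrotreg}.

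It remains to check the orientation property~\ref{theogridrotor}: for each $z\in\delta\Z^d$ one must show $V\cap Q_{\delta-\Cgrid\eps}(z)=\eps R(z)\Z^d\cap Q_{\delta-\Cgrid\eps}(z)$. But this is exactly how $V$ was defined on the shrunk cubes, so there is nothing more to prove here — the only subtlety is to make sure, when fixing $\Cgrid$, that the buffer slabs of width $\Cgrid\eps$ genuinely separate the shrunk cubes of neighbouring $z$'s and that no correction point introduced in a buffer slab ever falls inside some $Q_{\delta-\Cgrid\eps}(z)$; this is a bookkeeping constraint on $\Cgrid$ and is automatic once $\Cgrid$ exceeds a dimensional constant. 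I expect the main obstacle to be the first step — constructing the glued vertex set $V$ so that it is simultaneously globally $(\bar c,\eps)$-uniform, agrees with each rotated lattice on the shrunk cube, and (crucially) is compatible near each face so that when Lemma~\ref{lemmaconstrE} is applied the resulting Delaunay elements near a face of $Q_\delta(z)$ are forced, via Lemma~\ref{lemmabrzd}, to be the \say{natural} cubic-lattice simplexes of $\eps R(z)\Z^d$, thereby guaranteeing that the prescribed lattice is not disturbed by Delaunay surgery coming from the other side. Getting the slab widths and the uniformity constants to close consistently — i.e.\ pinning down $\Cgrid(d)$ — is the real content of the argument; the rest is an assembly of Lemma~\ref{lemmaconstrE}, Remark~\ref{remdiam}, Lemma~\ref{lemmabrzd} and Remark~\ref{remarkface}.
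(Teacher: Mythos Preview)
Your overall architecture --- build $V$ globally, apply Lemma~\ref{lemmaconstrE} to get a Delaunay triangulation, then verify uniformity and non-degeneracy --- matches the paper. But there is a genuine gap in your non-degeneracy argument, and a structural difference worth noting.

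\medskip
\textbf{The gap.} For simplexes $e$ whose vertex set straddles the buffer slab you write: \say{the edge vectors of $e$ still have length in a fixed range $[c\eps,2\bar c\eps]$ \ldots\ and since $e$ has nonempty interior the $d$ edge vectors from any fixed vertex are linearly independent, so a direct volume/Gram-determinant estimate \ldots\ gives $\LL^d(e)\ge c'(d)\eps^d$.} This is false. Bounded edge lengths together with linear independence give only $\LL^d(e)>0$, not a \emph{uniform} lower bound: a simplex can have all edges of length $\sim\eps$ and yet be arbitrarily thin (take $(0,0),(1,0),(\tfrac12,t)$ in $\R^2$ with $t\to0$). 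The Gram determinant can vanish while the diagonal entries stay bounded. This is precisely the difficulty, and it is not resolved by uniformity of $V$ or by the Delaunay property alone. In the paper this is handled by Lemma~\ref{lemmaboundarylayer}, which constructs the transition points $\Vmid$ \emph{one at a time}, at each stage choosing the new point outside a controlled union of thin slabs so that~\eqref{eqestldv} and~\eqref{eqesthdm1w} propagate; the volume lower bound is thus built into the vertex set by design, not deduced a posteriori. Your greedy thin-out/fill-in description of the buffer points gives no control of this kind.

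\medskip
\textbf{A structural difference.} You propose to transition directly between two (or up to $2^d$) rotated lattices $\eps R(z)\Z^d$ across each face/corner. The paper instead introduces a \emph{common} unrotated background lattice $\eps\Z^d$ on the complement of slightly shrunk cubes, and inside each cube transitions from $\eps\Z^d$ on $\partial Q_{M\eps}(z)$ to $\eps R(z)\Z^d$ in the interior, independently of all other cubes. This decoupling is what makes Lemma~\ref{lemmaboundarylayer} a local, self-contained statement (one cube, one rotation) and avoids the combinatorics of many rotations meeting at a corner. Your direct-gluing route is not impossible in principle, but it multiplies the cases in the non-degeneracy analysis and you would still need a Lemma~\ref{lemmaboundarylayer}-type construction for each interface.
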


\begin{figure}
\begin{center}
 \includegraphics[width=7cm]{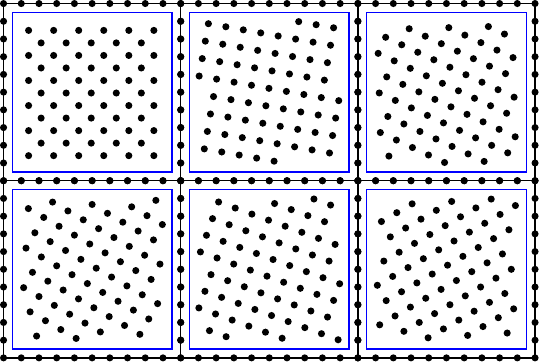} 
\end{center}
 \caption{Sketch of the set of vertices $V$ built in Theorem~\ref{theogridrot}. The blue squares indicate the irregular regions where $\Vmid$ is used.}
 \label{fig1}
\end{figure}

We start by proving 
that in a single cube we can construct a set of vertices $V$ which coincides with $\eps\Z^d$ on the boundary, with a rotation of the same lattice inside, and which is uniform and non-degenerate, in a sense made precise in the statement below. This will then be used to prove Theorem~\ref{theogridrot}.
\begin{lemma}\label{lemmaboundarylayer}
Let $z\in \R^d$, $\eps>0$, $R\in \SO(\RR^d)$, $M\in\N$ with $M\ge 6+2 d$.
Then there is $V\subseteq\R^d$
with the following properties:
\begin{enumerate}[label=\rm\roman*)]
 \item\label{lemmaboundarylayorient} Orientation: $V\setminus Q_{M\eps}(z)=\eps\Z^d\setminus Q_{M\eps}(z)$ and $V\cap Q_{(M-2)\eps}(z)=R\eps\Z^d\cap Q_{(M-2)\eps}(z)$;
 \item\label{lemmaboundarylayerunif} $(2d,\epsilon)$-uniformity: 
 for any $q\in\R^d$ we have 
 $B_{2d\eps}(q)\cap V\ne\emptyset$; for any
 $x\ne y\in V$ we have $|x-y|\ge\eps/(2d)$;
 \item\label{lemmaboundarylayernond} Non-degeneracy:
 There is $C'=C'(d)$ such that if
 $v\subseteq V$, $\#v=d+1$, $v$ is not contained
 in a $(d-1)$-dimensional affine subspace, and there is
 a ball $B_r(y)$ with 
 $v\subseteq \partial B_{r}(y)$, 
 $B_r(y)\cap V=\emptyset$,
 then $\LL^d(\conv v)\ge \eps^d/C'$.
 \end{enumerate}
\end{lemma}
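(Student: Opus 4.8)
The idea is to build $V$ explicitly as a union of three pieces, glued along cubical shells. Write $Q^{\mathrm{out}}:=\eps\Z^d\setminus Q_{(M-1)\eps}(z)$ for the outer lattice, $Q^{\mathrm{in}}:=R\eps\Z^d\cap Q_{(M-3)\eps}(z)$ for the inner rotated lattice, and then I need a buffer set $V_{\mathrm{mid}}$ living in the shell $Q_{(M-1)\eps}(z)\setminus \overline{Q_{(M-3)\eps}(z)}$ that interpolates between the two. Since the two prescribed pieces are already $\eps$-separated internally and $M\ge 6+2d$ leaves a shell of width at least $2\eps$ (in fact we can afford $2$ full layers of cubes on each side of the transition), the whole construction reduces to: (a) placing points in that annular shell so that the union is $(2d,\eps)$-uniform, and (b) checking the non-degeneracy property~\ref{lemmaboundarylayernond} for empty-circumsphere simplices.

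For step (a), I would fill the shell with a "stratified" point cloud: in the portion near the outer boundary keep the axis-aligned lattice $\eps\Z^d$, near the inner boundary keep $R\eps\Z^d$, and in between take, on each of finitely many parallel cubical shells at spacing $\eps$, a perturbed copy of a lattice that rotates gradually from $\Id$ to $R$ — but crucially each individual shell is a small perturbation of *some* rotated lattice, so points stay $\ge \eps/(2d)$ apart, and no point is farther than $2d\eps$ from any $q$ because consecutive shells are within $\eps$ of each other and each shell is itself $\eps\sqrt d/2$-dense in its hyperplane (Remark~\ref{remarkface}\ref{remarkfaced}). The constants $2d$ are generous and absorb the perturbation; the verification is a routine triangle-inequality bookkeeping, using $M\ge 6+2d$ only to guarantee the shell is thick enough to contain the required transition layers.

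For step (b), suppose $v\subseteq V$ with $\#v=d+1$, not contained in a hyperplane, and $v\subseteq\partial B_r(y)$ with $B_r(y)\cap V=\emptyset$. First, $(2d,\eps)$-uniformity already bounds $r$: the empty ball cannot contain a point of $V$, so $r\le 2d\eps$, whence $\diam(\conv v)\le 4d\eps$. Now the lower bound on $\LL^d(\conv v)$: if all $d+1$ vertices lie in the region where $V$ is exactly $\eps\Z^d$ (resp.\ exactly $R\eps\Z^d$), then $\conv v$ is a non-degenerate simplex with vertices in a rescaled unimodular lattice, so $\LL^d(\conv v)\ge \eps^d/d!$ by Remark~\ref{remarkface}\ref{remarkfacev}. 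The remaining case is when $v$ meets the transition shell or straddles it; here I cannot appeal to a single lattice, so instead I use that each vertex lies on one of finitely many hyperplanes (the cubical shells) at mutual spacing $\ge c(d)\eps$ and within each such hyperplane the points are $\ge \eps/(2d)$-separated. Since the $d+1$ points are not coplanar, at least two of the shell-hyperplanes are used; picking $d$ vertices spanning a $(d-1)$-face with at least two of them on distinct hyperplanes gives that face positive $(d-1)$-volume bounded below (again a lattice-separation argument in the worst-case hyperplane, analogous to Remark~\ref{remarkface}\ref{remarkfaces}), and the $(d+1)$-st vertex is at distance $\ge c(d)\eps$ from the affine hull of that face because it lies on yet another hyperplane or is separated within one. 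Multiplying gives $\LL^d(\conv v)\ge \eps^d/C'$.

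**Main obstacle.** The delicate point is step (b) in the transition shell: away from the pure-lattice regions I have no group structure, so the volume lower bound must be extracted purely from the quantitative separation "different shell-hyperplanes are $\ge c(d)\eps$ apart and points within a hyperplane are $\ge\eps/(2d)$ apart." This forces the transition construction in step (a) to be organized along *flat parallel hyperplanes* (not an amorphous cloud), so that every simplex has a controlled "base $\times$ height" decomposition; arranging the gradual rotation $\Id\rightsquigarrow R$ while keeping this layered structure — and while matching $\eps\Z^d$ on the outside and $R\eps\Z^d$ on the inside — is the technical heart of the lemma. Everything else (the uniformity bound, the bound $r\le 2d\eps$, and the pure-lattice case) is elementary.
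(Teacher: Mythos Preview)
Your outline has a genuine gap in step~(b). The assertion that ``the $(d+1)$-st vertex is at distance $\ge c(d)\eps$ from the affine hull of that face because it lies on yet another hyperplane or is separated within one'' is not justified and is in general false: lying on a parallel hyperplane at distance $c\eps$ says nothing about the distance to the affine hull of a face whose span is transverse to the layering. In $\R^3$ one can place three well-separated points on the plane $z=0$ and a fourth on the unit sphere at height $\sin\phi$; all four lie on the unit sphere, pairwise distances stay bounded below by $1$, yet the volume is $\asymp\phi\to 0$. More to the point, within your own layered setup, two points on layer $z=0$ aligned with the $x$-axis together with two points on layer $z=1$ whose difference is a \emph{slightly rotated} unit lattice vector $(\cos\theta,\sin\theta)$ give a tetrahedron of volume $\asymp|\sin\theta|$; the only thing preventing this from being a Delaunay simplex is that its circumradius blows up as $\theta\to 0$, and you never invoke or prove that mechanism. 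So the base$\times$height decomposition, as stated, does not give the bound, and the actual obstruction (small volume forces large circumradius in your layered model) is exactly the hard part you would still need to establish.

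There is a second gap in step~(a): putting a ``perturbed copy of a lattice that rotates gradually from $\Id$ to $R$'' on a $(d-1)$-dimensional cubical shell is not well-defined. A rotated $d$-lattice intersected with a hyperplane (let alone a cube boundary) need not be a lattice, need not be $\eps\sqrt d/2$-dense in that hyperplane, and need not match $\eps\Z^d$ or $R\eps\Z^d$ at the endpoints of the interpolation. The matching requirement at the two ends is precisely what makes this delicate.

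The paper's proof avoids both difficulties by a completely different mechanism. The middle set $\Vmid$ is \emph{not} multi-layered with a gradual rotation; it is a single layer of points near $\partial Q_{(M-1)\eps}(z)$, obtained by perturbing a fixed reference grid $U_\eps\subseteq\partial Q_{(M-1)\eps}(z)$. The perturbations $z_j\in B_{\eps/(4d)}(u_j)$ are chosen \emph{iteratively in general position}: at each step there are only boundedly many ``admissible faces'' $w$ (sets of $d$ already-placed points spanning a $(d-1)$-plane) within reach, and $z_j$ is selected from a set of positive measure so that $|(z_j-p)\cdot\nu_w|\ge\gamma\eps$ for each such $w$. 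Non-degeneracy is then proved by an induction on the number of $\Vmid$-points appearing in a face, using the formula $\LL^d(\conv v)=\frac1d|(z_j-p)\cdot\nu_w|\,\HH^{d-1}(\conv w)$ to propagate the lower bound. The density of $\Vmid$ on $\partial Q_{(M-1)\eps}(z)$ is also used to show that no empty-circumsphere simplex can contain vertices from both $\Vin$ and $\Vout$, which reduces the non-degeneracy check to simplices of type $\Vmid\cup\Vin$ or $\Vmid\cup\Vout$ only. None of this uses any interpolation of the rotation; the key idea you are missing is the generic-choice/induction argument.
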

\begin{figure}
\begin{center}
 \includegraphics[width=7cm]{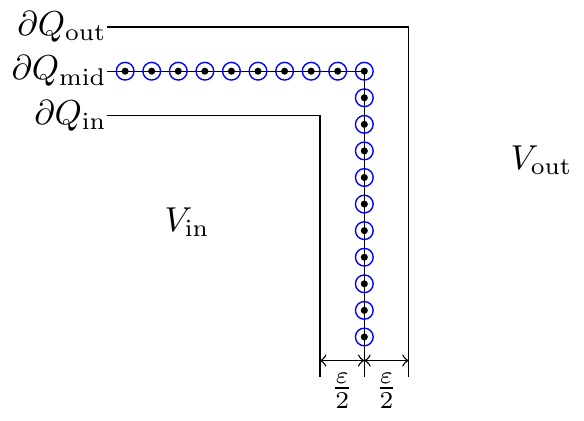}
\end{center}
\caption{Sketch of the boundary region as considered in Lemma~\ref{lemmaboundarylayer}.}
 \label{fig2}
\end{figure}

\begin{proof}We divide the proof in several steps.
	\medskip
	\\\textbf{Step 1}: general setting.
To simplify notation we denote by
$\Qout:=Q_{M\eps}(z)$ the outer cube, by $\Qin:=Q_{(M-2)\eps}(z)$ the inner cube, and by $\Qmid:=Q_{(M-1)\eps}(z)$ the intermediate one (see Figure~\ref{fig2}).
 We set $\Vout:=
 \eps\Z^d\setminus \Qout$; $\Vin:= R\eps\Z^d\cap\overline\Qin$, and shall construct below a finite set
 $\Vmid\subseteq Q_{(M-\frac12)\eps}(z)\setminus Q_{(M-\frac32)\eps}(z)$
 such that $$V:=\Vin\cup\Vout\cup\Vmid$$ has the desired properties.
The property~\ref{lemmaboundarylayorient} is true for any choice of $\Vmid$. Next we deal with~\ref{lemmaboundarylayerunif}, and leave the more delicate treatment of~\ref{lemmaboundarylayernond} at the end.
 
We show that for any $q\in \R^d$ one has $B_{2d\eps}(q)\cap (\Vin\cup \Vout)\ne \emptyset$.
Consider first the case $q\in \Qmid$. 
Let $q'$ be the point of $\overline Q_{(M-2-\sqrt d)\eps}(z)$ closest to $q$. This implies\begin{equation}\label{csadascs}
	 |q-q'|
\le\frac12\sqrt d(1+\sqrt d)\eps 
\end{equation}and
$B_{\sqrt d\eps/2}(q')\subseteq \Qin$. 
By  Remark~\ref{remarkface}, we can take $p\in R\eps \Z^d\cap 
\overline B_{\sqrt d\eps/2}(q')\subseteq\Vin$. 
Since by \eqref{csadascs}
$$
2 d\eps > |q-q'|+\sqrt d\eps/2
$$ we have
$p\in \overline B_{\sqrt d\eps/2}(q')\subseteq B_{2d\eps}(q)$, and the first assertion in~\ref{lemmaboundarylayerunif} is proved  in this case.
In the case $q\not\in\Qmid$ we argue similarly, projecting onto $\R^d\setminus Q_{(M+\sqrt d)\eps}(z)$,  with $\R^d\setminus \Qout$ instead of $\overlineQin$.
Therefore the first assertion in~\ref{lemmaboundarylayerunif} is true for any choice of $\Vmid$.

 
 It remains to choose $\Vmid$ so that the property $|x-y|\ge \eps/(2d)$ for all $x\ne y\in V$ (i.e.\ the second assertion in~\ref{lemmaboundarylayerunif}) is preserved, and~\ref{lemmaboundarylayernond}  holds. 
 In order to understand the strategy (cf.\ \ref{lemmaboundarylayernond}), consider a set 
 $v$ and a ball $B_r(y)$ such that
 \begin{equation}\label{eqvproppf}
\text{$v\subseteq V$ with $\#v=d+1$, 
 $v\subseteq\partial B_r(y)$, $V\cap B_r(y)=\emptyset$.   }
 \end{equation}
The construction strategy of $\Vmid$ then will ensure that:
 \begin{enumerate}[label=(\alph*)]
  \item\label{enumpropina} 
sets  $v$ as in~\eqref{eqvproppf} cannot contain elements of both $\Vin$ and $\Vout$;
  \item\label{enumpropinb} for any choice of $v$ as in~\eqref{eqvproppf}, with additionally $v\subseteq \Vin\cup \Vmid$
  or $v\subseteq \Vout\cup \Vmid$,
  is either contained in a $(d-1)$-dimensional affine subspace or obeys $\LL^d(\conv v)\ge \eps^d/C'$.
 \end{enumerate}
\medskip
\textbf{Step 2}: construction of $U_\eps$. We show here that there is a finite set $U_\eps\subseteq \partial\Qmid$ such that if the set $\Vmid$ is constructed picking exactly one point  $z$ of each $B_{\eps/(4d)}(u)$, for $u\in U_\eps$, then~\ref{enumpropina} 
and the second assertion in~\ref{lemmaboundarylayerunif} hold. The specific choice of the points $z$
will be done in Step 3 to ensure~\ref{enumpropinb} of (and hence~\ref{lemmaboundarylayernond}, by~\ref{enumpropina}).
 

We let $U_\eps:=\partial\Qmid\cap(\frac{1}{d}\eps\Z^d+p)$, where $p:=z-\frac{M-1}2\eps\sum_i e_i$ is a vertex of $\Qmid$. 
The shift $p$ is chosen so that the set is nonempty; we recall that $\Qmid$ is a cube of side length $(M-1)\eps\in\eps\Z$, but the centre $z$ is a generic point in $\R^d$.

Assume now that $\Vmid$ is chosen so that
it contains exactly one point of each $ B_{\eps/(4d)}(u)$, for $u\in U_\eps$.
We claim that then $V$ satisfies also the second assertion in~\ref{lemmaboundarylayerunif}. Let indeed $x,y\in V$,
$x\ne y$. 
If both are in $\Vin$, or both in $\Vout$, then $|x-y|\ge\eps$. 
If both are in $\Vmid$, then 
there are $u_x\ne u_y\in U_\eps$ with 
$|u_x-x|+|u_y-y|\le \eps/(2d)$. 
As
$u_x-u_y\in 
\frac{1}{d}\eps\Z^d\setminus\{0\}$, we obtain
$$|x-y|\ge |u_x-u_y|-|u_x-x|-|u_y-y|\ge 
\eps/(2d).$$
In the other cases, we use $$\dist(\Vout,\Vmid)\ge \dist(\partial\Qout,\partial\Qmid)-\eps/(4d)=\epsilon/2-\epsilon/(4 d)\ge \epsilon/4$$
and similarly $\dist(\Vin,\Vmid)\ge\eps/4$ to conclude.
This proves the second assertion in~\ref{lemmaboundarylayerunif}.

We finally check that~\ref{enumpropina} holds.
Let $v\subseteq V$ be as in~\eqref{eqvproppf}. 
Assume by contradiction that $v$
contains elements of both $\Vin$ and $\Vout$, then the sphere $\partial B_r(y)$ 
intersects both $\partial \Qout$ and $\partial \Qin$. We show that there exists $x'\in \partial \Qmid$ such that
$B_{\eps/2}(x')\subseteq B_r(y)$. 
Assume first $y\in \Qmid$. Let $y'\in  \partial B_r(y)\cap\partial\Qout$, and choose $x'\in[y,y']\cap \partial\Qmid$. Then
$|x'-y'|\ge\eps/2$, so that
$$|x'-y|=|y-y'|-|x'-y'|\le  r-\eps/2$$ 
and
$B_{\eps/2}(x')\subseteq B_r(y)$. If instead $y\not\in \Qmid$, we select $y'\in \partial B_r(y)\cap \partial\Qin$, and proceed analogously. 
Let $x$ be the point in $U_\eps$ closest to $x'$. 
As every component $x_i$ is the element of
$\frac{1}{d}\eps\Z+p_i$ 
 closest 
to $x_i'$,  we have
$|x-x'|\le \sqrt d\eps/(2d)=\eps/(2\sqrt d)$. 
As $\frac12> \frac1{4d}+\frac{1}{2\sqrt d}$, 
we obtain
$B_{\eps/(4d)}(x)\subseteq B_{\eps/2}(x')\subseteq B_r(y)$.
As $x\in U_\eps$, there is a point of $\Vmid$ in $B_{\eps/(4d)}(x)$,
which contradicts the condition $V\cap B_r(y)=\emptyset$ stated in~\eqref{eqvproppf}.
Therefore this cannot happen, and hence~\ref{enumpropina} holds.
\medskip
\\\textbf{Step 3:} choice of the elements of $\Vmid$.
We write 
$\{u_1,\ldots, u_J\}:=U_\eps$ and
iteratively for every $j$ 
 pick a point  $z_j\in B_{\eps/(4d)}(u_j)$ which ensures~\ref{enumpropinb}. 
 We collect in $\Vmid^j:=\{z_1, \ldots, z_j\}$ the points chosen in the first $j$ steps, and at the end we will use $\Vmid:=\Vmid^J$. Fix 
 \begin{equation}\label{eq:luigi81}
\ell:= 1+2d,
 \end{equation} the reason for this specific choice will be clear later.

An admissible 
set of vertices at stage $j$ is a set $v$ with $\#v=d+1$ such that
there is
 $q\in \partial \Qmid$ with $v\subseteq B_{\ell\eps}(q)$,  $\LL^d(\conv v)>0$, and either $v\subseteq \Vmid^j\cup \Vin$
 or $v\subseteq \Vmid^j\cup \Vout$.

An admissible face at stage $j$ is a set $w$ with $\#w=d$ such that
there is
 $q\in \partial \Qmid$ with $w\subseteq B_{\ell\eps}(q)$, 
 $\calH^{d-1}(\conv w)>0$, and either $w\subseteq \Vmid^j\cup \Vin$
 or $w\subseteq \Vmid^j\cup \Vout$.
 We denote by $N_w:=\#(w\cap \Vmid^j)$  the number of items of $w$ in $\Vmid^j$, clearly $N_w\le d$.
  
We intend to 
show that there are $\alpha,\,\beta,\,\gamma,\,{C_F}>0$ (depending only on $d$) such that we can 
choose $z_j\in B_{\eps/(4d)}(u_j)$ iteratively with the following two properties:
\begin{enumerate}[label=\roman*)]
\item If $v$ is an admissible set of vertices at stage $j$, then
\begin{equation}\label{eqestldv}
 \LL^d(\conv v) \ge \beta\eps^{d}.
\end{equation}
 \item If $w$ is an admissible face at stage $j$, then
\begin{equation}\label{eqesthdm1w}
 \calH^{d-1}(\conv w) \ge \frac{\alpha^{N_w}}{\Cface}\eps^{d-1}.
\end{equation}
\end{enumerate}
The key to the choice of $z_j$, which eventually leads to~\eqref{eqestldv} at stage $j$ building upon \eqref{eqesthdm1w} at stage $j-1$, is the following geometric observation.
If $v$ is an admissible set of vertices at stage $j$, 
and it contains the point $z_j$, then $w:=v\setminus\{z_j\}$ is  an admissible face at stage $j-1$ and for any $q\in w$ we have
\begin{equation}\label{eqvolvw}
 \LL^d(\conv v ) = \frac1d|(z_j-q)\cdot\nu_w| \calH^{d-1}(\conv w)
\end{equation}
where $\nu_w$ is a unit normal to the affine space generated by $w$. The factor $\calH^{d-1}(\conv w)$ will be estimated via~\eqref{eqesthdm1w} at stage $j-1$, the choice of $z_j$ needs to ensure that the first factor is not too small, for any possible choice of $w$.

Now we start choosing $z_1,\ldots,z_J$. As stated before, we proceed by iteration. Assume that we have already chosen $z_1,\ldots,z_{j-1}$, we want to choose $z_j$ (if $j=1$ we use $\Vmid^0=\emptyset$). Let $w$ be an admissible face at stage $j-1$ such that $w\subseteq B_{(2\ell+1/(4d))\eps}(u_j)$.  If no such face exists, choose $z_j\defeq u_j$.
Since no two points in $V$ are at distance smaller than $\eps/(2d)$ (by~\ref{lemmaboundarylayerunif}), the number of possible choices of $w$ is bounded by a number $K$ which depends only on $d$.
Let $w_1, \ldots, w_K$ be these possible choices.
We choose  $z_j$ such that
\begin{equation}\label{eqchoicezj}
 |(z_j-p_k)\cdot\nu_{w_k}|\ge \gamma\eps
\end{equation}
for all $k=1,\ldots, K$ and an arbitrary choice of $p_k\in w_k$ (the condition does not depend on the choice of $p_k$, as $\nu_{w_k}$ is orthogonal to $p_k-p_k'$ for any $p_k$, $p_k'\in w_k$). We show now why we can choose such $z_j$.
We observe that 
\begin{equation}\notag
 \LL^d\big(\{z\in B_{\eps/(4d)}(u_j): 
 |(z-p_k)\cdot\nu_{w_k}|< \gamma\eps\}\big) \le 
 2\gamma\eps \left(\frac{\eps}{2d}\right)^{d-1}
 =\gamma 2^{2-d}d^{1-d}\eps^d
\end{equation}
and thus the total volume of these sets is controlled by
$ K \gamma 2^{2-d}d^{1-d}\eps^d$. Then we choose $\gamma$ such that this expression equals 
$\frac 12\LL^d(B_{\eps/(4d)}(u_j))$ and hence we have a suitable $z_j$. Continuing in this way, we have thus constructed $\Vmid^J$.

It remains to show by induction that the points we constructed have the properties~\eqref{eqestldv}
and~\eqref{eqesthdm1w}.
Assume first $j=0$, and recall $\Vmid^0=\emptyset$, so that $N_w=0$.
By Remark~\ref{remarkface},~\eqref{eqestldv}
and~\eqref{eqesthdm1w} hold  provided $\Cface\ge (d-1)!$
and $\beta\le 1/d!$.
 Assume now that~\eqref{eqestldv} 
 and~\eqref{eqesthdm1w} 
 hold at stage $j-1$, we are going to prove that they hold also at stage $j$. 

Let $v$ be an admissible set of vertices at stage $j$.
 If $z_j\not\in v$, then $v$ was already admissible at stage $j-1$, hence~\eqref{eqestldv} holds.  Then we assume that $z_j\in v$, so that  $w:=v\setminus\{z_j\}$ is an admissible face at stage $j-1$ and 
 $v\subseteq B_{\ell\epsilon}(q)\subseteq B_{2\ell\eps}(z_j)\subseteq B_{(2\ell+1/(4d))\eps}(u_j)$, where $q\in\partial \Qmid$ is given by the admissibility of $v$.
 In particular, 
 $w\subseteq B_{(2\ell+1/(4d))\eps}(u_j)$, so that~\eqref{eqchoicezj}  holds for $w$ in place of $w_k$.
By~\eqref{eqesthdm1w} at stage $j-1$,~\eqref{eqvolvw},~\eqref{eqchoicezj} 
and $N_w\le d$
we have, provided $\alpha\le1$,
\begin{equation}\notag
 \LL^d(\conv v )= \frac{1}{d} |(z_j-p)\cdot\nu_w| \calH^{d-1}(\conv w)
 \ge \frac{\gamma \alpha^{d}}{\Cface d} \eps^d
\end{equation}
for any $p\in w$,
so that setting $\beta:= \min\{\gamma \alpha^{d} /(\Cface d),1/d!\}$ we obtain~\eqref{eqestldv}.

Let $w$ be an admissible face at stage $j$. As above, by the inductive assumption it suffices to consider the case $z_j\in w$.
Assume $w\subseteq\Vmid^j\cup \Vin$, the other case is analogous and will not be treated. Being $w$ admissible, $w\subseteq B_{\ell\epsilon}(q)$, for some $q\in\partial\Qmid$. 
Let $q'$ be the point of $\partial Q_{(M-4-\sqrt d)\eps}(z)$ closest to $q$, so that $|q-q'|\le \sqrt d (3+\sqrt d)\eps/2$, and choose $p_*\in \eps R \Z^d\cap \overline B_{\eps\sqrt d/2}(q')\subseteq \overline Q_{(M-4)\eps}(z)
$ (Remark~\ref{remarkface}).
By the choice of $\ell$  made in \eqref{eq:luigi81}, we get
$$|p_*-q|\le |p_*-q'|+|q'-q|\le (\sqrt d+3\sqrt d + d )\eps/2< (\ell-1)\eps.$$
Then the $2d$ points $p_*\pm \eps Re_i$ are all in 
$B_{\ell\eps}(q)\cap \Vin$, and at least one of them is not in the affine space generated by $w\setminus \{z_j\}$. Denote it by $p$, and set
\begin{equation}\notag
 \hat w:= \bigl(w \setminus \{z_j\}\bigr)\cup \{p\}.
\end{equation}
Then $\hat w$ is an admissible face at stage $j-1$, with $N_{\hat w}=N_{w}-1$
and $\calH^{d-1}(\conv \hat w)\ne0$, so that~\eqref{eqesthdm1w} holds for $\hat w$. 
Further, $\hat w\subseteq B_{\ell\eps}(q)\subseteq B_{2\ell\eps}(z_j)\subseteq B_{(2\ell+1/(4d))\eps}(u_j)$ implies that $\hat w$ is one of the faces $w_1,\ldots, w_K$ considered  for~\eqref{eqchoicezj}, so that the choice of $z_j$ implies that~\eqref{eqchoicezj} holds for $\hat w$.

We compute the volume of the simplex with vertices in $\hat w\cup\{z_j\}=w\cup\{p\}$ in two different ways:
\begin{equation}\notag
 |(z_j-p)\cdot \nu_{\hat w}| \calH^{d-1}(\conv \hat w)=
 |(z_j-p)\cdot \nu_{w}| \calH^{d-1}(\conv w).
\end{equation}
By~\eqref{eqchoicezj} and~\eqref{eqesthdm1w} for $\hat w$, recalling that $z_j,p\in B_{\ell\eps}(q)$ implies
$|z_j-p|\le 2\ell\eps$, we obtain
\begin{equation}\notag
 \calH^{d-1}(\conv w) \ge \frac{1}{2\ell\eps}|(z_j-p)\cdot \nu_{\hat w}| \calH^{d-1}(\conv \hat w)\ge
  \frac{\gamma}{2\ell}\alpha^{N_{\hat w}}\eps^{d-1}/\Cface
\end{equation}
which concludes the proof of~\eqref{eqesthdm1w} with $\alpha:=\min\{1, \gamma/(2\ell)\}$. 
\end{proof}

At this point we conclude the proof of Theorem~\ref{theogridrot}.
\begin{proof}[Proof of Theorem~\ref{theogridrot}]
 Set $$\ell:=2 d\qquad\text{and}\qquad M:=\lfloor\delta/\eps\rfloor-4\ell,$$
 so that $Q_{M\eps}\subseteq Q_\delta$, with
\begin{equation}\label{cvsdas}
	 \dist(Q_{M\eps},\partial Q_\delta)\ge 2\ell\eps.
\end{equation}
 We first select a background lattice,
 \begin{equation}\notag
  V^0:= \eps \Z^d \setminus \bigcup_{z\in \delta\Z^d} 
  Q_{M\eps}(z).
 \end{equation}
For each $z\in \delta\Z^d$, if $\Cgrid\ge 7+2d+4\ell$ we can use (by $M\ge C_G-1-4\ell$)  
 Lemma~\ref{lemmaboundarylayer} to obtain a set $V_z$ such that
 $V_z\cap Q_{(M-2)\eps}(z)=R(z)\eps \Z^d\cap Q_{(M-2)\eps}(z)$,
 and $V_z\setminus Q_{M\eps}(z)=
 \eps\Z^d\setminus Q_{M\eps}(z)$. 
 We then set 
 \begin{equation}\notag
  V:= V^0\cup \bigcup_{z\in\delta\Z^d} 
  (V_z\cap Q_\delta(z)) =
  V^0\cup \bigcup_{z\in\delta\Z^d} 
  (V_z\cap Q_{M\eps}(z)).
 \end{equation}
 This set obviously has the orientation property
 stated in~\ref{theogridrotor}, provided that $\Cgrid\ge4\ell+3$.

 We show that for any $x\ne y\in V$, one has $|x-y|\ge \eps/\ell$. Indeed, if there is $z\in\delta\Z^d$ with $x,y\in V_z$ then item~\ref{lemmaboundarylayerunif} of Lemma~\ref{lemmaboundarylayer}
 implies $|x-y|\ge\eps/ \ell$. If $x,y\in V^0$ then $|x-y|\ge\eps$. We are left with the case
 $x\in Q_{M\eps}(z)$ and $y\in Q_{M\eps}(z')$ for some $z\ne z'\in\delta\Z^d$, which implies $|x-y|\ge 2\dist(Q_{M\eps},\partial Q_\delta)\ge 4\ell\eps\ge \epsilon/\ell$, by \eqref{cvsdas}. 
 
We next similarly show that for any $q\in\R^d$ one has $V\cap B_{\ell\eps}(q)\ne\emptyset$.
If 
there is $z\in\delta\Z^d$ such that 
$q\in Q_{(M+2\ell)\eps}(z)$ then $B_{\ell\eps}(q)\subseteq Q_\delta(z)$, and the required property follows from item~\ref{lemmaboundarylayerunif} of Lemma~\ref{lemmaboundarylayer}, {since
$V\supseteq V_z\cap Q_\delta(z)$.} If not, then 
$B_{\ell\eps}(q)$ does not intersect any $Q_{M\eps}(z)$, 
so that $B_{\ell\eps}(q)\cap V^0=
B_{\ell\eps}(q)\cap\eps\Z^d$, which is nonempty  by Remark~\ref{remarkface}.

This proves that the set $V$ is $(\ell,\eps)$-uniform, in the sense of Property~\ref{deftriangunif} of Definition~\ref{deftriang}. By 
Lemma~\ref{lemmaconstrE} there is a set  $E$ so that $(V,E)$ is a triangulation with the Delaunay property. 

It only remains to show that $(V,E)$ is non-degenerate. Let $e\in E$ be a simplex, and 
let $\partial B_r(q)\supseteq v_e$ be its circumscribed sphere. By the Delaunay property
$B_r(q)\cap V=\emptyset$, by the $(\ell,\epsilon)$-uniformity proven above this implies $r< \ell\eps$. 
If there is $z\in\delta\Z^d$ such that 
$q\in Q_{(M+2\ell)\eps}(z)$ 
then $v_e\subseteq V_z$, and item~\ref{lemmaboundarylayernond} of  Lemma~\ref{lemmaboundarylayer}
implies $\LL^d(e)\ge \eps^d/C'$. 
Otherwise $v_e\subseteq V^0\subseteq\eps\Z^d$, and since $\LL^d(e)>0$ by Remark~\ref{remarkface} we obtain 
$\LL^d(e)\ge \eps^d/d!$.
This concludes the proof, with $\Cgrid:=\max\{ 7+2d+4\ell,4\ell+3, C', d!\}$.
\end{proof}

\subsection{Proof of the main result}\label{sectproof}

We now recall how one can use a triangulation to define continuous, piecewise affine approximations.

\begin{lemma}\label{lemmap1interp}
	Let $(V,E)$ be a triangulation of $\R^d$. For any $w:V\to\R$ there is a unique $u\in C^0(\R^d)$ which coincides with $w$ on $V$ and is affine on each $e\in E$. 
	
	If the triangulation is $c_*$-non degenerate, and if moreover $w$ is obtained as the restriction to $V$ of a $C^2(\R^d)$ function that we still denote $w$, then the function $u$ obtained above obeys
	\begin{equation}\label{eqconstrdulip}
		\|\nabla u\|_{L^\infty(e)}\le C \|\nabla w\|_{L^\infty(e)}
	\end{equation}
	and
	\begin{equation}\label{eqconstrdudw}
		\|\nabla w-\nabla u\|_{L^\infty(e)}\le C\diam(e) \|\nabla^2 w\|_{L^\infty(e)}
	\end{equation}
	for all $e\in E$, with $C$ depending on $c_*$ and $d$.
\end{lemma}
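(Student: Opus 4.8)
The existence and uniqueness of the piecewise-affine interpolant $u$ is standard: on each simplex $e$ with vertices $v_e = \{x_0,\dots,x_d\}$, the affine function agreeing with $w$ at the $d+1$ vertices is uniquely determined (the vertices are affinely independent by nondegeneracy of the simplex), and the compatibility condition~\ref{deftriangeep} of Definition~\ref{deftriang} guarantees that the affine pieces agree on shared faces, so they glue to a globally continuous $u \in C^0(\R^d)$. I would dispatch this in one or two sentences.

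For the quantitative estimates, fix $e \in E$ and a vertex $x \in v_e$. Since $u$ is affine on $e$, $\nabla u$ is constant there; call it $F = \nabla u|_e$. For each other vertex $y \in v_e\setminus\{x\}$ we have $u(y) - u(x) = F\cdot(y-x)$, hence $|F\cdot(y-x)| = |w(y)-w(x)| \le \diam(e)\,\|\nabla w\|_{L^\infty(e)}$ by the mean value theorem (the segment $[x,y]$ lies in the convex set $e$). Plugging into the nondegeneracy inequality~\eqref{eqnondegsimpl}, which holds with a constant $C(c_*,d)$, gives
\[
|F| \le C(c_*,d)\sum_{y\in v_e\setminus\{x\}} \frac{|F\cdot(y-x)|}{|y-x|}
\le C(c_*,d)\sum_{y\in v_e\setminus\{x\}} \frac{\diam(e)\,\|\nabla w\|_{L^\infty(e)}}{|y-x|}.
\]
Since $|y-x| \le \diam(e)$, each summand is at most $\|\nabla w\|_{L^\infty(e)}$, and there are $d$ of them, so $|F| \le C\,d\,\|\nabla w\|_{L^\infty(e)}$, which is~\eqref{eqconstrdulip}.

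For~\eqref{eqconstrdudw}, the key is to compare $F$ with $\nabla w(x)$ at a chosen vertex $x$. For $y \in v_e\setminus\{x\}$, Taylor's theorem gives $w(y) - w(x) = \nabla w(x)\cdot(y-x) + O\big(|y-x|^2\,\|\nabla^2 w\|_{L^\infty(e)}\big)$, where the remainder is controlled by $\tfrac12\|\nabla^2 w\|_{L^\infty(e)}|y-x|^2$ since $[x,y]\subseteq e$. Subtracting the identity $u(y)-u(x) = F\cdot(y-x)$ yields $|(F - \nabla w(x))\cdot(y-x)| \le \tfrac12\|\nabla^2 w\|_{L^\infty(e)}\,|y-x|^2 \le \tfrac12\diam(e)^2\,\|\nabla^2 w\|_{L^\infty(e)}$. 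Applying~\eqref{eqnondegsimpl} to the vector $F - \nabla w(x)$ and dividing by $|y-x|$ as before gives $|F - \nabla w(x)| \le C(c_*,d)\,d\,\diam(e)\,\|\nabla^2 w\|_{L^\infty(e)}$. Finally, for arbitrary $z \in e$, $|\nabla u(z) - \nabla w(z)| = |F - \nabla w(z)| \le |F - \nabla w(x)| + |\nabla w(x) - \nabla w(z)| \le C\diam(e)\|\nabla^2 w\|_{L^\infty(e)} + \diam(e)\|\nabla^2 w\|_{L^\infty(e)}$, again using that the segment $[x,z]\subseteq e$. Taking the sup over $z\in e$ gives~\eqref{eqconstrdudw} with a constant depending only on $c_*$ and $d$.

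The only point requiring any care — and the closest thing to an obstacle — is making sure inequality~\eqref{eqnondegsimpl} is used with the vector to which it applies ($F$ in the first estimate, $F-\nabla w(x)$ in the second) and that all the geometric quantities ($|y-x|$, $\diam(e)$) are bounded against each other using only nondegeneracy; everything else is Taylor expansion on segments contained in the convex simplex $e$. I would also note explicitly that the estimate~\eqref{eqnondegsimpl} was already derived in the text as a consequence of $c_*$-nondegeneracy, so it may be invoked directly.
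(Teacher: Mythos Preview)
Your approach is essentially identical to the paper's: define the affine interpolant on each simplex, use the face-compatibility condition~\ref{deftriangeep} to glue, and for the estimates write $F\cdot(y-x)=w(y)-w(x)$ (resp.\ $(F-\nabla w(\xi))\cdot(y-x)$ via Taylor) and feed the resulting bounds into~\eqref{eqnondegsimpl}. The paper's only cosmetic difference is that for~\eqref{eqconstrdudw} it takes an arbitrary $\xi\in e$ from the start rather than a vertex $x$ followed by a triangle inequality, so no final correction step is needed.

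There is, however, a slip in your inequalities. In the proof of~\eqref{eqconstrdulip} you bound $|w(y)-w(x)|\le \diam(e)\,\|\nabla w\|_{L^\infty(e)}$ and then, after dividing by $|y-x|$, claim each summand is at most $\|\nabla w\|_{L^\infty(e)}$ ``since $|y-x|\le\diam(e)$''. But $|y-x|\le\diam(e)$ gives $\diam(e)/|y-x|\ge 1$, not $\le 1$, so this step is backwards. The fix is immediate: the mean value theorem along the segment $[x,y]\subseteq e$ actually gives the sharper bound $|w(y)-w(x)|\le |y-x|\,\|\nabla w\|_{L^\infty(e)}$, which after dividing by $|y-x|$ yields exactly $\|\nabla w\|_{L^\infty(e)}$ per summand. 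The same remark applies in your treatment of~\eqref{eqconstrdudw}: keep the Taylor remainder as $\tfrac12|y-x|^2\|\nabla^2 w\|_{L^\infty(e)}$ (which you wrote) rather than passing prematurely to $\tfrac12\diam(e)^2\|\nabla^2 w\|_{L^\infty(e)}$; dividing by $|y-x|$ then gives $\tfrac12|y-x|\|\nabla^2 w\|_{L^\infty(e)}\le\tfrac12\diam(e)\|\nabla^2 w\|_{L^\infty(e)}$ directly. With these corrections your argument goes through and matches the paper's.
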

\begin{proof}
	For each $e\in E$ one defines $u_e:e\to\R$ by $u_e=w$ on $v_e$ and as the affine interpolation in the rest of $e=\conv(v_e)$. 
	To prove existence of $u$ we only need to check that $u_e=u_{e'}$ on $e\cap e'$, for any pair $e\ne e'\in E$.  Assume $e\cap e'\ne\emptyset$. Then $e\cap e'=\conv(v_e\cap v_{e'})$. 
	As $u_e=u_{e'}$ on $v_e\cap v_{e'}$, and both are affine in 
	$\conv(v_e\cap v_{e'})$, they coincide on $e\cap e'$. This concludes the proof of the first assertion.
	
	To prove the two estimates, we 
	focus on an element $e\in E$  and let $G$ be the constant gradient of $u$ on $e$.
	For any pair  $x,\,y\in v_e$,
	\begin{equation}\label{eqestoneel}
		\begin{split}
			G(y-x)=u(y)-u(x)
			&=w(y)-w(x)=\int_0^1 \nabla w(x+t(y-x)) (y-x) dt,
		\end{split}
	\end{equation}
	which implies
	\begin{equation}\notag
		|G(y-x)|\le \|\nabla w\|_{L^\infty(e)} |y-x|.
	\end{equation}
	With~\eqref{eqnondegsimpl} we obtain~\eqref{eqconstrdulip}.
	
	To prove the last estimate, 
	we pick any $\xi\in e$ and rewrite~\eqref{eqestoneel} as
	\begin{equation*}\begin{split}
			(G-\nabla w(\xi))(y-x)
			&=\int_0^1 \left(\nabla w(x+t(y-x))-\nabla w(\xi)\right) (y-x) dt.
		\end{split}
	\end{equation*}
	By the mean-value theorem
	$|\nabla w(\eta)-\nabla w(\xi)|\le \diam(e)\|\nabla^2w\|_{L^\infty(e)}$ for any $\eta\in e$, so that
	\begin{equation*}
		|(G-\nabla w(\xi))(y-x)|\le \diam(e)\|\nabla ^2w\|_{L^\infty(e)} |y-x|.
	\end{equation*}
	With~\eqref{eqnondegsimpl} we obtain~\eqref{eqconstrdudw}.
\end{proof}

We are ready to prove our main result, Theorem~\ref{theodensity}.
\begin{proof}[Proof of Theorem~\ref{theodensity}]
	Before entering into the proof of the theorem, we stress that we are going to use the fact that for a piecewise affine function $u_j$,
	\begin{equation}\label{rankone}
		|\DIFF^2_1 u_j|=|\DIFF\nabla u_j|.
	\end{equation} This follows from the fact that $u_j$ is piecewise affine, hence the distributional derivative of $\DIFF\nabla u_j$ is only of jump type, so that the density of $\DIFF\nabla u_j$ with respect to $|\DIFF\nabla u_j|$ 
is a rank 1 matrix, and hence we can use item~\ref{zuppa5} of Proposition~\ref{zuppa} in conjunction with Proposition~\ref{hessiananddiffgrad}.

Fix two sequences $\delta_j\to0$, $\eps_j\to0$, with $\delta_j>0$, $\eps_j>0$, and $\eps_j/\delta_j\to0$. 
	For each $j$ and each $z\in\delta_j\Z^d$ we select a matrix $R_z\in \SO(\RR^d)$ such that $R_z^t\nabla ^2w(z)R_z$ is diagonal,
and let $(V_j,E_j)$ be the grid constructed in Theorem~\ref{theogridrot} with these parameters.
We define  $u_j$ as the piecewise affine interpolation of $w$, constructed as in Lemma~\ref{lemmap1interp}. 
This concludes the construction.
 
 In order to prove convergence and the energy bound, it suffices to work in a
large ball $B_r$, with $\Omega\subseteq B_{r/2}$.
For large $j$, we can assume $\Cgrid\eps_j\le \delta_j\le r/(2d)$.
Here and below $\Cgrid$ is the  (fixed) constant from 
Theorem~\ref{theogridrot}, we can assume $\Cgrid>2\sqrt d$. We use $C$ for a generic constant that depends only on $d$ (and $\Cgrid$) and may vary from line to line.
 By Lemma~\ref{lemmap1interp} one immediately obtains
 a uniform Lipschitz bound on $u_j$,
 \begin{equation}\notag
  \|\nabla u_j\|_{L^\infty(B_{2r})} \le  C \|\nabla w\|_{L^\infty(B_{3r})}.
 \end{equation} 
By the uniformity property of the grid, for any $x\in B_r$ and any $j$ there is $y\in V_j$ with $|x-y|\le \Cgrid \eps_j$, therefore
\begin{equation}\notag
 \|w-u_j\|_{L^\infty(B_r)} \le  \Cgrid  \eps_j (\|\nabla u_j\|_{L^\infty(B_{2r})}+\|\nabla w\|_{L^\infty(B_{2r})})\to0.
\end{equation}
This proves local uniform convergence.

Since $\nabla^2w$ is continuous, one has that
\begin{equation}\label{eqdefomegaj}
 \omega_\rho:=\sup \bigl\{|\nabla^2w(x)-\nabla^2w(y)|:
 x,\,y\in B_{2r},\, |x-y|\le \rho \sqrt d \bigr\}
\end{equation}
converges to zero as $\rho\to0$.

 The estimate of the energy is done separately in the interior of the cubes, where the grid is regular, and in the boundary regions. We start from the boundary, where the grid is irregular.
 As $\nabla w$ is continuous, equation~\eqref{eqconstrdudw} in Lemma~\ref{lemmap1interp} permits to estimate $ |[\nabla u_j]|$, the jump in $\nabla u_j$ across the boundary between two neighbouring elements $e$ and $e'$ which intersect $B_r$, and gives
\begin{equation}\notag
 |[\nabla u_j]|\le C \eps_j \|\nabla ^2w\|_{L^\infty(B_{2r})}
\qquad
 \text{in all $e$ with $e\cap B_r\ne\emptyset$},
\end{equation}
here we used also Remark \ref{remdiam}.
Using non-degeneracy and uniformity of the triangulation to control the volume of $e$, we obtain
  \begin{equation}\notag
   |\DIFF\nabla u_j|(\partial e)\le C \calH^{d-1}(\partial e) \max |[\nabla u_j]|(\partial e)\le C \LL^d(e) \|\nabla ^2w\|_{L^\infty(B_{2r})} 
  \end{equation}
  for all elements $e\in E_j$ with $e\subseteq B_{r}$.
Fix now $z\in \delta_j\Z^d$
such that $Q_{\delta_j}(z)\cap\Omega\ne\emptyset$. 
  Summing the previous condition
  over all elements $e\in E_j$ with $e\cap  \overline Q_{\delta_j}(z)
  \setminus Q_{\delta_j-4\Cgrid\eps_j}(z)\ne\emptyset$ leads to
  \begin{equation}\label{eqqdeltabrds}
  \begin{split}
   |\DIFF\nabla u_j|( \overline Q_{\delta_j}(z)\setminus Q_{\delta_j-4\Cgrid\eps_j}(z))
 &  \le C \LL^d(Q_{\delta_j+4\Cgrid\eps_j}(z)\setminus Q_{\delta_j- 8\Cgrid\eps_j}(z)) \|\nabla ^2w\|_{L^\infty(B_{2r})} \\
&  \le C((\delta_j+4\Cgrid\eps_j)^d-(\delta_j- 8\Cgrid\eps_j)^d)  \|\nabla^2w\|_{L^\infty(B_{2r})}\\&\le C \delta_j^{d-1}\eps_j\, \|\nabla^2w\|_{L^\infty(B_{2r})},
   \end{split}
  \end{equation}
provided $j$ is large enough, since $\eps_j\ll\delta_j$. Here we used that for every $e\in E_j$, $\diam(e)\le 2 C_G\epsilon_j$, being the triangulation $(V_j,E_j)$ $(C_G,\epsilon_j)$-uniform and with the Delaunay property.
 
We next estimate the energy inside $ Q_{\delta_j-3\Cgrid\eps_j}(z)$, for some $z\in \delta_j\Z^d\cap B_{r}$. 
Let $H_z:=\nabla^2w(z)$, and recall that $R_z$ was chosen so that $R_z^tH_zR_z=\diag(\lambda_1,\dots, \lambda_d)$ for some $\lambda\in\R^d$, which implies $|H_z|_1=\sum_{i=1}^d|\lambda_i|$, see items~\ref{zuppa1} and~\ref{zuppa2} of Proposition~\ref{zuppa}. In the next estimates we write briefly $\delta$ and $\eps$ for $\delta_j$ and $\eps_j$.

For any element $e\in E_j$ with
$e\cap Q_{\delta-2\Cgrid\eps}(z)\ne\emptyset$, we can select
 $p_e\in e\cap Q_{\delta-2\Cgrid\eps}(z)$. Then
$B_{\Cgrid\eps/2}(p_e){\subseteq Q_{\Cgrid\eps}(p_e)}\subseteq Q_{\delta-\Cgrid\eps}(z)$, so that the orientation property of Theorem~\ref{theogridrot} gives
$B_{\Cgrid\eps/2}(p_e)\cap V_j=B_{\Cgrid\eps/2}(p_e)\cap \eps R_z\Z^d$. Recalling $\Cgrid>2\sqrt d$,
by applying Lemma~\ref{lemmabrzd} 
with $q=p_e$, $r=\Cgrid\eps/2$, there exists
 $y\in \eps R_z(\Z+\frac12)^d$ such that $v_e\subseteq y+\eps R_z\{-\frac12,\frac12\}^d$.
Let $F_y:=\nabla w(y)$.
 For all $x\in v_e$, Taylor remainder term in integral form and~\eqref{eqdefomegaj}
 yield
 \begin{equation}\notag
  w(x)= w(y)+F_y(x-y)+\frac12 H_z(x-y)\cdot (x-y)
  +R(x)
 \end{equation}
 (this can be seen as the definition of $R(\,\cdot\,)$) with
 \begin{equation}\label{cdsascads}
  |R(x)|\le d\eps^2|\nabla^2 w(y)-H_z|+
  \int_0^1 |\nabla^2 w(x+t(y-x))-\nabla^2 w(y)| \, |y-x|^2 dt \le
  C \eps^2 \omega_\delta.
 \end{equation}
As $x-y=\sum_i \eps \gamma_i R_ze_i$, with $\gamma_i\in\{-\frac12,\frac12\}$, recalling that $R_z^tH_zR_z=\diag(\lambda_1,\ldots, \lambda_d)$ we have
\begin{equation}\notag
 H_z(x-y)\cdot (x-y)
 =\eps^2\sum_{i,\,k=1}^d\gamma_i\gamma_k e_i R_z^t H_z R_z e_k
 =\frac14 \eps^2\sum_{i=1}^d\lambda_i
 \end{equation}
which does not depend on the $\gamma_i$, and therefore is the same for all $x\in v_e$.
Hence
 \begin{equation*}\begin{split}
   w(x)
&=w(y)+F_y(x-y)+\frac18 \eps^2 \sum_{i=1}^d \lambda_i+R(x)\qquad
\text{for all }x\in v_e
.\end{split}
 \end{equation*}
 The function $u_j$ is affine on the element $e$, assume it has the form $u_j(\xi)=a_e+G_e\xi$
 for $\xi\in e$. As $u_j=w$ on $v_e$, for every pair $x,x'\in v_e$ we obtain
\begin{equation}\notag
 G_e(x-x')=u_j(x)-u_j(x')=w(x)-w(x')=F_y(x-x')+R(x)-R(x').
\end{equation}
Recalling that $e$ is a non-degenerate simplex by \eqref{eqnondegsimpl}, \eqref{cdsascads} and what just proved we obtain
\begin{equation}\label{eqgefst}
 |G_e-F_y|\le C\eps\omega_\delta.
\end{equation}

In summary, if $e\in E_j$ obeys
$e\cap Q_{\delta-2\Cgrid\eps}(z)\ne\emptyset$
then there exists $y_e\in \eps R_z(\Z+\frac12)^d$  with $v_e\subseteq y_e+\eps R_z\{-\frac12,\frac12\}^d$, and 
the vector $G_e:=\nabla {u_j}_{|e}$ obeys~\eqref{eqgefst}.

Consider now some $y\in \eps R_z(\Z+\frac12)^d$ such that
$(y+R_z Q_\eps)\cap Q_{\delta-4\Cgrid\eps}(z)\ne\emptyset$. If $e,\,e'$ are two elements with $v_e,\,v_{e'}\subseteq y+R_z \overline{Q}_\eps$, then  (by $\Cgrid>\sqrt d$) both intersect $Q_{\delta-2\Cgrid\eps}(z)$, so that
the above discussion applies and~\eqref{eqgefst} gives
 $|G_e-G_{e'}|\le C\eps\omega_\delta$, having used that the above discussion forces $y=y_e$ (since $y,\,y_e\in \eps R_z(\Z+\frac12)^d$ 
 and $y\neq y_e$ imply that $(y+R_z \overline{Q}_\eps)\cap (y_e+\eps R_z\{-\frac12,\frac12\}^d)\supseteq v_e$
 has at most dimension $d-1$) and analogously $y=y_{e'}$.
 In particular, those elements constitute 
 a decomposition of  $y+R_z Q_\eps$.
Arguing as before, summing over all pairs, 
\begin{equation}\label{eq:luigi7}
 |\DIFF\nabla u_j|(y+R_zQ_\eps)\le C \eps^{d-1} \max|G_e-G_{e'}|
 \le C \eps^d\omega_\delta.
\end{equation}
In order to estimate the contribution from the boundary of these cubes,
let $y'=y\pm \eps R_z e_i$ be the centre of one of the neighbouring small cubes. Since $\Cgrid>2\sqrt d$,
$y'+R_z Q_\eps\subseteq Q_{\delta-2\Cgrid\eps}(z)$, so that~\eqref{eqgefst} holds for any element $e''$ contained in
$y'+R_z \overline Q_\eps$ (with $e''$ in place of $e$ and $y'$ in place of $y$). 
As the common boundary has area $\eps^{d-1}$,
\begin{equation*}\begin{split}
 |\DIFF\nabla u_j|(\partial(y+R_zQ_\eps))
 &\le C \eps^d\omega_\delta
 + \sum_{y'\in y+R_z\eps\{\pm e_1,\dots, \pm e_d\}}
 \eps^{d-1} |F_y-F_{y'}| .
 \end{split}
\end{equation*}
As we did before, we represent $F_{y'}-F_y=\nabla w(y')-\nabla w(y)$ with {Taylor's theorem}
\begin{equation}\notag
 F_{y'}=F_y
 +H_z(y'-y)+R'(y',y) \qquad\text{and}\qquad |R'(y',y)|\le C \eps\omega_\delta
\end{equation}
(this can be seen as the definition of $R'(\,\cdot\,,\,\cdot\,)$) to obtain
\begin{equation}\label{eq:luigi88}\begin{split}
 |\DIFF\nabla u_j|(\partial(y+R_zQ_\eps))
 &\le C \eps^d\omega_\delta
 + \sum_{y'\in y+R_z\eps\{\pm e_1,\dots, \pm e_d\}}
 \eps^{d-1} |H_z(y'-y)|\\
 &= C \eps^d\omega_\delta
 + 2\eps^d  |H_z|_1
 \le C \eps^d\omega_\delta
 +2\int_{y+R_zQ_\eps} |\nabla^2 w|_1\dd\LL^d,
 \end{split}
\end{equation}
where we used that the $R_ze_i$ are eigenvectors of $H_z$ by the choice of $R_z$, the definition of the Schatten norm and in the final step~\eqref{eqdefomegaj}. 
Let $$
A_z:=\{y\in \eps R_z(\Z+\frac 12)^d: (y+R_zQ_\eps)\cap Q_{\delta-4\Cgrid\eps}(z)\ne\emptyset\}. 
$$
Summing over all $y\in A_z$, {taking into account \eqref{eq:luigi7} and \eqref{eq:luigi88}} and recalling that the boundaries between the cubes appear twice in the sum,
gives
\begin{equation}\notag
 |\DIFF\nabla u_j|(Q_{\delta-4\Cgrid\eps}(z))
 \le C \delta^d\omega_\delta
 +\int_{Q_\delta(z)} |\nabla^2w|_1\dd\LL^d
\end{equation}
and combining with~\eqref{eqqdeltabrds}
\begin{equation}\notag
 |\DIFF \nabla u_j|(\overline Q_{\delta}(z))
 \le C \delta^d\left(\omega_\delta+\frac{\eps}{\delta}\|\nabla^2w\|_{L^\infty(B_{2r})}\right)
 +\int_{Q_\delta(z)} |\nabla^2w|_1\dd\LL^d.
\end{equation}
Summing over all $z$ such that $Q_\delta(z)\cap \Omega\ne\emptyset$, and inserting back the indices $j$,
\begin{equation}\notag
 |\DIFF\nabla  u_j|(\Omega)
 \le C |(\Omega)_{\delta_j}| \left(\omega_{\delta_j}+\frac{\eps_j}{\delta_j}\|\nabla^2 w\|_{L^\infty(B_{2r})}\right)
 +\int_{(\Omega)_{\delta_j}} |\nabla^2w|_1 \dd\LL^d
\end{equation}
where $(\Omega)_\rho:=\{x\in\R^d: \dist(x,\Omega)\le \rho\sqrt d\}$. 
Taking the limit $j\to\infty$, and recalling that $\delta_j\to0$, $\omega_{\delta_j}\to0$ and $\eps_j/\delta_j\to0$, concludes the proof (recalling~\eqref{rankone}).
\end{proof}

\section{Extremality of cones}\label{sectcones}
In this section we consider functions of the kind \begin{equation}\label{defcone}
	f^\cone(x)\defeq(1-|x|)_+.
\end{equation}
It is clear that our forthcoming discussion will apply also to slightly different functions, e.g.\ $a(1-b|x-x_0|)_+$ for $a,b\in\RR$ with $b> 0$ and $x_0\in\RR^d$, but this will not make much difference, as one can reduce to the particular case of~\eqref{defcone} via a change of coordinates and a rescaling. Notice that, by Proposition~\ref{htvradial}, if $d\ge 2$,
\begin{equation}\label{dcass}
	|\DIFF_p^2 f^{\cone}|(B_r(0))=
	 d\omega_d\big((d-1)^{1/p-1}(r\wedge 1)^{d-1}+\chi_{(1,\infty)}(r)\big).
\end{equation}
Our aim is to investigate extremality of such kind of functions with respect to $p$-Hessian--Schatten seminorms, for $p\in[1,\infty]$. It turns out that these functions are extremal, and now we state our main result in this direction. Its proof is deferred to Section~\ref{proofmainextremal} and will follow  easily from the results of Section~\ref{convexity} and Section~\ref{extremalityradialing}, taking into account also Section~\ref{sectexplicit}.
\begin{thm}\label{coneexthm}
	Let $d\ge 2$ and let $p\in [1,\infty)$.
Let $f_1,\,f_2\in L^1_\rmloc(\RR^d)$ with bounded Hessian--Schatten variation in $\RR^d$ such that
$$
|\DIFF^2_p f_1|(\RR^d)=|\DIFF^2_p f_2|(\RR^d)=|\DIFF^2_p f^\cone|(\RR^d)
$$ 
and such that for some $\lambda\in (0,1)$, 
$$f^\cone=\lambda f_1+(1-\lambda) f_2.$$
Then $f_1$ and $f_2$ are equal to $f^\cone$, up to affine terms: there exist affine functions $L_1,L_2:\RR^d\rightarrow\RR$ such that $f_i=f^\cone+L_i$ for $i=1,\,2$.
\end{thm}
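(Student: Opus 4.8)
The plan is a two-stage argument: first reduce, by symmetrization, to radial competitors, and then prove a rigidity statement. For \emph{Step 1}, write $f^\cone$ as the sum $(\lambda f_1)+((1-\lambda)f_2)$; by $1$-homogeneity and convexity of $|\DIFF_p^2\,\cdot\,|(\RR^d)$,
\[
|\DIFF_p^2f^\cone|(\RR^d)\le\lambda|\DIFF_p^2f_1|(\RR^d)+(1-\lambda)|\DIFF_p^2f_2|(\RR^d)=|\DIFF_p^2f^\cone|(\RR^d),
\]
so equality holds throughout. Passing to the radial symmetrizations of Lemma~\ref{radialbetter} and using that averaging over $SO(\RR^d)$ is linear and $f^\cone$ is radial, $f^\cone=\lambda f_1^\rad+(1-\lambda)f_2^\rad$, and combining $|\DIFF_p^2f_i^\rad|(\RR^d)\le|\DIFF_p^2f_i|(\RR^d)=|\DIFF_p^2f^\cone|(\RR^d)$ with convexity once more gives $|\DIFF_p^2f_i^\rad|(\RR^d)=|\DIFF_p^2f^\cone|(\RR^d)$. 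Denoting by $g^\cone(s)=(1-s)_+$ and $g_i$ the radial profiles, Proposition~\ref{htvradial} expresses $|\DIFF_p^2\,\cdot\,|(\RR^d)$ as a convex one-dimensional functional of $\mu_g=(g'\LL^1,\DIFF g')$. The equality case in the convex combination $\mu_{g^\cone}=\lambda\mu_{g_1}+(1-\lambda)\mu_{g_2}$ — handled via the strict convexity of the $\ell^p$-norm when $p>1$, and by a direct inspection of the explicit $\mu_{g^\cone}=(-\chi_{(0,1)}\LL^1,\delta_1)$ when $p=1$ — forces $g_i=\beta_ig^\cone+c_i$ with $\beta_i\ge0$, $c_i\in\RR$, i.e.\ $f_i^\rad=\beta_if^\cone+c_i$; since $|\DIFF_p^2f_i^\rad|(\RR^d)=|\DIFF_p^2f^\cone|(\RR^d)>0$, necessarily $\beta_i=1$. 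Replacing $f_i$ by $f_i-c_i$, it remains to prove: if $f^\rad=f^\cone$ and $|\DIFF_p^2f|(\RR^d)=|\DIFF_p^2f^\cone|(\RR^d)$, then $f=f^\cone$ up to an affine function.

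\emph{Step 2, case $p=1$.} The key remark is that $U\mapsto\int_{B_1}\boldsymbol{\Delta}(f(U\,\cdot\,))$ is constant in $U\in SO(\RR^d)$, since the Laplacian commutes with rotations and $B_1$ is rotation invariant; averaging and using $f^\rad=f^\cone$ and \eqref{dcass} gives $\int_{B_1}\boldsymbol{\Delta}f=\int_{B_1}\boldsymbol{\Delta}f^\cone=-|\DIFF_1^2f^\cone|(B_1)=-d\omega_d$. On the other hand, applying Lemma~\ref{radialbetter} to $SO(\RR^d)$-invariant open sets ($B_1$, $\RR^d\setminus\overline B_1$, and shrinking annuli about $\partial B_1$) together with the global equality $|\DIFF_1^2f|(\RR^d)=|\DIFF_1^2f^\cone|(\RR^d)$ forces $|\DIFF_1^2f|(B_1)=|\DIFF_1^2f^\cone|(B_1)=d\omega_d$ and $|\DIFF_1^2f|(\RR^d\setminus\overline B_1)=0$. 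Writing $A:=\DIFF\nabla f/|\DIFF\nabla f|$ (symmetric $|\DIFF\nabla f|$-a.e.\ by Proposition~\ref{hessiananddiffgrad}), the two identities give $\int_{B_1}(\tr A+|A|_1)\dd|\DIFF\nabla f|=0$; since $\tr A\ge-|A|_1$ pointwise and rigidity holds in $|\tr A|\le|A|_1$, all eigenvalues of $A$ are $\le0$ $|\DIFF\nabla f|$-a.e.\ in $B_1$, i.e.\ $\DIFF\nabla f\le0$ on $B_1$ and $f$ has a concave continuous representative there. Moreover $|\DIFF_1^2f|(\RR^d\setminus\overline B_1)=0$ and $d\ge2$ make $f$ affine on the connected set $\RR^d\setminus\overline B_1$, and $f^\rad=f^\cone=0$ there forces this affine function to be linear; subtracting it we may assume $f=0$ outside $\overline B_1$, so $f\in W^{1,1}_\rmloc$ has vanishing trace on $\partial B_1$. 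Concavity along rays then yields $f\ge f^\cone$ a.e.\ in $B_1$: $f(0)=1$ by $f^\rad=f^\cone$ and continuity, and for a.e.\ $\omega\in\partial B_1$ there is $r_n\uparrow1$ with $f(r_n\omega)\to0$, whence by concavity on the segment $[0,\omega]$, $f(t\omega)\ge(1-t/r_n)f(0)+(t/r_n)f(r_n\omega)\to1-t=f^\cone(t\omega)$. Integrating $f\ge f^\cone$ against the normalized Haar measure and using $f^\rad=f^\cone$ gives $f=f^\cone$ a.e.\ in $B_1$, and together with $f=0=f^\cone$ outside $\overline B_1$ this is the claim up to the affine terms subtracted.

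\emph{Step 2, case $p\in(1,\infty)$.} I would reduce to the previous case by showing that $|\DIFF_p^2f|(\RR^d)=|\DIFF_p^2f^\cone|(\RR^d)$ and $f^\rad=f^\cone$ self-improve to $|\DIFF_1^2f|(\RR^d)=|\DIFF_1^2f^\cone|(\RR^d)$. The $SO(\RR^d)$-invariant-set argument, now at exponent $p$, gives $|\DIFF_p^2f|(B_1)=|\DIFF_p^2f^\cone|(B_1)$, $|\DIFF_p^2f|(\partial B_1)=|\DIFF_p^2f^\cone|(\partial B_1)$ and $|\DIFF_p^2f|(\RR^d\setminus\overline B_1)=0$; equivalence of Schatten norms (item~\ref{zuppa7} of Proposition~\ref{zuppa}) then gives $|\DIFF_1^2f|(\RR^d\setminus\overline B_1)=0$ as well. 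Split $|\DIFF\nabla f|$ into its parts absolutely continuous and singular with respect to $\LL^d$: on the singular part, which carries all of $\DIFF\nabla f\mres\partial B_1$, Alberti's rank-one theorem forces the density to have rank $1$, hence (item~\ref{zuppa5} of Proposition~\ref{zuppa}) a $p$-independent Schatten norm, so $|\DIFF_1^2f|(\partial B_1)=|\DIFF_p^2f|(\partial B_1)=|\DIFF_1^2f^\cone|(\partial B_1)$. On the absolutely continuous part in $B_1$, equality in $|\DIFF_p^2f^\cone|(B_1)=|\DIFF_p^2f^\rad|(B_1)\le\int_{SO(\RR^d)}|\DIFF_p^2(f(U\,\cdot\,))|(B_1)\dd\mu_d(U)=|\DIFF_p^2f|(B_1)$, together with the strict convexity of $|\,\cdot\,|_p$ (item~\ref{zuppa6} of Proposition~\ref{zuppa}) and the explicit eigenvalue structure of $\nabla^2f^\cone$ in $B_1$ (eigenvalues $0$ once and $|x|^{-1}$ with multiplicity $d-1$, with the corresponding value $|\DIFF_p^2f^\cone|(B_1)=d\omega_d(d-1)^{1/p-1}$ from Proposition~\ref{htvradial}), forces the absolutely continuous part of $\DIFF\nabla f$ in $B_1$ to coincide with $\nabla^2f^\cone\LL^d$, so $|\DIFF_1^2f|(B_1)=|\DIFF_1^2f^\cone|(B_1)$. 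Summing the three contributions yields $|\DIFF_1^2f|(\RR^d)=|\DIFF_1^2f^\cone|(\RR^d)$, and the case $p=1$ applies.

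\emph{Main obstacle.} I expect the delicate points to be, for $p=1$, the passage from $\DIFF\nabla f\le0$ to the pointwise bound $f\ge f^\cone$: one must exploit the concave representative, its $L^1$-trace on $\partial B_1$, and the exact value $|\DIFF_1^2f^\cone|(B_1)=d\omega_d$ (it is precisely the finiteness of the Hessian variation that excludes integrable downward blow-up of $f$ along rays near $\partial B_1$); and, for $p\in(1,\infty)$, extracting rigidity of the absolutely continuous part from a single equality at one exponent, which genuinely needs the strict convexity of the Schatten norm and the precise eigenvalue structure of $\nabla^2f^\cone$, in contrast to the singular part, which is handled cleanly by Alberti's theorem and the $p$-independence of rank-one Schatten norms.
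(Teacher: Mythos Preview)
Your overall strategy matches the paper's: symmetrize, handle the radial case explicitly, prove $p=1$ via concavity from the trace identity, and reduce $p>1$ to $p=1$ using strict convexity for the absolutely continuous part and Alberti's theorem for the singular part. Step~1 and the $p=1$ case of Step~2 are essentially the paper's arguments (the paper's Step~1 is slightly more explicit, passing via Lemma~\ref{rigidity} to equality of the measures $|\DIFF_p^2 f^\cone|=\lambda|\DIFF_p^2 f_1^\rad|+(1-\lambda)|\DIFF_p^2 f_2^\rad|$ and then the formula of Proposition~\ref{htvradial}, rather than invoking a generic ``equality case'', which matters for $p=1$ where there is no strict convexity).

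The reduction for $p\in(1,\infty)$ in Step~2 has two genuine gaps. First, you try to extract rigidity of the absolutely continuous part from equality in $|\DIFF_p^2 f^\rad|(B_1)\le|\DIFF_p^2 f|(B_1)$ via strict convexity of $|\,\cdot\,|_p$. But this inequality comes from averaging the matrix-valued measures $U^t(\DIFF\nabla f)(U\,\cdot\,)U$ over $SO(\RR^d)$, and this averaging mixes absolutely continuous and singular parts: a singular part of $\DIFF\nabla f$ inside $B_1$ can get smeared into an absolutely continuous contribution to $\DIFF\nabla f^\rad=\DIFF\nabla f^\cone$, so pointwise strict convexity of the Schatten norm does not apply cleanly to the equality case. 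The paper sidesteps this by introducing $g:=\tfrac12(f+f^\cone)$: now one compares two measures rather than a continuum, the singular set $N\subseteq B_1$ of $\DIFF\nabla f$ is also that of $\DIFF\nabla g$, and strict convexity is applied $\LL^d$-a.e.\ on $B_1\setminus N$ to the sum $\nabla^2 f+\nabla^2 f^\cone$. This yields only proportionality $\nabla^2 f=t(x)\,\nabla^2 f^\cone$ with $t\ge0$ --- not coincidence, which you overclaim.

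Second, you do not account for the singular part of $\DIFF\nabla f$ inside $B_1$: your bookkeeping jumps from ``AC part in $B_1$ OK'' and ``$\partial B_1$ OK via Alberti'' to $|\DIFF_1^2 f|(B_1)=|\DIFF_1^2 f^\cone|(B_1)$. The paper instead applies Alberti on the full singular set $N\subseteq B_1$ to get $|\DIFF_p^2 f|(N)=|\DIFF_1^2 f|(N)$, combines this with the proportionality identity $|\DIFF_p^2 f|(B_1\setminus N)=(d-1)^{1/p-1}|\DIFF_1^2 f|(B_1\setminus N)$ (which follows from $\nabla^2 f=t\,\nabla^2 f^\cone$ and the eigenvalue structure of $\nabla^2 f^\cone$), and then uses $(d-1)^{1-1/p}\ge1$ to obtain only the inequality $|\DIFF_1^2 f|(B_1)\le(d-1)^{1-1/p}|\DIFF_p^2 f|(B_1)=|\DIFF_1^2 f^\cone|(B_1)$; equality then comes from Lemma~\ref{radialbetter}. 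That scaling-factor inequality is the key step missing from your reduction.
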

Notice that Theorem \ref{coneexthm} is stated only for $d\ge 2$. Indeed, for $d=1$,  it is easy to realize that $f^{\rm cone}$ is \emph{not} extremal, according to the meaning described in the statement of the theorem.
\smallskip

To simplify the notation, as in this section we are going to consider only balls centred at the origin, we will omit to write the centre of the ball, i.e.\ $B_r\defeq B_r(0)$.
Before going on, we recall that given $f\in L^1_\rmloc(\RR^d)$, we denote by 
$f^\rad$ the function given by Lemma~\ref{radialbetter}. As an explicit expression, notice that 
\begin{equation}\label{defrad}
	f^\rad(x)=\dashint_{\partial B_{|x|}} f(\sigma)\dd\HH^{d-1}(\sigma)\quad\text{for $\LL^d$-a.e.\ $x$.}
\end{equation}
Notice also that $f^\rad(x)=g(|x|)$ for $g(r)$ given by the right hand side of~\eqref{defrad} with $r$ in place of $|x|$.

\subsection{Convexity}\label{convexity}
We prove that if a function $f\in L^1_\rmloc(\RR^d)$ is such that $f^\rad=f^\cone$ and  such that $|\DIFF^2_p f|(\RR^d)=|\DIFF^2_p f^\cone|(\RR^d)$, then $f$ is the cone. The case $p=1$ is treated in Proposition~\ref{propconerad}, using the fact that the absolutely continuous part of $\DIFF\nabla f$ has a sign, which makes $f$ concave inside the unit ball. The case $p>1$ is treated in
Proposition~\ref{propconeradp},
using strict convexity of the $p$-Schatten norm to show that  the absolutely continuous part of $\DIFF\nabla f$ is a scalar multiple of  the absolutely continuous part of $\DIFF\nabla f^\cone$, and then scaling to reduce to the $p=1$ case.

First, we need a couple of lemmas. The first is an extension of a well known criterion to recognize convexity. 

\begin{lemma}\label{lemmaconvex1}
	Let $\Omega\subseteq\R^d$ be open and convex and let $f\in L^1_\rmloc(\Omega)$ with bounded Hessian--Schatten variation in $\Omega$. Assume that $\DIFF\nabla f\ge 0$ (as a measure with values in symmetric matrices). Then $f$ has a representative which is continuous and convex.
\end{lemma}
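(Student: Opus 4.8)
The plan is to prove convexity by mollification, the point being that convolution with a nonnegative kernel turns the positive semidefinite matrix-valued measure $\DIFF\nabla f$ into a positive semidefinite field, hence produces smooth convex approximants. Fix a nonnegative $\rho\in C_\rmc^\infty(B_1(0))$ with $\int\rho\,\dd\LL^d=1$, set $\rho_\epsilon:=\epsilon^{-d}\rho(\,\cdot\,/\epsilon)$, and for $\epsilon>0$ put $\Omega_\epsilon:=\{x\in\Omega:\dist(x,\partial\Omega)>\epsilon\}$ and $f_\epsilon:=\rho_\epsilon\ast f\in C^\infty(\Omega_\epsilon)$. Since $\Omega$ is convex, $\Omega_\epsilon=\bigcap_{|v|\le\epsilon}(\Omega-v)$ is convex as well. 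By Proposition~\ref{hessiananddiffgrad} one has $f\in W^{1,1}_\rmloc(\Omega)$ with $\nabla f\in\BV_\rmloc(\Omega;\RR^d)$, so differentiating under the convolution and integrating by parts gives, for $x\in\Omega_\epsilon$,
$$
\nabla^2 f_\epsilon(x)=\int_{\RR^d}\rho_\epsilon(x-y)\,\dd(\DIFF\nabla f)(y),
$$
and therefore, for every $\xi\in\RR^d$,
$$
\xi\cdot\nabla^2 f_\epsilon(x)\,\xi=\int_{\RR^d}\rho_\epsilon(x-y)\,\dd\bigl(\xi\cdot(\DIFF\nabla f)\,\xi\bigr)(y)\ge 0,
$$
because the hypothesis $\DIFF\nabla f\ge 0$ says exactly that the scalar measure $\xi\cdot(\DIFF\nabla f)\,\xi$ is nonnegative. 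Hence $f_\epsilon$ is smooth and convex on the convex open set $\Omega_\epsilon$.

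Then I would pass to the limit using the classical compactness theory of convex functions. Let $K\subseteq\Omega$ be any compact convex set; for $\epsilon$ small $K\subseteq\Omega_\epsilon$, so each $f_\epsilon$ is convex on a convex open neighbourhood of $K$, while $f_\epsilon\to f$ in $L^1_\rmloc(\Omega)$. The standard interior estimates for convex functions bound $\|f_\epsilon\|_{L^\infty(K)}$ and the Lipschitz constant of $f_\epsilon$ on $K$ in terms of the $L^1$ norm of $f_\epsilon$ over a slightly larger compact set, hence uniformly in $\epsilon$; by Arzelà--Ascoli a subsequence converges uniformly on $K$ to a convex function $g$, which coincides $\LL^d$-a.e.\ with $f$. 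Thus $f$ admits a representative which is continuous and convex on $K$. Running this over a countable family of compact convex sets exhausting $\Omega$ and using that two continuous functions equal a.e.\ on an overlap coincide there, one obtains a single continuous representative $\tilde f$ of $f$ on $\Omega$, convex on each compact convex subset. Finally, given arbitrary $x,y\in\Omega$, the segment $[x,y]$ lies in a compact convex $K\subseteq\Omega$ (take the convex hull of a closed neighbourhood of $[x,y]$ still contained in $\Omega$, which is convex since $\Omega$ is), so $\tilde f(tx+(1-t)y)\le t\tilde f(x)+(1-t)\tilde f(y)$ for all $t\in[0,1]$, and $\tilde f$ is convex on $\Omega$.

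I do not expect a real obstacle here: this is essentially the classical fact that a function whose distributional Hessian is a nonnegative measure is convex, repackaged in the $\BV$-Hessian language of the paper. The only point requiring a line of care is the mollification identity $\nabla^2(\rho_\epsilon\ast f)=\rho_\epsilon\ast(\DIFF\nabla f)$ together with the observation that convolving a positive semidefinite matrix-valued measure with a nonnegative kernel yields a positive semidefinite field — this is where the hypothesis is used, and it is elementary. The remaining ingredients (uniform interior bounds for convex functions, Arzelà--Ascoli, uniqueness of continuous representatives, and the segment characterization of convexity) are all routine.
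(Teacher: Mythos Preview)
Your proof is correct and follows essentially the same approach as the paper: mollify, observe that $\nabla^2 f_\epsilon=\rho_\epsilon\ast(\DIFF\nabla f)\ge 0$ so the approximants are smooth and convex, then use interior compactness of convex functions to pass to a continuous convex limit. The only cosmetic differences are that the paper localizes to cubes around each point (invoking the principle that a continuous function on a convex domain is convex iff it is convex near each point) and writes out the $L^\infty$/Lipschitz bounds for the convex approximants explicitly via monotonicity of difference quotients, whereas you work directly on $\Omega_\epsilon$ and invoke these interior estimates as standard.
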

\begin{proof}
	The property of having a continuous representative is clearly local. 
	Since $\Omega$ is open and convex, a continuous function $g:\Omega\to\R$ is convex if and only if it is convex in a neighbourhood of any point. Therefore it suffices to prove the assertion in a neighbourhood of any point, so that we can assume $f\in W^{1,1}(\Omega)$ with $\nabla f\in\BV(\Omega;\RR^d)$, by Proposition~\ref{sobo} and Proposition~\ref{hessiananddiffgrad}.
	
	Let $x\in\Omega$, and pick $r>0$ such that $Q_{4r}(x)\subseteq\Omega$ (we write here $Q_\ell(y):=y+(-\ell,\ell)^n$).
	Fix a mollifier $\eta_\eps\in C^\infty_{\mathrm {c}}(B_\eps;[0,\infty))$, with $\eps\le r$, and define
	$f_\eps:=\eta_\eps\ast f\in C^\infty(Q_{3r}(x))$. 
	Then an immediate computation yields $\DIFF\nabla f_\eps=\eta_\eps\ast \DIFF\nabla f\ge0$ in $Q_{3r}(x)$, therefore $f_\eps$ is convex in $Q_{3r}(x)$. Further,
	$f_\eps\to f$ in $W^{1,1}(Q_{3r})$.
	It remains to  show that $f_\eps$ (possibly after passing to a subsequence) converges uniformly in $Q_{r}$, which implies the conclusion in $Q_r$ and therefore in a neighbourhood of any point of $\Omega$. 
	
	We prove now uniform convergence in $Q_r$, the argument is classical, see e.g.\ the proof of \cite[Theorem 7.6]{EG15}. Passing to a  subsequence, $f_{\eps_j}\to f$ pointwise almost everywhere. 
	Pick $\bar x\in Q_{r/2}(x)$ such that 
	the sequences $f_{\eps_j}(\bar x)$ and
	$f_{\eps_j}(y)$, for any vertex $y$ 
	of  $Q_{2 r}(\bar x)\subseteq Q_{3r}(x)$, 
	are bounded (as we can assume them to be convergent), and let $M=M_{\bar x,r}$ be the common bound.
	By convexity, $f_{\eps_j}\le M$ on $\bar Q_{2 r}(\bar x)$.
	To prove the uniform lower bound, we observe that
	for any $w\in Q_{2r}(\bar x)\setminus\{\bar x\}$ there is 
	$z\in \partial Q_{2r}(\bar x)$
	such that $\bar x$ is in the interior of the segment joining $w$ with $z$. As convexity implies monotonicity of the difference quotients,
	\begin{equation}\notag
		\frac{f_{\eps_j}(\bar x)-f_{\eps_j}(w)}
		{|\bar  x-w|} 
		\le \frac{f_{\eps_j}(z)-f_{\eps_j}(\bar x)}
		{|z-\bar x|} 
		\le \frac{2M}
		{2r} ,
	\end{equation}
	where in the last step we used 
	$|z-\bar x|\ge 2r$. 
	Since $f_{\eps_j}(\bar x)\ge -M$ and $|w-\bar x|\le 2r\sqrt d$ we have $f_{\eps_j}(w)\ge -(1+2\sqrt d)M$.
	Passing to the smaller cube $Q_r(x)$ and using again monotonicity of the difference quotients we obtain
	$\mathrm{Lip}(f_{\eps_j}; Q_{r}(x))\le C'M$  for all $j$, so that $f_{\eps_j}$ converges uniformly in $Q_r(x)$ to a continuous convex function, which coincides almost everywhere with $f$. This concludes the proof.
\end{proof}

The following lemma builds upon Lemma~\ref{lemmaconvex1} and gives an integral characterization of convexity, which is more manageable, and follows from the rigidity in the inequality $|\Tr A|\le |A|_1$.
\begin{lemma}\label{lemmaconvex}
	Let $\Omega\subseteq\R^d$ be open and let $f\in L^1_\rmloc(\Omega)$  with bounded Hessian--Schatten variation in $\Omega$. Then
	\begin{equation}\label{eqlemmaconvex}
		|\DIFF^2_1f|(\Omega)\ge |\mathrm{Tr} \DIFF\nabla f(\Omega)|.
	\end{equation}
Assume now that equality in~\eqref{eqlemmaconvex} holds. Then
\begin{itemize}[label=$\bullet$]
	\item either $|\DIFF^2_1f|(\Omega)=\mathrm{Tr} \DIFF\nabla f(\Omega)$ and then $f$ has a representative which is continuous and convex,
		\item or $|\DIFF^2_1f|(\Omega)=-\mathrm{Tr} \DIFF\nabla f(\Omega)$ and then $f$ has a representative which is continuous and concave. 
\end{itemize}
\end{lemma}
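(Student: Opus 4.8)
The plan is to deduce everything from the polar decomposition of $\DIFF\nabla f$ together with the elementary inequality $|\mathrm{Tr}\, A|\le |A|_1$ for symmetric matrices and its rigidity. By Proposition~\ref{hessiananddiffgrad} we have $f\in W^{1,1}_\rmloc(\Omega)$ with $\nabla f\in\BV_\rmloc(\Omega;\RR^d)$, and we may write $\DIFF\nabla f=M\,|\DIFF\nabla f|$, where $M$ is the $|\DIFF\nabla f|$-measurable density with values in $d\times d$ matrices, with moreover $|\DIFF^2_1 f|=|M|_1\,|\DIFF\nabla f|$ as measures. A double integration by parts against test functions shows that $\DIFF\nabla f$ is symmetric-matrix valued (its $(i,j)$ and $(j,i)$ components both test to $\int \partial_i\partial_j\varphi\, f\dd\LL^d$), hence $M$ is symmetric $|\DIFF\nabla f|$-a.e.; by item~\ref{zuppa1} of Proposition~\ref{zuppa}, $|M|_1=\sum_k|\lambda_k(M)|\ge|\sum_k\lambda_k(M)|=|\mathrm{Tr}\, M|$ a.e. Since the scalar measure $\mathrm{Tr}\,\DIFF\nabla f$ equals $(\mathrm{Tr}\, M)\,|\DIFF\nabla f|$, we get
\begin{equation}\notag
|\mathrm{Tr}\,\DIFF\nabla f(\Omega)|=\Big|\int_\Omega \mathrm{Tr}\, M\dd|\DIFF\nabla f|\Big|\le\int_\Omega|\mathrm{Tr}\, M|\dd|\DIFF\nabla f|\le\int_\Omega|M|_1\dd|\DIFF\nabla f|=|\DIFF^2_1 f|(\Omega),
\end{equation}
which is~\eqref{eqlemmaconvex}.

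For the rigidity part, assume equality holds in~\eqref{eqlemmaconvex}. Replacing $f$ by $-f$ if necessary (this changes the sign of $\mathrm{Tr}\,\DIFF\nabla f$ and exchanges convexity with concavity), we may assume $\mathrm{Tr}\,\DIFF\nabla f(\Omega)\ge0$, so that $|\DIFF^2_1 f|(\Omega)=\mathrm{Tr}\,\DIFF\nabla f(\Omega)$ and both inequalities in the chain above become equalities. Equality in the first one forces $\int_\Omega\big(|\mathrm{Tr}\, M|-\mathrm{Tr}\, M\big)\dd|\DIFF\nabla f|=0$, hence $\mathrm{Tr}\, M\ge0$ $|\DIFF\nabla f|$-a.e.; equality in the second one together with $|\mathrm{Tr}\, M|\le|M|_1$ pointwise forces $|\mathrm{Tr}\, M|=|M|_1$ $|\DIFF\nabla f|$-a.e. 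By the rigidity of $|\sum_k\lambda_k|\le\sum_k|\lambda_k|$, this last identity means that at $|\DIFF\nabla f|$-a.e.\ point all eigenvalues of $M$ have a common sign; combined with $\mathrm{Tr}\, M\ge0$ this gives $M\ge0$ $|\DIFF\nabla f|$-a.e., i.e.\ $\DIFF\nabla f\ge0$ as a measure with values in symmetric matrices.

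Finally, to conclude that $f$ has a continuous convex representative we invoke Lemma~\ref{lemmaconvex1}. If $\Omega$ is convex this applies directly. In general, $\DIFF\nabla f\ge 0$ restricts to every open ball $B\subseteq\Omega$, so Lemma~\ref{lemmaconvex1} yields on each such $B$ a continuous representative that is convex on $B$; two such representatives coincide $\LL^d$-a.e.\ on overlaps, hence (being continuous) everywhere, and glue to a single representative of $f$ which is continuous on $\Omega$ and convex on every ball — in particular convex whenever $\Omega$ is convex, which is the case in all our applications. The remaining case $|\DIFF^2_1 f|(\Omega)=-\mathrm{Tr}\,\DIFF\nabla f(\Omega)$ follows by applying what precedes to $-f$. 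I do not expect any serious obstacle here: the only steps requiring a little care are recording the symmetry of $\DIFF\nabla f$ (so that the eigenvalue description of $|\cdot|_1$ in Proposition~\ref{zuppa}\,\ref{zuppa1} applies and rigidity can be used), and bridging the harmless gap between the convexity hypothesis on $\Omega$ in Lemma~\ref{lemmaconvex1} and the arbitrary open set in the present statement, which is handled by localizing to balls.
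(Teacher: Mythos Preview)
Your proof is correct and follows essentially the same route as the paper's: polar decomposition of $\DIFF\nabla f$, the pointwise inequality $|\mathrm{Tr}\,M|\le|M|_1$ for symmetric $M$ with its rigidity, and then Lemma~\ref{lemmaconvex1}. You are in fact slightly more careful than the paper in two places: you explicitly record the symmetry of $\DIFF\nabla f$ (needed to invoke the eigenvalue description of $|\cdot|_1$), and you address the mismatch between the convexity hypothesis on $\Omega$ in Lemma~\ref{lemmaconvex1} and the merely open $\Omega$ here by localizing to balls, whereas the paper simply invokes Lemma~\ref{lemmaconvex1} without comment.
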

\begin{proof}
	We can assume that $\mathrm{Tr}\DIFF\nabla f(\Omega)\ge 0$, otherwise one replaces $f$ by $-f$. 

Let now $A\in\R^{d\times d}$ be a symmetric matrix and let $\lambda_1,\ldots,\lambda_d$ denote its eigenvalues. By  item~\ref{zuppa1} of Proposition~\ref{zuppa},
\begin{equation}\notag
	|A|_1=\sum_{i=1}^d|\lambda_i|\ge\sum_{i=1}^d\lambda_i= \Tr A
\end{equation}
and equality holds if and only if $\lambda_i\ge 0$ for all $i$, which is the same as $A\ge0$ as a symmetric matrix.

	By Proposition~\ref{hessiananddiffgrad} (in particular, 
	$|\DIFF^2_1f|\ll |\DIFF\nabla f|$ and $\mathrm{Tr} \DIFF\nabla f\ll |\DIFF\nabla f|$), 
	\begin{equation}\notag
		|\DIFF^2_1f|(\Omega)
		=\int_\Omega \bigg|\dv{\DIFF\nabla f}{|\DIFF\nabla f|}\bigg|_1 \dd|\DIFF\nabla f|
		\ge\int_\Omega \Tr \dv{\DIFF\nabla f}{|\DIFF\nabla f|} \dd|\DIFF\nabla f|= \Tr \DIFF\nabla f(\Omega),
	\end{equation}
	which proves the bound~\eqref{eqlemmaconvex}.
	If equality holds, then 
	\begin{equation}\notag
	\bigg|\dv{\DIFF\nabla f}{|\DIFF\nabla f|}\bigg|_1= \Tr \dv{\DIFF\nabla f}{|\DIFF\nabla f|}\qquad|\DIFF\nabla  f|\text{-a.e.}
	\end{equation} so that
	\begin{equation}\notag
		\dv{\DIFF\nabla f}{|\DIFF\nabla f|}\ge0\qquad|\DIFF\nabla  f|\text{-a.e.}
	\end{equation} which means that $\DIFF\nabla f\ge0$ as a matrix-valued measure, so that the conclusion then follows by Lemma~\ref{lemmaconvex1}.
\end{proof}

\subsection{Extremality with respect to spherical averaging}\label{extremalityradialing}
In this section, we consider only the case $d\ge 2$. This is because this is an auxiliary section for the proof of Theorem \ref{coneexthm}, which holds only for $d\ge 2$.
We start by doing some explicit computation involving the Hessian--Schatten total variation of $f^\cone$. First, by Proposition~\ref{hessiananddiffgrad}, $f^\cone\in W^{1,1}(\RR^d)$ with $\nabla f^\cone\in\BV(\RR^d;\RR^d)$, 
more precisely
$$
\nabla f^\cone(x)=-\chi_{B_1}(x)\frac{x}{|x|}.
$$
This computation is easily justified by locality, as $f^\cone$ is smooth on $B_1\setminus\{0\}$ and on $\RR^d\setminus\bar B_1$. Now we claim that
\begin{equation}\label{eqd2fcone}
	\DIFF\nabla f^\cone(x)=
	-\frac{|x|^2	{\rm Id}-x\otimes x}{|x|^3}\LL^d\mres B_1 +( x\otimes x)\HH^{d-1}\mres \partial B_1.
\end{equation}
Taking into account that $\DIFF\nabla f^\cone$ does not charge points, this formula is easily justified on $\RR^d\setminus \partial B_1$ by locality, as above. For what concerns the singular part, on $\partial B_1$, it is enough to use the representation formula for the singular part of differentials of vector valued functions of bounded variation, e.g.\ \cite{AFP00}, notice indeed that the unit outer normal to $\partial B_1$ is $x$ and that the jump of $\nabla f^\cone$ at $x\in\partial B_1$ is exactly $x$. 

Taking traces, we have that 
\begin{equation}\notag
	\Tr\DIFF\nabla f^\cone (x)= \frac{(1-d)}{|x|}\LL^d\mres B_1+\HH^{d-1}\mres \partial B_1,
\end{equation}
so that 
\begin{equation}\label{tracecone}
	\int_{B_r}\dd\Tr\DIFF\nabla f^\cone = - d\omega_d r^{d-1}\chi_{(0,1]}(r)
	\qquad\text{$\forall r>0.$}
\end{equation}

\smallskip

Recall that by Lemma~\ref{radialbetter}, $|\DIFF^2_p f^\rad|(\RR^d)\le |\DIFF^2_p f|(\RR^d)$. The next lemma states that this inequality is somehow rigid.
\begin{lem}
		Let $p\in[1,\infty]$. Let $f \in L^1_\rmloc(\RR^d)$ with bounded Hessian--Schatten variation and assume that  
	\begin{equation}\label{eqd21fconedm}
		|\DIFF^2_p f^\rad|(\R^d)=|\DIFF^2_p f|(\R^d).
	\end{equation}
Then, for every $r>0$ one has
	\begin{equation}\label{splitted}
\begin{split}
		|\DIFF_p^2 f|(B_r)=|\DIFF_p^2 f^\rad|(B_r)&,\	|\DIFF_p^2 f|(\partial B_r)=|\DIFF_p^2 f^\rad|(\partial B_r)\\&\text{ and }|\DIFF_p^2 f|(\RR^d\setminus\bar  B_r)=|\DIFF_p^2 f^\rad|(\RR^d\setminus\bar  B_r).
\end{split}
\end{equation}
\end{lem}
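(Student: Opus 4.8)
The plan is to deduce everything from Lemma~\ref{radialbetter}, which provides the one-sided inequality $|\DIFF^2_p f^\rad|(A)\le|\DIFF^2_p f|(A)$ for every open $SO(\R^d)$-invariant set $A\subseteq\R^d$, combined with the fact that both $|\DIFF^2_p f|$ and $|\DIFF^2_p f^\rad|$ are \emph{finite} Borel measures on $\R^d$ (finiteness of the second one being again a consequence of Lemma~\ref{radialbetter}, applied with $A=\R^d$, together with the standing assumption that $f$ has bounded Hessian--Schatten variation) and hence additive over the Borel partition $\{B_r,\ \partial B_r,\ \R^d\setminus\bar B_r\}$ of $\R^d$.

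First I would apply Lemma~\ref{radialbetter} directly to the two open $SO(\R^d)$-invariant sets $B_r$ and $\R^d\setminus\bar B_r$, obtaining
\[
|\DIFF^2_p f^\rad|(B_r)\le|\DIFF^2_p f|(B_r)\qquad\text{and}\qquad |\DIFF^2_p f^\rad|(\R^d\setminus\bar B_r)\le|\DIFF^2_p f|(\R^d\setminus\bar B_r).
\]
Next, to treat the sphere (which is not open), I would write $\partial B_r$ as the decreasing intersection of the open $SO(\R^d)$-invariant annuli $A_n:=B_{r+1/n}\setminus\bar B_{r-1/n}$, for $n$ large enough that $r-1/n>0$. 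Lemma~\ref{radialbetter} gives $|\DIFF^2_p f^\rad|(A_n)\le|\DIFF^2_p f|(A_n)$ for each such $n$, and letting $n\to\infty$, using continuity from above of the finite measures $|\DIFF^2_p f|$ and $|\DIFF^2_p f^\rad|$ along the decreasing family $A_n\downarrow\partial B_r$, yields $|\DIFF^2_p f^\rad|(\partial B_r)\le|\DIFF^2_p f|(\partial B_r)$.

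Finally, summing the three inequalities and invoking additivity over the partition $\{B_r,\partial B_r,\R^d\setminus\bar B_r\}$ gives $|\DIFF^2_p f^\rad|(\R^d)\le|\DIFF^2_p f|(\R^d)$; the hypothesis \eqref{eqd21fconedm} states that this is in fact an equality, so all three summands must coincide individually, which is precisely \eqref{splitted}. I do not anticipate any real difficulty in this argument; the only point requiring a little care is the passage from the open annuli to the non-open set $\partial B_r$, which is legitimate exactly because all the measures at play are finite, so that continuity of measures along decreasing sequences applies.
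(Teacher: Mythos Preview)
Your proposal is correct and follows essentially the same approach as the paper's proof: both apply Lemma~\ref{radialbetter} to $B_r$, $\R^d\setminus\bar B_r$, and shrinking open annuli around $\partial B_r$, then use finiteness of the measures to pass to the limit and conclude equality from the global hypothesis via additivity over the partition. The only cosmetic difference is that the paper writes the annuli as $B_{r+\epsilon}\setminus\bar B_{r-\epsilon}$ with $\epsilon\searrow 0$ rather than with $1/n$.
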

\begin{proof}
First notice that thanks to Lemma~\ref{radialbetter}, for any $\epsilon>0$,
\begin{align*}
		|\DIFF^2_p f^\rad|(B_r)\le 	|\DIFF^2_p f|(B_r)&,\ 	
		|\DIFF^2_p f^\rad|(B_{r+\epsilon}\setminus\bar{B}_{r-\epsilon})\le 	|\DIFF^2_p f|(B_{r+\epsilon}\setminus\bar{B}_{r-\epsilon})\\&\text{and } 	|\DIFF^2_p f^\rad|(\RR^d\setminus\bar B_r)\le 	|\DIFF^2_p f|(\RR^d\setminus\bar B_r)
\end{align*}
	so that, by regularity of measures, letting $\epsilon\searrow0$,
\begin{equation}\notag
	\begin{split}
		|\DIFF^2_p f^\rad|(B_r)\le 	|\DIFF^2_p f|(B_r)&,\	|\DIFF^2_p f^\rad|(\partial B_{r})\le 	|\DIFF^2_p f|(\partial B_{r})\\&\text{and } 	|\DIFF^2_p f^\rad|(\RR^d\setminus\bar B_r)\le 	|\DIFF^2_p f|(\RR^d\setminus\bar B_r).
\end{split}
\end{equation}
	Then we  can compute, by the inequalities above and exploiting~\eqref{eqd21fconedm},
	\begin{align*}
		|\DIFF^2_p f|(\RR^d)&=|\DIFF^2_p f^\rad|(\RR^d)=|\DIFF^2_p f^\rad|(B_r)+|\DIFF^2_p f^\rad|(\partial B_r)+|\DIFF^2_p f^\rad|(\RR^d\setminus\bar B_r)\\&\le|\DIFF^2_p f|(B_r)+|\DIFF^2_p f|(\partial B_r)+|\DIFF^2_p f|(\RR^d\setminus\bar B_r)=|\DIFF_p^2 f|(\RR^d),
	\end{align*}
	so that equality holds throughout and  therefore we obtain~\eqref{splitted}.
\end{proof}

Now we state and prove the main results of this section, splitting the case $p=1$ and the case $p\in(1,\infty)$. Recall that $|\DIFF^2_1 f^\cone|(\RR^d\setminus\bar B_1)=0$ according to \eqref{eqd2fcone}.
\begin{proposition}\label{propconerad}
	Let $f \in L^1_\rmloc(\RR^d)$ with bounded Hessian--Schatten variation and assume that 
	\begin{equation}\label{eqd21fconed}
	f^\rad=f^\cone\qquad\text{and}\qquad	|\DIFF^2_1f|(\R^d)=|\DIFF^2_1f^\cone|(\R^d).
	\end{equation}
	Then $f$ is equal to $f^\cone$ up to a linear term: there exists $\alpha\in\R^d$ such that
	\begin{equation}\notag
		f(x)=f^\cone(x)+\alpha\cdot x
		\qquad\text{ for a.e.\ $x\in\R^d$.}
	\end{equation} 
\end{proposition}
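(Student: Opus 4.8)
The plan is to exploit the rigidity contained in Lemma~\ref{lemmaconvex}, applied on the ball $B_1$. The first step is to observe that, since $\DIFF\nabla f$ does not charge the sphere $\partial B_1$ a priori, but from \eqref{splitted} (applied with $p=1$) together with \eqref{eqd2fcone} we know $|\DIFF^2_1 f|(\partial B_1) = |\DIFF^2_1 f^\cone|(\partial B_1) = d\omega_d$ and $|\DIFF^2_1 f|(\R^d\setminus\bar B_1) = 0$, so all the interesting mass sits inside $B_1$, where $|\DIFF^2_1 f|(B_1) = |\DIFF^2_1 f^\cone|(B_1) = d\omega_d (d-1)^{1/1-1}\cdot 1 = d\omega_d$ (using $1\wedge 1 = 1$). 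The key computation is to identify $\Tr\DIFF\nabla f(B_1)$. Here I use the observation from the introduction: for any $U\in SO(\R^d)$, $\int_{B_1}\boldsymbol{\Delta}(f\circ U) = \int_{B_1}\boldsymbol{\Delta} f$, because the Laplacian is rotation-invariant and $B_1$ is rotation-invariant; integrating over $U$ against the Haar measure and using $f^\rad = f^\cone$ gives
\begin{equation}\notag
\int_{B_1}\dd\Tr\DIFF\nabla f = \int_{B_1}\dd\Tr\DIFF\nabla f^\cone = -d\omega_d,
\end{equation}
the last equality by \eqref{tracecone} with $r = 1$.

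The second step is the rigidity argument. We have just shown $\Tr\DIFF\nabla f(B_1) = -d\omega_d = -|\DIFF^2_1 f|(B_1)$, i.e.\ equality holds in \eqref{eqlemmaconvex} on $\Omega = B_1$ in the concave alternative. By Lemma~\ref{lemmaconvex}, $f$ has a representative that is continuous and concave on $B_1$. In particular $\DIFF\nabla f$ is a nonpositive symmetric-matrix-valued measure on $B_1$, and $f$ is continuous on $B_1$.

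The third step is to upgrade the integral identity $f^\rad = f^\cone$ to the pointwise identity $f = f^\cone$ on $B_1$, up to a linear term. Since $f$ is concave and continuous on $B_1$, for a.e.\ radius $s\in(0,1)$ the spherical average $\dashint_{\partial B_s} f \dd\HH^{d-1}$ equals $f^\rad$ evaluated there, namely $f^\cone(s\sigma) = 1-s$; concavity of $f$ on $B_1$ forces, by Jensen applied to the barycenter of the uniform measure on $\partial B_s$ (which is the origin), $f(0) \ge \dashint_{\partial B_s} f = 1-s$ for every such $s$, hence $f(0)\ge 1$. On the other hand, I want the reverse bound $f \ge f^\cone$ on $B_1$: fix $x\in B_1$ and look at the segment from $x$ to $-x\cdot\frac{something}{}$... more cleanly, the plan here is: after subtracting the linear part, one can assume $\nabla f(0) = 0$ (a.e.\ in a Lebesgue sense; use that concave functions are differentiable a.e.\ and the spherical average of the gradient at radius $s$ vanishes because $\nabla f^\cone$ has zero average over spheres $\partial B_s$ for $s<1$ — wait, it does not, $\nabla f^\cone = -x/|x|$ on $B_1$, whose spherical average is zero by symmetry, good). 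So after subtracting an affine function we may assume the continuous concave representative satisfies $f(0) = 1$ and the average of $\nabla f$ over each small sphere is $0$. Then concavity gives $f(x) \le f(0) + \nabla f(0)\cdot x$ pointwise, but more useful is: for the concave function $g(s) := \dashint_{\partial B_s} f \dd\HH^{d-1}$ on $(0,1)$ (which is concave as an average of concave slices is... actually $s\mapsto f(s\sigma)$ is concave for each fixed $\sigma$, so $g$ is concave), we have $g(s) = 1-s$ identically, a function that is \emph{affine} on $(0,1)$; rigidity in Jensen's inequality then forces $f(s\sigma) = 1-s$ for \emph{every} $\sigma$ and every $s\in(0,1)$, i.e.\ $f = f^\cone$ on $B_1$.

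The final step is to extend the identity from $B_1$ to all of $\R^d$. Since $|\DIFF^2_1 f|(\R^d\setminus\bar B_1) = 0$, $f$ is affine on the connected open set $\R^d\setminus\bar B_1$; matching with the boundary values and gradient of $f^\cone$ on $\partial B_1$ (which are $0$ and $0$ respectively, since $f^\cone \equiv 0$ and $\nabla f^\cone = 0$ just outside $\bar B_1$, and the trace of $f = f^\cone$ from inside is $0$ on $\partial B_1$), the affine function is forced to be $0$ there; combining with $f = f^\cone$ on $B_1$ and the fact that $|\DIFF\nabla f|(\partial B_1)$ matches that of $f^\cone$ (from \eqref{splitted}), one concludes $f = f^\cone + \alpha\cdot x$ a.e.\ on $\R^d$, where $\alpha\in\R^d$ is the gradient of the affine correction we subtracted.

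\textbf{Main obstacle.} The delicate point is the rigidity step turning $f^\rad = f^\cone$ plus concavity into the genuinely pointwise $f = f^\cone$: one must argue carefully that equality in the spherical-average version of Jensen's inequality for a concave function, holding for a.e.\ radius, propagates to equality on each sphere, and handle the subtraction of the affine term so that $\nabla f(0)$ is genuinely zeroed out — this requires knowing that concave functions are differentiable a.e.\ and controlling the behavior near the origin, where $f^\cone$ has a conical singularity. The bookkeeping of affine corrections (first subtracting a linear term to normalize, then checking the exterior affine piece vanishes) also needs care to make sure the final $\alpha$ is well-defined and the identity is global.
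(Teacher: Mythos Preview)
Your first two steps (computing $\Tr\DIFF\nabla f(B_1)=-d\omega_d$ via rotation invariance and then invoking Lemma~\ref{lemmaconvex} to get a continuous concave representative on $B_1$) match the paper's argument and are fine.

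The genuine gap is in your third step. The ``rigidity in Jensen'' claim --- that if the spherical average $g(s)=\dashint_{\partial B_s} f$ is the affine function $1-s$, then each slice $s\mapsto f(s\sigma)$ must equal $1-s$ --- is simply false. The very conclusion of the proposition furnishes the counterexample: $f(x)=f^\cone(x)+\alpha\cdot x$ is concave on $B_1$, satisfies $f^\rad=f^\cone$, yet $f(s\sigma)=1-s+s(\alpha\cdot\sigma)\ne 1-s$ for $\alpha\ne 0$. Averaging concave (even affine) functions to an affine function does not force them to coincide. Your attempt to repair this by first normalizing ``$\nabla f(0)=0$'' is also problematic: $f^\cone$ is not differentiable at $0$ (its superdifferential there is the whole closed unit ball), so there is no canonical ``gradient at $0$'' to subtract, and you give no argument that such a subtraction makes the slices equal.

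The paper fixes this by reversing the order of your steps 3 and 4. It first uses \eqref{splitted} to see that $|\DIFF^2_1 f|(\R^d\setminus\bar B_1)=0$, so $f$ is affine there, $f(x)=\alpha\cdot x+\beta$; the condition $f^\rad=f^\cone=0$ outside forces $\beta=0$. One then subtracts \emph{this} linear term to get $\tilde f=f-\alpha\cdot x$, which is still concave on $B_1$, still satisfies $\tilde f^\rad=f^\cone$ (so $\tilde f(0)=1$), and now vanishes a.e.\ on $\R^d\setminus\bar B_1$. The key extra ingredient is the boundary condition: for $\calH^{d-1}$-a.e.\ $\sigma$ the one-dimensional slice $\tilde f_\sigma(s)=\tilde f(s\sigma)$ belongs to $W^{1,1}_\rmloc((0,\infty))$ and vanishes on $(1,\infty)$, hence has a continuous representative with $\tilde f_\sigma(s)\to 0$ as $s\uparrow 1$. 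Now concavity of $\tilde f_\sigma$ on $[0,1)$ with $\tilde f_\sigma(0)=1$ and $\tilde f_\sigma(1^-)=0$ gives $\tilde f_\sigma(s)\ge 1-s$, i.e.\ $\tilde f\ge f^\cone$ on $B_1$; combined with $\tilde f^\rad=f^\cone$ this forces $\tilde f=f^\cone$. The boundary information at $s=1$, coming from the exterior, is exactly what your Jensen argument was missing.
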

\begin{proof}
	Let $r>0$ and let $U\in SO(\RR^d)$. By Lemma~\ref{radialbetter}, $f_U\defeq f(U\,\cdot\,)$ has finite Hessian--Schatten total variation. Also, for any radial function $g\in C_\rmc^\infty(\RR^d)$ one has
	$$
	\int_{\RR^d}f_U\Delta g\dd\LL^d=\int_{\RR^d}f (\Delta g)_{U^t}\dd\LL^d=\int_{\RR^d}f\Delta g\dd\LL^d,
	$$
so that, integrating both sides with respect to $\dd\mu_d(U)$ and using Fubini's Theorem,
	$$
\int_{\RR^d}f^\rad\Delta g\dd\LL^d=	\int_{\RR^d}f\Delta g\dd\LL^d.
$$
Then, as $f^\rad=f^\cone$ and integrating by parts,
		$$
	\int_{\RR^d}g \dd\Tr \DIFF\nabla f^\cone=	\int_{\RR^d}g \dd\Tr \DIFF\nabla f.
	$$
	Therefore, by an approximation argument, recalling the explicit computation~\eqref{tracecone}, we obtain that
	$$
		\int_{B_r} \dd\Tr \DIFF\nabla f=- d\omega_d r^{d-1}\chi_{(0,1]}(r)
		\qquad\forall r>0.
	$$
	In particular, taking into account \eqref{dcass} and~\eqref{splitted}
	$$
-\Tr \DIFF\nabla f(B_1)=d\omega_d=|\DIFF_1^2 f^\cone|(B_1)= |\DIFF_1^2 f|(B_1).
$$	
Now Lemma~\ref{lemmaconvex} can be applied, to obtain that the function $f$ has a continuous and concave representative in $B_1$  that, without loss of generality, {we} still denote by $f$. By~\eqref{splitted} again, $f$ is affine on $\RR^d\setminus\bar B_1$, say $f(x)=\alpha\,\cdot\,x+\beta$ for $x\in \RR^d\setminus\bar B_1$, for some $\alpha\in\RR^d$ and $\beta\in\RR$.	Now $f^\rad=f^\cone$ forces $\beta=0$. 

Setting also $\tilde f(x)\defeq f(x)-\alpha\,\cdot\, x$, we conclude the proof by showing $\tilde f=f^\cone$. Notice that still $\tilde f$ is continuous and concave on $B_1$ and $\tilde f^\rad=f^\cone$. Notice that this last fact implies $\tilde f(0)=1$.

Now, for any $\sigma\in \partial B_1$, define $\tilde f_\sigma(s)\defeq \tilde f(s \sigma)$ for $s\in [0,\infty)$,
 a function continuous and concave in $[0,1)$ with $\tilde f_\sigma(0)=1$.
 Notice that for $\HH^{d-1}$-a.e.\ $\sigma\in \partial B_1$, $\tilde f_\sigma\in W^{1,1}_\rmloc((0,\infty))$. This can be seen either with a change of coordinates and the characterization of Sobolev functions  on lines or by approximation, using repeatedly integration in polar coordinates. Hence, for $\HH^{d-1}$-a.e.\ $\sigma\in \partial B_1$, the function $\tilde f_\sigma$ has a continuous representative in $[1,\infty)$. Now, for $\HH^{d-1}$-a.e.\ $\sigma\in \partial B_1$, $\tilde f_\sigma$ vanishes a.e. in $(1,\infty)$ (as $\tilde f$ vanishes identically on $\RR^d\setminus \bar B_1$), therefore this implies
 $\tilde{f}_\sigma(s)\to 0$ as $s\uparrow 1$ and the continuous representative is the one null in $[1,\infty)$. 
 Then, exploiting continuity and concavity,  for $\HH^{d-1}$-a.e.\ $\sigma\in \partial B_1$, $\tilde f_\sigma(s)\ge (1-s)$ for $s\in [0,1]$. Then it holds that $\tilde f\ge f^\cone$ $\LL^d$-a.e.\ on $B_1$, whence, being $\tilde f^\rad=f^\cone$, $\tilde f=f^\cone$ on $B_1$. 
\end{proof}

\begin{proposition}\label{propconeradp}
		Let $p\in[1,\infty)$. Let $f \in L^1_\rmloc(\RR^d)$ with bounded Hessian--Schatten variation and assume that 
	\begin{equation}\label{eqd2pfd2pfcone}
	f^\rad=f^\cone\qquad\text{and}\qquad	|\DIFF^2_p f|(\R^d)=|\DIFF^2_p f^\cone|(\R^d).
	\end{equation}
	Then $f$ is equal to $f^\cone$ up to a linear term: there exists $\alpha\in\R^d$ such that
	\begin{equation}\notag
		f(x)=f^\cone(x)+\alpha\cdot x
		\qquad\text{ for a.e.\ $x\in\R^d$.}
	\end{equation} 
\end{proposition}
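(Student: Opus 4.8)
The plan is to reduce to the case $p=1$, which is Proposition~\ref{propconerad}. So assume from now on $p\in(1,\infty)$; the aim is to show that the hypothesis self-improves to $|\DIFF^2_1 f|(\RR^d)=|\DIFF^2_1 f^\cone|(\RR^d)$, after which Proposition~\ref{propconerad} (applicable since $f^\rad=f^\cone$) gives the conclusion. Since $f^\rad=f^\cone$, the hypothesis reads $|\DIFF^2_p f^\rad|(\RR^d)=|\DIFF^2_p f|(\RR^d)$, so equality holds in Lemma~\ref{radialbetter}, and the preceding Lemma yields the localized equalities \eqref{splitted} for every $r>0$. Combined with \eqref{dcass}, this shows that $|\DIFF^2_p f|$ is concentrated on $\overline B_1$, with $|\DIFF^2_p f|(B_1)=d\omega_d(d-1)^{1/p-1}$ and $|\DIFF^2_p f|(\partial B_1)=d\omega_d$.

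I first dispose of the parts of $|\DIFF^2_1 f|$ that are insensitive to $p$. Off $\overline B_1$: since $|\DIFF^2_p f|$ and $|\DIFF\nabla f|$ are mutually absolutely continuous (Proposition~\ref{hessiananddiffgrad}) and the former vanishes on $\RR^d\setminus\overline B_1$, so does $|\DIFF^2_1 f|$. On $\partial B_1$: the measure $\DIFF\nabla f\mres\partial B_1$ is singular with respect to $\LL^d$, so by Alberti's rank-one theorem its polar is a rank-one matrix $|\DIFF\nabla f|$-a.e., and by item~\ref{zuppa5} of Proposition~\ref{zuppa} the $1$- and $p$-Schatten norms agree on rank-one matrices; hence $|\DIFF^2_1 f|(\partial B_1)=|\DIFF\nabla f|(\partial B_1)=|\DIFF^2_p f|(\partial B_1)=d\omega_d=|\DIFF^2_1 f^\cone|(\partial B_1)$.

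The heart of the matter is the behaviour on $B_1$. Write $\tau:=|\DIFF\nabla f|$ and $P:=\DIFF\nabla f/|\DIFF\nabla f|$, and recall (as in the proof of Lemma~\ref{radialbetter}) that $\DIFF\nabla f^\rad=\dashint_{\SO(\RR^d)}\DIFF\nabla f_U\,\dd\mu_d(U)$ as measures, with $\DIFF\nabla f_U= U^t\bigl[(U^{-1})_\#(\DIFF\nabla f)\bigr]U$. Set $\sigma:=\dashint_{\SO(\RR^d)}(U^{-1})_\#\tau\,\dd\mu_d(U)$, a rotation-invariant measure with $(U^{-1})_\#\tau\ll\sigma$ for $\mu_d$-a.e.\ $U$; writing $(U^{-1})_\#\tau=h_U\sigma$ we get $h_U\ge0$, $\dashint_{\SO(\RR^d)}h_U\,\dd\mu_d=1$ ($\sigma$-a.e.), $\DIFF\nabla f_U=\bigl(U^t P(U\,\cdot\,)U\bigr)h_U\,\sigma$, and $\dv{\DIFF\nabla f^\rad}{\sigma}(x)=\dashint_{\SO(\RR^d)} U^t P(Ux)U\,h_U(x)\,\dd\mu_d(U)$. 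Since $|\DIFF^2_p f_U|(B_1)=|\DIFF^2_p f|(B_1)$ for every $U$ ($B_1$ being rotation invariant), equality in Lemma~\ref{radialbetter} on $B_1$ becomes
\[
\int_{B_1}\Bigl|\dashint_{\SO(\RR^d)} U^t P(Ux)U\,h_U(x)\,\dd\mu_d(U)\Bigr|_p\dd\sigma(x)=\int_{B_1}\dashint_{\SO(\RR^d)}|P(Ux)|_p\,h_U(x)\,\dd\mu_d(U)\,\dd\sigma(x),
\]
so for $\sigma$-a.e.\ $x\in B_1$ equality holds in Jensen's inequality for the Schatten $p$-norm. As that norm is strictly convex for $p\in(1,\infty)$ (item~\ref{zuppa6} of Proposition~\ref{zuppa}) and $|U^t P(Ux)U|_2=1$, this forces $U^t P(Ux)U$ to equal a fixed matrix $Q(x)$, with $|Q(x)|_2=1$, for $\mu_d$-a.e.\ $U$ with $h_U(x)>0$; hence $\DIFF\nabla f^\rad\mres B_1=Q\,\sigma\mres B_1$. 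Comparing with $\DIFF\nabla f^\cone\mres B_1=-\frac{|x|^2\mathrm{Id}-x\otimes x}{|x|^3}\LL^d\mres B_1$ from \eqref{eqd2fcone} identifies $\sigma\mres B_1=\sqrt{d-1}\,|x|^{-1}\LL^d\mres B_1$ and $Q(x)=-(d-1)^{-1/2}\bigl(\mathrm{Id}-x\otimes x/|x|^2\bigr)$, whose eigenvalues are $0$ (simple) and $-(d-1)^{-1/2}$ (multiplicity $d-1$). Since $P(Ux)=UQ(x)U^t$ for $\tau$-a.e.\ $y=Ux\in B_1$, we obtain $|P|_1\equiv(d-1)^{1/2}$ and $|P|_p\equiv(d-1)^{1/p-1/2}$ $\tau$-a.e.\ on $B_1$, whence, using \eqref{splitted} and \eqref{dcass},
\[
|\DIFF^2_1 f|(B_1)=(d-1)^{1-1/p}\,|\DIFF^2_p f|(B_1)=(d-1)^{1-1/p}\,d\omega_d(d-1)^{1/p-1}=d\omega_d=|\DIFF^2_1 f^\cone|(B_1).
\]

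Summing the three contributions gives $|\DIFF^2_1 f|(\RR^d)=2d\omega_d=|\DIFF^2_1 f^\cone|(\RR^d)$ (again by \eqref{dcass}), and Proposition~\ref{propconerad} applies, yielding $f(x)=f^\cone(x)+\alpha\cdot x$ for a.e.\ $x$ and some $\alpha\in\RR^d$. The main obstacle is the argument on $B_1$: setting up a common dominating measure for the family $\{\DIFF\nabla f_U\}_U$, justifying the routine measure-theoretic facts (that $(U^{-1})_\#\tau\ll\sigma$ for $\mu_d$-a.e.\ $U$, and the identity for $\dv{\DIFF\nabla f^\rad}{\sigma}$), and extracting the precise eigenvalue profile of $P$ from the equality case of Jensen's inequality for the strictly convex Schatten norm, then matching it against the explicit Hessian \eqref{eqd2fcone} of $f^\cone$.
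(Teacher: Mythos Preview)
Your overall plan---reduce to $p=1$ by showing $|\DIFF^2_1 f|(\RR^d)=|\DIFF^2_1 f^\cone|(\RR^d)$---matches the paper, and your treatment of $\RR^d\setminus\bar B_1$ and of $\partial B_1$ (via Alberti's rank-one theorem) is fine. The gap is in your argument on $B_1$.

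You set $\sigma:=\dashint_{\SO(\RR^d)}(U^{-1})_\#\tau\,\dd\mu_d(U)$ and assert that $(U^{-1})_\#\tau\ll\sigma$ for $\mu_d$-a.e.\ $U$, calling this a ``routine measure-theoretic fact''. It is not: take $\tau=\delta_{e_1}$; then $\sigma$ is the uniform measure on $\partial B_1$, yet no $(U^{-1})_\#\tau=\delta_{U^{-1}e_1}$ is absolutely continuous with respect to it. In the present setting the same phenomenon occurs if $\DIFF\nabla f\mres B_1$ has a singular part concentrated, say, on a line segment: the rotational average $\sigma$ becomes absolutely continuous (the rotated segments sweep out an annulus), but each individual $(U^{-1})_\#\tau$ retains its singular part and is not $\ll\sigma$. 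Without this absolute continuity the densities $h_U$ do not exist, the displayed Jensen identity is not available, and the conclusion $|P|_1\equiv(d-1)^{1/2}$ $\tau$-a.e.\ on $B_1$ cannot be drawn. In fact that conclusion is incompatible with Alberti's theorem on any singular part of $\DIFF\nabla f\mres B_1$, where the polar is rank one and hence $|P|_1=|P|_p$.

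The paper avoids this by not averaging over $\SO(\RR^d)$ at all on $B_1$. It compares $f$ directly with $f^\cone$ through the midpoint $g:=\tfrac12(f+f^\cone)$, which also satisfies the hypothesis, and splits $B_1$ into a Lebesgue-null set $N$ carrying the singular part of $\DIFF\nabla f$ and its complement. On $B_1\setminus N$ the equality chain for $|\DIFF^2_p g|(B_1)$ forces equality in the triangle inequality for $|\cdot|_p$ between the absolutely continuous densities of $\DIFF\nabla f$ and $\DIFF\nabla f^\cone$, and strict convexity gives $\tfrac{\dd\DIFF\nabla f}{\dd\LL^d}=t\,\tfrac{\dd\DIFF\nabla f^\cone}{\dd\LL^d}$; on $N$ one uses Alberti to get $|\DIFF^2_p f|(N)=|\DIFF^2_1 f|(N)$. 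These two pieces yield only the \emph{inequality} $|\DIFF^2_1 f|(B_1)\le|\DIFF^2_1 f^\cone|(B_1)$ (note the factor $(d-1)^{1-1/p}\ge 1$ applied to the singular contribution), and equality on $\RR^d$ then follows from Lemma~\ref{radialbetter}. To repair your argument you should likewise separate the absolutely continuous and singular parts of $\DIFF\nabla f\mres B_1$ and treat them by different mechanisms.
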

\begin{proof}
	We focus on the case $p>1$ as the case $p=1$ has already been proved in  Proposition~\ref{propconerad}.	Let now $g:=\frac12 (f+f^\cone)$.  Recalling~\eqref{splitted}, $|\DIFF^2_p g|(\RR^d\setminus \bar B_1)=0$.
	Still, $g^\rad=f^\cone$, so that, by Lemma~\ref{radialbetter} and~\eqref{eqd2pfd2pfcone},
	\begin{align*}
		|\DIFF^2_p f^\cone|(\RR^d)\le |\DIFF^2_p g|(\RR^d)\le \frac12|\DIFF^2_p f|(\RR^d)+\frac12|\DIFF^2_p f^\cone|(\RR^d)=|\DIFF^2_p f^\cone|(\RR^d),
	\end{align*}
hence equality holds throughout and therefore $g$ satisfies~\eqref{eqd2pfd2pfcone} in place of $f$.
	
	We next decompose $\DIFF\nabla f$ in absolutely continuous and singular part, use that the singular one has a rank one density with respect to the total variation, and show that the absolutely continuous one is proportional to the one of $\DIFF\nabla f^\cone$. We are going to use  the theory of functions of bounded variation throughout, see e.g.\ \cite{AFP00}. The superscript $s$ denotes the singular part of a measure with respect to $\LL^d$. 
	We have a $\LL^d$-negligible Borel set $N\subseteq B_1$ such that $|\DIFF^2_1 f|^s\mres B_1=|\DIFF^2_1 f|\mres N$. Also $|\DIFF^2_1 g|^s\mres B_1=|\DIFF^2_1 g|\mres N$, being $|\DIFF^2_1 f^\cone|\mres B_1\ll\LL^d$, by~\eqref{eqd2fcone}. In addition
	\begin{align*}
		|\DIFF^2_p g|\mres N&\le \frac12|\DIFF^2_p f|\mres N+\frac12|\DIFF^2_p f^\cone|\mres N=\frac12|\DIFF^2_p f|\mres N\\&\le |\DIFF^2_p g|\mres N+\frac12|\DIFF^2_p f^\cone|\mres N= |\DIFF^2_p g|\mres N
	\end{align*}
hence equality holds throughout and in particular, $|\DIFF^2_p g|\mres N=\frac 12|\DIFF^2_p f|\mres N$. Now, recall that $|\DIFF^2_p f|\mres (B_1\setminus N)\ll\LL^d$ and $|\DIFF^2_p g|\mres (B_1\setminus N)\ll\LL^d$, also $|\DIFF^2_p f^\cone|\mres B_1\ll\LL^d$, by~\eqref{eqd2fcone}. Therefore, by Proposition~\ref{hessiananddiffgrad},
\begin{align*}
	|\DIFF^2_p g|(B_1)&=|\DIFF^2_p g|(N)+\int_{B_1\setminus N}\bigg|\dv{\DIFF\nabla g}{|\DIFF\nabla g|}\bigg|_p\dd{|\DIFF\nabla g|}=|\DIFF^2_p g|(N)+\int_{B_1\setminus N}\bigg|\dv{\DIFF\nabla g}{\LL^d}\bigg|_p\dd{\LL^d}\\&=|\DIFF^2_p g|(N)+\frac12\int_{B_1\setminus N}\bigg|\dv{\DIFF\nabla f}{\LL^d}+\dv{\DIFF\nabla f^\cone}{\LL^d}\bigg|_p\dd{\LL^d}\\&\le 
\frac 12|\DIFF^2_p f|(N)+\frac 12\int_{B_1\setminus N}\bigg|\dv{\DIFF\nabla f}{\LL^d}\bigg|_p+\bigg|\dv{\DIFF\nabla f^\cone}{\LL^d}\bigg|_p\dd{\LL^d}\\&\le
\frac 12|\DIFF^2_p f|(B_1)+\frac 12|\DIFF^2_p f^\cone|(B_1)=	|\DIFF^2_p g|(B_1),
\end{align*}
where we also used~\eqref{eqd2pfd2pfcone} for $f$ and $g$ and \eqref{splitted} in the last equality. Hence equality holds throughout, so that 
$$
\bigg|\dv{\DIFF\nabla f}{\LL^d}+\dv{\DIFF\nabla f^\cone}{\LL^d}\bigg|_p=\bigg|\dv{\DIFF\nabla f}{\LL^d}\bigg|_p+\bigg|\dv{\DIFF\nabla f^\cone}{\LL^d}\bigg|_p\qquad\LL^d\text{-a.e.\ on $B_1$}.
$$
By strict convexity of the $p$-Schatten norm (item~\ref{zuppa6} of Proposition~\ref{zuppa}), and the fact (by~\eqref{eqd2fcone}) that the density of $\DIFF\nabla f^\cone$ with respect to $\LL^d$ is nonzero
 $\LL^d$-a.e.\ on $B_1$, we have that for some Borel map $t:B_1\rightarrow  [0,\infty)$,
\begin{equation}\label{vmdsok}
	\dv{\DIFF\nabla f}{\LL^d}=t\dv{\DIFF\nabla f^\cone}{\LL^d}\qquad\LL^d\text{-a.e.\ on $B_1$}. 
\end{equation}
Now, by~\eqref{eqd2fcone}, for $q\in[1,\infty]$,
\begin{equation}\label{vmdsok1}
	\bigg|\dv{\DIFF\nabla f^\cone}{\LL^d}{}(x)\bigg|_q=\bigg|-\frac{|x|^2	{\rm Id}-x\otimes x}{|x|^3}\bigg|_q=\frac{(d-1)^{1/q}}{|x|}\qquad\LL^d\text{-a.e.\ on $B_1$}.
\end{equation}

Then, by~\eqref{vmdsok} and~\eqref{vmdsok1} (with $q=1,p$),
\begin{align*}
	\bigg|\dv{\DIFF\nabla f}{\LL^d}{}(x)\bigg|_p&=t(x)\frac{(d-1)^{1/p}}{|x|}={(d-1)^{1/p-1}}t(x)\frac{d-1}{|x|}\\&={(d-1)^{1/p-1}}\bigg|\dv{\DIFF\nabla f}{\LL^d}{}(x)\bigg|_1\qquad\LL^d\text{-a.e.\ on $B_1$}.
\end{align*}
Therefore, by Proposition~\ref{hessiananddiffgrad},
\begin{equation}\label{eq222221}
	|\DIFF^2_p f|(B_1\setminus N)={(d-1)^{1/p-1}}|\DIFF^2_1 f|(B_1\setminus N).
\end{equation}
On the singular set $N$, by Proposition~\ref{hessiananddiffgrad} and Alberti's rank 1 Theorem together with item~\ref{zuppa5} of Proposition~\ref{zuppa}, 
\begin{equation}\label{eq22222}
		|\DIFF^2_p f|(N)=\int_{N}\bigg|\dv{\DIFF\nabla f}{|\DIFF\nabla f|}\bigg|_p\dd |\DIFF\nabla f|=\int_{N}\bigg|\dv{\DIFF\nabla f}{|\DIFF\nabla f|}\bigg|_1\dd |\DIFF\nabla f|=	|\DIFF^2_1 f|(N).
\end{equation}
Therefore, by~\eqref{eq222221},~\eqref{eq22222} and~\eqref{splitted}, taking into account that $d\ge 2$ and $p\ge 1$ (hence $1\le (d-1)^{1-1/p}$),
\begin{equation}\label{vcsdacds}
\begin{split}
	|\DIFF_1^2 f|(B_1)&=|\DIFF^2_1 f|(B_1\setminus N)+|\DIFF^2_1 f|(N)=(d-1)^{1-1/p}|\DIFF^2_p f|(B_1\setminus N)+|\DIFF^2_p f|(N)\\&\le (d-1)^{1-1/p}\big(|\DIFF^2_p f|(B_1\setminus N)+|\DIFF^2_p f|(N)\big)=(d-1)^{1-1/p}|\DIFF_p^2 f|(B_1)\\&=(d-1)^{1-1/p}|\DIFF_p^2f ^\cone|(B_1)=|\DIFF_1^2f ^\cone|(B_1)
\end{split}
\end{equation}
where the last equality follows from~\eqref{dcass}.
Recalling~\eqref{splitted} and arguing exactly as for~\eqref{eq22222} for the first and third equalities,
\begin{equation}\label{csadkncs}
	|\DIFF^2_1 f|(\partial B_1)=|\DIFF^2_p f|(\partial B_1)=|\DIFF^2_p f^\cone|(\partial B_1)=|\DIFF^2_1 f^\cone|(\partial B_1).
\end{equation}
Then, by~\eqref{splitted}, exploiting~\eqref{vcsdacds} and~\eqref{csadkncs}
\begin{align*}
|\DIFF_1^2 f|(\RR^d)=|\DIFF_1^2 f|(B_1)+|\DIFF_1^2 f|(\partial B_1)\le |\DIFF_1^2 f^\cone|(B_1)+|\DIFF_1^2 f^\cone|(\partial B_1)=|\DIFF_1^2 f^\cone|(\RR^d).
\end{align*}
Recalling Lemma~\ref{radialbetter} together with~\eqref{eqd2pfd2pfcone}, the inequality above yields that $f$ satisfies~\eqref{eqd21fconed}, so that the conclusion follows from Proposition~\ref{propconerad}.
\end{proof}
\subsection{Proof of the main result}\label{proofmainextremal}
\begin{proof}[Proof of Theorem~\ref{coneexthm}]
	Let $f_1$ and $f_2$ be as in the statement and recall~\eqref{defrad}, so that we can define $f^\rad_i$ for $i=1,2$. As $f^{\cone}$ is already a radial function, we still have $\lambda f^\rad_1+(1-\lambda)f^\rad_2=f^\cone$. 
Now we compute, using Lemma~\ref{radialbetter} and the assumption,
\begin{align*}
	|\DIFF^2_p f^\cone|(\RR^d)&=|\DIFF^2_p (\lambda f_1^\rad+(1-\lambda)f_2^\rad)|(\RR^d)\le \lambda |\DIFF_p^2 f_1^\rad|(\RR^d)+ (1-\lambda) |\DIFF^2_p f^\rad_2|(\RR^d)\\&\le \lambda |\DIFF_p^2 f_1|(\RR^d)+ (1-\lambda) |\DIFF^2_p f_2|(\RR^d) =\lambda 	|\DIFF_p^2 f^\cone|(\RR^d)+(1-\lambda)	|\DIFF_p^2 f^\cone|(\RR^d)\\&=		|\DIFF^2_p f^\cone|(\RR^d),
\end{align*}
hence equality holds throughout. Therefore, $$|\DIFF^2_p f^\rad_i|(\RR^d)=|\DIFF^2_p f_i|(\RR^d)\qquad\text{for }i=1,2,$$
and
$$
|\DIFF^2_p (\lambda f_1^\rad+(1-\lambda)f_2^\rad)|(\RR^d)=  |\DIFF_p^2 (\lambda f_1^\rad)|(\RR^d)+  |\DIFF^2_p ((1-\lambda) f^\rad_2)|(\RR^d)
$$
so that, by Lemma~\ref{rigidity},
\begin{equation}\label{ugualmisura}
|\DIFF^2_p f^\cone|=\lambda	|\DIFF^2_p f_1^\rad|+(1-\lambda)|\DIFF^2_p f_2^\rad|
\end{equation}
as measures on $\RR^d$.
As $f_1^\rad$ and $f_2^\rad$  are radial functions with bounded Hessian--Schatten variation, by Proposition~\ref{htvradial},
$f_i^\rad(x)=g_i(|x|)$ for $g_i\in W^{1,1}_{\rmloc}((0,\infty))$. Similarly, $f^\cone(x)=g^\cone(|x|)=(1-|x|)_+$, notice that $\lambda g_1+(1-\lambda) g_2=g^\cone$. Then, using  repeatedly the representation formula of Proposition~\ref{htvradial} and~\eqref{ugualmisura}, 
\begin{align*}
	|\DIFF_p^2 f^\cone|(B_1)&=d\omega_d\int_0^1  \Vert (0,g_\cone',\ldots,g_\cone')\Vert_{\ell^p} s^{d-2}\dd s\\&\le
d\omega_d\bigg(\lambda \int_0^1  \Vert (0,g_1',\ldots,g_1')\Vert_{\ell^p} s^{d-2}\dd s+(1-\lambda) \int_0^1  \Vert (0,g_2',\ldots,g_2')\Vert_{\ell^p} s^{d-2}\dd s\bigg)
	\\&\le \lambda |\DIFF_p^2 f^\rad_1|(B_1)+(1-\lambda)|\DIFF_p^2 f^\rad_2|(B_1)=|\DIFF_p^2 f^\cone|(B_1),
\end{align*}
hence equality holds throughout. In particular, 
as we have obtained
$$
d\omega_d\int_0^1  \Vert (0,g_i',\dots,g_i')\Vert_{\ell^p} s^{d-2}\dd s=|\DIFF^2_p f_i^\rad|(B_1)\qquad\text{for }i=1,\,\,2,
$$
 exploiting the representation formula of Proposition~\ref{htvradial}, 
we have that $g_1'$ and $g_2'$ are constant on $(0,1)$. Also, by~\eqref{ugualmisura},  and the representation formula of Proposition~\ref{htvradial} again,
$g_1'$ and $g_2'$ vanish identically on $(1,\infty)$. Recall also that  $g_i\in W^{1,1}_{\rmloc}((0,\infty))$, so that $g_i$ has a continuous representative, for $i=1,\,2$. Hence, there exist $\alpha_1,\alpha_2\in\RR$ and $\beta_1,\,\beta_2\in\RR$ such that 
$$
g_i(s)=\alpha_i (1-s)_++\beta_i.
$$
Now, $\lambda g_1+(1-\lambda) g_2=g^\cone$ forces $\lambda\alpha_1+(1-\lambda)\alpha_2=1$, whereas
$$
|\alpha_i||\DIFF_p^2 f^\cone|(\RR^d)=|\DIFF_p^2 f_i^\rad|(\RR^d)=|\DIFF_p^2 f^\cone|(\RR^d)\qquad\text{for }i=1,2
$$
forces $|\alpha_1|=|\alpha_2|=1$. Hence, $\alpha_1=\alpha_2=1$.

Therefore, to sum up, we have, for $i=1,\,2$,
$$
	f_i^\rad=f^\cone+\beta_i,$$
	so that$$
	|\DIFF^2_p f_i^\rad|(\RR^d)=|\DIFF_p^2 f^\cone|(\RR^d)=|\DIFF^2_p f_i|(\RR^d).
$$
Notice that $f_i^\rad-\beta_i=(f_i-\beta_i)^\rad$.
Now we use  Proposition~\ref{propconeradp}  to infer that  $$
f_i(x)-\beta_i=f^\cone(x)+a_i\,\cdot\,x\qquad\text{for a.e.\ }x\in\RR^d,
$$
hence the proof is concluded with $L_i(x)\defeq\alpha_i\,\cdot\, x+\beta_i$.	
\end{proof}

\section{Solutions of the minimization problem}\label{sectionsolutions}

 In this section we stick to the two dimensional case $d=2$. Recall that, by Proposition~\ref{sobo}, functions with bounded Hessian--Schatten variation are continuous, as we are in dimension $2$ and hence  the evaluation functionals in~\eqref{deffunct} below are meaningful (we will implicitly take  the continuous representative, whenever it is possible).
 
 Fix  $\Omega\subseteq\RR^2$ open, and fix $x_1,\ldots, x_N
 \in\Omega$ distinct test points and fix also $y_1,\ldots,y_N\in\RR$. For $\lambda\in [0,\infty]$ and $p,\,q\in [1,\infty]$ we consider the functional
\begin{equation}\label{deffunct}
	 \FF_\lambda^{p,q}:L^1_\rmloc(\Omega)\rightarrow [0,\infty]\qquad\text{defined as}\qquad \FF_\lambda^{p,q} (f)\defeq|\DIFF^2_p f|(\Omega)+\lambda\Vert (f(x_i)-y_i)_{i=1,\ldots,N}\Vert_{\ell^q},
\end{equation}
where we adopt the convention that $\infty\,\cdot\, 0=0$. Notice that if $p=q=1$, we have that $\FF^{1,1}_\lambda=\FF_\lambda$, where $\FF_\lambda$ is defined in~\eqref{defFF} in the Introduction.
 
 Our aim is to establish conditions  under which $\FF_\lambda^{p,q}$ has minimizers, i.e.\ we want to ensure the existence of a minimizer of 
 $$
\inf_{f\in L^1_\rmloc(\Omega)} \FF^{p,q}_\lambda(f).
 $$
 
It turns out that for many values of $\lambda,\,p,\,q$, minimizers indeed exist. Here we state our main results in this direction.
 
  \begin{thm}\label{csaa}
 	Let $p,\,q\in [1,\infty]$ and let $\lambda\in [0, 2^{1/p-1} 4\pi]$. Then there exists a minimizer of $\FF_\lambda^{p,q}$.
 \end{thm}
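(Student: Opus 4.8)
The plan is to apply the direct method of the calculus of variations. The functional $\FF^{p,q}_\lambda$ is a sum of the term $|\DIFF^2_p\cdot|(\Omega)$, which is lower semicontinuous for free, and the evaluation term $\lambda\Vert(f(x_i)-y_i)_i\Vert_{\ell^q}$; the latter need not be lower semicontinuous along $L^1_\rmloc$-convergent sequences, since the pointwise values $f_k(x_i)$ may jump when Hessian--Schatten mass concentrates at $x_i$, and it is precisely in recovering semicontinuity despite this that the bound $\lambda\le 2^{1/p-1}4\pi$ will be used. Two preliminary reductions: $\FF^{p,q}_\lambda$ is unchanged if one adds to $f$ any affine function vanishing at $x_1,\dots,x_N$, a finite-dimensional subspace, so we quotient by it; and for $\lambda=0$ the infimum is $0$, attained on affine functions, so we may assume $\lambda\in(0,2^{1/p-1}4\pi]$.

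\emph{Compactness of minimizing sequences.} Given a minimizing sequence $(f_k)$, the energy bound yields $\sup_k|\DIFF^2_p f_k|(\Omega)<\infty$ and (as $\lambda>0$) $\sup_k\Vert(f_k(x_i)-y_i)_i\Vert_{\ell^q}<\infty$. On an exhaustion of $\Omega$ by bounded domains supporting Poincaré inequalities, Proposition~\ref{sobo} lets us subtract from each $f_k$ an affine function, in the chosen complement, so that the remainders are bounded in $W^{1,2}_\rmloc(\Omega)\cap L^\infty_\rmloc(\Omega)$ by $|\DIFF^2_p f_k|(\Omega)$; the control of the values $f_k(x_i)$ then bounds the subtracted affine parts, so that $(f_k)$ is bounded in $W^{1,2}_\rmloc(\Omega)$. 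By Rellich's theorem, along a subsequence $f_k\to f$ in $L^1_\rmloc(\Omega)$, and by lower semicontinuity $f$ has bounded Hessian--Schatten variation; being $d=2$, $f$ is continuous, so $f(x_i)$ is well defined. This is the content of Proposition~\ref{compacntess}.

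\emph{Lower semicontinuity (Lemma~\ref{lowersemicont}).} Passing to a further subsequence I would arrange $|\DIFF^2_p f_k|\rightharpoonup\mu$ weakly$^\ast$ as finite measures on $\Omega$ and $a_i:=\lim_k\bigl(f_k(x_i)-f(x_i)\bigr)\in\R$. Localizing lower semicontinuity of $|\DIFF^2_p\cdot|$ to balls $B\Subset\Omega$ with $\mu(\partial B)=0$ gives $|\DIFF^2_p f|\le\mu$, whence, since in dimension $2$ the measure $|\DIFF^2_p f|$ charges no point,
\[
\mu(\Omega)\ \ge\ |\DIFF^2_p f|(\Omega)+\sum_{i=1}^N m_i,\qquad m_i:=\mu(\{x_i\})\ge 0 .
\]
The heart of the matter is the excess estimate $|a_i|\le m_i/(2^{1/p-1}4\pi)$, i.e.\ Lemma~\ref{excess}. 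Its proof rests on: (i) in dimension $2$ a cut-cone $x\mapsto h\,(1-|x-x_i|/r)_+$ of height $h$ has $1$-Hessian--Schatten variation exactly $4\pi h$, for every $r>0$, by the explicit formula of Proposition~\ref{htvradial}, and this is, via radial rearrangement (Lemma~\ref{radialbetter}) and the same formula, the cheapest way to raise the value at $x_i$ by $h$ while the function returns to $0$; taking $r=r_k\to 0$, any sequence with $f_k(x_i)-f(x_i)\to a_i$ must hence concentrate at $x_i$ at least $4\pi|a_i|$ of $1$-Hessian--Schatten variation; and (ii) the power-mean inequality on the singular values gives $|M|_p\ge 2^{1/p-1}|M|_1$ for $2\times 2$ matrices $M$, hence $|\DIFF^2_p\cdot|\ge 2^{1/p-1}|\DIFF^2_1\cdot|$ as measures, so the concentrated $p$-mass $m_i$ is at least $2^{1/p-1}4\pi|a_i|$.

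\emph{Conclusion.} Since $f_k(x_i)\to f(x_i)+a_i$ and $\liminf_k|\DIFF^2_p f_k|(\Omega)\ge\mu(\Omega)$, using the triangle inequality in $\ell^q$ together with $\Vert(a_i)_i\Vert_{\ell^q}\le\Vert(a_i)_i\Vert_{\ell^1}=\sum_i|a_i|\le(2^{1/p-1}4\pi)^{-1}\sum_i m_i$,
\[
\liminf_k\FF^{p,q}_\lambda(f_k)\ \ge\ |\DIFF^2_p f|(\Omega)+\sum_i m_i+\lambda\Vert(f(x_i)-y_i)_i\Vert_{\ell^q}-\frac{\lambda}{2^{1/p-1}4\pi}\sum_i m_i\ \ge\ \FF^{p,q}_\lambda(f),
\]
the last step because $\lambda\le 2^{1/p-1}4\pi$. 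As $(f_k)$ was minimizing, $f$ is a minimizer. The main obstacle is the excess estimate of Lemma~\ref{excess} — quantitatively comparing the oscillation $f_k(x_i)-f(x_i)$ with the Hessian--Schatten mass the functions shed at $x_i$ — for which the explicit radial computation (Proposition~\ref{htvradial}) and a blow-up/localization argument around each $x_i$ are needed.
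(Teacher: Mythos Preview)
Your overall architecture is exactly the paper's: direct method, with compactness supplied by Lemma~\ref{compacntess} and lower semicontinuity by Lemma~\ref{lowersemicont}, the latter resting on the excess bound of Lemma~\ref{excess}. Your chain of inequalities in the Conclusion paragraph reproduces the paper's proof of Lemma~\ref{lowersemicont} essentially verbatim.

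The one place where you diverge is in the sketch of Lemma~\ref{excess}. You argue via the optimality of cut cones (effectively Lemma~\ref{goodcutoff}): the cheapest compactly supported bump of height $h$ costs $2^{1+1/p}\pi\,h$, hence a jump $a_i$ in the value at $x_i$ forces that much concentrated mass. The gap is the phrase ``while the function returns to $0$'': for the difference $f_k-f$ you have no pointwise or boundary information near $x_i$, only $L^1_\rmloc$ convergence, so the cut-cone lower bound for compactly supported competitors does not apply directly. The paper avoids this by proving instead the pointwise estimate of Lemma~\ref{trick},
\[
\Big|f(x_i)-\dashint_{B_r(x_i)} f\Big|\le 2^{1-1/p}\Big(\tfrac{1}{4\pi}|\DIFF^2_p f|(B_r(x_i))+\tfrac{1}{2\pi}|\DIFF^2_p f|(B_{2r}(x_i)\setminus B_r(x_i))\Big),
\]
obtained by radial symmetrization and the explicit formula of Proposition~\ref{htvradial}. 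This compares the value at $x_i$ with a ball average rather than a boundary value; since ball averages do pass to the limit under $L^1_\rmloc$ convergence and the annular term vanishes as $r\searrow 0$, Lemma~\ref{excess} follows. Your intuition is sound and the constant is right, but to make it rigorous you need this point-vs-average device (or an equivalent localization) rather than the compact-support optimality of Lemma~\ref{goodcutoff}.
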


 \begin{thm}\label{csaaa}
Let $\lambda\in [0,\infty]$. Then there exists a minimizer of $\FF_\lambda^{1,1}$.
 \end{thm}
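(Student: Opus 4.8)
The plan is to use the critical value $\lambda_c\defeq 4\pi$ as a threshold and to reduce everything to Theorem~\ref{csaa}. For $\lambda\in[0,4\pi]$ the statement is immediate: Theorem~\ref{csaa} applies with $p=q=1$, since in that case $2^{1/p-1}4\pi=4\pi$, and it yields a minimizer of $\FF_\lambda^{1,1}=\FF_\lambda$. It therefore remains to treat $\lambda\in(4\pi,\infty]$, with $\lambda=\infty$ included (using the convention $\infty\cdot 0=0$); the idea here is to start from a minimizer of $\FF_{\lambda_c}^{1,1}$ and repair it, by a local surgery, so that it interpolates the data exactly, at an energy cost that is precisely cancelled by the $\lambda_c$-weighted error it removes.

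Concretely, for $\lambda\in(4\pi,\infty]$ I would pick a minimizer $f$ of $\FF_{\lambda_c}^{1,1}$, which exists by the case already settled; since $\FF_{\lambda_c}^{1,1}(0)=4\pi\sum_i|y_i|<\infty$, this minimizer has $|\DIFF_1^2 f|(\Omega)<\infty$. Fix $\bar r>0$ so small that the closed balls $\overline{B_{\bar r}(x_i)}$, $i=1,\dots,N$, are pairwise disjoint and contained in $\Omega$, and set
\[
\tilde f\defeq f+\sum_{i=1}^N\big(y_i-f(x_i)\big)\Big(1-\tfrac{|\cdot-x_i|}{\bar r}\Big)_+\ \in\ L^1_\rmloc(\Omega).
\]
Each summand is supported in $B_{\bar r}(x_i)$ and equals $y_i-f(x_i)$ at $x_i$, so, using that $f$ has a continuous representative (Proposition~\ref{sobo}), one gets $\tilde f(x_j)=y_j$ for every $j$ and hence the error term of $\FF_\lambda$ vanishes at $\tilde f$. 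By subadditivity of $|\DIFF_1^2\,\cdot\,|(\Omega)$ (immediate from Definition~\ref{def:H-Svar}) together with the scaling identity $|\DIFF_1^2\big(h(1-|\cdot|/\bar r)_+\big)|(\RR^2)=4\pi|h|$, which holds for every $\bar r>0$ and every $h\in\RR$ (the $d=2$ instance of Proposition~\ref{htvradial}, cf.\ also \eqref{dcass}), one obtains
\[
|\DIFF_1^2\tilde f|(\Omega)\ \le\ |\DIFF_1^2 f|(\Omega)+4\pi\sum_{i=1}^N|y_i-f(x_i)|\ =\ \FF_{\lambda_c}^{1,1}(f),
\]
whence $\FF_\lambda^{1,1}(\tilde f)=|\DIFF_1^2\tilde f|(\Omega)\le \FF_{\lambda_c}^{1,1}(f)$. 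Since $\lambda\ge 4\pi$ we also have $\FF_\lambda^{1,1}\ge\FF_{\lambda_c}^{1,1}$ pointwise on $L^1_\rmloc(\Omega)$ (trivially so when $\lambda=\infty$), so for any competitor $g$,
\[
\FF_\lambda^{1,1}(g)\ \ge\ \FF_{\lambda_c}^{1,1}(g)\ \ge\ \FF_{\lambda_c}^{1,1}(f)\ \ge\ \FF_\lambda^{1,1}(\tilde f),
\]
the middle inequality being minimality of $f$; thus $\tilde f$ is a minimizer of $\FF_\lambda^{1,1}$.

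I expect the main obstacle to lie entirely inside Theorem~\ref{csaa} and not in the reduction above: there one needs relative compactness of minimizing sequences (via the Sobolev-type estimates of Proposition~\ref{sobo}) and lower semicontinuity of $\FF_\lambda^{p,q}$, where the concentration dichotomy of $|\DIFF_1^2 f_k|$ at the test points $x_i$ and the sharp value $4\pi$ of the threshold come into play. Granting Theorem~\ref{csaa}, the argument above is elementary; the only points requiring care are: choosing $\bar r$ small enough that the added cones neither overlap nor reach a different test point, invoking continuity so that $\tilde f(x_i)=y_i$ holds exactly rather than approximately, and the scale invariance of the cut-cone energy — a cone of height $h$ over a ball of any radius contributes exactly $4\pi|h|$ to the Hessian--Schatten variation — which is exactly what makes the surgery in the supercritical regime break even.
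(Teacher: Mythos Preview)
Your proof is correct and follows essentially the same approach as the paper: apply Theorem~\ref{csaa} for $\lambda\le\lambda_c=4\pi$, then for $\lambda>\lambda_c$ take a minimizer $f$ of $\FF_{\lambda_c}^{1,1}$, add rescaled cut cones to force exact interpolation, and use that the energy cost $4\pi\sum_i|y_i-f(x_i)|$ exactly matches the error term removed, together with the monotonicity $\FF_\lambda^{1,1}\ge\FF_{\lambda_c}^{1,1}$. Your write-up is in fact more explicit than the paper's (which refers back to the argument of Proposition~\ref{lambdabig}), particularly in the choice of $\bar r$ and the final chain of inequalities.
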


Theorem~\ref{csaa} and Theorem~\ref{csaaa} will follow easily from the results of  Section~\ref{auxexist}. We defer their proof of to Section~\ref{sectprofexi}.
 
 \subsection{Auxiliary results}\label{auxexist}
 
 For the following lemma, we recall  again that functions with bounded Hessian--Schatten variation in dimension $2$ are automatically continuous. Hence, the evaluation (at $0$) functional in the infimum above is meaningful. The spirit of this lemma is to provide us with \say{bump} functions whose Hessian--Schatten total variation is almost optimal.
 \begin{lem}\label{goodcutoff}
 	Let $p\in [1,\infty]$. Then it holds that 
 	\begin{equation}\label{cdasnojcnas}
 		\inf \left\{|\DIFF^2_p f|(\RR^2): \text{$f\in L^1_\rmloc(\RR^2)$ with 
		compact support and $f(0)=1$}\right\}=2^{1+1/p}\pi.
 	\end{equation}
 In particular, thanks to \eqref{dcass},  the infimum is attained by the cut cone $x\mapsto (1-|x|)^+$ when $p=1$.
 \end{lem}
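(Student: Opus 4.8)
The plan is to reduce to radial competitors and then apply the explicit formula of Proposition~\ref{htvradial} in dimension $d=2$. Only functions $f$ with $|\DIFF^2_p f|(\RR^2)<\infty$ are relevant for the infimum, and these have a continuous representative by Proposition~\ref{sobo}, so $f(0)$ is meaningful. For such $f$, Lemma~\ref{radialbetter} with $A=\RR^2$ gives $|\DIFF^2_p f^\rad|(\RR^2)\le|\DIFF^2_p f|(\RR^2)$, while $f^\rad$ still has compact support (by Fubini, $f^\rad=0$ a.e.\ outside the support of $f$) and, since $f$ and $f^\rad$ are continuous and $f^\rad$ is the spherical average of $f$ (see~\eqref{defrad}), letting $|x|\to0$ gives $f^\rad(0)=f(0)=1$. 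Hence the infimum is unchanged if we restrict to radial $f(\cdot)=g(|\cdot|)$, for which Proposition~\ref{htvradial} (with $d=2$, so $d\omega_d=2\pi$, $s^{d-1}=s$, $s^{d-2}=1$, and $g'$ appearing once) yields
\begin{equation}\notag
	|\DIFF^2_p f|(\RR^2)=2\pi\Bigl(\int_{(0,\infty)} s\,\dd|\DIFF^s g'|(s)+\int_0^\infty \Vert(sg''(s),g'(s))\Vert_{\ell^p}\,\dd s\Bigr),
\end{equation}
where $g$ is continuous on $[0,\infty)$ with $g(0)=1$ and $g=0$ near $\infty$.

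For the lower bound, finiteness of the right-hand side together with $\Vert(a,b)\Vert_{\ell^p}\ge\max\{|a|,|b|\}$ gives $g'\in L^1((0,\infty))$, $\int_0^\infty s|g''(s)|\,\dd s<\infty$ and $\int_{(0,\infty)}s\,\dd|\DIFF^s g'|(s)<\infty$. Since $g'\in L^1$ near $0$, there is a sequence $\eps_k\downarrow0$ of Lebesgue points of $g'$ with $\eps_k g'(\eps_k)\to0$ (otherwise $|g'(s)|\ge c/s$ near $0$, contradicting $g'\in L^1$). Integrating the Leibniz identity $\DIFF(sg'(s))=(sg''(s)+g'(s))\LL^1+s\,\DIFF^s g'$ over $(\eps_k,\infty)$, using $g'=0$ near $\infty$ and $g(\eps_k)\to g(0)=1$, and letting $k\to\infty$, one obtains $\int_0^\infty sg''(s)\,\dd s+\int_{(0,\infty)} s\,\dd\DIFF^s g'(s)=1$. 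Bounding the integrand from below by the functional dual to the extremal vector $(1,-1)$, i.e.\ $\Vert(a,b)\Vert_{\ell^p}\ge 2^{-1/p^*}(a-b)$ (valid since $2^{-1/p^*}(1,-1)$ has unit $\ell^{p^*}$-norm, cf.\ item~\ref{zuppa4} of Proposition~\ref{zuppa}), and combining with the two previous identities,
\begin{equation}\notag
\begin{split}
	\int_0^\infty\Vert(sg'',g')\Vert_{\ell^p}\,\dd s+\int_{(0,\infty)} s\,\dd|\DIFF^s g'|
	&\ge 2^{-1/p^*}\Bigl(2-\int_{(0,\infty)} s\,\dd\DIFF^s g'\Bigr)+\int_{(0,\infty)} s\,\dd|\DIFF^s g'|\\
	&\ge 2^{1-1/p^*}+\bigl(1-2^{-1/p^*}\bigr)\int_{(0,\infty)} s\,\dd|\DIFF^s g'|\ \ge\ 2^{1/p},
\end{split}
\end{equation}
so $|\DIFF^2_p f|(\RR^2)\ge 2\pi\cdot 2^{1/p}=2^{1+1/p}\pi$, and by the reduction this holds for every admissible $f$.

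For the matching upper bound, fix $0<\eps<1$, set $A_\eps:=1/\ln(1/\eps)$, and take the radial $f_\eps(\cdot)=g_\eps(|\cdot|)$ with $g_\eps\equiv1$ on $[0,\eps]$, $g_\eps(s):=A_\eps\ln(1/s)$ on $[\eps,1]$, and $g_\eps\equiv0$ on $[1,\infty)$: it is continuous, compactly supported, $g_\eps(0)=1$, with $g_\eps'(s)=-A_\eps/s$, $g_\eps''(s)=A_\eps/s^2$ on $(\eps,1)$ and $\DIFF^s g_\eps'=-\tfrac{A_\eps}{\eps}\delta_\eps+A_\eps\delta_1$. Proposition~\ref{htvradial} then yields
\begin{equation}\notag
	|\DIFF^2_p f_\eps|(\RR^2)=2\pi\Bigl(2A_\eps+2^{1/p}A_\eps\int_\eps^1\frac{\dd s}{s}\Bigr)=4\pi A_\eps+2^{1+1/p}\pi,
\end{equation}
which converges to $2^{1+1/p}\pi$ as $\eps\to0$, since $A_\eps\to0$. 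This proves~\eqref{cdasnojcnas}; the attainment for $p=1$ then follows from~\eqref{dcass}, which gives $|\DIFF^2_1 f^\cone|(\RR^2)=4\pi$ (equal to $2^{1+1/p}\pi$ for $p=1$), while $f^\cone(0)=1$ and $f^\cone$ is supported on $\{|x|\le1\}$. I expect the main obstacle to be the lower bound: one has to keep careful track of the singular part $\DIFF^s g'$ of the second radial derivative, and in particular justify that the integration by parts producing $\int_0^\infty sg''\,\dd s+\int_{(0,\infty)} s\,\dd\DIFF^s g'=1$ carries no boundary contribution at the origin — this is handled by choosing the cutoff radii $\eps_k$ along a sequence for which $\eps_k g'(\eps_k)\to0$, which exists because $g'\in L^1$ near $0$.
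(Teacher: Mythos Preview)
Your proof is correct, but it proceeds quite differently from the paper's.

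For the upper bound, the paper uses $f_\epsilon(x)=(1-|x|^\epsilon)_+$ and computes via Proposition~\ref{htvradial} that $|\DIFF^2_p f_\epsilon|(\RR^2)=2\pi\bigl(\epsilon^{-1}\Vert(\epsilon(\epsilon-1),\epsilon)\Vert_{\ell^p}+\epsilon\bigr)\to 2^{1+1/p}\pi$. Your logarithmic competitors $g_\eps(s)=A_\eps\ln(1/s)$ are an equally valid (and arguably more transparent) choice, since on $(\eps,1)$ the pair $(sg_\eps'',g_\eps')$ is exactly proportional to $(1,-1)$, so the extremal direction in the $\ell^p$ norm is hit on the nose.

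For the lower bound the differences are more substantial. The paper first mollifies (Lemma~\ref{mollif}) to reduce to smooth radial $f$, which kills the singular part of $\DIFF g'$ and makes the integrations by parts trivial; it then uses the scalar inequality $|M|_p\ge 2^{1/p-1}|M|_1$ (for $2\times 2$ symmetric $M$) to reduce to $p=1$, and finally bounds $\int_0^\infty s|g''|\,\dd s\ge 1$ and $\int_0^\infty |g'|\,\dd s\ge 1$ separately via $g(0)=1$, $g'_+(0)=0$. You instead work directly with the BV formula of Proposition~\ref{htvradial}, keep the singular part $\DIFF^s g'$ throughout, and use the single duality inequality $\Vert(a,b)\Vert_{\ell^p}\ge 2^{-1/p^*}(a-b)$ to treat all $p$ at once. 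Your route is more uniform in $p$ and avoids the mollification detour, at the price of having to control the boundary contribution at the origin (your choice of $\eps_k$ with $\eps_k g'(\eps_k)\to 0$, which exists since $g'\in L^1$ near $0$) and to carry the term $\int s\,\dd|\DIFF^s g'|$ through the estimate. The paper's route is technically lighter because after mollification everything is $C^\infty$ and compactly supported.
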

 \begin{proof}
 	For $\epsilon\in (0,1)$, define $f_\epsilon(x)=(1-|x|^{\epsilon})\vee 0$.  By Proposition~\ref{htvradial},
 	$$
 	|\DIFF^2_p f_\epsilon|(\RR^2)=2\pi\bigg(\int_{0}^1 s^{\epsilon-1}\Vert(\epsilon(\epsilon-1),\epsilon)\Vert_{\ell^p}\dd s+\epsilon\bigg)\rightarrow 2^{1+1/p}\pi\qquad\text{as }\epsilon\searrow 0,
 	$$
 	so that we have $\le$ in~\eqref{cdasnojcnas}.
 	
 	We prove now the opposite inequality in~\eqref{cdasnojcnas}. Take then $f\in L^1_\rmloc(\RR^2)$, compactly supported, with bounded Hessian--Schatten variation and such that $f(0)=1$. We have to prove that $|\DIFF^2_p f|(\RR^2)\ge 2^{1+1/p}\pi$.  Using Lemma~\ref{mollif}, Lemma~\ref{radialbetter}, we see that we can assume with no loss of generality that $f\in C^\infty_\rmc(\RR^2)$ and $f$ is radial, say $f(x)=g(|x|)$,  with $g(0)=1$ and $g'_+(0)=0$. Now, by Proposition~\ref{hessiananddiffgrad} and the inequality $(|a|+|b|)\leq 2^{1-1/p}(|a|^p+|b|^p)^{1/p}$, we obtain that 
	$$|\DIFF^2_p f|(\RR^2)\ge 2^{1/p-1}|\DIFF^2_1 f|(\RR^2).$$
 	Hence, it is enough to show the claim in the case $p=1$, i.e.\ we have to show that $|\DIFF^2_1 f|(\RR^2)\ge 4\pi$. We compute now
 	$$
 	\int_0^\infty s|g''|\dd s\ge \int_0^\infty sg''\dd s=-\int_0^\infty g'\dd s=1
 	\qquad\text{and}\qquad
 	\int_0^\infty |g'|\dd s\ge -\int_0^\infty g'\dd s=1
 	$$
 	so that by by Proposition~\ref{htvradial},
 	\begin{equation*}
 		|\DIFF^2_1 f|(\RR^2)=2\pi \int_0^\infty s|g''|+|g'|\dd s\ge 4\pi.\qedhere
 	\end{equation*}
 \end{proof}
 
 The existence of \say{good bump functions} granted by Lemma~\ref{goodcutoff} allows us to prove, in
 Proposition~\ref{lambdabig} below, that for $\lambda$ large enough the infimum of $\FF_\lambda^{p,q}$ does not depend on $\lambda$, namely that minimizing $\FF_\lambda^{p,q}$  asymptotically promotes the perfect fit with the data.  
 
 \begin{prop}\label{lambdabig}
 	Let $p,\,q\in [1,\infty]$ and let $\lambda\in [2\pi 2^{1/p}{ N}^{1-1/q},\infty]$.
 	Then
 	$$\inf_{f\in L^1_\rmloc(\Omega)} \FF^{p,q}_\lambda(f)=\inf_{f\in L^1_\rmloc(\Omega)} \FF_{\infty}^{p,q}(f).$$
	In particular, in this range of $\lambda$, the infima are also independent of $q$.
 \end{prop}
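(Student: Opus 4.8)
The plan is to prove the two inequalities separately. The inequality $\inf \FF_\lambda^{p,q}\le \inf \FF_\infty^{p,q}$ is immediate for any $\lambda\in[0,\infty]$: if $f$ satisfies $f(x_i)=y_i$ for all $i$, then the error term vanishes, so $\FF_\lambda^{p,q}(f)=|\DIFF_p^2 f|(\Omega)=\FF_\infty^{p,q}(f)$; taking the infimum over such $f$ and noting $\inf\FF_\lambda^{p,q}$ is over a larger class gives the bound. (If no $f\in L^1_\rmloc(\Omega)$ with bounded Hessian--Schatten variation matches the data, then $\inf\FF_\infty^{p,q}=\infty$ and there is nothing to prove; in dimension $2$, continuity makes the constraint nonvacuous but one can always match finitely many values.)

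For the reverse inequality $\inf\FF_\lambda^{p,q}\ge \inf\FF_\infty^{p,q}$, the idea is a ``cone correction'': given any competitor $f$, I would modify it near each $x_i$ by adding a small, sharply scaled bump that fixes the value at $x_i$ without changing the values at the other test points, at a controlled cost in Hessian--Schatten energy. Concretely, fix $\bar r>0$ small enough that the balls $B_{\bar r}(x_i)$ are pairwise disjoint and contained in $\Omega$. Using Lemma~\ref{goodcutoff}, for each $\eta>0$ pick a compactly supported $\phi\in L^1_\rmloc(\RR^2)$ with $\phi(0)=1$ and $|\DIFF_p^2\phi|(\RR^2)\le 2^{1+1/p}\pi+\eta$; after rescaling $x\mapsto \phi(x/\rho)$ (which leaves $|\DIFF_p^2\,\cdot\,|$ invariant in dimension $2$ by the scaling property) and choosing $\rho$ so small that $\supp\phi(\,\cdot\,/\rho)\subseteq B_{\bar r}(0)$, I set
\begin{equation}\notag
\tilde f(x) := f(x) + \sum_{i=1}^N \bigl(y_i - f(x_i)\bigr)\,\phi\bigl((x-x_i)/\rho\bigr).
\end{equation}
Then $\tilde f(x_j)=f(x_j)+(y_j-f(x_j))=y_j$ for every $j$ (the $i\ne j$ terms vanish since the supports are disjoint), so $\tilde f$ is admissible for $\FF_\infty^{p,q}$, and by subadditivity and disjointness of supports
\begin{equation}\notag
|\DIFF_p^2\tilde f|(\Omega)\le |\DIFF_p^2 f|(\Omega) + \sum_{i=1}^N |y_i-f(x_i)|\,\bigl(2^{1+1/p}\pi+\eta\bigr).
\end{equation}
Estimating $\sum_i |y_i-f(x_i)|\le N^{1-1/q}\Vert(f(x_i)-y_i)_i\Vert_{\ell^q}$ by Hölder, and using $2^{1+1/p}\pi = 2\pi\,2^{1/p}\le \lambda N^{-(1-1/q)}$ (this is exactly the hypothesis on $\lambda$), I get
\begin{equation}\notag
\FF_\infty^{p,q}(\tilde f)=|\DIFF_p^2\tilde f|(\Omega)\le |\DIFF_p^2 f|(\Omega)+\bigl(\lambda + N^{1-1/q}\eta\bigr)\Vert(f(x_i)-y_i)_i\Vert_{\ell^q}=\FF_\lambda^{p,q}(f)+N^{1-1/q}\eta\,\Vert(f(x_i)-y_i)_i\Vert_{\ell^q}.
\end{equation}
Taking the infimum over $f$ (restricted to a minimizing sequence, along which the error term is bounded) and letting $\eta\to0$ yields $\inf\FF_\infty^{p,q}\le\inf\FF_\lambda^{p,q}$. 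Combined with the easy inequality, all the infima coincide, and since the right-hand side $\inf\FF_\infty^{p,q}$ manifestly does not involve $q$ (the error term is identically zero on its domain), independence of $q$ follows.

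The main technical point to be careful about is the case $\lambda=\infty$, where one must interpret $\FF_\infty^{p,q}$ via the convention $\infty\cdot 0=0$ and check the correction argument still goes through: if $\inf\FF_\lambda^{p,q}<\infty$ then any near-minimizer already has finite energy, and the cone correction produces an admissible $\tilde f$ with finite energy, so no difficulty arises; if the infimum is $+\infty$ the statement is trivial. A second minor point is making sure the rescaled bump really has compact support inside the prescribed small ball and that adding it keeps $\tilde f\in L^1_\rmloc(\Omega)$ with bounded Hessian--Schatten variation — both are immediate since $\phi$ is compactly supported with finite energy and the sum is finite. The genuinely substantive input is Lemma~\ref{goodcutoff}, which pins the constant $2^{1+1/p}\pi$; everything else is bookkeeping with subadditivity, scaling invariance in $d=2$, and Hölder's inequality.
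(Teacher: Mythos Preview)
Your proof is correct and essentially identical to the paper's: both add to a generic competitor $f$ a sum of rescaled bump functions (from Lemma~\ref{goodcutoff}) supported in disjoint small balls around the $x_i$ to force a perfect fit, estimate the added Hessian--Schatten variation by $(2^{1+1/p}\pi+\eta)\sum_i|f(x_i)-y_i|$, apply H\"older $\ell^1\le N^{1-1/q}\ell^q$, and let the slack $\eta\to 0$. The easy inequality $\FF_\lambda^{p,q}\le\FF_\infty^{p,q}$ is handled the same way. One cosmetic remark: rather than restricting to a minimizing sequence before sending $\eta\to 0$, it is slightly cleaner (and what the paper does) to first let $\eta\to 0$ for each fixed $f$ with $\FF_\lambda^{p,q}(f)<\infty$ to get $\inf\FF_\infty^{p,q}\le\FF_\lambda^{p,q}(f)$, and only then take the infimum over $f$.
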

 \begin{proof}
 	We let $r\in (0,\infty)$ small enough so that $\dist(x_i,x_j)> 3 r$ if $i\ne j$. Let $\epsilon\in(0,1)$. For $i=1,\ldots,N$, by Lemma~\ref{goodcutoff} and a scaling argument, we take $g_i\in C_\rmc(\RR^2)$ with $g(x_i)=1$, $\supp g_i\subseteq B_{r}(x_i)$ and $|\DIFF^2_p g_i|(\RR^2)\le 2^{1+1/p}\pi +\epsilon$.
 	
 	Then we consider $f\in L^1_\rmloc(\Omega)$ and we set\begin{equation}\label{modifyf}
 		 \tilde f\defeq f-\sum_i  (f(x_i)-y_i)g_i.
 	\end{equation}
 	Notice $\tilde f(x_i)=y_i$ for every $i=1,\ldots,N$ and that
 	\begin{align*}
 		|\DIFF^2_p \tilde f|(\Omega)&\le |\DIFF^2_p  f|(\Omega)+(2^{1+1/p}\pi+\epsilon) \sum_{i=1}^N |f(x_i)-y_i|\\&=|\DIFF^2_p f|(\Omega)+(2^{1+1/p}\pi+\epsilon) \Vert (f(x_i)-y_i)_i\Vert_{\ell^1}\\&\le  |\DIFF^2_p f|(\Omega)+ (2^{1+1/p}\pi+\epsilon) N^{1-1/q}\Vert (f(x_i)-y_i)_i\Vert_{\ell^q}.
 	\end{align*}
 	Therefore, being $\epsilon\in (0,1)$ arbitrary and $f\in L^1_\rmloc(\Omega)$ arbitrary, we have that
 	$$\inf_{f\in L^1_\rmloc(\Omega)} \FF^{p,q}_{\infty}(f)\le
\inf_{f\in L^1_\rmloc(\Omega)} \FF_{\lambda}^{p,q}(f)
	\qquad{\text{whenever $\lambda\geq 2\pi 2^{1/p}N^{1-1/q}$.}}$$
 	As also $\FF_{\infty}^{p,q}(\tilde f)\ge \FF^{p,q}_{\lambda}(f)$, we have proved the claim, thanks to our choice of $\lambda$.
 \end{proof}

 The following lemma estimates how much the evaluation functional at $x$ differs from the average  functional on $B_r(x)$, hence allows us to quantify the error we make replacing the evaluation functional  with another functional that has the advantage of being continuous with respect to weaker notion of convergence.
 
 \begin{lem}\label{trick}
 	Let $f\in L^1_\rmloc(\Omega)$ with bounded Hessian--Schatten variation in $\Omega$. Let also $B=B_r(x)\subseteq\Omega$ such that $2 B:=B_{2r}(x)\subseteq\Omega$.  Then, if $p\in [1,\infty]$, 
 	\begin{equation}\label{trick1}
 		\abs{f(x_0)-\dashint_B f}\le 2^{1-1/p}\left( \frac{1}{4\pi}|\DIFF_p^2 f|(B)+\frac{1}{2 \pi}
		|\DIFF^2_p f  |(2 B\setminus B)\right).
 	\end{equation}
 \end{lem}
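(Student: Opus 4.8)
The plan is to dualize: I will exhibit one fixed, compactly supported matrix field $F$ whose iterated divergence is the signed measure $\delta_{x_0}-\LL^2(B)^{-1}\chi_B\LL^2$, pair it against $\DIFF\nabla f$, and read off the bound from the pointwise size of $F$ on $B$ and on $2B\setminus B$.

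First two reductions. By translating we may take $x_0=0$; and since in dimension $d=2$ the Hessian--Schatten variation is scale invariant (the dilation $y\mapsto f(ry)$ leaves $|\DIFF_p^2\,\cdot\,|$ on balls unchanged, the Jacobian factor being $r^{2-d}=1$), we may also assume $B=B_1$, $2B=B_2$. Next, from $|M|_1\le 2^{1-1/p}|M|_p$ for every $M\in\RR^{2\times2}$ and Proposition~\ref{hessiananddiffgrad} we get $|\DIFF_1^2 f|\le 2^{1-1/p}|\DIFF_p^2 f|$ as measures, so it suffices to prove the case $p=1$, namely $|f(0)-\dashint_{B_1}f|\le\frac1{4\pi}|\DIFF_1^2 f|(B_1)+\frac1{2\pi}|\DIFF_1^2 f|(B_2\setminus B_1)$.

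The heart of the matter is to construct a bounded, symmetric, $\RR^{2\times2}$-valued field $F$, continuous on $\RR^2\setminus\{0\}$ and supported in $\overline{B_{R_0}}$ for some fixed $R_0\in(1,2)$, with $\sum_{i,j}\partial_i\partial_j F_{ij}=\delta_0-\pi^{-1}\chi_{B_1}\LL^2$ in the distributional sense, such that its operator norm satisfies $|F|_\infty\le\frac1{4\pi}$ on $B_1$ and $|F|_\infty\le\frac1{2\pi}$ on $\overline{B_{R_0}}\setminus B_1$. I would look for $F$ radial of the form $F(y)=a(r)\,\tfrac{y\otimes y}{|y|^2}+b(r)\,\big(\mathrm{Id}-\tfrac{y\otimes y}{|y|^2}\big)$ with $r=|y|$, so that $|F|_\infty=\max(|a|,|b|)$ and $\sum_{i,j}\partial_i\partial_j F_{ij}=\operatorname{div}(c(r)\,y/|y|)$ with $c=a'+r^{-1}(a-b)$. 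Demanding that the absolutely continuous part of this be $-\pi^{-1}$ on $(0,1)$ and $0$ outside $B_1$, that $c$ have no jump (so no spurious surface measures appear), and that the $\delta_0$ have the right mass forces $c(r)=\frac1{2\pi}(r^{-1}-r)$ on $(0,1)$ and $c\equiv0$ on $(1,\infty)$; on $(0,1)$ one may then simply take $a\equiv\frac1{4\pi}$, which gives $b(r)=\frac1{4\pi}(2r^2-1)$ and hence $|F|_\infty\equiv\frac1{4\pi}$ there. On $(1,R_0)$ the condition $c\equiv0$ forces $b=a+ra'$, and it remains to glue $a$ from $\frac1{4\pi}$ at $r=1$ (with $a'(1)=0$) down to $a(R_0)=a'(R_0)=0$ while keeping $\max(|a|,|a+ra'|)\le\frac1{2\pi}$; writing $\alpha(t)=a(1+t)$ this is the requirement $(1+t)|\alpha'|\le\alpha+\frac1{2\pi}$, which leaves ample room (e.g.\ $R_0=7/4$ works). \emph{This verification --- the elementary but somewhat delicate bookkeeping of the pointwise norms of $F$ across the transition annulus $\{1<r<R_0\}$ --- is the main point of the proof.} Note that $F$ is genuinely discontinuous at the origin, but this is harmless because $|\DIFF\nabla f|(\{0\})=0$ in dimension two (a $BV$ derivative has no atoms when $d\ge2$).

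With such an $F$ the estimate is immediate. For $f\in C^2$ near $\overline{B_{R_0}}$, since $\sum_{i,j}\partial_i\partial_j F_{ij}=\delta_0-\pi^{-1}\chi_{B_1}\LL^2$ and $F$ is compactly supported,
\begin{equation*}
f(0)-\dashint_{B_1}f=\int_{\RR^2}\sum_{i,j}F_{ij}\,\partial_i\partial_j f\,\dd\LL^2=\int_{\RR^2}F:\nabla^2 f\,\dd\LL^2 ,
\end{equation*}
and the trace--operator duality $|F:\nabla^2 f|\le|F|_\infty|\nabla^2 f|_1$ (item~\ref{zuppa4} of Proposition~\ref{zuppa} with $p=1$) together with the bounds on $|F|_\infty$ yields $|f(0)-\dashint_{B_1}f|\le\frac1{4\pi}\int_{B_1}|\nabla^2 f|_1\dd\LL^2+\frac1{2\pi}\int_{B_2\setminus B_1}|\nabla^2 f|_1\dd\LL^2$. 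For a general $f$ with bounded Hessian--Schatten variation I would mollify: $f_\eps:=\rho_\eps\ast f$ (with $\rho_\eps$ even) is smooth near $\overline{B_{R_0}}$ for $\eps$ small, $f_\eps\to f$ locally uniformly since $f$ is continuous (Proposition~\ref{sobo}), so $f_\eps(0)\to f(0)$ and $\dashint_{B_1}f_\eps\to\dashint_{B_1}f$, while $\int F:\nabla^2 f_\eps\,\dd\LL^2=\int(\rho_\eps\ast F):\dd\DIFF\nabla f\to\int F:\dd\DIFF\nabla f$ by dominated convergence (using $|\DIFF\nabla f|(\{0\})=0$ and that $|\DIFF\nabla f|$ is finite on $\overline{B_{R_0}}$). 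Passing to the limit and using Proposition~\ref{hessiananddiffgrad} to estimate $|F:\dd\DIFF\nabla f|\le|F|_\infty\,\dd|\DIFF_1^2 f|$, together with $\operatorname{supp}F\subseteq\overline{B_{R_0}}\subseteq B_2$, gives the $p=1$ inequality, and the two reductions above then deliver the full statement.
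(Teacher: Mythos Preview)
Your argument is correct and takes a genuinely different route from the paper's.

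The paper first reduces to radial smooth data: by Lemma~\ref{radialbetter} the Hessian--Schatten variation does not increase under spherical averaging, and by Proposition~\ref{myersserrin} one may approximate by $C^\infty$ functions, so it suffices to treat $f(\cdot)=g(|\cdot|)$ with $g$ smooth. Then $f(0)-\dashint_{B_r}f$ is written as an integral of $g'$, and a one--dimensional integration--by--parts argument (splitting $(0,\xi)$ into the excursion intervals of $g'$ and choosing $\xi\in(r,2r)$ with $r|g'(\xi)|\le\int_r^{2r}|g'|$) bounds $\int_0^r|g'|$ by $\int_0^{2r}s|g''|\,\dd s+2\int_r^{2r}|g'|\,\dd s$, which via Proposition~\ref{htvradial} becomes the stated estimate.

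Your approach is dual: you exhibit a single bounded symmetric matrix field $F$ with $\sum_{i,j}\partial_i\partial_j F_{ij}=\delta_0-\LL^2(B_1)^{-1}\chi_{B_1}\LL^2$ and with $|F|_\infty\le\tfrac{1}{4\pi}$ on $B_1$, $|F|_\infty\le\tfrac{1}{2\pi}$ on $\overline{B_{R_0}}\setminus B_1$, and then simply pair against $\DIFF\nabla f$. The radial ansatz with $c(r)=\tfrac{1}{2\pi}(r^{-1}-r)$ on $(0,1)$ and the choice $a\equiv\tfrac{1}{4\pi}$ there give the sharp constant on the inner ball for free; on the annulus your observation that $b=a+ra'=(ra)'$ reduces the construction to finding $g:=ra$ with prescribed boundary data $g(1)=g'(1)=\tfrac{1}{4\pi}$, $g(R_0)=g'(R_0)=0$ and the sole constraint $|g'|\le\tfrac{1}{2\pi}$, which is elementary for any $R_0>\tfrac32$ (so $R_0=\tfrac74$ indeed works). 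The limiting step through $\int(\rho_\eps\ast F):\dd\DIFF\nabla f$ is clean and, because $\operatorname{supp}F\subseteq\overline{B_{R_0}}\subsetneq B_2$, yields the open--ball quantities directly, without the paper's preliminary step of choosing $r$ with $|\DIFF^2 f|(\partial B_r)=0$.

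In short: the paper's proof is a radial reduction followed by a one--dimensional calculus lemma, while yours is a direct duality argument via an explicit test field. Your method avoids the radialization machinery entirely and makes the constant $\tfrac{1}{4\pi}$ appear as the exact $\ell^\infty$--norm of the canonical solution on $B_1$; the paper's method is more hands--on but leans on tools already developed in the article.
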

 \begin{proof}
 	We can assume with no loss of generality that $x=0$.
 	By approximation of $r$ from below, we can also assume that  $|\DIFF^2_1 f|(\partial B)=0$.
 	Hence, using Proposition~\ref{myersserrin} and Lemma~\ref{radialbetter}, we can assume in addition that $f$ is radial and $f\in C^{\infty}(2B)$, say  $f(\,\cdot\,)=g(|\,\cdot\,|)$. Notice that $g'_{+}(0)=0$.
 	We then compute
 	\begin{align*}
 		f(0)-\dashint_B f&=g(0)-\frac{2}{r^2} \int_0^r s g(s)\dd s =\frac{2}{ r^2} \int_0^r s(g(0)-g(s))\dd s\\&=-\frac{2}{r^2} \int_0^r s\int_0^s g'(\tau)\dd\tau\dd s=-\frac{2}{r^2} \int_0^r g'(\tau)\biggl( \int_\tau^r s\dd s\biggr) \dd\tau,
 	\end{align*}
 	so that 
 	\begin{equation}\label{asosvndsoa}
 		\abs{f(0)-\dashint_B f }\le \frac{2}{r^2}\int_0^r |g'| r^2/2\dd s=\int_0^r |g'|\dd s.
 	\end{equation}
 	We stick for the moment to the case $p=1$. We use Proposition~\ref{htvradial} to compute 
 	\begin{equation}\label{htvballs}
 		\begin{split}
  			|\DIFF^2_1 f|(2B\setminus B)&=2\pi \int_r^{2 r} s|g''|+|g'|\dd s,\\
  			|\DIFF^2_1 f|( B)&=2\pi\int_0^r s|g''|+|g'|\dd s
 		\end{split}
 	\end{equation}
and we take $\xi\in (r,2 r)$ such that
 \begin{equation}\label{casnjssaa}
r | g'|(\xi)\le \int_r^{2 r} |g'|\dd s.
 	\end{equation}

 	Now we write $\{g'>0\}\cap (0,\xi)=\bigcup_k I_k$ and $\{g'<0\}\cap (0,\xi)=\bigcup_k J_k$, where $I_k$ and $J_k$ are countably many pairwise disjoint open intervals. Notice that if $p\in\partial I_k$  for some $k$, then either $p=\xi$ or $g'(p)=0$. Then, if we take $I_k$ such that $\xi\in\partial I_k$,
 	$$
 	\int_{I_k} s|g''|\dd s\ge -\int_{I_k} s g''\dd s=\int_{I_k} g'\dd s- \xi g'(\xi)=\int_{I_k} |g'|\dd s- \xi |g'|(\xi),
 	$$
 	whereas if we take $I_k$ such that $\xi\notin\partial I_k$,
 	$$
 	\int_{I_k} s|g''|\dd s\ge -\int_{I_k} s g''\dd s=\int_{I_k} g'\dd s=\int_{I_k} |g'|\dd s.
 	$$
 	Similar inequalities hold in the case of  an interval of the type $J_k$. 
 	Therefore, summing over all intervals $I_k$ and $J_k$,
 	$$
 	\int_0^{2 r}s|g''|\dd s\ge \int_0^\xi s |g''|\dd s\ge \int_0^\xi |g'|\dd s-\xi |g'|(\xi),
 	$$
 	so that, by the choice of $\xi$ due to~\eqref{casnjssaa},
 	\begin{align*}
 		\int_0^r |g'|\dd s\le	\int_0^\xi |g'|\dd s \le \int_0^{2r}s |g''|\dd s+\xi |g'|(\xi)\le  \int_0^{2r}s |g''|\dd s+
2\int_r^{2 r} |g'|\dd s.
 	\end{align*}
	
 	Then, using also~\eqref{asosvndsoa} and~\eqref{htvballs},
 		\begin{align*}
 		2 \abs{f(0)-\dashint_B f}&\le 2\int_0^r |g'|\dd s\le \int_0^r |g'|\dd s+\int_0^r s|g''|\dd s+\int_r^{2 r} s|g''|\dd s+
 		2\int_r^{2 r} |g'|\dd s
\\
&\le \frac{1}{2\pi} |\DIFF ^2_1 f|(B)+\frac{1}{\pi}|\DIFF ^2_1 f|(2B \setminus B),
 	\end{align*}
 	whence the claim for $p=1$. 
 	For the general case, simply notice that $|\DIFF^2_1f|(B)\le 2^{1-1/p}|\DIFF^2_p f|(B)$ and the same holds for $2B\setminus B$, by $\ell_1-\ell_p$ inequality and Proposition~\ref{hessiananddiffgrad}.
 \end{proof}
 \begin{rem}
 	Notice that the constant ${1}/{(4\pi)}$ in front of $|\DIFF^2_p f|(B)$ in~\eqref{trick1} is somehow optimal. We can realize this considering the sequence of functions $f_\epsilon$ used to prove Lemma~\ref{goodcutoff}.\fr
 \end{rem}
 
 By Lemma~\ref{trick}, there is no surprise in knowing that, given a weakly convergent sequence $f_k\rightharpoonup f$, in duality with the space $L^\infty_\rmc(\Omega)$ of $L^\infty$ function with compact (essential) support, we can estimate how much the evaluation functional fails to converge in terms of concentration of Hessian--Schatten total variation at $x$.

 \begin{lem}\label{excess}
 	Let $f\in L^1_\rmloc(\Omega)$ and let $(f_k)\subseteq L^1_\rmloc(\Omega)$ such that $f_k\rightharpoonup f$ in duality with $L^\infty_\rmc(\Omega)$
 	with $\sup_k|\DIFF^2_p f_k|(A)<\infty$ for any open set $A\Subset\Omega$. Then, $f$ has locally bounded Hessian--Schatten variation in $\Omega$ and
	for any $x\in\Omega$ one has
 	\begin{equation}\label{csanoca}
 		\limsup_k |f(x)-f_k(x)|\le \frac{2^{1-1/p}}{4\pi}\lim_{r\searrow 0}\limsup_k |\DIFF^2_p f_k|(B_{r}(x)).
 	\end{equation}
 \end{lem}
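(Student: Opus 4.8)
The plan is to deduce Lemma~\ref{excess} from Lemma~\ref{trick} by inserting, for each fixed $x\in\Omega$ and each admissible radius $r$, the auxiliary ball average as an intermediate quantity. First I would note that the lower semicontinuity of the Hessian--Schatten variation with respect to $L^1_\rmloc$-convergence (which is implied by convergence in duality with $L^\infty_\rmc(\Omega)$, since weak-$*$ convergence implies convergence in distributions) together with the uniform bound $\sup_k|\DIFF^2_p f_k|(A)<\infty$ on compactly contained open sets gives that $f$ has locally bounded Hessian--Schatten variation in $\Omega$; this is the easy first assertion.

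For the quantitative estimate, fix $x\in\Omega$ and pick $r>0$ small enough that $B_{2r}(x)\Subset\Omega$. The key observation is that the average functional $g\mapsto\dashint_{B_r(x)} g\,\dd\LL^d$ is continuous along the sequence: indeed $\chi_{B_r(x)}/\LL^d(B_r(x))\in L^\infty_\rmc(\Omega)$, so $f_k\rightharpoonup f$ gives $\dashint_{B_r(x)} f_k\to\dashint_{B_r(x)} f$. Now apply Lemma~\ref{trick} to each $f_k$ and to $f$ (legitimate since each has bounded Hessian--Schatten variation in a neighbourhood of $\overline{B_{2r}(x)}$), obtaining
\begin{align*}
 |f(x)-f_k(x)| &\le \Bigl|f(x)-\dashint_{B_r(x)} f\Bigr| + \Bigl|\dashint_{B_r(x)} f-\dashint_{B_r(x)} f_k\Bigr| + \Bigl|\dashint_{B_r(x)} f_k - f_k(x)\Bigr|\\
 &\le \frac{2^{1-1/p}}{4\pi}|\DIFF_p^2 f|(B_r(x)) + \frac{2^{1-1/p}}{2\pi}|\DIFF_p^2 f|(B_{2r}(x)\setminus B_r(x))\\
 &\quad + \Bigl|\dashint_{B_r(x)} f-\dashint_{B_r(x)} f_k\Bigr| + \frac{2^{1-1/p}}{4\pi}|\DIFF_p^2 f_k|(B_r(x)) + \frac{2^{1-1/p}}{2\pi}|\DIFF_p^2 f_k|(B_{2r}(x)\setminus B_r(x)).
\end{align*}
Taking $\limsup_k$ kills the average-difference term, and by lower semicontinuity the $f_k$ terms dominate the corresponding $f$ terms in the limit, so one is left with a bound of the form $\frac{2^{1-1/p}}{4\pi}\limsup_k|\DIFF_p^2 f_k|(B_r(x)) + \frac{2^{1-1/p}}{2\pi}\limsup_k|\DIFF_p^2 f_k|(B_{2r}(x)\setminus B_r(x)) + C|\DIFF_p^2 f|(\overline{B_{2r}(x)}\setminus B_r(x))$.

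Finally I would let $r\searrow 0$. The point is that as $r\to0$ the annular contributions $|\DIFF_p^2 f_k|(B_{2r}(x)\setminus B_r(x))$ and $|\DIFF_p^2 f|(B_{2r}(x)\setminus\{x\})$ become negligible relative to $|\DIFF_p^2 f_k|(B_r(x))$: since $f$ does not charge the point $x$ with its finite measure $|\DIFF_p^2 f|$, one has $|\DIFF_p^2 f|(B_{2r}(x)\setminus B_r(x))\to0$, and one chooses a sequence of radii $r$ along which $\limsup_k|\DIFF_p^2 f_k|(B_{2r}(x)\setminus B_r(x))$ is also controlled by $\limsup_k|\DIFF_p^2 f_k|(B_r(x))$ — this is possible because, summing over a dyadic family of disjoint annuli, the total of the annular masses (uniformly in $k$, using the fixed bound on a compact neighbourhood) is finite, so one can extract radii for which the annular term is as small as desired compared to the ball term. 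Passing to this subsequence of radii yields exactly \eqref{csanoca}. The main obstacle is this last bookkeeping step: carefully choosing the radii so that the annular (factor $1/(2\pi)$) term does not spoil the sharp constant $1/(4\pi)$, which requires exploiting that infinitely many disjoint dyadic annuli carry uniformly summable mass and hence some of them carry arbitrarily little; everything else is a routine combination of Lemma~\ref{trick}, weak convergence of the averaging functional, and lower semicontinuity.
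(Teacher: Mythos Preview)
Your overall strategy coincides with the paper's: split $|f(x)-f_k(x)|$ via the ball average, use weak convergence for the middle term and Lemma~\ref{trick} for the endpoint terms, then let $r\searrow 0$. Applying Lemma~\ref{trick} to $f$ itself is harmless but unnecessary; the paper simply uses continuity of $f$ in dimension two to get $|f(x)-\dashint_{B_r(x)} f|\to 0$ directly, so the detour through lower semicontinuity to compare the $f$-terms with the $f_k$-terms is not needed (and as written it would cost you a spurious factor~$2$ in the leading constant before you notice that $|\DIFF^2_p f|(B_r(x))\to 0$ anyway).

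The genuine gap is your handling of the annular contribution $\limsup_k|\DIFF^2_p f_k|(B_{2r}(x)\setminus B_r(x))$. The dyadic pigeonhole you sketch is not valid: uniform boundedness of the total masses does \emph{not} force $\liminf_j\limsup_k \nu_k(A_j)=0$ for the dyadic annuli $A_j$. At the level of abstract measures, partition $\mathbb{N}$ into infinitely many infinite sets $S_1,S_2,\ldots$ and set $\nu_k:=\delta_{p_j}$ with $p_j\in A_j$ whenever $k\in S_j$; then $\nu_k(B_{r_0})=1$ for all $k$, yet $\limsup_k\nu_k(A_j)=1$ for every $j$. The bound $\sum_j\nu_k(A_j)\le 1$ holds for each fixed $k$, but $\limsup_k$ does not commute with the sum, so no radius selection based only on summability can make the annular term small.

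The paper's fix is short and is the missing ingredient: after passing to a subsequence realizing $\limsup_k|f(x)-f_k(x)|$, extract a further subsequence along which $|\DIFF^2_p f_k|\rightharpoonup\mu$ in duality with $C_\rmc(B_{r_1}(x))$ for some fixed small $r_1$. Then $\limsup_k|\DIFF^2_p f_k|(B_{2r}(x)\setminus B_r(x))\le\mu(\bar B_{2r}(x)\setminus B_r(x))$, and the right-hand side tends to $0$ as $r\searrow 0$ since $\bar B_{2r}(x)\setminus B_r(x)\subseteq \bar B_{2r}(x)\setminus\{x\}\searrow\emptyset$. This weak-$*$ compactness step replaces your radius-selection argument and delivers the sharp constant directly.
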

 \begin{proof}
 	First, take a non relabelled subsequence so that $\lim_k |f(x)-f_k(x)|$ exists and equals the $\limsup_k$ at the left hand side of~\eqref{csanoca}.
 	
 	We assume that there exists $r_1>0$ small enough so that $B_{r_1}(x)\subseteq\Omega$ and moreover 
	that $\limsup_k|\DIFF^2 f_k|(B_{r_1}(x))<\infty$, otherwise there is nothing to show. 
 By lower semicontinuity this implies that $f$ has bounded Hessian-Schatten variation in $B_{r_1}(x)$.
 	We extract a further non relabelled subsequence such that, for some finite measure $\mu$ on $B_{r_1}(x)$, $|\DIFF^2_p f_k|\rightharpoonup\mu $ in duality with $C_\rmc(B_{r_1}(x))$. 
 	
 	Let now $r\in (0,r_1/2)$. Then,
 	\begin{align*}
 		|f(x)-f_k(x)|	\le& \abs{f(x)-\dashint_{B_r(x)} f}+ \abs{\dashint_{B_r(x)} f-\dashint_{B_r(x)} f_k}+ \abs{f_k(x)-\dashint_{B_r(x)} f_k}.
 	\end{align*}
 	Now notice that by continuity of $f$ the first summand converges to $0$ as $r\searrow 0$, whereas, by the convergence assumption the second summand converges to $0$ as $k\rightarrow\infty$.
 	Also, by Lemma~\ref{trick}, we bound the third summand as follows
 	$$
 	\abs{f_k(x)-\dashint_{B_r(x)} f_k}\le  2^{1-1/p}\left( \frac{1}{4\pi}|\DIFF_p^2 f_k|(B_r(x))+\frac{1}{2 \pi}|\DIFF^2_p f_k  |( B_{2 r}(x)\setminus B_r(x))\right).
 	$$
 	To conclude, it is enough notice that that
 	\begin{equation*}
\limsup_{r\searrow 0}\limsup_k|\DIFF^2_p f_k | (B_{2 r}(x)\setminus B_r(x))\le \lim_{r\searrow 0} \mu(\bar B_{2 r}(x)\setminus B_r(x))=0.\qedhere
 	\end{equation*}
 \end{proof}
 
By using the results above, we can prove the lower semicontinuity of $\FF_\lambda^{p,q}$.  In the case $q=1$, notice that
the argument used in the proof of Proposition~\ref{lambdabig} together with the next result can be used to show that $\FF^{p,1}_\lambda$ is precisely the relaxed functional of $\FF_\infty^{p,1}$ when $\lambda=2^{1+1/p}\pi$.

 \begin{lem}\label{lowersemicont}
 	Let $p,\,q\in [1,\infty]$ and let $\lambda\in [0,2^{1/p-1} 4\pi ]$. Then $\FF_\lambda^{p,q}$ is lower semicontinuous with respect to weak convergence in duality with $L^\infty_\rmc(\Omega)$.
 \end{lem}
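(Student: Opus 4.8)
The proof follows the direct method, the crucial input being the quantitative estimate of Lemma~\ref{excess}. Let $f_k\rightharpoonup f$ in duality with $L^\infty_\rmc(\Omega)$ and set $\ell:=\liminf_k\FF_\lambda^{p,q}(f_k)$; we may assume $\ell<\infty$ and, passing to a subsequence, $\FF_\lambda^{p,q}(f_k)\to\ell$. If $\lambda=0$ the statement is just the lower semicontinuity of $|\DIFF^2_p\,\cdot\,|(\Omega)$ with respect to convergence in distributions (recalled after Definition~\ref{def:H-Svar}), which is implied by the notion of convergence at hand, so from now on $\lambda>0$. Then $\sup_k|\DIFF^2_p f_k|(\Omega)<\infty$ and $\sup_k\Vert(f_k(x_i)-y_i)_i\Vert_{\ell^q}<\infty$; extracting once more we may assume that $|\DIFF^2_p f_k|\mres\Omega\rightharpoonup\mu$ weakly-$*$ in duality with $C_\rmc(\Omega)$, for some finite measure $\mu$ on $\Omega$, that $|\DIFF^2_p f_k|(\Omega)\to L$, that $\Vert(f_k(x_i)-y_i)_i\Vert_{\ell^q}\to M$ (so $L+\lambda M=\ell$), and that $|f_k(x_i)-y_i|\to m_i$ for each $i$, whence $\Vert(m_i)_i\Vert_{\ell^q}=M$. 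Write $a_i:=\mu(\{x_i\})\ge0$ and $b_i:=|f(x_i)-y_i|$; by lower semicontinuity on the open set $\Omega$, $f$ already has bounded Hessian--Schatten variation in $\Omega$.

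The first ingredient is the bound
\begin{equation}\notag
|\DIFF^2_p f|(\Omega)\le L-\sum_{i=1}^N a_i .
\end{equation}
Since $\nabla f\in\BV_\rmloc(\Omega;\RR^2)$ by Proposition~\ref{hessiananddiffgrad} and in dimension $2$ the derivative of a $\BV$ function does not charge points, $|\DIFF^2_p f|(\{x_i\})=0$ (again by Proposition~\ref{hessiananddiffgrad}). For a good radius $\rho>0$ (so small that the balls $\overline B_\rho(x_i)$ are disjoint, contained in $\Omega$, and satisfy $\mu(\partial B_\rho(x_i))=0$) set $A_\rho:=\Omega\setminus\bigcup_i\overline B_\rho(x_i)$; lower semicontinuity of $|\DIFF^2_p\,\cdot\,|(A_\rho)$ under distributional convergence, together with $|\DIFF^2_p f_k|(\Omega)\to L$ and weak-$*$ lower semicontinuity of the mass of $\mu$ on the open balls $B_\rho(x_i)$, gives $|\DIFF^2_p f|(A_\rho)\le L-\sum_i\mu(B_\rho(x_i))$; letting $\rho\searrow0$ and using $A_\rho\nearrow\Omega\setminus\{x_1,\dots,x_N\}$ and $|\DIFF^2_p f|(\{x_i\})=0$ yields the claim.

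The second ingredient comes from Lemma~\ref{excess}: since, for each $i$, $\limsup_k|\DIFF^2_p f_k|(B_r(x_i))\le\mu(\overline B_r(x_i))\to a_i$ as $r\searrow0$, we get $\limsup_k|f(x_i)-f_k(x_i)|\le c_i$ with $c_i:=\frac{2^{1-1/p}}{4\pi}a_i$ (recall $f$ is continuous in dimension $2$, so $b_i$ is well defined), hence $m_i\ge(b_i-c_i)_+$. Using the pointwise identity $b_i=(b_i-c_i)_++(b_i\wedge c_i)$ valid for $b_i,c_i\ge0$, the triangle inequality in $\ell^q$, the bound $\Vert\,\cdot\,\Vert_{\ell^q}\le\Vert\,\cdot\,\Vert_{\ell^1}$, and $\Vert((b_i-c_i)_+)_i\Vert_{\ell^q}\le\Vert(m_i)_i\Vert_{\ell^q}=M$, one obtains
\begin{equation}\notag
\lambda\Vert(b_i)_i\Vert_{\ell^q}\le\lambda M+\lambda\sum_{i=1}^N c_i\le\lambda M+2^{1/p+1}\pi\sum_{i=1}^N c_i=\lambda M+\sum_{i=1}^N a_i ,
\end{equation}
where the middle inequality uses precisely the hypothesis $\lambda\le2^{1/p-1}4\pi=2^{1/p+1}\pi$ and the last equality uses $\frac{2^{1-1/p}}{4\pi}\cdot2^{1/p+1}\pi=1$. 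Adding this to the first ingredient gives $\FF_\lambda^{p,q}(f)=|\DIFF^2_p f|(\Omega)+\lambda\Vert(b_i)_i\Vert_{\ell^q}\le L+\lambda M=\ell$, which is the assertion.

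The only genuinely delicate point is converting the possible failure of $f_k(x_i)\to f(x_i)$ into the loss $a_i$ of regularizing mass concentrating at $x_i$, i.e.\ the application of Lemma~\ref{excess} and the identification of $\lim_{r\searrow0}\limsup_k|\DIFF^2_p f_k|(B_r(x_i))$ with the atom $\mu(\{x_i\})$; everything else is bookkeeping and the elementary $\ell^q$-inequality. The threshold $2^{1/p-1}4\pi=2^{1/p+1}\pi$ on $\lambda$ enters only at that last inequality and is exactly the sharp constant appearing in Lemma~\ref{goodcutoff} and Lemma~\ref{trick}: it is what makes the gain $\sum_i a_i$ in the Hessian--Schatten term compensate the worst-case loss $\lambda\sum_i c_i$ in the data term.
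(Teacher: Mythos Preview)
Your proof is correct and follows essentially the same route as the paper's: extract a subsequence so that $|\DIFF^2_p f_k|\rightharpoonup\mu$, use that $|\DIFF^2_p f|$ does not charge points in dimension~$2$ to obtain $|\DIFF^2_p f|(\Omega)\le L-\sum_i\mu(\{x_i\})$, and invoke Lemma~\ref{excess} to control the defect in the evaluation term by the atoms of $\mu$. The only cosmetic difference is that the paper bounds the data discrepancy via the direct triangle inequality $\Vert(f(x_i)-y_i)_i\Vert_{\ell^q}\le\Vert(f(x_i)-f_k(x_i))_i\Vert_{\ell^q}+\Vert(f_k(x_i)-y_i)_i\Vert_{\ell^q}$ together with $\Vert\,\cdot\,\Vert_{\ell^q}\le\Vert\,\cdot\,\Vert_{\ell^1}$, whereas you pass through the decomposition $b_i=(b_i-c_i)_++(b_i\wedge c_i)$; both lead to the same inequality and the same use of the threshold $\lambda\le 2^{1/p-1}4\pi$.
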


 \begin{proof}
 	Let $(f_k)\subseteq L^1_\rmloc(\Omega)$ be such that $f_k\rightharpoonup f$ in duality with $L^\infty_\rmc(\Omega)$, for some $f\in L^1_\rmloc(\Omega)$. We have to prove that
 	$$
 	\FF_\lambda^{p,q}(f)\le\liminf_k  \FF_\lambda^{p,q}(f_k).
 	$$
 	First, extract a non relabelled subsequence such that $\FF_\lambda^{p,q}(f_k)$ has a limit, as $k\rightarrow \infty$, which equals the right hand side of the inequality above.
 	Then, we can assume that $\liminf_k |\DIFF^2_p f_k|(\Omega)<\infty$, otherwise there is nothing to show. Hence $f$ has bounded Hessian--Schatten variation in $\Omega$ and, up to the extraction of a non relabelled subsequence, we can assume that $|\DIFF^2_p f_k|\rightharpoonup\mu$ in duality with $C_\rmc(\Omega)$ for some finite measure $\mu$ on $\Omega$. Even though $\mu$ depends on $p$, we do not make this dependence explicit.
 	Also, we extract a  non relabelled subsequence such that for every $i=1,\ldots,N$,
 	$|f(x_i)-f_k(x_i)|$ has a (finite) limit as $k\rightarrow \infty$.
 	
 	Notice that for every $z\in\Omega$ one has
\begin{equation}\label{dvcsaascas}
	  		 \mu(\{z\})\leq	\lim_{r\searrow 0}\limsup_k |\DIFF_p^2 f_k|(\bar B_r(z)){\leq}
	\lim_{r\searrow 0}\mu(\bar B_r(z))=\mu(\{z\}).
\end{equation}
 	We compute, as $|\DIFF^2_p f|(\{z\})=0$ for every $z\in\Omega$,
 	\begin{equation}\label{cmasd1}
 		\FF_\lambda^{p,q}(f)=|\DIFF^2_p f|(\Omega)+\lambda \Vert (f(x_i)-y_i)_i\Vert_{\ell^q}=\lim_{r\searrow 0}|\DIFF^2_p f|\bigg(\Omega\setminus\bigcup_{i=1}^N\bar B_{r}(x_i)\bigg)+\lambda \Vert (f(x_i)-y_i)_i\Vert_{\ell^q}.
 	\end{equation}
 	By lower semicontinuity,
 	\begin{equation}\notag
 		|\DIFF^2_p f|\bigg(\Omega\setminus\bigcup_{i=1}^N\bar B_{r}(x_i)\bigg)\le \liminf_k |\DIFF^2_p f_k|\bigg(\Omega\setminus\bigcup_{i=1}^N\bar B_{r}(x_i)\bigg)
 	\end{equation}
 	so that by \eqref{dvcsaascas}
 	\begin{equation}\label{cmasd2}
 		\lim_{r\searrow 0}|\DIFF^2_p f|\bigg(\Omega\setminus\bigcup_{i=1}^N\bar B_{r}(x_i)\bigg)\le \liminf_k |\DIFF^2_p f_k|(\Omega)-\sum_{i=1}^N\mu(\{x_i\}).
 	\end{equation}
 	
 	Also, by Lemma~\ref{excess} and  \eqref{dvcsaascas},
 	\begin{equation}\notag
 		\lim_k \Vert (f(x_i)-f_k(x_i))_i\Vert_{\ell^q}\le 	\lim_k \Vert (f(x_i)-f_k(x_i))_i\Vert_{\ell^1}\le   \frac{2^{1-1/p}}{4\pi} \sum_{i=1}^N \mu(\{x_i\}),
 	\end{equation}
 	so that 
 	\begin{equation}\label{cmasd3}
 		\Vert (f(x_i)-y_i)_i\Vert_{\ell^q}\le \frac{2^{1-1/p}}{4\pi} \sum_i \mu(\{x_i\})+\liminf_k  \Vert (f_k(x_i)-y_i)_i\Vert_{\ell^q}.
 	\end{equation}
 	Inserting~\eqref{cmasd2} and~\eqref{cmasd3} into~\eqref{cmasd1} we obtain, by the super additivity of the $\liminf$,
 	\begin{align*}
 		\FF_\lambda^{p,q}(f)\le \liminf_k \FF_\lambda^{p,q}(f_k)+\bigg(\lambda  \frac{2^{1-1/p}}{4\pi}-1\bigg) \sum_i \mu(\{x_i\}),
 	\end{align*}
 	whence the claim by the choice of $\lambda$.
 \end{proof}
 
 Weak relative compactness of minimizing sequences for $\FF_\lambda^{p,q}$ is obtained through a classical argument, the only (slight) technical difficulty relies in possibly irregular domains $\Omega$.
 \begin{lem}\label{compacntess}
 	Let $p,\,q\in [1,\infty]$ and let $\lambda\in[0,\infty]$. Then there exist a minimizing sequence $(f_k)$ for $\FF_\lambda^{p,q}$ and a function $f\in L^1_\rmloc(\Omega)$ such that $f_k\rightharpoonup f$ in duality with $L^\infty_{\rmc}(\Omega)$.
 \end{lem}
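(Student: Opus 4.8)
The plan is to take an arbitrary minimizing sequence and to modify it, by subtracting affine functions that vanish at all the data points $x_1,\dots,x_N$ (hence do not change $\FF_\lambda^{p,q}$), so as to make it bounded in $L^2_\rmloc(\Omega)$; weak compactness together with a diagonal argument over an exhaustion of $\Omega$ then gives the assertion.

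First I would dispose of the trivial cases: if $\lambda=0$ (or $N=0$) then $\FF_\lambda^{p,q}=|\DIFF^2_p\,\cdot\,|(\Omega)$, whose infimum is $0$ and is attained at any affine function, so $f_k\equiv 0$ works. Next I would reduce to the case where $\Omega$ is connected and contains all the $x_i$: on each connected component of $\Omega$ carrying no data point one may replace every competitor by $0$ (this does not increase $\FF_\lambda^{p,q}$, nor spoil the minimizing property), and the compactness analysis below may be run separately on each of the (at most $N$) components that do carry a data point. Since the infimum of $\FF_\lambda^{p,q}$ is finite — for $\lambda=\infty$ this uses the cut-cone construction of Proposition~\ref{lambdabig} — I fix a minimizing sequence $(g_k)\subseteq L^1_\rmloc(\Omega)$ with $\sup_k\FF_\lambda^{p,q}(g_k)=:C_0<\infty$; in particular $\sup_k|\DIFF^2_p g_k|(\Omega)\le C_0$ and, for each $i$, $\sup_k|g_k(x_i)|<\infty$ (from the penalty term if $\lambda<\infty$, and because $g_k(x_i)=y_i$ if $\lambda=\infty$).

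Fix an exhaustion $\Omega_1\subseteq\overline{\Omega_1}\subseteq\Omega_2\subseteq\dots$ of $\Omega$ by bounded Lipschitz domains (which support Poincaré inequalities), with $\bigcup_n\Omega_n=\Omega$ and $x_1,\dots,x_N\in\Omega_1$; producing such an exhaustion even when $\Omega$ is irregular is the (minor) technical point. Choose $\rho>0$ so small that the closed balls $\overline{B_{2\rho}(x_i)}$ are pairwise disjoint and contained in $\Omega_1$. By Proposition~\ref{sobo} (case $d=2$, exponent $r=2$), for each $k$ there is an affine function $L_k$ such that $\tilde g_k:=g_k-L_k$ satisfies $\|\tilde g_k\|_{L^2(\Omega_1)}+\|\nabla\tilde g_k\|_{L^2(\Omega_1)}\le C\,|\DIFF^2 g_k|(\Omega_1)\le C\,C_0$. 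Since $\tilde g_k$ has the same Hessian--Schatten variation as $g_k$, applying Lemma~\ref{trick} to $\tilde g_k$ on $B_\rho(x_i)$ gives $\big|\tilde g_k(x_i)-\dashint_{B_\rho(x_i)}\tilde g_k\big|\le C\,|\DIFF^2_p \tilde g_k|(B_{2\rho}(x_i))\le C\,C_0$, while $\big|\dashint_{B_\rho(x_i)}\tilde g_k\big|\le C(\rho)\,\|\tilde g_k\|_{L^2(\Omega_1)}\le C(\rho)\,C_0$; hence $\sup_k|\tilde g_k(x_i)|<\infty$ and therefore $\sup_k|L_k(x_i)|<\infty$ for every $i$. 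As the space $\mathcal A$ of affine functions is finite-dimensional and an affine function is determined modulo $\mathcal A_0:=\{L\in\mathcal A: L(x_i)=0\text{ for all }i\}$ by the values $(L(x_i))_i$, I may write $L_k=P_k+\ell_k$ with $\ell_k\in\mathcal A_0$ and $P_k$ in a fixed complement of $\mathcal A_0$, so that $\sup_k\|P_k\|_{L^2(\Omega_1)}<\infty$.

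Now set $\hat g_k:=g_k-\ell_k$. Since $\ell_k\in\mathcal A_0$ is affine and vanishes at every $x_i$, it changes neither $|\DIFF^2_p g_k|$ nor the values $g_k(x_i)$, so $\FF_\lambda^{p,q}(\hat g_k)=\FF_\lambda^{p,q}(g_k)$ and $(\hat g_k)$ is still minimizing; moreover on $\Omega_1$ we have $\hat g_k=\tilde g_k+P_k$, whence $\sup_k\|\hat g_k\|_{L^2(\Omega_1)}<\infty$. For each fixed $n$, Proposition~\ref{sobo} on $\Omega_n$ writes $\hat g_k=\tilde h_k+M_k$ with $\sup_k\|\tilde h_k\|_{L^2(\Omega_n)}<\infty$ and $M_k$ affine; comparing on $\Omega_1$ and using that on a set with nonempty interior all norms on the three-dimensional space of affine functions are equivalent, one gets $\sup_k\|M_k\|_{L^2(\Omega_n)}<\infty$ and hence $\sup_k\|\hat g_k\|_{L^2(\Omega_n)}<\infty$. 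A diagonal argument then produces a subsequence $(\hat g_{k_j})$ converging weakly in $L^2(\Omega_n)$ for every $n$ to some $f\in L^2_\rmloc(\Omega)\subseteq L^1_\rmloc(\Omega)$; since every $\varphi\in L^\infty_\rmc(\Omega)$ is supported in some $\Omega_n$ and lies in $L^2(\Omega_n)$, this yields $\hat g_{k_j}\rightharpoonup f$ in duality with $L^\infty_\rmc(\Omega)$, and taking $f_k:=\hat g_{k_j}$ finishes the proof. The step I expect to be the main obstacle is precisely the passage from the energy bound to a uniform bound on the pointwise values $\tilde g_k(x_i)$ after the affine correction; this is exactly what Lemma~\ref{trick} is designed to deliver, so the delicate estimate is effectively isolated into that lemma.
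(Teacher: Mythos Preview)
Your proof is correct and takes a genuinely different route from the paper's. The paper argues by a case distinction on the geometry of the data points: if three of the $x_i$ are non-collinear, it shows the original minimizing sequence is already locally uniformly bounded (constructing a John domain neighbourhood of a compact connected set containing all $x_i$, applying Proposition~\ref{sobo} there, and then using the three non-collinear points to pin down the affine corrections); if all $x_i$ lie on a line, it first subtracts an explicit affine function vanishing on that line so as to fix a value at an auxiliary point off the line, reducing to the previous case.

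You instead avoid the case split by working modulo the subspace $\mathcal A_0$ of affine functions vanishing at all $x_i$: subtracting the $\mathcal A_0$-component of the Sobolev affine correction leaves the functional unchanged and forces the remaining affine part into a fixed finite-dimensional complement, which is automatically bounded once the values $L_k(x_i)$ are. This is cleaner and treats both geometric configurations uniformly. Your use of Lemma~\ref{trick} to bound $\tilde g_k(x_i)$ after the Sobolev correction is a nice touch that the paper does not exploit here (the paper reaches the $L^\infty$ control through \cite[Proposition~3.1]{Deme} instead). On the other hand, the paper's explicit John-domain construction makes the dependence on the domain entirely self-contained, whereas you outsource this to the existence of a connected Lipschitz exhaustion of $\Omega$; both are standard, but be sure that your $\Omega_n$ are connected so that Proposition~\ref{sobo} yields a single affine correction on each.
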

 \begin{proof}
 	We assume $\lambda>0$, the case $\lambda=0$ being trivial. We also assume that $\Omega$
	is connected, as we can do the modifications independently in each connected component of $\Omega$.
	Let now $(f_k)\subseteq L^1_\rmloc(\Omega)$ be a minimizing sequence for $\FF_\lambda^{p,q}$. In particular, the sequence $(|\DIFF^2 f_k|(\Omega))$ is bounded as well as the sequence $(|f_k(x_i)|)$, for every $i=1,\ldots,N$. 
 	Now we are going to modify $(f_k)$ to obtain a new sequence $(\tilde{f}_k)\subseteq L^1_\rmloc(\Omega)$ that is still minimizing but in $\Omega$ is locally uniformly bounded. 
 	
 	There are two cases to be considered:
 	\begin{enumerate}[label={\rm(\alph{enumi})}]
 		\item\label{asssss1} $N\ge 3$ and there are three points $x_{i_1},x_{i_2},x_{i_3}\in\{x_1,\ldots, x_N\}$ such that $x_{i_2}-x_{i_1}$ and $x_{i_3}-x_{i_1}$ are linearly independent.
 		\item \label{asssss2} either $N=0$ or all the points $x_i$ are on a line $\{t v+c:t\in\RR\} \subseteq\RR^2$, for some $v\in\RR^2\setminus\{0\}$ and $c\in\RR$.
 	\end{enumerate}
 	We treat the two cases separately.
 	\medskip
\\\textbf{Case \ref{asssss1}}. 
In this case no modification is needed, indeed we show that $(f_k)$ is locally uniformly bounded in $\Omega$. Take a compact set $K\subseteq\Omega$. For $\eps:=\frac12 \dist(K,\partial\Omega)$ we select points $y_0, y_1, \ldots, y_M\in K$ such that $K\subseteq \cup_j B_\eps(y_j)$, then curves $\gamma_j\subseteq\Omega$ joining $y_j$ to $y_0$, and finally curves $\hat\gamma_i\subseteq\Omega$ joining $x_i$ to $y_0$. 
Let 
$$
K':=\bigcup_{j=0}^M \overline B_\eps(y_j) \cup \bigcup_{j=1}^M \gamma_j \cup 
\bigcup_{i=1}^N \hat\gamma_i.
$$
Then $\cup_i\{x_i\}\cup K\subseteq K'\subseteq \Omega$, and $K'$ is compact and connected. 
Therefore, to prove uniform
boundedness of $(f_k)$ on $K$, we can assume with no loss of generality that all points $x_i$ belong to $K$ and that
$K$ is connected.
	
	Now we take $\delta\in(0,1)$ small enough so that 
	$\Omega'\defeq {B_{{2}\delta}}(K)$ satisfies $\overline{\Omega'}\subseteq\Omega$. Hence $\Omega'$ is a connected domain. 
We show now that $\Omega'$ is a (bounded) John domain, then $\Omega'$ satisfies Poincar\'e inequalities, by  \cite[Lemma 3.1 and Theorem 5.1]{Bojar} and the trivial inequality
$$
	\bigg(\dashint_{\Omega '} \Big|f-\dashint_{{\Omega'}} f\Big|^{q}\dd\LL^2\bigg)^{1/q}\le 2 	\bigg(\dashint_{\Omega '} |f-a|^{q}\dd\LL^2\bigg)^{1/q}\qquad\text{for every }a\in\RR
	$$
	that holds for every $f\in L^1(\Omega')$ and $q\in [1,\infty)$.
Fix any $p_0\in K$. We have to show that there exist $0<\alpha\le \beta$ such that for every $p\in\Omega'$, there exists a rectifiable curve $\gamma:[0,l(\gamma)]\rightarrow\Omega'$, parametrized by arc length, joining $p$ to $p_0$ and such that 
$l(\gamma)\le \beta$ and 
\begin{equation}\label{cdascsadas}
	\dist(\gamma(t),\partial\Omega')\ge \frac{\alpha t}{l(\gamma)}\qquad\text{for every }t\in [0,l(\gamma)].
\end{equation}
To prove this, notice first that there exists $\beta'>0$ such that for every $p\in K$, there exists rectifiable curve $\gamma$, parametrized by arc length, joining $p$ to $p_0$, with   image contained in $B_{\delta}(K)\subseteq\Omega'$ and length bounded by $\beta'$. This 
follows from the connectedness of $B_\delta(K)$ and the 
compactness of $K$ (simply take a finite covering of $K$ of balls of radius $\delta$ and centre in $K$ and consider the rectifiable curves with image in $B_{\delta}(K)$ joining the centres of these balls); also, $\gamma$ satisfies \eqref{cdascsadas} with $\alpha\defeq\delta $. Then the claim for arbitrary $p\in \Omega'$ follows: indeed, for any $p\in \Omega'\setminus K$, $p\in B_{2\delta}(q)$ with $q\in K$, then we join the radial curve connecting $p$ to $q$ to the curve connecting $q$ to $p_0$ obtained as before and we have that $l(\gamma)\le{2}\delta+ \beta'\eqdef \beta$ and moreover
$\gamma$ still satisfies \eqref{cdascsadas} (with $\alpha=\delta$ as before): indeed, for $t\in [0,|p-q|]$,
\begin{align*}
	\dist(\gamma(t),\partial\Omega')\ge {2}\delta-|p-q|+t\ge {2}\delta \frac{t}{|p-q|}\ge {2}\delta\frac{t}{l(\gamma)},
\end{align*}
whereas  for $t\in [|p-q|,l(\gamma)]$, \eqref{cdascsadas} follows as before.

	Take also $\psi\in C_\rmc^\infty(\RR^2)$ such that $\supp\psi\subseteq\Omega'$ and $\psi=1$ on a neighbourhood of $K$. By Proposition~\ref{sobo} and standard calculus rules, the sequence $(|\DIFF^2 (\psi \hat f_k)|(\RR^2))$ is bounded, where $\hat f_k=f_k-g_k$ with $g_k$ suitable affine perturbation. Therefore, by \cite[Proposition 3.1]{Deme} and the compactness of
support of $\psi\hat{f}_k$, we have that $\psi\hat{f}_k$ are uniformly bounded in $L^\infty(\RR^2)$, in
	particular $\hat{f}_k$ are uniformly bounded in $L^\infty(K)$.
	Now, as $|g_k(x_i)|=|\hat{f}_k(x_i)-f(x_i)|$ are bounded for every $i=i_1,\,i_2,\,i_3$, it is easy to infer, by the assumption in \ref{asssss1} that the perturbations $g_k$ are uniformly bounded. Hence $\Vert f_k\Vert_{L^\infty(K)}$ is bounded and, since $K$ is arbitrary, the claim follows by weak compactness.
 	\medskip
	\\\textbf{Case \ref{asssss2}}. If $N\le 2$, there is an affine function $f_*$ with 
	$f_*(x_i)=y_i$ for all $i$, and therefore $\FF_\lambda^{p,q}(f_*)=0$.
	 We can therefore assume $N\ge 3$.
	Let $v^\perp$ be a unit vector orthogonal to $v$, and choose $\eps\in(0,1)$ sufficiently small that $x_0:=x_1+\eps v^\perp\in\Omega$. Define
	 \begin{equation*}
	  \tilde{f}_k(x):=f_k(x)-\frac1\eps f_k(x_0) (x-x_1)\cdot v^\perp .
	 \end{equation*}
	 As $\FF_\lambda^{p,q}(\tilde{f}_k)=
	 \FF_\lambda^{p,q}(f_k)$, this is also a minimizing sequence, with the additional property that $\tilde{f}_k(x_0)=0$ for all $k$. The conclusion follows then from the argument of the previous case.	 
 \end{proof}

\subsection{Proof of the main results}\label{sectprofexi}
Having proved the results in Section~\ref{auxexist}, Theorem~\ref{csaa} and Theorem~\ref{csaaa} follow in a  immediate, classical way.
  \begin{proof}[Proof of Theorem~\ref{csaa}]
 	The statement is proved by the direct method of calculus of variations, by Lemma~\ref{lowersemicont} and Lemma~\ref{compacntess}.
 \end{proof}
 \begin{proof}[Proof of Theorem~\ref{csaaa}]
 Let $\lambda_c{\defeq}4\pi$.
 We argue as in Proposition~\ref{lambdabig}, starting from a minimizer $f$ of $\FF^{1,1}_{\lambda_c}$ granted by Theorem~\ref{csaa}.
 We modify $f$ subtracting $\sum_i (f(x_i)-y_i)g_i$ where this time $g_i$ are rescaled cut cones (see \eqref{modifyf}), in such a way that
 $$\tilde{f}\defeq f-\sum_i (f(x_i)-y_i)g_i$$ has a perfect fit with the data. Since $|\DIFF^2_1 g_i|(\R^2)=4\pi$ (recall e.g.\  Lemma \ref{goodcutoff}), one has
 \begin{equation*}
 \FF^{1,1}_\infty(\tilde f)\leq|\DIFF^2_1\tilde{f}|(\Omega)\leq 
 |\DIFF^2_1 f|(\Omega)+\sum_i |\DIFF^2_1 g_i|(\R^2)|f(x_i)-y_i|\\
 =\FF^{1,1}_{\lambda_c}(f).
 \end{equation*}
 This, taking the inequality $\FF^{1,1}_\lambda\leq\FF^{1,1}_\infty$ into account, 
 proves that $\tilde{f}$ is a minimizer of $\FF^{1,1}_\lambda$ for any $\lambda\geq\lambda_c$.
 \end{proof}
  \section*{Acknowledgments}
  The first two authors wish to thank Shayan Aziznejad, Michele Benzi and Michael Unser for inspiring conversations around the topic of this note. The third author wishes to thank Matteo Focardi and Flaviana Iurlano for interesting discussions on  Section~\ref{sectcpwl}. The authors wish to thank Gian Paolo Leonardi for comments leading to the investigation contained in Remark \ref{remnotclosed}.

 \bibliography{schatten}
 \bibliographystyle{alpha}
 
\end{document}